\definecolor{winered}{rgb}{0.8,0,0}
\definecolor{deepblue}{rgb}{0,0,0.8}
\newtheorem{thm}{Theorem}[subsection]
\newtheorem{prop}[thm]{Proposition}
\newtheorem{cor}[thm]{Corollary}
\newtheorem{lem}[thm]{Lemma}
\theoremstyle{definition}
\newtheorem{df}[thm]{Definition}
\newtheorem{rmk}[thm]{Remark}
\newtheorem{exm}[thm]{Example}
\newtheorem{const}[thm]{Construction}
\theoremstyle{remark}
\numberwithin{equation}{subsection}
\newcommand{\bC}{\mathbf{C}}
\newcommand{\A}{\mathbb{A}}
\newcommand{\E}{\mathbb{E}}
\newcommand{\G}{\mathbb{G}}
\newcommand{\N}{\mathbb{N}}
\renewcommand{\P}{\mathbb{P}}
\newcommand{\Q}{\mathbb{Q}}
\newcommand{\R}{\mathbb{R}}
\newcommand{\T}{\mathbb{T}}
\newcommand{\Z}{\mathbb{Z}}
\newcommand{\cA}{\mathcal{A}}
\newcommand{\cB}{\mathcal{B}}
\newcommand{\cC}{\mathcal{C}}
\newcommand{\cE}{\mathcal{E}}
\newcommand{\cF}{\mathcal{F}}
\newcommand{\cG}{\mathcal{G}}
\newcommand{\cM}{\mathcal{M}}
\newcommand{\cO}{\mathcal{O}}
\newcommand{\cP}{\mathcal{P}}
\newcommand{\cU}{\mathcal{U}}
\newcommand{\cV}{\mathcal{V}}
\newcommand{\rH}{\mathrm{H}}
\newcommand{\rM}{\mathrm{M}}
\newcommand{\sC}{\mathscr{C}}
\newcommand{\sT}{\mathscr{T}}
\newcommand{\sX}{\mathscr{X}}
\newcommand{\et}{\mathrm{\acute{e}t}}
\DeclareMathOperator{\Hom}{Hom}
\DeclareMathOperator{\Spec}{Spec}
\newcommand{\colim}{\mathop{\mathrm{colim}}}
\newcommand{\limit}{\mathop{\mathrm{lim}}}
\newcommand{\id}{\mathrm{id}}
\newcommand{\ul}{\underline}
\newcommand{\ol}{\overline}
\newcommand{\DM}{\mathrm{DM}}
\newcommand{\eff}{{\mathrm{eff}}}
\newcommand{\Shv}{\mathrm{Sh}}
\newcommand{\Spt}{\mathrm{Sp}}
\newcommand{\Sm}{\mathrm{Sm}}
\newcommand{\CAlg}{\mathrm{CAlg}}
\newcommand{\PrL}{\mathrm{Pr}^{\mathrm{L}}}
\newcommand{\Ho}{\mathrm{Ho}}
\newcommand{\boxx}{\square}
\newcommand{\pt}{\mathrm{pt}}
\newcommand{\fib}{\mathrm{fib}}
\newcommand{\tor}{\mathrm{tor}}
\newcommand{\tf}{\mathrm{tf}}
\newcommand{\rank}{\mathrm{rank}}
\newcommand{\SH}{\mathrm{SH}}
\newcommand{\logSH}{\mathrm{logSH}}
\newcommand{\map}{\mathrm{hom}}
\newcommand{\DLambda}{\mathrm{D}(\mathrm{Mod}_{\Lambda})}
\newcommand{\logDA}{\mathrm{log}\mathrm{DA}}
\newcommand{\DA}{\mathrm{DA}}
\newcommand{\Psh}{\mathrm{PSh}}
\newcommand{\cofib}{\mathrm{cofib}}
\newcommand{\colimit}{\mathop{\mathrm{colim}}}
\newcommand{\Sch}{\mathrm{Sch}}
\newcommand{\lSch}{\mathrm{lSch}}
\newcommand{\lSm}{\mathrm{lSm}}
\newcommand{\lFt}{\mathrm{lFt}}
\newcommand{\sFt}{\mathrm{sFt}}
\newcommand{\Ft}{\mathrm{Ft}}
\newcommand{\sSm}{\mathrm{sSm}}
\newcommand{\lSmIFan}{\mathrm{lSmIFan}}
\newcommand{\infPsh}{\mathrm{PSh}}
\newcommand{\infShv}{\mathrm{Sh}}
\newcommand{\Spc}{\mathrm{Spc}}
\newcommand{\Gmm}{\mathbb{G}_m^{\mathrm{log}}}
\newcommand{\op}{\mathrm{op}}
\newcommand{\logDMeff}{\mathrm{log}\mathrm{DM}^{\mathrm{eff}}}
\newcommand{\logDM}{\mathrm{log}{\mathrm{DM}}}
\newcommand{\DMeff}{\mathrm{DM}^{\mathrm{eff}}}
\newcommand{\Map}{\mathrm{Hom}}
\newcommand{\uMap}{\underline{\mathrm{Hom}}}
\newcommand{\gp}{\mathrm{gp}}
\newcommand{\unit}{\mathbf{1}}
\newcommand{\Stab}{\mathrm{Sp}}
\newcommand{\ver}{\mathrm{ver}}
\newcommand{\dver}{\mathrm{dver}}
\newcommand{\divi}{\mathrm{div}}
\newcommand{\ML}{\mathrm{M}\Lambda}
\newcommand{\MGL}{\mathrm{MGL}}
\newcommand{\KGL}{\mathrm{KGL}}
\newcommand{\dtau}{\mathrm{d}\tau}
\newcommand{\sNis}{\mathrm{sNis}}
\newcommand{\dNis}{\mathrm{dNis}}
\newcommand{\ketale}{\mathrm{k\acute{e}t}}
\newcommand{\detale}{\mathrm{d\acute{e}t}}
\newcommand{\setale}{\mathrm{s\acute{e}t}}
\newcommand{\Nis}{\mathrm{Nis}}
\newcommand{\letale}{\mathrm{l\acute{e}t}}
\newcommand{\Zar}{\mathrm{Zar}}
\begin{document}
\title{$\A^1$-homotopy theory of log schemes}
\author{Doosung Park}
\address{Bergische Universit{\"a}t Wuppertal,
Fakult{\"a}t Mathematik und Naturwissenschaften
\\
Gau{\ss}strasse 20, 42119 Wuppertal, Germany}
\email{dpark@uni-wuppertal.de}
\subjclass[2020]{Primary 14F42; Secondary 14A21, 19E08}
\keywords{log schemes, motivic homotopy theory, localization property}

\begin{abstract}We construct the $\mathbb{A}^1$-local stable motivic homotopy categories of fs log schemes.
For schemes with the trivial log structure, our construction is equivalent to the original construction of Morel-Voevodsky.
We prove that our construction satisfies the localization property.
As a consequence, we obtain the Grothendieck six-functor formalism for strict morphisms of fs log schemes.
We extend $\mathbb{A}^1$-invariant cohomology theories of schemes to fs log schemes.
In particular, we define motivic cohomology, homotopy K-theory, and algebraic cobordism of fs log schemes.
For any fs log scheme log smooth over a scheme, we express cohomology of its boundary in terms of cohomology of schemes.
\end{abstract}

\maketitle

\section{Introduction}

The goal of \cite{logDM} and \cite{logSH} is to build a suitable framework of motivic homotopy theory of fs log schemes.
What makes their theory distinct from $\A^1$-homotopy theory of schemes \cite{MV} is that $\A^1$ is not inverted.
Instead, they invert $\boxx:=(\P^1,\infty)$, which is the log scheme associated with the open immersion $\P^1-\{\infty\}\to \P^1$.
With this replacement,
the $\infty$-category of logarithmic motives $\logDM(k)$ is constructed in \cite{logDM} for every field $k$, and the stable log motivic $\infty$-category $\logSH(S)$ is constructed in \cite{logSH} for every fs log scheme $S$.

\

One can incorporate various non $\A^1$-invariant cohomology theories in this setting.
For example, Hodge cohomology $H_{\Zar}^p(-,\Omega^q)$ is representable in $\logDM(k)$  for every perfect field $k$ by \cite[Theorem 9.7.1]{logDM}, and logarithmic topological Hochschild homology $\mathrm{THH}$ is representable in $\logSH(B)$ for every scheme $B$ by \cite{logSH}.
For the representability of logarithmic Hochschild homology $\mathrm{HH}$, we refer to \cite{BLPO}.

\

Unfortunately, the Grothendieck six-functor formalism is unknown in this non $\A^1$-invariant setting.
One of the main obstacles is that the localization property does not hold, which consists of the following two conditions for a strict closed immersion $i$ of fs log schemes with its open complement $j$:
\begin{itemize}
\item The induced sequence $j_\sharp j^*\to \id \to i_*i^*$ is a cofiber sequence,
\item $i_*$ is fully faithful.
\end{itemize}
If we have such a property, then $i^*$ has to be an equivalence whenever $i$ is a thickening.
However, topological Hochschild homology is not nil-invariant, which disproves the localization property.
It is an interesting question to determine which parts of the Grothendieck six-functor formalism remain valid in the non $\A^1$-invariant setting.

\

The purpose of this paper is to initiate the study of the Grothendieck six-functor formalism for fs log schemes after inverting $\A^1$.
Our construction of the stable motivic $\infty$-category $\SH(X)$ for fs log schemes $X$ is as follows.
We only consider fs log schemes with Zariski log structure, i.e., the structure sheaf of monoid $\cM_X$ on $X$ is a Zariski sheaf instead of an \'etale sheaf.
We first consider the $\infty$-category of dividing Nisnevich sheaves \cite[Definition 3.1.5]{logDM} of pointed spaces on the category $\lSm/X$ of fs log schemes log smooth over $X$.
Then we apply $(\A^1\cup \ver)$-localization and $\P^1$-stabilization to construct $\SH(X)$.
The class $\ver$ consists of open immersions $j\colon U\to V$ in $\lSm/X$ such that the induced open immersion
\[
j'\colon U-\partial_X U\to V-\partial_X V
\]
is an isomorphism, where $\partial_X U$ denotes the collection of points of $U$ that are not vertical over $X$,
see Definition \ref{vert.1}.
If $U$ is exact log smooth over $X$, then topologically $\partial_X U$ is the relative boundary of $U$ over $X$ according to \cite[Theorem 3.7]{zbMATH05809283}.
Hence the collar neighborhood theorem \cite{MR0133812} ensures that inverting $j'$ is reasonable to consider at least in the case that $U$ and $V$ are exact log smooth over $X$.

\

What we have done in this paper are as follows:
\begin{itemize}
\item We prove that our $\SH(X)$ is equivalent to the original construction \cite[Definition 5.7]{zbMATH01194164} whenever $X$ is a scheme with the trivial log structure.
We also discuss the abelian variant $\DA(X,\Lambda)$  for any commutative ring $\Lambda$ and the variants $\SH_{\setale}(X)$, $\SH_{\ketale}(X)$, $\DA_{\setale}(X,\Lambda)$, and $\DA_{\ketale}(X,\Lambda)$ using the strict \'etale and Kummer \'etale topologies.
This is in \S 2.
\item We prove the localization property for $\SH(X)$ and $\DA(X,\Lambda)$.
As a consequence, we obtain the Grothendieck six-functor formalism for strict morphisms of fs log schemes from the works of Ayoub \cite{Ayo071},\cite{Ayo072} and Cisinski-D\'eglise \cite{CD19}.
This consists of \S 3.
\item We provide a canonical long exact sequence of cohomology groups
\begin{align*}
\cdots
&\to
\E^{p,q}(\ul{X})
\to
\E^{p,q}(X-\partial X)
\oplus
\E^{p,q}(\ul{\partial X})
\to
\E^{p,q}(\partial X)
\\
&\to
\E^{p+1,q}(\ul{X})
\to
\cdots
\end{align*}
for every base scheme $B$, object $\E\in \SH(B)$, fs log scheme $X$ log smooth over $B$, and integer $q$.
Here, $\partial X$ is the strict closed subscheme of $X$ with the reduced scheme structure consisting of the points with nontrivial log structures.
See Corollary \ref{coh.3} for the details.
\end{itemize}

\

One of the main applications of $\A^1$-homotopy theory of fs log schemes is to construct a motivic nearby cycles functor that behaves well in positive and mixed characteristics too,
see \cite[Theorems 1.1--1.4]{lognearby}.
This requires the Grothendieck six-functor formalism for fs log schemes,
which is established in the sequel papers \cite{divspc}, \cite{logGysin}, \cite{logshriek}, and \cite{logsix}.

\

The $\A^1$-localization will expel various non $\A^1$-invariant cohomology theories of schemes.
Nevertheless, $\A^1$-invariant cohomology theories of schemes still remain, and our theory gives a natural way to extend such cohomology theories to fs log schemes.
In particular, we can define
\begin{itemize}
\item $\A^1$-invariant motivic cohomology $H_{\cM}^{p}(X,\Lambda(q))$,
\item homotopy $K$-theory $\mathrm{KH}_p(X)$,
\item and $\A^1$-invariant algebraic cobordism $\mathrm{MGL}^{p,q}(X)$
\end{itemize}
for every fs log scheme $X$, commutative ring $\Lambda$, and integers $p$ and $q$.

\

If $X$ is an fs log scheme proper and log smooth over a scheme, then our cohomology of $\partial X$ is isomorphic to the cohomology of the stable homotopy type at infinity of $X-\partial X$ defined by Dubouloz-D\'eglise-{\O}stv{\ae}r, compare \cite[(4.4.3.a)]{2206.01564} and Theorem \ref{local.8}.
See also \cite{MR2302525} for Levine's notion of motivic tubular neighborhoods.
We also have a similar result with Wildeshaus' boundary motives, see \cite[Proposition 2.4]{zbMATH05042488}.
Hence our theory provides a logarithmic theoretical background for these notions.

\

This paper is a reboot of the author's thesis \cite{ParThesis},  \cite{1707.09435}.
Many classes of morphisms were inverted there for the purpose of the Grothendieck six-functor formalism.
However, that makes the computation of cohomology harder.
Unlike this approach, we invert fewer morphisms that are enough to deduce the above results.

\subsection*{Notation and conventions}
Every fs log scheme in this paper has a Zariski log structure unless otherwise stated.
As in \cite{logDM},
this is no loss of generality for log motives thanks to Niziol's result \cite[Theorem 5.10]{Niz}:
If $X$ is an fs log scheme with an \'etale log structure,
then there exists a log blow-up $Y\to X$ such that $Y$ has a Zariski log structure.

Our standard reference for notation and terminology in log geometry is Ogus's book \cite{Ogu}.
An fs log scheme $X$ is quasi-compact, finite dimensional, or noetherian if $\ul{X}$ is so.
A morphism of fs log schemes $f$ is separated or proper if $\ul{f}$ is so.
We employ the following notation throughout the paper.

\begin{tabular}{l|l}
$\Sch$ & the category of finite dimensional separated noetherian schemes
\\
$\lSch$ &  the category of finite dimensional separated noetherian
\\
&  fs log schemes
\\
$\Sm$ & the class of smooth morphisms in $\Sch$
\\
$\lSm$ & the class of log smooth morphisms in $\lSch$
\\
$\Spc$ & the $\infty$-category of spaces
\\
$\Spc_*$ & the $\infty$-category of pointed spaces
\\
$\Spt$ & the $\infty$-category of spectra
\\
$\Lambda$ & a commutative ring
\\
$\Hom_{\cC}$ & the hom space in an $\infty$-category $\cC$
\\
$\hom_{\cC}$ & the hom spectrum in a stable $\infty$-category $\cC$
\\
$\DLambda$ & the $\infty$-category of chain complexes of $\Lambda$-modules
\\
$\infPsh(\cC,\cU)$ & the $\infty$-category of presheaves on $\cC$ with values in an $\infty$-category $\cU$
\\
$\infShv_t(\cC,\cU)$ & the $\infty$-category of $t$-sheaves on $\cC$ with values in an $\infty$-category $\cU$
\\
$\id\xrightarrow{ad}f_*f^*$ & the unit of an adjunction $(f^*,f_*)$
\\
$f^*f_*\xrightarrow{ad'}\id$ & the counit of an adjunction $(f^*,f_*)$
\end{tabular}

\subsection*{Acknowledgements}
This research was conducted in the framework of the DFG-funded research training group GRK 2240: \emph{Algebro-Geometric Methods in Algebra, Arithmetic and Topology}.
We thank Alberto Vezzani for indicating gaps in a previous version.
We are also grateful to the referee for a detailed and helpful report, which pointed out places in the draft requiring corrections and improved the exposition.

\section{Construction of \texorpdfstring{$\SH$}{SH}}

After explaining several topologies on fs log schemes in \S \ref{topology}, we review the construction of $\logSH$ in \S \ref{review}.
We study basic properties of vertical boundaries in \S \ref{vertical_boundaries}.
The notion of vertical boundaries will play an important role throughout this paper.
We also define a class of morphisms $\ver$, which we will often invert.

The purpose of \S \ref{removing} is to show that for $S\in \Sch$, if we invert $\A^1$ in $\logSH(S)$, then $\ver$ is inverted too.
In \S \ref{vertical_localizations}, we construct $\SH(S)$ for any $S\in \lSch$.
We also show that our $\SH(S)$ is equivalent to the original construction of Morel-Voevodsky if $S\in \Sch$.
The proof is done by giving an explicit description of the localization functor $L_{ver}$.
Furthermore, for $S\in \Sch$, we give an equivalence of $\infty$-categories
\[
\SH(S)
\simeq
(\A^1)^{-1}\logSH(S)
\]

\subsection{Review of topologies on fs log schemes}
\label{topology}

Throughout the paper,
$\tau$ will be one of the strict Nisnevich,
strict \'etale,
and Kummer \'etale topologies on $\lSch$.
Then let $\ul{\tau}$ denote the Nisnevich, \'etale, and \'etale topologies on $\Sch$ respectively,
and let $\dtau$ denote the dividing Nisnevich, dividing \'etale, and log \'etale topologies on $\lSch$ respectively.
This is summarized in the following table with the abbreviations of the topologies:

\

\begin{center}
\begin{tabular}{|c|c|c|}
\hline
$\tau$ & $\ul{\tau}$ & $\dtau$
\\
\hline
$\sNis$ & $\Nis$ & $\dNis$
\\
$\setale$ & $\et$ & $\detale$
\\
$\ketale$ & $\et$ & $\letale$
\\
\hline
\end{tabular}
\end{center}
In this section,
we review the above topologies on fs log schemes.

Recall that the \emph{strict Nisnevich} (resp.\ \emph{strict \'etale}) topology on $\lSch$ is the topology generated by the families $\{f_i\colon U_i\to X\}_{i\in I}$ such that each $f_i$ is strict and $\{\ul{f_i}\colon \ul{U_i}\to \ul{X}\}_{i\in I}$ is a Nisnevich (resp.\ \'etale) covering.
The associated topology is the \emph{strict Nisnevich} (resp.\ \emph{strict \'etale}) topology, and the abbreviation is $\sNis$ (resp.\ $\setale$).

Recall that a \emph{strict Nisnevich} distinguished square is a square in $\lSch$ of the form
\[
Q
:=
\begin{tikzcd}
Y'\ar[d]\ar[r]&
Y\ar[d]
\\
X'\ar[r]&
X
\end{tikzcd}
\]
such that every morphism in $Q$ is strict and the underlying square of schemes $\ul{Q}$ is a Nisnevich distinguished square.
Note that a presheaf on $\lSch$ with values in an $\infty$-category $\cU$ is a strict Nisnevich sheaf if and only if $\cF(Q)$ is cartesian for every strict Nisnevich distinguished square $Q$,
which is a direct analog of the well-known result for the Nisnevich topology.

A morphism in $\lSch$ is called a \emph{dividing cover} if it is a surjective proper log \'etale monomorphism.
Recall from \cite[Definitions 3.1.4, 3.1.5]{logDM} that the \emph{dividing Nisnevich} (resp.\ \emph{dividing \'etale}) topology on $\lSch$ is the coarsest topology finer than the strict Nisnevich (resp.\ strict \'etale) topology and containing dividing covers as covering families.
The abbreviation is $\dNis$ (resp.\ $\detale$).

Recall from \cite[\S 2.1, 9.1]{MR1922832} that the \emph{Kummer \'etale} (resp.\ \emph{log \'etale}) topology on $\lSch$ is the topology generated by the families of Kummer \'etale (resp.\ log \'etale) morphisms $\{f_i\colon U_i\to X\}_{i\in I}$ in $\lSch$ such that $\coprod_{i\in I} U_i \to X$ is surjective (resp.\ universally surjective).
The abbreviation is $\ketale$ (resp.\ $\letale$).

\begin{prop}
\label{divlocal.8}
Let $\cF$ be a presheaf on $\lSch$ with values in an $\infty$-category $\cU$.
\begin{enumerate}
\item[\textup{(1)}]
$\cF$ is a dividing Nisnevich sheaf if and only if $\cF$ is a strict Nisnevich sheaf and invariant under dividing covers.
\item[\textup{(2)}]
$\cF$ is a dividing \'etale sheaf if and only if $\cF$ is a strict Nisnevich sheaf and invariant under dividing covers.
\item[\textup{(3)}]
$\cF$ is a log \'etale sheaf if and only if $\cF$ is a Kummer \'etale sheaf and invariant under dividing covers.
\end{enumerate}
\end{prop}
\begin{proof}
Since dividing covers are monomorphisms in the category of fs log schemes, $\cF$ satisfies descent for dividing covers if and only if $\cF$ is invariant under dividing covers.
This shows (1) and (2).
Together with \cite[Proposition 3.9]{Nak2},
we also have (3).
\end{proof}

\subsection{Review of the construction of \texorpdfstring{$\logSH$}{logSH}}
\label{review}

In this subsection, we review the construction of $\logSH$ in \cite{logSH}.
See also \cite{logDMCras}.

\begin{df}
For every fan $\Sigma$, let $\T_\Sigma$ be the fs log scheme whose underlying scheme is the toric variety associated with $\Sigma$ and whose log structure is the compactifying log structure associated with the open immersion from the torus of the toric variety.

If $B\in \Sch$ and $P$ is an fs monoid, we set
\[
\G_{m,B}:=\G_m\times B,
\;
\A_{P,B}:=\A_P\times B,
\;
\T_{\Sigma,B}:=\T_{\Sigma}\times B.
\]
\end{df}

Next, we recall the definition of premotivic $\infty$-categories, which is a direct adaptation of premotivic triangulated categories in \cite{CD19}.

\begin{df}
\label{shv.7}
Let $\bC$ be a category with a class of morphisms $\cP$ that contains all isomorphisms and is closed under compositions and pullbacks.
Let
\[
\sT
\colon
\bC^{\op}
\to
\CAlg(\PrL)
\]
be a functor of $\infty$-categories.
For a morphism $f$ in $\bC$, we set $f^*:=\sT(f)$.
A right adjoint of $f^*$ is denoted by $f_*$.
We say that $\sT$ is a \emph{$\cP$-premotivic $\infty$-category over $\bC$} if the following conditions are satisfied:
\begin{enumerate}
\item[(i)]
For every morphism $f$ in $\cP$, $f^*$ admits a left adjoint $f_\sharp$.
\item[(ii)]
For every cartesian square
\[
\begin{tikzcd}
X'\ar[r,"g'"]\ar[d,"f'"']&
X\ar[d,"f"]
\\
S'\ar[r,"g"]&
S
\end{tikzcd}
\]
in $\bC$ such that $f\in \cP$, the Beck-Chevalley transformation
\[
Ex\colon f_\sharp' g'^* \to g^*f_\sharp
\]
is an isomorphism.
\item[(iii)] For every morphism $f$ in $\cP$, the Beck-Chevalley transformation
\[
Ex\colon
f_\sharp((-)\otimes f^*(-))
\to
f_\sharp (-) \otimes (-)
\]
is an isomorphism.
\end{enumerate}
For $S\in \bC$, let $\unit_S$ (or simply $\unit$) be the monoidal unit in $\sT(S)$.
If $f\colon X\to S$ is a morphism in $\cP$, then we set $M(X):=f_\sharp \unit_X$.

Let $\sT'$ be another $\cP$-premotivic $\infty$-category over $\bC$.
We say that a natural transformation $\varphi\colon \sT\to \sT'$ is a \emph{functor of $\cP$-premotivic $\infty$-categories} if the Beck-Chevalley transformation
\[
f_\sharp \varphi(Y)
\to
\varphi(X) f_\sharp
\]
is an isomorphism for every $\cP$-morphism $f\colon Y\to X$.
\end{df}

\begin{const}
\label{shv.8}
With the above notation, suppose that $\cA$ is a class of morphisms in $\sT(-)$ closed under $f^*$ for all morphisms $f$ in $\cC$, $f_\sharp$ for all morphisms $f$ in $\cP$, and $\otimes M(Y)$ for all $Y\in \cP/-$.
Then as in Robalo's thesis \cite[\S 9.1]{Robalothesis}, we can construct the pointwise localization $\cA^{-1}\sT$ and show that it is a $\cP$-premotivic $\infty$-category.
Here, $\cA^{-1}\sT(X)$ is the $\cA$-localization of $\sT(X)$ in the sense of \cite[\S 5.5.4]{HTT} for all $X\in \cC$.
\end{const}

\begin{df}
Let $\cC$ be a presentable symmetric monoidal $\infty$-category, and let $X$ be an object of $\cC$.
We set
\[
\Stab_X(\cC)
:=
\limit(\cdots \xrightarrow{\Omega_{X}} \cC\xrightarrow{\Omega_{X}} \cC),
\]
where $\Omega_{X}:=\ul{\Map}(-,X)$, and $\ul{\Map}$ denotes the internal hom.
See \cite[\S 2.2.1]{zbMATH06374152} for the details.

If the cyclic permutation $(123)$ on $X\otimes X\otimes X$ is isomorphic to the identity morphism, then there exists a presentable symmetric monoidal $\infty$-category whose underlying $\infty$-category is $\Stab_X(\cC)$ by \cite[Corollary 2.22]{zbMATH06374152}.
\end{df}

\begin{df}
\label{logSH.1}
In $\A^1$-homotopy theory of schemes, we have the commutative diagram of $\Sm$-premotivic $\infty$-categories over $\Sch$
\[
\begin{tikzcd}
\rH_{\ul{\tau}} \ar[r,"(-)_+"]&
\rH_{\ul{\tau},*} \ar[r,"\Sigma_{S^1}^\infty"]&
\SH_{\ul{\tau}}^\eff\ar[r,"\Sigma_{\G_m}^\infty"]\ar[d,"{\Lambda^{(-)}}"']&
\SH_{\ul{\tau}}\ar[d,"{\Lambda^{(-)}}"]
\\
&
&
\DA_{\ul{\tau}}^\eff(-,\Lambda)\ar[r,"\Sigma_{\G_m}^\infty"]&
\DA_{\ul{\tau}}(-,\Lambda),
\end{tikzcd}
\]
where
\begin{gather*}
\rH_{\ul{\tau}}(S)
:=
(\A^1)^{-1}\infShv_{\ul{\tau}}(\Sm/S,\Spc),
\\
\rH_{\ul{\tau},*}(S)
:=
(\A^1)^{-1}\infShv_{\ul{\tau}}(\Sm/S,\Spc_*),
\\
\SH_{\ul{\tau}}^\eff(S)
:=
(\A^1)^{-1}\infShv_{\ul{\tau}}(\Sm/S,\Spt),
\\
\DA_{\ul{\tau}}^\eff(S,\Lambda)
:=
(\A^1)^{-1}\infShv_{\ul{\tau}}(\Sm/S,\DLambda),
\\
\SH_{\ul{\tau}}(S)
:=
\Stab_{\G_m}(\SH_{\ul{\tau}}^\eff(S)),
\;
\DA_{\ul{\tau}}(S,\Lambda)
:=
\Stab_{\G_m}(\DA_{\ul{\tau}}^\eff(S,\Lambda)).
\end{gather*}
For simplicity of notation, $\A^1$ denotes the class of projections $X\times \A^1\to X$ for $X\in \Sm/S$ (or $X\in \lSm/S$).
We often omit $\Nis$ in $\rH_{\Nis}$ and the other five $\infty$-categories if $\ul{\tau}=\Nis$.
For a presheaf $\cF$ on $\Sm/S$ (or $\lSm/S$), let $\Lambda^{\cF}$ be the free presheaf of $\Lambda$-modules associated with $\cF$.
\end{df}

\begin{df}
For $S\in \Sch$ and $X\in \Sm/S$ with a strict normal crossing divisor $Z$ on $X$ over $S$ \cite[Definition 7.2.1]{logDM}, let $(X,Z)$ be the fs log scheme whose underlying scheme is $X$ and whose log structure is the compactifying log structure \cite[Definition III.1.6.1]{Ogu} associated with the open immersion $X-Z\to X$.
\end{df}

\begin{df}
We set $\square:=(\P^1,\infty)$ and $\Gmm:=(\P^1,0+\infty)$,
where we regard $\P^1$ as a smooth scheme over $\Spec(\Z)$.
In \cite{logSH}, we have the commutative diagram of $\lSm$-premotivic $\infty$-categories over $\lSch$
\[
\begin{tikzcd}
\logSH_{\tau}^\eff\ar[r,"\Sigma_{\Gmm}^\infty"]\ar[d,"{\Lambda^{(-)}}"']&
\logSH_{\tau}\ar[d,"{\Lambda^{(-)}}"]
\\
\logDA_{\tau}^\eff(-,\Lambda)\ar[r,"\Sigma_{\Gmm}^\infty"]&
\logDA_{\tau}(-,\Lambda),
\end{tikzcd}
\]
where
\begin{gather*}
\logSH_{\tau}^\eff(S)
:=
\boxx^{-1}\infShv_{\dtau}(\lSm/S,\Spt),
\\
\logDA_{\tau}^\eff(S,\Lambda)
:=
\boxx^{-1}\infShv_{\dtau}(\lSm/S,\DLambda),
\\
\logSH_{\tau}(S)
:=
\Stab_{\Gmm}(\logSH_{\tau}^\eff(S)),
\;
\logDA_{\tau}(S,\Lambda)
:=
\Stab_{\Gmm}(\logDA_{\tau}^\eff(S,\Lambda)).
\end{gather*}
As before, $\boxx$ denotes the class of projections $X\times \boxx\to X$ for $X\in \lSm/S$.
We often omit $\sNis$ in $\logSH_{\dNis}^\eff$ and the other three $\infty$-categories if $\tau=\sNis$.
As a consequence of \cite[Proposition 2.5.11]{logSH}, and we have an isomorphism $\Sigma_{\Gmm}^\infty \Sigma_{S^1}^\infty \simeq \Sigma_{\P^1}^\infty$.

If $k$ is a field, then we also have the $\infty$-category $\logDM^\eff_{\tau}(k)$, see \cite[Definition 5.2.1]{logDM}.
We set
\[
\logDM_{\tau}(k):=\Stab_{\Gmm}(\logDM_{\tau}^\eff(k)).
\]
For $X\in \lSm/k$, let $M(X)$ denote the associated motives in $\logDM_{\tau}^\eff(k)$ and $\logDM_{\tau}(k)$.
\end{df}

\subsection{Vertical boundaries}
\label{vertical_boundaries}

\begin{prop}
\label{vert.4}
Let $\theta\colon P\to Q$ be a map of saturated monoids.
Then the following conditions are equivalent.
\begin{enumerate}
\item[\textup{(1)}]
$\theta$ is vertical,
i.e.,
the cokernel of $\theta$ computed in the category of integral monoids is a group.
\item[\textup{(2)}]
The cokernel of $\theta$ computed in the category of saturated monoids is a group.
\item[\textup{(3)}]
The face of $Q$ generated by $\theta(P)$ is $Q$.
\end{enumerate}
\end{prop}
\begin{proof}
The equivalence between (1) and (3) is due to \cite[Remark I.4.3.2]{Ogu}.
To show the equivalence between (1) and (2),
it suffices to show the following claim:
For an integral monoid $M$ such that its saturation $M^\mathrm{sat}$ is a group,
$M$ is a group.
For this,
consider an element $x\in M$.
Since $M^\mathrm{sat}$ is a group,
there exists an integer $n>0$ such that $n(-x)\in M$.
This implies $-x=n(-x)+(n-1)x\in M$.
\end{proof}

For a monoid $P$, the notation $P^*$ means the submonoid of the units of $P$,
and the notation $\ol{P}$ means $P/P^*$.
For a map of monoids $\theta\colon P\to Q$,
we have the induced map $\ol{\theta}\colon \ol{P}\to \ol{Q}$.

\begin{prop}
\label{vert.3}
Let $\theta\colon P\to Q$ and $\eta\colon Q\to R$ be maps of saturated monoids.
\begin{enumerate}
\item[{\rm (1)}]
$\theta$ is vertical if and only if $\overline{\theta}$ is vertical.
\item[{\rm (2)}]
If $\theta$ is vertical and $G$ is a face of $Q$, then the induced map $P_{\theta^{-1}(G)}\to Q_G$ is vertical.
\item[{\rm (3)}]
If $\theta$ and $\eta$ are vertical, then $\eta\theta$ is vertical.
\item[{\rm (4)}]
If $\eta\theta$ is vertical, then $\eta$ is vertical.
\item[{\rm (5)}]
If $\eta$ is exact and $\eta\theta$ is vertical, then $\theta$ is vertical.
\end{enumerate}
\end{prop}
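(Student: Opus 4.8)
The plan is to prove all five statements directly from characterization (ii) of verticality. Writing $\langle S\rangle_M$ for the face of a saturated monoid $M$ generated by a subset $S$, the assertion ``$\phi\colon M\to N$ is vertical'' becomes ``$\langle\phi(M)\rangle_N=N$''. I would first record the two pieces of monoid combinatorics from \cite{Ogu} that drive everything: the sharpening $M\to\overline M$ identifies the faces of $M$ with those of $\overline M$, and, for a face $G$ of $M$, the localization $M\to M_G$ identifies the faces of $M$ containing $G$ with the faces of $M_G$; moreover both identifications are order isomorphisms of face lattices, hence commute with the operation $\langle-\rangle$ of passing to the generated face, since $\langle S\rangle_M$ is the intersection of all faces containing $S$. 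With this dictionary in hand, (1) is immediate: $\overline\theta(\overline P)$ is the image of $\theta(P)$ in $\overline Q$, so $\langle\theta(P)\rangle_Q$ and $\langle\overline\theta(\overline P)\rangle_{\overline Q}$ correspond under the first identification, and one is $Q$ iff the other is $\overline Q$.

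For (2) I would note that $\theta^{-1}(G)$ is a face of $P$ and that $\theta$ extends to $\theta_G\colon P_{\theta^{-1}(G)}\to Q_G$ because $\theta\big((\theta^{-1}(G))^{\mathrm{gp}}\big)\subset G^{\mathrm{gp}}$. The image of $\theta_G$ contains the image of $\theta(P)$ in $Q_G$, so it is enough to show that the face of $Q_G$ generated by that image is all of $Q_G$; under the localization dictionary this face corresponds to the smallest face of $Q$ containing $G\cup\theta(P)$, and since $\langle\theta(P)\rangle_Q=Q$ already, that smallest face is $Q$, i.e.\ $Q_G$. Statements (3) and (4) are then pure face manipulations in $R$: for (4), $\langle\eta(Q)\rangle_R\supset\langle\eta\theta(P)\rangle_R=R$; for (3), $\eta^{-1}\big(\langle\eta\theta(P)\rangle_R\big)$ is a face of $Q$ containing $\theta(P)$, hence all of $Q$ by verticality of $\theta$, so $\langle\eta\theta(P)\rangle_R$ contains $\eta(Q)$, and therefore $\langle\eta(Q)\rangle_R=R$.

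The real work is in (5), and this is the step I expect to be the main obstacle, since it is the only one that uses exactness. Setting $G:=\langle\theta(P)\rangle_Q$, I must show every $q\in Q$ lies in $G$. The verticality of $\eta\theta$ says $\eta(q)$ lies in the face of $R$ generated by the submonoid $\eta\theta(P)$, which unwinds to an honest equation $\eta(q)+r=\eta\theta(p)$ with $p\in P$ and $r\in R$. Then $\theta(p)-q$ is an element of $Q^{\mathrm{gp}}$ whose image under $\eta^{\mathrm{gp}}$ equals $r\in R$, so exactness of $\eta$ forces $\theta(p)-q\in Q$; writing $q'=\theta(p)-q$ gives $q+q'=\theta(p)\in G$, and since $G$ is a face, $q\in G$. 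The delicate point is precisely the passage from ``$\eta(q)$ lies in a generated face'' to the usable equation, and then the recognition that exactness is exactly the hypothesis needed to descend $\theta(p)-q$ from $Q^{\mathrm{gp}}$ to $Q$; once this is seen the argument closes. One could alternatively phrase (3)--(5) via characterization (i) and the fact that cokernel formations of saturated monoids compose and pass to quotients, but the face-theoretic route above is more uniform and avoids checking that those cokernels behave well.
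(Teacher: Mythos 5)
Your proof is correct, and it is genuinely more self-contained than the paper's. The paper disposes of (1)--(4) by citing \cite[Proposition I.4.3.3]{Ogu} and only writes out (5), which it proves via \cite[Proposition I.4.2.2]{Ogu}: for an exact $\eta$ the face $G=\langle\theta(P)\rangle_Q$ is of the form $\eta^{-1}(H)$ for a face $H$ of $R$, whence $\langle\eta\theta(P)\rangle_R\subset H$, verticality of $\eta\theta$ forces $H=R$, and $G=\eta^{-1}(R)=Q$. You instead prove all five parts from the face-lattice dictionaries for sharpening and localization, and for (5) you replace the face-correspondence lemma by an element-level computation: unwinding $\eta(q)\in\langle\eta\theta(P)\rangle_R$ to the equation $\eta(q)+r=\eta\theta(p)$ and using the definition of exactness to descend $\theta(p)-q$ from $Q^{\mathrm{gp}}$ to $Q$, then concluding $q\in G$ because $G$ is a face. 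Both arguments for (5) are correct and use exactness at the same logical juncture; yours trades the citation for a short direct verification (implicitly using that the face generated by a submonoid $N$ of $R$ is $\{r:\exists r'\in R,\ r+r'\in N\}$, which is easily checked), while the paper's is shorter on the page at the cost of invoking two external propositions. Your identification of (5) as the only part where exactness does real work matches the structure of the paper's proof exactly.
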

\begin{proof}
In \cite[Proposition I.4.3.3]{Ogu}, (1)--(4) are proven.
For (5), let $G$ be the face of $Q$ generated by $\theta(P)$.
By \cite[Proposition I.4.2.2]{Ogu}, there exists a face $H$ of $R$ such that $\eta^{-1}(H)=G$.
This implies that the face generated by $\eta\theta(P)$ is contained in $H$.
Since $\eta\theta$ is vertical, we have $H=R$.
Hence we have $G=Q$, i.e., $\theta$ is vertical.
\end{proof}

\begin{prop}
\label{vert.10}
Let
\[
\begin{tikzcd}
P\ar[d,"\theta"']\ar[r]&
P'\ar[d,"\theta'"]
\\
Q\ar[r]&
Q'
\end{tikzcd}
\]
be a cocartesian square of saturated monoids.
Then $\theta$ is vertical if and only if $\theta'$ is vertical.
\end{prop}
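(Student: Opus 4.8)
The plan is to reduce everything to condition (i) in the definition of verticality, namely that $\theta$ is vertical if and only if the cokernel $C_\theta$ of $\theta$ computed in the category of saturated monoids is a group, and likewise $\theta'$ is vertical if and only if $C_{\theta'}$ is a group. The heart of the matter is then the formal observation that a cokernel is stable under cobase change, so that the pushout square of the statement forces $C_\theta\simeq C_{\theta'}$; once this is established the two verticality conditions become literally the same condition, and we are done.

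To run this, I would first record that the trivial monoid $0$ is a zero object of the category of saturated monoids: it is initial because there is a unique homomorphism $0\to M$, and terminal because the zero homomorphism is the unique homomorphism $M\to 0$. Consequently, for any homomorphism $f\colon A\to B$ of saturated monoids the cokernel $C_f$ exists and is computed as the pushout $0\sqcup_A B$ taken in the category of saturated monoids. Then I would contemplate the diagram of saturated monoids
\[
\begin{tikzcd}
P\ar[r]\ar[d,"\theta"']&
P'\ar[r]\ar[d,"\theta'"]&
0\ar[d]
\\
Q\ar[r]&
Q'\ar[r]&
C
\end{tikzcd}
\]
in which the left square is the given cocartesian square and the right square is the pushout exhibiting $C:=C_{\theta'}$. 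Since the composite $P\to P'\to 0$ is the zero homomorphism, the pasting lemma for pushouts identifies the outer rectangle with the pushout of $0\leftarrow P\xrightarrow{\theta}Q$; that is, $C\simeq C_\theta$. Comparing the two descriptions of $C$ yields $C_\theta\simeq C_{\theta'}$, and condition (i) gives the claimed equivalence.

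I do not anticipate a genuine obstacle. The only point that deserves care is that all the colimits in sight are to be taken in the category of saturated monoids rather than in commutative monoids; but the pasting lemma for pushouts is a purely formal statement valid in any category, and the trivial monoid remains a zero object in the category of saturated monoids, so the argument is unaffected by this subtlety. One could alternatively try the face-theoretic characterization (ii), analyzing the faces of the saturated pushout $Q'$ in terms of those of $Q$ and $P'$, but keeping track of saturation makes this noticeably more cumbersome, so the cokernel computation is the cleaner route.
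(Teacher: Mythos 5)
Your proposal is correct and matches the paper's argument: the paper's proof is precisely the observation that the cokernels of $\theta$ and $\theta'$ (in saturated monoids) are isomorphic, which you justify by the pushout-pasting computation, and then verticality is read off from condition (i). No issues.
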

\begin{proof}
The cokernels of $\theta$ and $\theta'$ computed in the category of saturated monoids are isomorphic.
Proposition \ref{vert.4}(2) finishes the proof.
\end{proof}

\begin{df}
Let $X$ be an fs log scheme.
The \emph{boundary of $X$}, denoted $\partial X$, is the set of points of $X$ whose log structure is nontrivial.
\end{df}

\begin{df}
\label{vert.1}
Let $f\colon X\to S$ be a morphism of fs log schemes.
The \emph{vertical boundary of $X$ over $S$}, denoted $\partial_S X$, is the set of points $x$ of $X$ such that $\cM_{S,f(x)}\to \cM_{X,x}$ is not vertical.
We say that $f$ is \emph{vertical} if $\partial_S X=\emptyset$.
If $S=\Spec(\Z)$, then we have $\partial_{S}X = \partial X$.
\end{df}

\begin{prop}
\label{vert.7}
Let $f\colon X\to S$ be a morphism of fs log schemes.
Then $f$ is vertical if and only if $f$ admits a chart $\theta\colon P\to Q$ Zariski locally on $S$ and $X$ such that $\theta$ is vertical.
\end{prop}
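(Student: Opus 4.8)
The plan is to establish both implications by reducing to the monoid-theoretic statement via charts, using the local structure theory of log smooth (more generally, fine) morphisms and the characterization of verticality in terms of faces. The ``if'' direction is essentially formal: suppose $f$ admits a vertical chart $\theta\colon P\to Q$ Zariski locally, so that near a point $x\in X$ with image $s=f(x)\in S$ there is a commutative square relating $X\to S$ to $\A_Q\to \A_P$ with the vertical maps strict (or at least such that the induced map on stalks of characteristic monoids factors through $\overline{\theta}$). Then $\cM_{S,s}\to \cM_{X,x}$ is, up to the passage to associated groups and to the quotient by the respective faces picked out by $x$ and $s$, obtained from $\theta$ by base change along $P\to \cM_{S,s}$; by Proposition~\ref{vert.10} (invariance of verticality under cocartesian squares) together with Proposition~\ref{vert.3}(2) (verticality is inherited by localization at faces), verticality of $\theta$ forces verticality of $\cM_{S,s}\to \cM_{X,x}$. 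Since this holds at every $x$, we get $\partial_S X=\emptyset$, i.e.\ $f$ is vertical.

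For the ``only if'' direction, assume $f$ is vertical. The idea is to start from an arbitrary chart and correct it to a vertical one. By the local structure of fine log morphisms we may choose, Zariski locally on $S$ and $X$, a chart $\theta\colon P\to Q$ for $f$; after localizing we may further assume that the faces of $P$ and $Q$ corresponding to $s$ and $x$ are trivial, i.e.\ that $P\to \cM_{S,s}$ and $Q\to \cM_{X,x}$ are ``sharp'' local charts with $\overline P=P$, $\overline Q=Q$ (sharpen both monoids). Let $G\subset Q$ be the face generated by the image of $\theta(P)$. Verticality of $f$ at $x$ says precisely that $\overline\theta\colon P\to \cM_{X,x}$ is vertical, hence $\cM_{X,x}$ equals its own face generated by the image of $P$; pulling this back through the chart $Q\to \cM_{X,x}$ shows that $Q=G$ after possibly shrinking $X$ around $x$ (the complement of $\A_G$ in $\A_Q$ is closed and misses $x$). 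Thus, Zariski locally around $x$, the chart $\theta\colon P\to Q$ already has the property that the face of $Q$ generated by $\theta(P)$ is all of $Q$, which is exactly the condition (ii) for $\theta$ to be vertical. Covering $X$ by such opens and $S$ by the corresponding opens gives the desired Zariski-local vertical chart.

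The main obstacle I expect is the bookkeeping in the ``only if'' direction: one must make sure that the open locus of $X$ on which the ambient chart $Q$ restricts to the face $G$ (equivalently, on which the stalk map is vertical) is genuinely open and contains $x$, and that this can be done compatibly with a chart on $S$. This is where one invokes that $\A_G\hookrightarrow \A_Q$ is an open immersion and that the points of $\A_Q$ lying over $x$ under $X\to\A_Q$ land in $\A_G$ precisely when the corresponding face condition holds; combined with Proposition~\ref{vert.3}(1) (verticality of $\theta$ is equivalent to verticality of $\overline\theta$) this upgrades the pointwise verticality hypothesis to the chart-level statement. A secondary technical point is passing between Zariski-local charts on source and target simultaneously, which is handled by the standard refinement argument for charts of fine log morphisms; none of this should be deep, but it requires care to state cleanly.
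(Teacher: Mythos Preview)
Your approach is essentially the same as the paper's, but you have overcomplicated both directions. For the ``if'' direction, the invocation of Proposition~\ref{vert.10} is misplaced: the stalk map $\overline{\cM}_{S,s}\to\overline{\cM}_{X,x}$ is not obtained from $\theta$ by a cocartesian square, but rather as the localization $P/\theta^{-1}(G)\to Q/G$ at the face $G\subset Q$ mapping to units at $x$; Proposition~\ref{vert.3}(1),(2) alone suffice, exactly as in the paper. For the ``only if'' direction, once you have arranged (as you do) that $P$ and $Q$ are sharp charts neat at $s$ and $x$, the chart morphism $\theta$ is literally identified with the stalk map $\overline{\cM}_{S,s}\to\overline{\cM}_{X,x}$, which is vertical by hypothesis; so your subsequent argument about the face $G$, pulling back through $Q\to\cM_{X,x}$, and shrinking to the open $\A_G\hookrightarrow\A_Q$ is entirely redundant. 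The paper simply cites \cite[Remark II.2.3.2]{Ogu} to produce such a chart and then applies Proposition~\ref{vert.3}(1), finishing in two lines.
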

\begin{proof}
Let $x$ be a point of $X$.
Assume that $f$ is vertical.
We can find a chart $\theta\colon P\to Q$ near $x$ and $f(x)$ such that $\ol{P}\to \ol{Q}$ is isomorphic to $\ol{\cM}_{S,f(x)}\to \ol{\cM}_{X,x}$ by \cite[Remark II.2.3.2]{Ogu}.
Use Proposition \ref{vert.3}(1) to deduce that $\theta$ is vertical.

Conversely,
assume that $f$ admits a chart $\theta\colon P\to Q$ such that $\theta$ is vertical.
The map
$\ol{\cM}_{S,f(x)}\to \ol{\cM}_{X,x}$ is isomorphic to the induced map $P/\theta^{-1}(G)\to Q/G$ for some face $G$ of $Q$, which is vertical by Proposition \ref{vert.3}(1),(2).
\end{proof}

\begin{prop}
\label{vert.2}
Let $f\colon X\to S$ be a morphism of fs log schemes.
Then $\partial_S X$ is a closed subset of $X$.
\end{prop}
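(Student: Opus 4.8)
The plan is to work Zariski-locally on $X$ and reduce the claim to a statement about charts, using Proposition \ref{vert.7} as the local model for verticality. First I would fix a point $x \in X$ that is \emph{not} in $\partial_S X$, i.e.\ such that $\cM_{S,f(x)} \to \cM_{X,x}$ is vertical, and I would produce a Zariski-open neighborhood $U$ of $x$ in $X$ with $U \cap \partial_S X = \emptyset$; since $X$ has a Zariski log structure and is a scheme of finite Krull dimension, exhibiting such a $U$ for every $x \notin \partial_S X$ shows that the complement of $\partial_S X$ is open, hence $\partial_S X$ is closed. To build $U$: choose a chart $\theta\colon P \to Q$ for $f$ near $x$ and $f(x)$ as in the proof of Proposition \ref{vert.7}, so that $\overline{P} \to \overline{Q}$ is isomorphic to $\overline{\cM}_{S,f(x)} \to \overline{\cM}_{X,x}$; by Proposition \ref{vert.3}(1) the hypothesis that $\cM_{S,f(x)} \to \cM_{X,x}$ is vertical at $x$ translates into the statement that $P/\theta^{-1}(G_x) \to Q/G_x$ is vertical, where $G_x$ is the face of $Q$ corresponding to the prime ideal of $\overline{\cM}_{X,x}$.

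The second step is to propagate verticality from $x$ to a neighborhood. Over the (affine, say) chart neighborhood, the points of $X$ correspond to pairs consisting of a point of the base and a face of $Q$ (equivalently, a prime of $\overline{\cM}_{X,-}$ that is a "specialization-type" datum), and as one specializes $x$ to a more general point $y$ in its closure the relevant face \emph{grows}: $G_x \subseteq G_y$. By Proposition \ref{vert.3}(2), since $P/\theta^{-1}(G_x) \to Q/G_x$ is vertical and $G_y/G_x$ is a face of $Q/G_x$, the localization $(P/\theta^{-1}(G_x))_{\,\text{pullback of }G_y/G_x} \to (Q/G_x)_{G_y/G_x}$ is again vertical, and this localized homomorphism is exactly $P/\theta^{-1}(G_y) \to Q/G_y$, i.e.\ the map $\overline{\cM}_{S,f(y)} \to \overline{\cM}_{X,y}$. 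Hence every generization $y$ of $x$ inside the chart neighborhood again lies outside $\partial_S X$. This shows $X \setminus \partial_S X$ is stable under generization within a neighborhood of each of its points; combined with the fact that $\overline{\cM}_{X,-}$ and the face data are constructible/locally constant on a stratification coming from the chart, one gets an honest open neighborhood.

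The main obstacle I expect is the bookkeeping needed to pass from "stable under generization" to "open": one needs to know that the function $x \mapsto (\text{isomorphism class of } \overline{\cM}_{S,f(x)} \to \overline{\cM}_{X,x})$, or at least the property of this map being vertical, is locally constant on suitable locally closed pieces, which requires using that $X$ is noetherian and that the chart induces a finite stratification of the neighborhood into pieces where the face $G$ is constant. The cleanest way to organize this is to observe that the non-verticality locus, over a chart $\theta\colon P\to Q$, is a union over the faces $G$ of $Q$ with $P/\theta^{-1}(G)\to Q/G$ not vertical of the corresponding locally closed strata, and that the closure of each such stratum is still contained in the non-vertical locus precisely by the generization argument above (contrapositive of Proposition \ref{vert.3}(2)); since there are finitely many faces of $Q$, $\partial_S X$ is a finite union of closed sets, hence closed. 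Everything else is a routine unwinding of the definition of the compactifying/chart log structure and of Proposition \ref{vert.3}.
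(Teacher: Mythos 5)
Your proposal is correct, and its engine is the same as the paper's — reduce to a chart $\theta\colon P\to Q$ for $f$ and control verticality of the stalk maps $\ol{\cM}_{S,f(y)}\to\ol{\cM}_{X,y}$ through the faces $G_y$ of $Q$ via Proposition \ref{vert.3}(1),(2) — but you organize the endgame differently. The paper's proof is shorter because it exploits one extra degree of freedom: around a point $x\notin\partial_S X$ one may choose the chart so that $\ol{P}\to\ol{Q}$ \emph{is} the stalk map $\ol{\cM}_{S,f(x)}\to\ol{\cM}_{X,x}$ (Ogus, Remark II.2.3.2), so the chart itself is vertical, and then the easy direction of Proposition \ref{vert.7} says every point of the chart neighborhood is vertical; the complement of $\partial_S X$ is therefore open with no further work. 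You instead keep an arbitrary chart and stratify its domain by the faces of $Q$, observing that $\partial_S X$ meets the chart neighborhood in the union of the strata $V_G$ with $P/\theta^{-1}(G)\to Q/G$ non-vertical, that $\ol{V_G}\subseteq\{y: G_y\subseteq G\}$, and that non-verticality passes to smaller faces by the contrapositive of Proposition \ref{vert.3}(2); since $Q$ has finitely many faces this exhibits $\partial_S X$ locally as a finite union of closed sets. That argument is complete and buys a little more (a constructible stratification of $\partial_S X$), at the cost of the face bookkeeping. One caution: your first formulation — verticality is stable under generization plus an appeal to "constructibility/local constancy" — is not by itself a proof of openness (stability under generization does not give open neighborhoods, and you would still have to justify the local constancy you invoke); only the final "cleanest way" paragraph constitutes a closed argument, so that is the version you should actually write down.
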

\begin{proof}
Assume $x\in X-\partial_S X$.
Proposition \ref{vert.7} implies that there exists a neighborhood $U$ of $x$ such that the induced morphism $f\colon U\to S$ is vertical.
This shows that $X-\partial_S X$ is an open subset of $X$.
\end{proof}

It follows that we can regard $X-\partial_S X$ as an open subscheme of $X$.
We impose the reduced scheme structure to consider $\partial_S X$ as a strict closed subscheme of $X$.
We also have the following consequence:
If $j\colon U\to X$ is an open immersion of fs log schemes such that the composition $fj\colon U\to S$ is vertical,
then $fj$ factors through $X-\partial_S X$.

\begin{prop}
\label{vert.9}
Let $f\colon X\to S$ and $g\colon Y\to X$ be morphisms of fs log schemes.
\begin{enumerate}
\item[{\rm (1)}]
If $f$ and $g$ are vertical, then $gf$ is vertical.
\item[{\rm (2)}]
If $gf$ is vertical, then $g$ is vertical.
\item[{\rm (3)}]
If $g$ is exact and $gf$ is vertical, then $f$ is vertical.
\item[{\rm (4)}]
If $f$ is vertical, then $Y-\partial_X Y$ is an open subscheme of $Y-\partial_S Y$.
\item[{\rm (5)}]
$Y-\partial_S Y$ is an open subscheme of $Y-\partial_X Y$.
\item[{\rm (6)}] If $g$ is exact, then $g$ maps $Y-\partial_S Y$ into $X-\partial_S X$.
\end{enumerate}
\end{prop}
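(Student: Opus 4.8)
The plan is to deduce all six parts by passing to stalks of log structures and invoking the matching item of Proposition~\ref{vert.3}. Fix $y\in Y$ and set $x:=g(y)$, $s:=f(x)$; write
\[
\theta\colon \cM_{S,s}\to \cM_{X,x},\qquad
\eta\colon \cM_{X,x}\to \cM_{Y,y}
\]
for the stalk homomorphisms of $f$ at $x$ and of $g$ at $y$, so that $\eta\theta$ is the stalk homomorphism of $gf$ at $y$. Under this dictionary, $y\notin\partial_S Y$ means $\eta\theta$ is vertical, $y\notin\partial_X Y$ means $\eta$ is vertical, $x\notin\partial_S X$ means $\theta$ is vertical, and the hypothesis ``$g$ exact'' makes every such $\eta$ exact. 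Hence (1) is Proposition~\ref{vert.3}(3) applied at every $y$, (2) is Proposition~\ref{vert.3}(4) applied at every $y$, and (6) is Proposition~\ref{vert.3}(5) applied at every $y$ with $y\notin\partial_S Y$. Part (3) then follows from (6): if $gf$ is vertical then $\partial_S Y=\emptyset$, i.e.\ $Y=Y-\partial_S Y$, so (6) gives $g(Y)\subseteq X-\partial_S X$, and surjectivity of $g$ forces $\partial_S X=\emptyset$.

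For (4) and (5) I would first settle the containments of underlying sets and then invoke Proposition~\ref{vert.2}. One always has $\partial_X Y\subseteq\partial_S Y$: if $y\notin\partial_S Y$ then $\eta\theta$ is vertical, so $\eta$ is vertical by Proposition~\ref{vert.3}(4), i.e.\ $y\notin\partial_X Y$; this is (5). If moreover $f$ is vertical, then also $\partial_S Y\subseteq\partial_X Y$: for $y\notin\partial_X Y$ the map $\eta$ is vertical and $\theta$ is vertical because $\partial_S X=\emptyset$, so $\eta\theta$ is vertical by Proposition~\ref{vert.3}(3), i.e.\ $y\notin\partial_S Y$; this is (4). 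In each case Proposition~\ref{vert.2} tells us $\partial_X Y$ and $\partial_S Y$ are closed in $Y$, so $Y-\partial_X Y$ and $Y-\partial_S Y$ are honest open subschemes of $Y$, and an inclusion of open subsets of $Y$ exhibits the smaller as an open subscheme of the larger.

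I do not expect a genuine obstacle here: the whole content is contained in Proposition~\ref{vert.3}, and what is left is bookkeeping --- keeping straight which of $\partial_S Y$, $\partial_X Y$ contains the other, and noting that ``open subscheme'' in (4)--(5) is meaningful precisely because of the closedness from Proposition~\ref{vert.2}. The only subtle point is (3): the pointwise form of Proposition~\ref{vert.3}(5) controls only the points of the image $g(Y)$, so reaching all of $X$ uses surjectivity of $g$ (equivalently, one reads (3) as a statement about $g(Y)$).
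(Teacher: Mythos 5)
Your proof is correct and follows essentially the same route as the paper's: reduce every item to the matching part of Proposition \ref{vert.3} applied to the stalk homomorphisms $\cM_{S,s}\to\cM_{X,x}\to\cM_{Y,y}$, with Proposition \ref{vert.2} justifying the open-subscheme language in (4)--(5). Your caveat on (3) is well taken --- the pointwise use of Proposition \ref{vert.3}(5), which is also what the paper does, only controls points in the image of $g$, so the statement implicitly needs $g$ surjective (or must be read as a statement about $g(Y)$), a point the paper's own proof passes over silently.
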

\begin{proof}
Let $y$ be a point of $y$.
Apply Proposition \ref{vert.3}(3)--(5) to the induced morphisms
\[
\cM_{Y,y}
\to
\cM_{X,g(y)}
\to
\cM_{S,f(g(y))}
\]
to show (1)--(3).
These immediately imply (4)--(6).
\end{proof}

\begin{prop}
\label{vert.5}
Let
\[
\begin{tikzcd}
X'\ar[d,"f'"']\ar[r,"g'"]&
X\ar[d,"f"]
\\
S'\ar[r,"g"]&
S
\end{tikzcd}
\]
be a cartesian square of fs log schemes.
Then
\[
(X-\partial_S X)\times_S S'
\simeq
X'-\partial_{S'}X'.
\]
\end{prop}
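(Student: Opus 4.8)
The plan is to reduce the assertion to a pointwise comparison of characteristic monoids and then to a combinatorial statement about fs monoids. First I would note that both sides are open subschemes of $X'$: since $X-\partial_S X\to X$ is an open immersion by Proposition \ref{vert.2}, and open immersions are strict and stable under base change, $(X-\partial_S X)\times_S S'$ is canonically the open subscheme $g'^{-1}(X-\partial_S X)$ of $X'$; on the other hand $X'-\partial_{S'}X'$ is open because $\partial_{S'}X'$ is closed, again by Proposition \ref{vert.2}. So it suffices to prove that, for every point $x'$ of $X'$ with images $x\in X$, $s'\in S'$, $s\in S$, the homomorphism $\ol{\cM}_{S,s}\to\ol{\cM}_{X,x}$ is vertical if and only if $\ol{\cM}_{S',s'}\to\ol{\cM}_{X',x'}$ is vertical; by Proposition \ref{vert.3}(1) these are precisely the conditions for $x$ to lie in $X-\partial_S X$ and for $x'$ to lie in $X'-\partial_{S'}X'$.

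Next I would argue locally by means of charts. Working Zariski-locally around $x$, $s$, $s'$, $x'$, I would choose a neat chart $P=\ol{\cM}_{S,s}$ of $S$ at $s$, a neat chart $\theta\colon P\to Q=\ol{\cM}_{X,x}$ of $f$ at $x$ extending it, and a neat chart $\gamma\colon P\to P'=\ol{\cM}_{S',s'}$ of $g$ at $s'$ extending it (see \cite[II.2.3]{Ogu}). Let $R$ be the pushout of $\theta$ and $\gamma$ in the category of fs monoids, with structure maps $\theta'\colon P'\to R$ and $i_Q\colon Q\to R$. By the standard behaviour of charts under fs fiber products (\cite{Ogu}), $f'$ admits near $x'$ the chart $\theta'\colon P'\to R$, in a way compatible with the chart $Q$ of $X$ along $g'$. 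By neatness, $f$ is vertical at $x$ if and only if $\theta$ is vertical; since the square formed by $P,Q,P',R$ is cocartesian in the category of saturated monoids, this is equivalent to $\theta'$ being vertical by Proposition \ref{vert.10}. On the other side, letting $F\subseteq R$ be the face of those elements mapping to a unit in $\cO_{X',x'}$, one has $\ol{\cM}_{X',x'}=R/F$, and by neatness of $P'$ the homomorphism $\ol{\cM}_{S',s'}\to\ol{\cM}_{X',x'}$ is the composite $P'\xrightarrow{\theta'}R\xrightarrow{\pi_F}R/F$. The key extra input, which I would deduce from the compatibility along $g'$ together with the facts that $Q$ is neat at $x$ and that $\cO_{X,x}\to\cO_{X',x'}$ is local, is the vanishing $i_Q^{-1}(F)=\{0\}$.

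Everything then reduces to showing that $\theta'$ is vertical if and only if $\pi_F\theta'$ is vertical. One implication is immediate: $\pi_F$ is surjective, hence vertical, so Proposition \ref{vert.3}(3) gives that $\pi_F\theta'$ is vertical whenever $\theta'$ is. For the converse I would prove that the face $G'$ of $R$ generated by $\theta'(P')$ already contains $F$. Indeed, given $a\in F$, since $R$ is the saturation of the submonoid generated by $i_Q(Q)$ and $\theta'(P')$, there is $n\ge 1$ with $na=\sum_j i_Q(q_j)+\sum_k\theta'(p'_k)$ for suitable $q_j\in Q$ and $p'_k\in P'$; as $F$ is a face and $na\in F$, each $i_Q(q_j)$ lies in $F$, whence $q_j=0$ by the vanishing above, so $na\in G'$; and since $R/G'$ is sharp (being the quotient of an fs monoid by a face), $na\in G'$ forces $a\in G'$. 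Granting $F\subseteq G'$, the face generated by $\theta'(P')\cup F$ is again $G'$, so if $\pi_F\theta'$ is vertical---equivalently, if this face is all of $R$---then $G'=R$ and $\theta'$ is vertical. I expect the main obstacle to be the second paragraph: assembling compatible neat charts, identifying $\ol{\cM}_{X',x'}$ with the quotient $R/F$, and above all pinning down the vanishing $i_Q^{-1}(F)=\{0\}$, since it is exactly this input that repairs the failure of $\ol{\cM}_{X',x'}$ to be the naive pushout $R$.
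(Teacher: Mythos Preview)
Your proof is correct and follows the same overall strategy as the paper: reduce to a pointwise statement about characteristic monoids, choose compatible charts for $f$ and $g$, form the fs pushout $R$ (the paper writes $Q'=P'\oplus_P Q$), and invoke Proposition~\ref{vert.10} to relate verticality of $P\to Q$ and $P'\to R$. The one substantive difference is how the face $F\subset R$ cutting out $\ol{\cM}_{X',x'}$ is handled. The paper notes that the preimages of $F$ in $P'$ and $Q$ are $P'^*$ and $Q^*$, and then appeals to \cite[Corollaries I.2.3.8, I.4.2.16]{Ogu} to conclude $F=R^*$ outright; Proposition~\ref{vert.3}(1) then finishes in one line. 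You instead establish only the weaker vanishing $i_Q^{-1}(F)=\{0\}$ (via neatness of $Q$ and locality of $\cO_{X,x}\to\cO_{X',x'}$) and run a direct combinatorial argument to show $F$ lies in the face generated by $\theta'(P')$, which also suffices. Your route is a bit longer but more self-contained; the chart-assembly issue you flag is exactly where the paper's choice of charts from \cite[Remark II.2.3.2]{Ogu} (not necessarily neat) is more flexible than insisting on neat charts everywhere.
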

\begin{proof}
Let $x'$ be a point of $X'$, and we set $x:=g'(x')$, $s':=f'(x')$, and $s:=f(x)$.
We need to show that $\cM_{S,s}\to \cM_{X,x}$ is vertical if and only if $\cM_{S',s'}\to \cM_{X',x'}$ is vertical.

The question is Zariski local on $S$, $S'$, and $X$.
By \cite[Remark II.2.3.2]{Ogu},
we may assume that $S$ admits a chart $P$ such that $\ol{P}$ is isomorphic to $\ol{M}_{S,s}$.
Use \cite[Proposition II.2.4.2]{Ogu} to obtain charts $P\to Q$ and $P\to P'$ for $f$ and $g$.
By \cite[Remark II.2.3.2]{Ogu} again,
we may assume that $\ol{P}\to \ol{Q}$ is isomorphic to $\ol{\cM}_{S,s}\to \ol{\cM}_{X,x}$ and $\ol{P}\to \ol{P'}$ is isomorphic to $\ol{\cM}_{S,s}\to \ol{\cM}_{S',s'}$.
For some face $F$ of $Q':=P'\oplus_P Q$ whose inverse images in $P'$ and $Q$ are $P'^*$ and $Q^*$, we have $\ol{\cM}_{X',x'}\simeq Q'/F$.
By \cite[Corollaries I.2.3.8, I.4.2.16]{Ogu},
the induced map of sets $\Spec(Q')\to \Spec(P')\times_{\Spec(P)}\Spec(Q)$ is injective.
Hence the two faces $F$ and $Q'^*$ of $Q'$ agree,
so $\ol{P'}\to \ol{Q'}$ is isomorphic to $\ol{\cM}_{S,s'}\to \ol{\cM}_{X,X'}$.
Together with Proposition \ref{vert.3}(1),
we see that $\cM_{S,s}\to \cM_{X,x}$ (resp.\ $\cM_{S's'}\to \cM_{X',x'}$) is vertical if and only if $P\to Q$ (resp.\ $P'\to Q'$) is vertical.
Apply Proposition \ref{vert.10} to the cocartesian square
\[
\begin{tikzcd}
P\ar[d]\ar[r]&
P'\ar[d]
\\
Q\ar[r]&
Q'
\end{tikzcd}
\]
to finish the proof.
\end{proof}

\begin{prop}
\label{vert.8}
Let $f\colon X\to S$ be a log unramified morphism of fs log schemes.
Then $f$ is vertical.
\end{prop}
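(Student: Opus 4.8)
The plan is to reduce the statement to an elementary fact about monoids and faces.

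Since verticality is a local condition, Proposition~\ref{vert.7} reduces us to showing that $f$ admits a vertical chart Zariski-locally on $S$ and $X$. Because $f$ is log unramified, the local structure theory of log smooth and log unramified morphisms (Kato's chart criterion and its unramified analogue, \cite[Chapter~IV]{Ogu}) supplies, after shrinking $S$ and $X$, a chart $\theta\colon P\to Q$ of $f$ for which the cokernel of $\theta^{gp}\colon P^{gp}\to Q^{gp}$ is finite. (If such a chart is only available after a strict \'etale localization on $X$, this is harmless: a strict morphism induces isomorphisms on $\ol{\cM}$-stalks, so by Proposition~\ref{vert.3}(1) verticality over $S$ may be tested strict-\'etale-locally on $X$.) It therefore suffices to prove the following claim: \emph{if $\theta\colon P\to Q$ is a homomorphism of saturated monoids such that $Q^{gp}/\theta^{gp}(P^{gp})$ is torsion, then $\theta$ is vertical.}

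To prove the claim, I use the second characterization of verticality (in the definition preceding Proposition~\ref{vert.3}): it is enough to show that the face $F$ of $Q$ generated by $\theta(P)$ equals $Q$. Let $q\in Q$. Since $Q^{gp}/\theta^{gp}(P^{gp})$ is torsion, there is $n\geq 1$ with $nq\in\theta^{gp}(P^{gp})$; writing $nq=\theta(p_1)-\theta(p_2)$ in $Q^{gp}$ with $p_1,p_2\in P$ gives the identity $nq+\theta(p_2)=\theta(p_1)$, which holds in $Q$ since its right-hand side lies in $Q$. As $\theta(p_1)\in\theta(P)\subseteq F$ and $F$ is a face of $Q$, we get $nq\in F$; and since $nq=q+(n-1)q$ with $q,(n-1)q\in Q$ and $F$ is a face, we conclude $q\in F$. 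Hence $F=Q$, so $\theta$ is vertical, and therefore $f$ is vertical.

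The one genuinely external ingredient is the structure theorem producing a chart with $\theta^{gp}$ of finite cokernel, and this is where I expect the only real subtlety to lie: an arbitrary chart of a log unramified morphism need not have this property (for instance, enlarging a good chart of the source by a free monoid mapping into the units leaves $f$ unchanged while enlarging the cokernel), so one must exploit log unramifiedness in choosing the chart. If one prefers to avoid citing the structure theorem, one can instead argue by contradiction from $\Omega^1_{X/S}=0$: at a hypothetical point of $\partial_S X$, a neat chart would make $\mathrm{coker}(\theta^{gp})$ acquire a free summand of positive rank, and the standard right-exact sequence for log differentials then forces $\Omega^1_{X/S}\neq 0$.
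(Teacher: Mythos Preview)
Your proof is correct and follows essentially the same route as the paper, which simply cites \cite[Proposition~IV.3.4.1]{Ogu} for the finite-cokernel condition on $\theta^{\gp}$ coming from log unramifiedness and \cite[Lemma~5.6(1)]{zbMATH05809283} for the monoid-theoretic claim you prove by hand. One minor streamlining: since verticality is defined stalkwise (Definition~\ref{vert.1}), you can bypass Proposition~\ref{vert.7} and apply your face argument directly to $\ol{\cM}_{S,f(x)}\to\ol{\cM}_{X,x}$ via Proposition~\ref{vert.3}(1).
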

\begin{proof}
Combine \cite[Proposition IV.3.4.1]{Ogu} and \cite[Lemma 5.6(1)]{zbMATH05809283}.
\end{proof}

In particular,
every dividing cover is vertical.

\begin{df}
For an fs monoid $P$ such that $P^\gp$ is torsion free, let $\Spec(P)$ be the fan associated with the dual monoid of $P$ in $P^\gp$.
\end{df}

\begin{df}
\label{vert.11}
For a morphism of fans $\theta\colon \Sigma\to \Delta$,
let $\Sigma-\partial_{\Delta}\Sigma$ be the subfan of $\Sigma$ consisting of $\sigma\in \Sigma$ such that $\theta$ maps every nontrivial faces of $\sigma$ to a nontrivial cone of $\Delta$.
Equivalently, $\Sigma-\partial_{\Delta}\Sigma$ is the largest subfan of $\Sigma$ such that for every cone $\sigma\in \Sigma-\partial_{\Delta}\Sigma$ with $\theta(\sigma)=0$, we have $\sigma=0$.
\end{df}

\begin{prop}
Let $\theta\colon \Sigma\to \Delta$ be a morphism of fans.
Then there is a canonical isomorphism
\[
\T_\Sigma-\partial_{\T_\Delta}\T_\Sigma
\simeq
\T_{\Sigma-\partial_\Delta \Sigma}.
\]
\end{prop}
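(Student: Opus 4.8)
The plan is to reduce the statement about log schemes to a combinatorial statement about fans, and then to unwind both sides in terms of the cones involved. Recall that for a fan $\Sigma$ the log scheme $\T_\Sigma$ has underlying scheme the toric variety $X_\Sigma$, with points corresponding (via orbits) to cones $\sigma\in\Sigma$, and the log structure at the generic point of the orbit $O_\sigma$ is governed by the monoid $S_\sigma$ of the cone $\sigma$; in particular the log structure at that point is trivial precisely when $\sigma=0$. More generally, the stalk $\ol{\cM}_{\T_\Sigma, x}$ at a point $x$ in the closure of $O_\sigma$ is the monoid associated with $\sigma$ (or the relevant face thereof). So the first step is to translate: a point $x$ of $\T_\Sigma$ lying in $O_\sigma$ is in $\partial_{\T_\Delta}\T_\Sigma$ if and only if the homomorphism $\ol{\cM}_{\T_\Delta,\theta(x)}\to\ol{\cM}_{\T_\Sigma,x}$ is not vertical, and by Proposition \ref{vert.3}(2) together with the face-localization description of stalks, this reduces to asking whether the monoid homomorphism induced by $\theta$ on the cone-monoids of $\sigma$ and $\theta(\sigma)$ is vertical.

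Next I would make precise the dictionary between verticality of the monoid map $S_{\theta(\sigma)}\to S_\sigma$ and the cone-theoretic condition ``$\theta$ sends no nontrivial face of $\sigma$ to $0$'' appearing in Definition \ref{vert.11}. The key point: a homomorphism of saturated monoids is vertical iff the face generated by its image is everything (condition (ii) of the verticality definition). On the toric side, the face of $S_\sigma$ generated by the image of $S_{\theta(\sigma)}$ corresponds, under Gordan-type duality, to the smallest face of $\sigma$ containing the preimage data — concretely, $\theta$ being vertical at $\sigma$ amounts to $\theta^{-1}(0)\cap\sigma$ being the trivial face, i.e. the only face of $\sigma$ collapsed by $\theta$ to $0$ is $\{0\}$. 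This is exactly the defining property of $\sigma\in\Sigma-\partial_\Delta\Sigma$. Thus, matching the description in Definition \ref{vert.11} with the verticality criterion, one gets that the orbit $O_\sigma$ meets $\T_\Sigma-\partial_{\T_\Delta}\T_\Sigma$ iff $\sigma\in\Sigma-\partial_\Delta\Sigma$ (and then the whole orbit, hence its whole closure within the subvariety, lies inside).

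With this established, I would finish by identifying the open subscheme $\T_\Sigma-\partial_{\T_\Delta}\T_\Sigma$ (with its inherited log structure) with $\T_{\Sigma-\partial_\Delta\Sigma}$. Since $\Sigma-\partial_\Delta\Sigma$ is a subfan of $\Sigma$ (Definition \ref{vert.11} already records it is the largest subfan with the stated property), the toric variety $X_{\Sigma-\partial_\Delta\Sigma}$ is canonically an open subscheme of $X_\Sigma$, namely the union of the affine opens $U_\sigma$ for $\sigma$ in the subfan. One checks that as a set this open is precisely the union of orbits $O_\tau$ with $\tau\in\Sigma-\partial_\Delta\Sigma$, which by the previous paragraph is $\T_\Sigma-\partial_{\T_\Delta}\T_\Sigma$; openness on both sides is compatible (the complement $\partial_{\T_\Delta}\T_\Sigma$ is closed by Proposition \ref{vert.2}). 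Finally, the compactifying log structure defining $\T_{\Sigma-\partial_\Delta\Sigma}$ is the restriction of the one on $\T_\Sigma$ because the torus of $X_{\Sigma-\partial_\Delta\Sigma}$ is the same as that of $X_\Sigma$ and the open immersion of the torus restricts compatibly; hence the two log structures agree and the isomorphism is canonical.

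The main obstacle I anticipate is the combinatorial matching in the middle paragraph: one has to be careful that the ``face generated by the image'' condition for verticality of $S_{\theta(\sigma)}\to S_\sigma$ dualizes correctly to the statement about faces of $\sigma$ mapping to $0$ — in particular one must handle the passage through face-localizations of stalks (Proposition \ref{vert.3}(2)) and make sure no nontriviality of the cone is lost, so that the two characterizations of $\Sigma-\partial_\Delta\Sigma$ given in Definition \ref{vert.11} are both used in the right place. The rest is a routine gluing argument for toric opens and compactifying log structures.
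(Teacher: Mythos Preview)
Your proposal is correct and follows essentially the same approach as the paper. Both arguments reduce to the same combinatorial core: the duality between faces of the dual monoid $S_\sigma$ and faces of the cone $\sigma$, under which the verticality condition (the face generated by the image is everything) translates to ``no nontrivial face of $\sigma$ maps to $0$ under $\theta$.'' The paper's proof is more compressed --- it reformulates verticality of $\eta\colon P\to Q$ as ``every face $F$ of $Q$ with $\eta^{-1}(F)=P$ satisfies $F=Q$,'' invokes Ogus's cone duality theorem directly, and then observes that $\T_{\Sigma-\partial_\Delta\Sigma}$ is the largest open of $\T_\Sigma$ vertical over $\T_\Delta$, which is by definition the complement of the vertical boundary --- whereas you work orbit-by-orbit and spell out the log-structure compatibility, but the substance is identical.
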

\begin{proof}
A map $\eta\colon P\to Q$ of fs monoids is vertical if and only if for every face $F$ of $Q$ with $\eta^{-1}(F)=P$, we have $F=Q$.
From the duality theory of cones \cite[Theorem I.2.3.12(4)]{Ogu}, we deduce the following:
$\T_\theta$ is vertical if and only if for every cone $\sigma\in \Sigma$ with $\theta(\sigma)=0$, we have $\sigma=0$.
This shows that $\T_{\Sigma-\partial_{\Delta}\Sigma}$ is the largest open subscheme of $\T_{\Sigma}$ that is vertical over $\T_\Delta$.
\end{proof}

\subsection{Removing boundaries}
\label{removing}

Throughout this subsection, we fix $B\in \Sch$ and a stable symmetric monoidal $\infty$-category $\cC$ with a functor
\[
M
\colon
\lSm/B \to \cC
\]
satisfying the following properties:
\begin{itemize}
\item For $X,Y\in \lSm/B$, there exists a canonical isomorphism
\[
M(X)\otimes M(Y)
\simeq
M(X\times_B Y).
\]
\item ($\A^1$-invariance) For $X\in \lSm/B$, the morphism
\[
M(X\times \A^1)\to M(X)
\]
induced by the projection $X\times \A^1\to X$ is an isomorphism.
\item ($\boxx$-invariance) For $X\in \lSm/B$, the morphism
\[
M(X\times \boxx)\to M(X)
\]
induced by the projection $X\times \boxx\to X$ is an isomorphism.
\item (Dividing invariance) For every dividing cover $f\colon Y\to X$ in $\lSm/B$, $M(f)\colon M(Y)\to M(X)$ is an isomorphism.
\item (Strict Nisnevich descent) For every strict Nisnevich distinguished square $Q$ in $\lSm/B$, $M(Q)$ is cocartesian.
\end{itemize}

\begin{exm}
\label{2.4}
We have the following examples of $\cC$
\begin{gather*}
(\A^1)^{-1}\logSH_{\tau}^\eff(B),
\;
(\A^1)^{-1}\logSH_{\tau}(B),
\;
(\A^1)^{-1}\logDA_{\tau}^\eff(B,\Lambda),
\\
(\A^1)^{-1}\logDA_{\tau}(B,\Lambda),
\;
(\A^1)^{-1}\logDM_{\tau}^\eff(k,\Lambda),
\;
(\A^1)^{-1}\logDM_{\tau}(k,\Lambda),
\end{gather*}
where $k$ is a field in the fifth and sixth ones.
\end{exm}

\begin{lem}
\label{2.2}
Let $Z_1,\ldots,Z_r$ be the axes divisors of $B\times \A^r$ with $r\geq 1$.
If we set $Y:=(B\times \A^r,Z_1+\cdots+Z_r)$ and $W:=Z_1\cap \cdots \cap Z_r$, then the induced morphism
\[
M(Y-W)
\to
M(Y)
\]
is an isomorphism.
\end{lem}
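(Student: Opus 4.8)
The plan is to induct on $r$ and reduce, at each step, to a dividing Nisnevich descent computation glued to the $\boxx$-invariance of $M$. Write $\A_\N := (\A^1, \{0\})$, so that $\boxx = (\P^1, \infty)$ and $\A_\N$ is the "log point at the origin" model. The key geometric input is that $(\A^r, Z_1 + \cdots + Z_r) = \A_\N^{\times r}$ over $\Spec(\Z)$, and that $Y - W$ is the open complement of $W = Z_1 \cap \cdots \cap Z_r = \{0\}$ (the origin with its induced log structure). When $r = 1$ the statement $M(\A_\N - \{0\}) \to M(\A_\N)$ is an equivalence is exactly $\boxx$-invariance after identifying $\A_\N - \{0\} = \G_m$ with the complement of $\{0,\infty\}$ in $\boxx$, or more directly: the inclusion $\G_m = \A^1 - \{0\} \hookrightarrow \A_\N$ and $\boxx$-invariance together with $\A^1$-invariance give the claim, since both $\G_m$ and $\A_\N$ are $(\A^1 \cup \boxx)$-locally contractible to the point $\Spec(\Z)$.

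For the inductive step, first I would set up a dividing Nisnevich distinguished square (or a short cofiber sequence obtained from such a square) on $\A^r$ that separates the locus near $W$ from the locus away from $W$. Concretely, cover $\A^r$ by the open sets $U_i := \A^r - Z_i$ for $i = 1, \dots, r$, which are strict-Nisnevich (indeed Zariski) open subschemes, together with a suitable neighborhood of $W$; the Zariski (hence strict Nisnevich, hence dividing Nisnevich) Mayer–Vietoris descent then reduces the computation of $M(Y - W)$ and $M(Y)$ to the pieces where at most $r-1$ of the divisors remain, plus the piece near the origin. On each $U_i$, only the divisors $Z_1, \dots, \widehat{Z_i}, \dots, Z_r$ survive, so $U_i$ with its log structure is isomorphic to $(\A^{r-1}, Z_1 + \cdots + Z_{r-1}) \times \A^1$, and $U_i \cap (Y - W) = (U_i - W')$ for the corresponding smaller intersection $W'$; by the inductive hypothesis and $\A^1$-invariance, $M(U_i \cap (Y-W)) \to M(U_i)$ is an equivalence. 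Doing this compatibly over all $i$ and their intersections, and handling the remaining neighborhood of $W$ by a further $\boxx$-invariance argument (the origin in $\A_\N^{\times r}$ deformation-retracts $\boxx$-locally onto the point), one concludes that $M(Y - W) \to M(Y)$ is an equivalence.

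I expect the main obstacle to be bookkeeping the log structures through the covering: one must check that each open piece $U_i$, and all the multi-fold intersections, carry exactly the product log structure that makes the inductive hypothesis applicable, and that the maps in the descent square are compatible with the maps $M(-) \to M(\A^r\text{-piece})$ so that the equivalences assemble. A cleaner alternative, which I would try first to avoid the combinatorics, is to proceed one divisor at a time: factor the inclusion $Y - W \hookrightarrow Y$ as a composite where at each stage one removes the intersection of all divisors from one more factor, and at each stage invoke $\boxx$-invariance on a single $\A_\N$-coordinate while keeping the others fixed. In that approach the hard point becomes identifying, at each intermediate stage, the relevant open subscheme with an object of $\lSm/B$ of the form (product of $\A_\N$'s and $\G_m$'s), which is straightforward but must be stated carefully; the descent square, if still needed, is then a single Zariski gluing rather than an $r$-fold one.
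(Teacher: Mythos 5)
There is a genuine gap, and it is located exactly where the paper's one essential idea sits. Your base case and your treatment of the ``neighborhood of $W$'' both rest on the claim that $\G_m$ and $\A_{\N}$ are $(\A^1\cup\boxx)$-locally contractible to the point. This is false: $\G_m$ is a scheme with trivial log structure, neither $\A^1$- nor $\boxx$-invariance applies to the map $M(\G_m)\to M(\pt)$ (it is not a projection from a product), and under the comparison of $\SH(B)$ with Morel--Voevodsky's category established later in the paper, $M(\G_m)$ corresponds to $\unit\oplus\Sigma^{1,1}\unit$, not to $\unit$. What the lemma asserts for $r=1$ is only that $M(\G_m)\to M(\A_{\N})$ is an equivalence; both sides are a nontrivial object. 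The same error propagates into your inductive step: the opens $U_i=\A^r-Z_i$ cover $Y-W$ but \emph{not} $Y$ (a point lies in some $U_i$ iff it lies outside $W$), so you must adjoin an open neighborhood of the origin, and no such neighborhood is any easier than the original problem; your proposed disposal of it by ``$\boxx$-local deformation retraction onto the point'' is the same false contractibility claim. Note also that $U_i\cap(Y-W)=U_i$ and $U_i\simeq(\A^{r-1},Z_1+\cdots+\widehat{Z_i}+\cdots+Z_r)\times\G_m$, with a $\G_m$-factor rather than the $\A^1$-factor you wrote, so the inductive hypothesis plus $\A^1$-invariance does not apply to these pieces either.

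The missing idea is a compactification. The paper replaces $Y$ by $\ol{Y}:=(\P^1,0)^{\times r}\simeq\boxx^r$; the Zariski distinguished square with vertices $Y-W$, $Y$, $\ol{Y}-W$, $\ol{Y}$ identifies the cofiber of $M(Y-W)\to M(Y)$ with that of $M(\ol{Y}-W)\to M(\ol{Y})$, and $M(\ol{Y})\simeq M(\pt)$ by $\boxx$-invariance. The point of compactifying is that now the cover works: $\ol{Y}-W=\bigcup_i(\ol{Y}-Z_i)$, and every multi-intersection $\ol{Y}-Z_I$ is isomorphic to $\A^{s}\times\boxx^{r-s}$ --- removing the zero divisor from $(\P^1,0)$ leaves $\P^1-\{0\}\simeq\A^1$ with trivial log structure, not $\G_m$ --- so each piece really is contractible and Zariski Mayer--Vietoris gives $M(\ol{Y}-W)\simeq M(\pt)$. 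Without this step your argument has no way to produce contractible pieces, and the statement you would need about $\G_m$ is simply not true.
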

\begin{proof}
We set $\ol{Y}:=(B\times (\P^1)^r,Z_1+\cdots+Z_r)$, and we view $\A^r$ as the open subscheme of $\P^r$ excluding the hyperplane at $\infty$.
The square
\[
\begin{tikzcd}
Y-W\ar[d]\ar[r]&
Y\ar[d]
\\
\ol{Y}-W\ar[r]&
\ol{Y}
\end{tikzcd}
\]
is a strict Nisnevich distinguished square.
Hence we need to show that the induced morphism
\[
M(\ol{Y}-W)\to M(\ol{Y})
\]
is an isomorphism.
Since $\ol{Y}\simeq B\times (\P^1,0)^r \simeq B\times \boxx^r$, it remains to show $M(\ol{Y}-W)\simeq M(B)$.

For any nonempty subset $I=\{i_1,\ldots,i_s\}\subset \{1,\ldots,r\}$, we set $Z_I:=Z_{i_1}+ \cdots + Z_{i_s}$.
Since $\ol{Y}-W=(\ol{Y}-Z_1)\cup \cdots \cup (\ol{Y}-Z_r)$, it suffices to show $M_S(\ol{Y}-Z_I)\simeq M(B)$ for all $I$ by dividing Nisnevich descent.
This follows from $\ol{Y}-Z_I\simeq B\times \A^s \times (\P^1,0)^{r-s}\simeq B\times \A^s \times \boxx^{r-s}$.
\end{proof}

\begin{prop}
\label{2.3}
For every $Y\in \lSm/B$, the induced morphism
\[
M(Y-\partial Y)
\to
M(Y)
\]
is an isomorphism.
\end{prop}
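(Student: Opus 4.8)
The plan is to reduce to a combinatorial assertion about strict normal crossing pairs and then to argue by induction on the number of components of the boundary divisor.

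\emph{Reduction to the strict normal crossing case.} First I would reduce to the situation where $Y=(X,Z)$ with $X\in\Sm/B$ and $Z=Z_1+\cdots+Z_r$ a strict normal crossing divisor on $X$ over $B$. Three observations allow this. First, the assertion is strict Nisnevich local on $Y$: strict Nisnevich distinguished squares are dividing Nisnevich distinguished, so $M$ has strict Nisnevich descent, and both $\partial Y$ and its open complement $Y-\partial Y$ are compatible with strict \'etale base change. Second, the assertion is unchanged if $Y$ is replaced by a dividing cover $g\colon\widetilde Y\to Y$, because $M(\widetilde Y)\simeq M(Y)$ by dividing descent and $g$ restricts to an isomorphism $\widetilde Y-\partial\widetilde Y\xrightarrow{\ \sim\ }Y-\partial Y$ (a dividing cover is log \'etale, hence vertical by Proposition \ref{vert.8}, and it is an isomorphism over the locus where the log structure is trivial). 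Third, by the structure theory of log smooth morphisms (Kato's chart theorem together with a toric resolution of the fans that occur), every $Y\in\lSm/B$ is, after passing to such a dividing cover and localizing in the strict Nisnevich topology, of the asserted form. Finally, I isolate the only consequence of Lemma \ref{2.2} that will be used: for $Y=(X,Z)$ as above and $W:=Z_1\cap\cdots\cap Z_r$, the open immersion induces an equivalence $M(Y-W)\xrightarrow{\ \sim\ }M(Y)$. This assertion is again strict Nisnevich local on $Y$, so locally around the points of $W$ we may assume $(X,Z)\cong(\A^r,Z_1+\cdots+Z_r)\times\A^m$ over $B$ with $W$ corresponding to $\{0\}\times\A^m$, and then it follows from Lemma \ref{2.2} together with $\A^1$-invariance of $M$.

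\emph{The induction.} Now I argue by induction on the number $r$ of components of $Z$. For $r=0$ the log structure of $Y$ is trivial and there is nothing to prove. Let $r\geq1$. By the equivalence $M(Y-W)\simeq M(Y)$ just recorded, it suffices to show that the open immersion $Y-\partial Y\hookrightarrow Y-W$ induces an equivalence on $M$. Set $V_i:=Y-Z_i$ for $1\leq i\leq r$; then $Y-W=\bigcup_{i=1}^{r}V_i$, and every finite intersection $V_{i_0}\cap\cdots\cap V_{i_n}=Y-(Z_{i_0}\cup\cdots\cup Z_{i_n})$ is again a strict normal crossing pair, but now with at most $r-1$ components, so the inductive hypothesis applies to it. The crucial point is that removing the boundary from any such intersection yields exactly $Y-\partial Y$ — the same divisors $Z_1,\dots,Z_r$ get deleted in total — so the inductive hypothesis supplies an equivalence $M(Y-\partial Y)\xrightarrow{\ \sim\ }M(V_{i_0}\cap\cdots\cap V_{i_n})$, induced by the evident open immersion. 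Since $M$ satisfies strict Nisnevich (hence \v{C}ech) descent and preserves finite coproducts, $M(Y-W)$ is the colimit over $\Delta^{op}$ of the \v{C}ech nerve of the Zariski cover $\{V_i\}_i$; pulling this cover back along $Y-\partial Y\hookrightarrow Y-W$ produces the tautological cover of $Y-\partial Y$ by $r$ copies of itself, whose \v{C}ech nerve has colimit $M(Y-\partial Y)$. The induced map between the two \v{C}ech nerves is a levelwise equivalence by the preceding sentence, hence an equivalence on colimits, and that colimit map is precisely $M(Y-\partial Y)\to M(Y-W)$. This completes the induction, and with it the proof.

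\emph{Main obstacle.} Once Lemma \ref{2.2} is in hand the induction is essentially formal, using only the descent properties of $M$ and — through Lemma \ref{2.2} — stability of $\cC$. I therefore expect the real work to lie in the reduction to the strict normal crossing case: concretely, in checking cleanly that a dividing cover restricts to an isomorphism over the trivial locus (so that dividing covers may be discarded) and that strict normal crossing pairs are reached dividing-Nisnevich-locally.
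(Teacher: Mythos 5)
Your proposal is correct, and its skeleton coincides with the paper's: reduce to a strict normal crossing pair via a dividing cover coming from toric resolution (using that such a cover is an isomorphism over the locus of trivial log structure), delete the deepest stratum $W=Z_1\cap\cdots\cap Z_r$ by means of Lemma \ref{2.2}, and induct. Two points of divergence are worth recording. First, your induction runs on the number $r$ of boundary components and concludes by \v{C}ech descent for the cover $\{Y-Z_i\}_i$ of $Y-W$, using that each finite intersection $V_I$ is a strict normal crossing pair with fewer components and that $V_I-\partial V_I=Y-\partial Y$; the paper instead inducts on the maximal codimension of the intersection strata and applies the inductive hypothesis directly to $Y-W$, observing $(Y-W)-\partial(Y-W)=Y-\partial Y$. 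Both are valid; the paper's choice avoids the \v{C}ech bookkeeping (which is itself legitimate, since finite Zariski covers are handled by iterated strict Nisnevich distinguished squares, exactly as in the proof of Lemma \ref{2.2}). Second, the one place where you are too quick is the passage to the product model: saying the statement is ``strict Nisnevich local, so we may assume $(X,Z)\cong(\A^r,Z_1+\cdots+Z_r)\times\A^m$'' conflates \'etale-local with Nisnevich-local. A strict normal crossing pair is only \'etale-locally of that form, so one needs the excision comparison through a common \'etale neighborhood of $W$ --- the two strict Nisnevich distinguished squares built from \cite[Construction 7.2.8]{logDM} --- which identifies the cofiber of $M(Y-W)\to M(Y)$ with that of the model, where Lemma \ref{2.2} applies. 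This is a standard maneuver and does not invalidate the argument, but it is where the paper's proof does its real work, rather than in the reduction to the strict normal crossing case as you anticipated.
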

\begin{proof}
The question is Zariski local on $Y$.
Hence we may assume that $Y$ admits a chart $P$ such that $P$ is sharp by \cite[Proposition II.2.3.7]{Ogu}.
There exists a dividing cover $Y'\to Y$ such that $\partial Y'$ is a strict normal crossing divisor by toric resolution of singularities \cite[Theorem 11.1.9]{CLStoric}.
Furthermore, the induced morphism $Y'-\partial Y'\to Y-\partial Y$ is an isomorphism.
Owing to dividing Nisnevich descent, it suffices to show that the induced morphism $M(Y'-\partial Y')\to M(Y')$ is an isomorphism.
In other words, we reduce to the case when $\partial Y$ is a strict normal crossing divisor.

Let $Z_1,\ldots,Z_r$ be the irreducible components of $\partial Y$.
We proceed by induction on the maximum codimension $d$ of the nonempty intersections of $Z_1,\ldots,Z_r$ in $Y$.
If $d=0$, there is nothing to prove.
Hence assume $d>0$.

The question is strict Nisnevich local on $Y$.
Hence we may assume that there exists a cartesian square
\[
\begin{tikzcd}
\ul{\partial Y}\ar[d]\ar[r]&
B\times \Spec(\Z[x_1,\ldots,x_n]/(x_1\cdots x_r))\ar[d]
\\
\ul{Y}\ar[r]&
B\times \Spec(\Z[x_1,\ldots,x_n])
\end{tikzcd}
\]
such that the horizontal morphisms are \'etale and the vertical morphisms are the obvious closed immersions.

We set $W:=Z_1\cap \cdots\cap Z_r$.
In this setting, \cite[Construction 7.2.8]{logDM} yields a commutative diagram
\[
\begin{tikzcd}
W\ar[d]\ar[r,leftarrow]&
W\ar[d]\ar[r]&
W\ar[d]
\\
X\ar[r,leftarrow]&
X''\ar[r]&
X'
\end{tikzcd}
\]
such that each square is cartesian, the horizontal morphisms are \'etale, and the right vertical morphism can be identified with the obvious closed immersion
\[
W\times \Spec(\Z[v_1,\ldots,v_r]/(v_1,\ldots,v_r))
\to
W\times \Spec(\Z[v_1,\ldots,v_r]).
\]

We set $Z_i':=W\times \Spec(\Z[v_1,\ldots,v_r]/(v_i))$, $Z_i'':=Z_i\times_X X''$, $Y':=(X',Z_1'+\cdots+Z_r')$, and $Y'':=(X'',Z_1''+\cdots+Z_r'')$.
The squares
\[
\begin{tikzcd}
Y''-W\ar[r]\ar[d]&
Y''\ar[d]
\\
Y-W\ar[r]&
Y,
\end{tikzcd}
\;\;
\begin{tikzcd}
Y''-W\ar[r]\ar[d]&
Y''\ar[d]
\\
Y'-W'\ar[r]&
Y'
\end{tikzcd}
\]
are strict Nisnevich distinguished squares.
Hence the squares
\[
\begin{tikzcd}
M(Y''-W)\ar[r]\ar[d]&
M(Y'')\ar[d]
\\
M(Y-W)\ar[r]&
M(Y),
\end{tikzcd}
\;\;
\begin{tikzcd}
M(Y''-W)\ar[r]\ar[d]&
M(Y'')\ar[d]
\\
M(Y'-W')\ar[r]&
M(Y')
\end{tikzcd}
\]
are cocartesian.
By Lemma \ref{2.2}, the morphism $M(Y'-W)\to M(Y')$ is an isomorphism.
It follows that the morphism $M(Y-W)\to M(Y)$ is an isomorphism too.

By induction, the morphism $M((Y-W)-\partial (Y-W))\to M(Y-W)$ is an isomorphism.
To conclude, observe that the induced morphism $(Y-W)-\partial (Y-W)\to Y-\partial Y$ is an isomorphism.
\end{proof}

\subsection{Vertical localizations}
\label{vertical_localizations}

\begin{df}
For $S\in \lSch$,
let $\ver_S$ (or $\ver$ for short) be the class of morphisms in $\lSm/S$ consisting of open immersions $U\to V$ such that $U-\partial_S U\to V-\partial_S V$ is an isomorphism.
\end{df}

\begin{prop}
\label{comp.4}
Let $f\colon S'\to S$ be a morphism in $\lSch$.
If $V\to U$ is a morphism in $\ver_S$, then the pullback $V\times_S S'\to U\times_S S'$ is a morphism in $\ver_{S'}$.
If $f$ is in $\lSm$ and $V'\to U'$ is a morphism in $\ver_{S'}$, then $V'\to U'$ is a morphism in $\ver_S$.
\end{prop}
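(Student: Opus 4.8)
The plan is to prove the two assertions separately. In both cases, being an open immersion is automatic: in the first it is a base change of $j$, and in the second $j$ is already an open immersion and this property is independent of the chosen base. So in each case only the condition on vertical boundaries must be checked.

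For the first assertion, let $j\colon V\to U$ be a morphism in $\ver_S$, so that $j$ is an open immersion and $V-\partial_S V\to U-\partial_S U$ is an isomorphism. The plan is to identify the vertical-boundary complements of $U\times_S S'$ and $V\times_S S'$ by Proposition \ref{vert.5}: applied to the cartesian squares obtained by pulling $U$ and $V$ back along $f$, it gives equalities of open subschemes
\[
(U\times_S S')-\partial_{S'}(U\times_S S')=(U-\partial_S U)\times_S S'
\]
inside $U\times_S S'$, and likewise with $V$ in place of $U$; these are genuine equalities, since the proof of Proposition \ref{vert.5} identifies $\partial_{S'}(U\times_S S')$ with the preimage of $\partial_S U$. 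Consequently the restriction of $j\times_S S'$ to the vertical-boundary complements is, under these equalities, the base change along $f$ of the isomorphism $V-\partial_S V\to U-\partial_S U$, hence an isomorphism. Therefore $V\times_S S'\to U\times_S S'$ lies in $\ver_{S'}$.

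For the second assertion, the hypothesis $f\in \lSm$ is used only to ensure $U',V'\in \lSm/S$, so that it makes sense to ask whether $j\colon V'\to U'$ lies in $\ver_S$. Write $g\colon V'\to S'$ and $g'\colon U'\to S'$ for the structure morphisms, so $g'j=g$. I would first record two facts. First, Proposition \ref{vert.9}(5) gives $\partial_{S'}V'\subseteq \partial_S V'$ and $\partial_{S'}U'\subseteq \partial_S U'$. Second, since an open immersion of fs log schemes is strict, for $v\in V'$ with image $u:=j(v)$ we have $fg(v)=fg'(u)$ and a canonical identification $\cM_{U',u}\simeq \cM_{V',v}$ compatible with the maps out of $\cM_{S,fg(v)}=\cM_{S,fg'(u)}$; hence $v\in \partial_S V'$ if and only if $u\in \partial_S U'$. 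Now $j$ restricts to an isomorphism $V'-\partial_{S'}V'\to U'-\partial_{S'}U'$ by hypothesis, so in particular $U'-\partial_{S'}U'\subseteq j(V')$, and combining this with $\partial_{S'}U'\subseteq \partial_S U'$ yields $U'-\partial_S U'\subseteq j(V')$. Using the pointwise equivalence, it follows that $j$ carries $V'-\partial_S V'$ bijectively onto $U'-\partial_S U'$; since $j$ is an open immersion, this restriction is an isomorphism, so $j\in \ver_S$.

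I do not expect a genuine obstacle. The argument rests only on the stability of (strict) open immersions under base change, the behaviour of vertical boundaries under composition recorded in Propositions \ref{vert.3} and \ref{vert.9}, and the base-change statement of Proposition \ref{vert.5}. The mild point worth attention is that Proposition \ref{vert.5} provides equalities of open subschemes of the ambient pullback, not merely abstract isomorphisms — this is what its proof in fact establishes — and it is this that makes the first assertion immediate.
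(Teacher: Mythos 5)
Your proof is correct and follows essentially the same route as the paper: the first claim via Proposition \ref{vert.5} (noting, as you do, that it is an identification of open subschemes), and the second via Proposition \ref{vert.9}(5) together with the compatibility of $\partial_S$ with open immersions. The paper organizes the second part slightly differently (sandwiching $V'-\partial_S V'$ between $W'-\partial_S W'$ and $W':=V'-\partial_{S'}V'$ rather than your pointwise identification $\partial_S V'=j^{-1}(\partial_S U')$ plus the inclusion $U'-\partial_S U'\subseteq j(V')$), but the two arguments rest on the same facts and are logically equivalent.
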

\begin{proof}
The first claim follows from Proposition \ref{vert.5}.
For the second claim, we set $W':=V'-\partial_{S'}V'\simeq U'-\partial_{S'}U'$.
Since $W'$ is an open subscheme of $V'$, $W'-\partial_S W'$ is an open subscheme of $V'-\partial_S V'$.
On the other hand, $V'-\partial_S V'$ is an open subscheme of $W'$ by Proposition \ref{vert.9}(5), so $V'-\partial_S V'$ is an open subscheme of $W'-\partial_S W'$.
Hence we have $V'-\partial_S V'\simeq W'-\partial_S W'$.
We similarly have $U'-\partial_S U'\simeq W'-\partial_S W'$.
\end{proof}

Together with Construction \ref{shv.8}, we obtain an $\lSm$-premotivic $\infty$-category over $\lSch$
\[
(\A^1\cup \ver)^{-1}\infShv_{\dtau}(\lSm/-,\Spc).
\]
Since $\boxx-\partial \boxx\simeq \A^1$, $\boxx$ is already inverted here.
Hence we have the localization functor
\begin{equation}
L_{\ver}
\colon
\square^{-1}\infShv_{\dtau}(\lSm/-,\Spc) \to (\A^1\cup \ver)^{-1}\infShv_{\dtau}(\lSm/-,\Spc).
\end{equation}

\begin{rmk}
For $S\in \lSch$,
let $\ver'$ be the class of morphisms in $\lSm/S$ consisting of the open immersions $U-\partial_S U\to U$ for all $U\in \lSm/S$.
Then $\ver'$ is a subclass of $\ver$, and there is a canonical isomorphism
\[
(\A^1\cup \ver')^{-1}\infShv_{\dtau}(\lSm/S,\Spc)
\simeq
(\A^1\cup \ver)^{-1}\infShv_{\dtau}(\lSm/S,\Spc).
\]
However, $\ver'$ does not satisfy the second claim in Proposition \ref{comp.4}.
For example, if $f\colon S'\to S$ is a morphism in $\lSm$ that is not vertical, then the identity morphism $S'\to S'$ is not in $\ver_{S}$ even though it is in $\ver_{S'}$.
\end{rmk}

\begin{const}
\label{comp.5}
For $S\in \Sch$,
let $\lambda\colon \Sm/S\to \lSm/S$ be the functor sending $X\in \Sm/S$ to $X$, and let $\omega\colon \lSm/S\to \Sm/S$ be the functor sending $Y\in \lSm/S$ to $Y-\partial Y$.
Since $\lambda$ is a left adjoint of $\omega$, we have adjoint functors
\begin{equation}
\omega^\sharp
:
\infPsh(\Sm/S,\Spc)
\rightleftarrows
\infPsh(\lSm/S,\Spc)
:
\omega_\sharp
\end{equation}
where $\omega^\sharp$ is left adjoint to $\omega_\sharp$, $\omega^\sharp (X)\simeq \lambda(X)$ for $X\in \Sm/S$, and $\omega_\sharp (X)\simeq \omega(X)$.
Since $\lambda$ and $\omega$ preserve products, $\omega^\sharp$ and $\omega_\sharp$ are monoidal.

The functor $\lambda$ maps $\A^1$ to $\A^1$ and $\ul{\tau}$-coverings to $\tau$-coverings.
The functor $\omega$ maps $\A^1$ to $\A^1$ and $\tau$-coverings to $\ul{\tau}$-coverings.
Hence we have induced adjoint functors
\begin{equation}
\label{comp.5.1}
\omega^\sharp
:
\rH_{\ul{\tau}}(S)
\rightleftarrows
(\A^1)^{-1}\infShv_\tau(\lSm/S,\Spc)
:
\omega_\sharp
\end{equation}
such that both preserve colimits.
Let $\omega^*$ be a right adjoint of $\omega_\sharp$.

For $X\in \Sm/S$, we have an isomorphism $\omega_\sharp \omega^\sharp (X) \simeq X$.
Hence $\omega_\sharp \omega^\sharp \simeq \id$, so $\omega^\sharp$ and $\omega^*$ are fully faithful.

For $\cF\in (\A^1)^{-1}\infShv_\tau(\lSm/S,\Spc)$ and $Y\in \lSm/S$, we have an isomorphism
\[
\omega^*\omega_\sharp \cF(Y)
\simeq
\cF(Y-\partial Y)=\cF(\omega(Y)).
\]
This means that $\omega^*\omega_\sharp \cF$ is $\ver$-local.
Since $\omega$ maps $\dtau$-coverings to $\ul{\tau}$-coverings, $\omega^*\omega_\sharp \cF$ is $\dtau$-local.
Hence
\[
\omega^*\omega_\sharp \cF(Y)\in (\A^1\cup \ver)^{-1}\infShv_{\dtau}(\lSm/S,\Spc).
\]
On the other hand, if $\cF\in (\A^1\cup \ver)^{-1}\infShv_{\dtau}(\lSm/S,\Spc)$, then $\omega^*\omega_\sharp \cF\simeq \cF$ since the open immersion $Y-\partial Y\to Y$ belongs to $\ver$.
It follows that the essential image of $\omega^*\omega_\sharp$ is isomorphic to $(\A^1\cup \ver)^{-1}\infShv_{\dtau}(\lSm/S,\Spc)$.
In summary, the localization functor
\[
L_{\dtau\cup \ver}
\colon
(\A^1)^{-1}\infShv_{\tau}(\lSm/S,\Spc)
\to
(\A^1\cup \ver)^{-1}\infShv_{\dtau}(\lSm/S,\Spc)
\]
satisfies
\begin{equation}
\label{comp.5.2}
L_{\dtau\cup \ver}\cF(Y)
\simeq
\cF(Y-\partial Y).
\end{equation}
Furthermore, we have a canonical equivalence of $\infty$-categories
\begin{equation}
\label{comp.5.3}
\rH_{\ul{\tau}}(S)
\simeq
(\A^1\cup \ver)^{-1}\infShv_{\dtau}(\lSm/S,\Spc).
\end{equation}
\end{const}

\begin{df}
\label{comp.3}
For $S\in \lSch$, we define
\begin{gather*}
\rH_{\tau}(S)
:=
(\A^1\cup \ver)^{-1}\infShv_{\dtau}(\lSm/S,\Spc),
\\
\rH_{*,\tau}(S)
:=
(\A^1\cup \ver)^{-1}\infShv_{\dtau}(\lSm/S,\Spc_*),
\\
\SH_{\tau}^\eff(S)
:=
(\A^1\cup \ver)^{-1}\infShv_{\dtau}(\lSm/S,\Spt),
\\
\DA_{\tau}^\eff(S,\Lambda)
:=
(\A^1\cup \ver)^{-1}\infShv_{\dtau}(\lSm/S,\DLambda),
\\
\SH_{\tau}(S)
:=
\Stab_{\G_m}(\SH_{\tau}^\eff(S)),
\;
\DA_{\tau}(S,\Lambda)
:=
\Stab_{\G_m}(\DA_{\tau}^\eff(S,\Lambda)).
\end{gather*}
Let $\cV$ be one of $\Spt$ and $\DLambda$.
For abbreviation, we also set
\begin{gather*}
\SH_{\tau}^\eff(S,\cV)
:=
(\A^1\cup \ver)^{-1}\infShv_{\dtau}(\Sm/S,\cV),
\\
\SH_{\tau}(S,\cV)
:=
\Stab_{\G_m}(\SH^\eff(S,\cV)).
\end{gather*}
We often omit the subscripts $\tau$ in $\rH_\tau(S)$ and other five $\infty$-categories if $\tau=\sNis$.
\end{df}

\begin{rmk}
\label{comp.7}
When $S\in \Sch$, our definition of $\rH_{\tau}(S)$ is equivalent to $\rH_{\ul{\tau}}(S)$ in \eqref{logSH.1} due to \eqref{comp.5.3}.
We can prove similar results for the other five $\infty$-categories, where we need to use the fact that both $\lambda$ and $\omega$ map $\G_m$ to $\G_m$ for $\SH$ and $\DA$.
We similarly have \eqref{comp.5.2} for these categories too.

Proposition \ref{comp.4} allows us to make the above categories into $\lSm$-premotivic $\infty$-categories over $\lSch$.
Furthermore, there is a commutative diagram of $\lSm$-premotivic $\infty$-categories over $\lSch$
\begin{equation}
\begin{tikzcd}[column sep=tiny, row sep=small]
&&
&
\logSH_{\tau}^\eff\ar[rr,"\Sigma_{\Gmm}^\infty"]\ar[dd,"L_{\ver}"']\ar[rd,"\Lambda^{(-)}"']&
&
\logSH_{\tau}\ar[dd,"L_{\ver}",near end]\ar[rd,"\Lambda^{(-)}"]
\\
&
&
&
&
\logDA^\eff_{\tau}(-,\Lambda)\ar[rr,"\Sigma_{\Gmm}^\infty",near start,crossing over]&
&
\logDA_{\tau}(-,\Lambda)\ar[dd,"L_{\ver}"]
\\
\rH_{\tau}\ar[rr,"(-)_+"]&&
\rH_{*,\tau}\ar[r,"\Sigma_{S^1}^\infty"]&
\SH_{\tau}^\eff\ar[rr,"\Sigma_{\G_m}^\infty",near start]\ar[rd,"\Lambda^{(-)}"']&
&
\SH_{\tau}^\eff\ar[rd,"\Lambda^{(-)}"]
\\
&
&
&
&
\DA_{\tau}^\eff(-,\Lambda)\ar[rr,"\Sigma_{\G_m}^\infty"]\ar[uu,leftarrow,"L_{\ver}"',crossing over,near start]&
&
\DA_{\tau}(-,\Lambda).
\end{tikzcd}
\end{equation}
\end{rmk}

\begin{prop}
\label{comp.8}
For $B\in \Sch$ and a field $k$,
there are equivalences of $\infty$-categories
\begin{gather*}
\SH_{\ul{\tau}}^\eff(B)
\simeq
(\A^1)^{-1}\logSH_{\tau}^\eff(B),
\;
\SH_{\ul{\tau}}(B)
\simeq
(\A^1)^{-1}\logSH_{\tau}(B),
\\
\DA_{\ul{\tau}}^\eff(B,\Lambda)
\simeq
\DA_{\tau}^\eff(B,\Lambda)
\simeq
(\A^1)^{-1}\logDA_{\tau}^\eff(B,\Lambda),
\\
\DA_{\ul{\tau}}(B,\Lambda)
\simeq
\DA_{\tau}(B,\Lambda)
\simeq
(\A^1)^{-1}\logDA_{\tau}(B,\Lambda),
\\
\DM_{\ul{\tau}}^\eff(k,\Lambda)
\simeq
\DM_{\tau}^\eff(k,\Lambda)
\simeq
(\A^1)^{-1}\logDM_{\tau}^\eff(k,\Lambda),
\\
\DM_{\ul{\tau}}(k,\Lambda)
\simeq
\DM_{\tau}(k,\Lambda)
\simeq
(\A^1)^{-1}\logDM_{\tau}(k,\Lambda).
\end{gather*}
\end{prop}
\begin{proof}
This is immediate from Proposition \ref{2.3} and Remark \ref{comp.7}.
\end{proof}

The equivalence $\DMeff(k,\Lambda)
\simeq
(\A^1)^{-1}\logDMeff(k,\Lambda)$ was proven in \cite[Theorem 8.2.16]{logDM} assuming resolution of singularities.

\begin{rmk}
Proposition \ref{comp.8} can be also written as
\[
(\A^1)^{-1} \logSH_{\tau}(B)
\simeq
(\ver)^{-1} \logSH_{\tau}(B)
\]
and similarly for the others.
However,
in the case of $\tau=\sNis$,
we do not expect an equivalence
\[
(\A^1)^{-1} \logSH(S)
\simeq
(\ver)^{-1} \logSH(S)
\]
when $S$ is an fs log scheme with a nontrivial log structure,
see Remark \ref{local.14} for an explanation.
On the other hand, we expect that there is an equivalence of $\infty$-categories
\[
(\A^1)^{-1}\logSH_{\ketale}(S)
\simeq
\SH_{\ketale}(S)
\]
at least if $S$ has a chart $\N$.
\end{rmk}

\begin{prop}
\label{comp.6}
For $B\in \Sch$,
let $f\colon \A_{\N,B}\to B$ be the projection, and let $j\colon \G_{m,B} \to \A_{\N,B}$ be the obvious open immersion.
Then the natural transformation
\[
f^*\xrightarrow{ad} j_*j^* f^*
\]
is an isomorphism for the $\lSm$-premotivic categories in \textup{Definition \ref{comp.3}}.
\end{prop}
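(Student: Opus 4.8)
The plan is to detect the assertion on a generating family and thereby reduce it to a statement about motives over $B$, which will then follow from Proposition \ref{2.3}. Write $\cC$ for one of the premotivic $\infty$-categories of Definition \ref{comp.3}. Since the objects $M_{\A_{\N,B}}(Y)$ with $Y\in\lSm/\A_{\N,B}$ generate $\cC(\A_{\N,B})$ (together with their Tate twists and suspensions in the stable cases, which commute with $f^*$ and $j^*$), it suffices to show that for every $Y\in\lSm/\A_{\N,B}$ and every $\cF\in\cC(B)$ the map
\[
\Map_{\cC(\A_{\N,B})}\bigl(M_{\A_{\N,B}}(Y),f^*\cF\bigr)
\longrightarrow
\Map_{\cC(\A_{\N,B})}\bigl(M_{\A_{\N,B}}(Y),j_*j^*f^*\cF\bigr)
\]
induced by the unit $f^*\xrightarrow{ad}j_*j^*f^*$ is an equivalence.

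First I would set up notation: let $p\colon Y\to\A_{\N,B}$ be the structure morphism, $Y_{\G_m}:=Y\times_{\A_{\N,B}}\G_{m,B}$, and $j'\colon Y_{\G_m}\to Y$, $p'\colon Y_{\G_m}\to\G_{m,B}$, $g:=fp\colon Y\to B$ the induced morphisms; all of $f,j,p,p',j',g,gj'$ lie in $\lSm$, so the functors $(-)_\sharp$ exist for them. Using the base-change equivalence $p'_\sharp (j')^*\xrightarrow{\simeq} j^*p_\sharp$ of Definition \ref{shv.7}(ii) and the monoidality of $(j')^*$ one gets $j^*M_{\A_{\N,B}}(Y)\simeq M_{\G_{m,B}}(Y_{\G_m})$, while $j^*f^*\cF\simeq(fj)^*\cF$. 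Unwinding the adjunctions $(j^*,j_*)$, $(p_\sharp,p^*)$, $(p'_\sharp,(p')^*)$, $(g_\sharp,g^*)$, $((gj')_\sharp,(gj')^*)$ then identifies the source of the displayed map with $\Map_{\cC(B)}(M_B(Y),\cF)$, the target with $\Map_{\cC(B)}(M_B(Y_{\G_m}),\cF)$, and the map itself with the one induced by $j'$. Hence it is enough to prove that $M_B(Y_{\G_m})\to M_B(Y)$ is an equivalence in $\cC(B)$ for every $Y\in\lSm/\A_{\N,B}$.

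The key geometric input is the inclusion $Y-\partial Y\subseteq Y_{\G_m}$. Indeed, $Y-\partial Y$ carries the trivial log structure, and a morphism of fs log schemes from such a log scheme to $\A_{\N,B}$ factors through $\G_{m,B}$: such a morphism is the datum of a global section of the structure monoid, which is a global unit, so its image under the structure map — namely the pullback of the canonical coordinate of $\A_\N$ — is invertible. Applying this to $Y-\partial Y\hookrightarrow Y\xrightarrow{p}\A_{\N,B}$ gives $Y-\partial Y\subseteq p^{-1}(\G_{m,B})=Y_{\G_m}$. Since $Y_{\G_m}$ is an open subscheme of $Y$ with the restricted log structure, $\partial Y_{\G_m}=\partial Y\cap Y_{\G_m}$, and hence $Y_{\G_m}-\partial Y_{\G_m}=Y_{\G_m}\cap(Y-\partial Y)=Y-\partial Y$. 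Now $Y$ and $Y_{\G_m}$ both lie in $\lSm/B$, so Proposition \ref{2.3} (in the stable cases; for $\cH$ and $\cH_*$, Construction \ref{comp.5}, under which both maps below become identities) shows that $M_B(Y-\partial Y)\to M_B(Y)$ and $M_B(Y-\partial Y)=M_B(Y_{\G_m}-\partial Y_{\G_m})\to M_B(Y_{\G_m})$ are equivalences. Applying two-out-of-three to the factorization $Y-\partial Y\subseteq Y_{\G_m}\subseteq Y$ yields that $M_B(Y_{\G_m})\to M_B(Y)$ is an equivalence, completing the argument.

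The step I expect to require the most care is the identification in the second paragraph: one must check that the chain of adjunction and base-change equivalences is compatible with the unit $f^*\xrightarrow{ad}j_*j^*f^*$, so that the transformation in question is precisely the map induced by $j'$ on motives over $B$; this is coherence bookkeeping rather than a conceptual difficulty. A minor additional point is that Proposition \ref{2.3} is phrased for stable $\cC$, so the non-stable cases $\cH$ and $\cH_*$ must be handled via the explicit description of $L_{\ver}$ from Construction \ref{comp.5}.
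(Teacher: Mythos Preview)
Your proof is correct and follows essentially the same route as the paper: both reduce by adjunction to showing that $M_B(Y\times_{\A_\N}\G_m)\to M_B(Y)$ is an equivalence for $Y\in\lSm/\A_{\N,B}$, and both establish the key geometric fact that $Y$ and $Y_{\G_m}$ have the same boundary complement over $B$.

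There is one small simplification worth noting. Once you have shown $Y_{\G_m}-\partial Y_{\G_m}=Y-\partial Y$, you have by definition shown that the open immersion $Y_{\G_m}\hookrightarrow Y$ lies in $\ver_B$. Since every category in Definition~\ref{comp.3} is obtained by inverting $\ver$, this map is an equivalence directly, with no case split and no appeal to Proposition~\ref{2.3} or Construction~\ref{comp.5}. This is exactly how the paper concludes; the paper reaches the same geometric fact by observing that any $Y\in\lSm/\A_{\N,B}$ is exact over $\A_{\N,B}$ (via \cite[Proposition I.4.2.1(4)]{Ogu}) and then invoking Proposition~\ref{vert.9}(6), whereas you compute it by hand. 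Your direct computation is arguably more transparent, but the final two-out-of-three step via Proposition~\ref{2.3} is unnecessary.
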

\begin{proof}
Let $X\in \lSm/\A_{\N,B}$.
Then $X$ is exact over $\A_{\N,B}$ by \cite[Proposition III.2.5.2(1), III.2.5.3(3)]{Ogu}.
Hence the open immersion
\[
X\times_{\A_{\N}}\G_m \to X
\]
is in $\ver_{B}$ by Proposition \ref{vert.9}(6).
It follows that the morphism
\[
f_\sharp j_\sharp j^*M(X)
\to
f_\sharp M(X) 
\]
is an isomorphism.
This implies that the natural transformation $f_\sharp j_\sharp j^* \xrightarrow{ad'} f_\sharp$ is an isomorphism.
By adjunction, we get the desired natural transformation.
\end{proof}

\begin{rmk}
\label{comp.9}
In this paper,
we mainly work with sheaves instead of hypersheaves for a streamlined exposition.
To work with hypersheaves,
for $S\in \lSch$ and an $\infty$-category $\cV$,
replace the $\infty$-category of $\dtau$-sheaves
\[
\infShv_{\dtau}(\lSm/S,\cV)
\]
by the $\infty$-category of $\divi$-local $\tau$-hypersheaves
\[
(\divi)^{-1}\infShv_\tau^\wedge(\lSm/S,\cV).
\]
For example,
we can define $\rH_{\tau}^\wedge(S)$ as Definition \ref{comp.10} below,
and this agrees with the classical one $\rH_{\ul{\tau}}^\wedge(S)$ if $S$ has a trivial log structure as Remark \ref{comp.7}.
\end{rmk}

\begin{df}
\label{comp.10}
For $S\in \lSch$, we define
\begin{gather*}
\rH_{\tau}^\wedge(S)
:=
(\A^1\cup \ver\cup \divi)^{-1}\infShv_{\tau}(\lSm/S,\Spc),
\\
\rH_{*,\tau}^\wedge(S)
:=
(\A^1\cup \ver\cup \divi)^{-1}\infShv_{\tau}(\lSm/S,\Spc_*),
\\
\SH_{\tau}^{\wedge,\eff}(S)
:=
(\A^1\cup \ver\cup \divi)^{-1}\infShv_{\tau}(\lSm/S,\Spt),
\\
\DA_{\tau}^{\wedge,\eff}(S,\Lambda)
:=
(\A^1\cup \ver\cup \divi)^{-1}\infShv_{\tau}(\lSm/S,\DLambda),
\\
\SH_{\tau}^\wedge(S)
:=
\Stab_{\G_m}(\SH_{\tau}^{\wedge,\eff}(S)),
\;
\DA_{\tau}^\wedge(S,\Lambda)
:=
\Stab_{\G_m}(\DA_{\tau}^{\wedge,\eff}(S,\Lambda)).
\end{gather*}
\end{df}

\begin{rmk}
By \cite[Proposition 3.3.28]{logDM} and \cite[Remark 2.6.3]{logSH},
for $S\in \Sch$,
$\rH(S)$ agrees with $\rH_\sNis^\wedge(S)$.
The same holds for the other five $\infty$-categories too.
\end{rmk}

\section{Localization property}

In this section, we prove a logarithmic analogue of the localization property of Morel-Voevodsky \cite[Theorem 2.21, p.\ 114]{MV}.
See also \cite[Th\'er\`eme 4.5.36]{Ayo072} for the generalization to the setting of presheaves with values in certain model categories.

The content of \S \ref{preliminary} and \ref{study} is to adopt their proof in our setting.
Then we show that inverting $\A^1$ and working with the strict Nisnevich topology are enough to deduce the localization property, see Theorem \ref{loc.8}.
Based on this, we prove the localization property for our categories $\rH_{\tau,*}$, $\SH_{\tau}^\eff$, $\SH_{\tau}$, $\DA_{\tau}^\eff(-,\Lambda)$, and $\DA_{\tau}(-,\Lambda)$.
In \S \ref{consequences}, we show that the localization property implies the Grothendieck six-functor formalism for strict morphisms.

\subsection{Preliminary lemmas}
\label{preliminary}
Throughout this subsection, $\cU$ is one of $\Spc$, $\Spc_*$, and $\Spt$, and $\DLambda$.
Recall that $\tau$ is one of $\sNis$, $\setale$, and $\ketale$,
and see \S \ref{topology} for $\ul{\tau}$ and $\dtau$.

\begin{df}
A \emph{henselian local} (resp.\ \emph{strictly local}) fs log scheme $X$ is a saturated log scheme such that $\ul{X}$ is a henselian (resp.\ strictly) local scheme.
A \emph{henselization} (resp.\ \emph{strict henselization}) of an fs log scheme $Y$ is the fiber product $Y\times_{\ul{Y}}U$ for some henselization (resp.\ strict henselization) $U$ of $\ul{Y}$.

Recall from \cite[Point 6 in 2.8]{Nak} that a \emph{log strictly local} saturated log scheme $X$ is a saturated log scheme such that $\ul{X}$ is strictly local and the multiplication map $n\colon \cM(X)\to \cM(X)$ is surjective for every integer $n$ invertible in $\ul{X}$. The \emph{log strict henselization} of an fs log scheme $Y$ at a log geometric point $y_{(\mathrm{log})}$ in the sense of \cite[Definition 2.5]{Nak} is the spectrum of the log ring
\[
(\colim_U \Gamma(U,\cO_U),\colim_U(U,\cM_U)),
\]
where $U$ runs over the category of $Y$-morphisms $y_{(\mathrm{log})}\to U$ such that $U$ is Kummer \'etale over $Y$.
\end{df}

\begin{lem}
\label{loc.1}
Let $f\colon X\to S$ be a strict finite morphism in $\lSch$.
Then
\begin{equation}
\label{loc.1.2}
f_*\colon \infPsh(\lSm/X,\cU)\to \infPsh(\lSm/S,\cU)
\end{equation}
sends $\tau$-local equivalences to $\tau$-local equivalences.
\end{lem}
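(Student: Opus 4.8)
The plan is to reduce the statement about $sNis$-local equivalences to a statement about stalks, exploiting that $f$ is strict and finite. Recall that a morphism in $\infPsh(\lSm/X,\cU)$ is an $sNis$-local equivalence if and only if it induces an equivalence on all strict Nisnevich stalks, i.e. after evaluating at the henselizations of objects of $\lSm/X$ at their points; this is the standard characterization of local equivalences for the strict Nisnevich cd-structure, which is complete and regular by \cite[Proposition 3.3.30]{logDM}. So it suffices to show that $f_*$ commutes with taking strict Nisnevich stalks, in the sense that for a henselian local fs log scheme $T$ strict over $S$ and a presheaf $\cF$ on $\lSm/X$, the value $(f_*\cF)(T)$ is computed by a filtered colimit of values of $\cF$ on henselian local fs log schemes strict over $X$.

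First I would unwind the definition of $f_*$ as the right Kan extension along the functor $\lSm/X \to \lSm/S$ given by composition with $f$: concretely, $(f_*\cF)(T) = \cF(T\times_S X)$ for $T\in \lSm/S$, since $f$ is strict (pullback along $f$ preserves $\lSm$ and the comma category has a terminal object $T\times_S X$). Here strictness is essential: $T\times_S X$ is again an fs log scheme log smooth over $X$, and the projection $T\times_S X\to X$ is strict. Then, for $T$ a henselization of some $Y\in \lSm/S$ at a point $y$, I would use that $f$ is finite and strict to conclude that $\ul{T}\times_{\ul S}\ul X = \ul{T}\times_{\ul S}\ul X$ is a finite scheme over the henselian local scheme $\ul T$, hence (by the structure theory of finite algebras over henselian local rings, \cite[Tag 04GG]{stacks} or the classical Hensel decomposition) decomposes as a finite disjoint union $\coprod_i \ul{T_i}$ of henselian local schemes, each finite and local over $\ul T$. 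Pulling back the log structure, $T\times_S X \simeq \coprod_i T_i$ with each $T_i$ a henselian local fs log scheme strict over $X$.

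The key step is then: given an $sNis$-local equivalence $\alpha\colon \cF\to \cG$ in $\infPsh(\lSm/X,\cU)$, I want $f_*\alpha$ to be an $sNis$-local equivalence on $\lSm/S$, i.e. to induce an equivalence on all stalks $(f_*\cF)(T)\to (f_*\cG)(T)$ for $T$ ranging over henselizations of objects of $\lSm/S$. By the computation above, this map is identified with $\prod_i \cF(T_i)\to \prod_i \cG(T_i)$ (a finite product, hence also a finite coproduct, so it is preserved whether $\cU = \Spc, \Spc_*, \Spt$, or $\DLambda$). Since each $T_i$ is a henselization of an object of $\lSm/X$ at a point, and $\alpha$ is an $sNis$-local equivalence, each $\cF(T_i)\to \cG(T_i)$ is an equivalence; hence so is the finite product. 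One subtlety to address carefully: the stalk characterization of $sNis$-local equivalences requires evaluating not literally at henselizations but at the filtered colimit defining them, so one should either pass to the pro-object level, or invoke that $\infPsh(\lSm/-,\cU)^{sNis-loc}$ coincides with hypercomplete sheaves and that stalks detect equivalences there; I would cite the general machinery for bounded regular cd-structures (as in \cite[Section 3.3]{logDM}) to make "evaluate at all strict henselian local objects" a conservative family after $sNis$-localization.

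The main obstacle I expect is the bookkeeping around finiteness and henselization: one must check that strictness of $f$ genuinely forces $T\times_S X$ to be a finite disjoint union of \emph{strict} henselian local fs log schemes over $X$, and that this decomposition is compatible with the filtered colimits defining henselization, so that the comparison $(f_*\cF)(T)\simeq\prod_i\cF(T_i)$ is legitimate at the level of stalks. Once that is in place, the rest is formal because finite products/coproducts in $\cU$ are computed the same way in all four cases and equivalences are stable under them. I would organize the write-up so that the finite decomposition lemma is isolated first, then the stalk computation for $f_*$, then the conclusion.
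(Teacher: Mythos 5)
Your proposal is correct and follows essentially the same route as the paper: reduce to evaluating at henselizations of objects of $\lSm/S$, identify $(f_*\cF)(U)$ with $\cF(U\times_S X)$, and use that finiteness of $U\times_S X\to U$ forces $U\times_S X$ to be a finite disjoint union of henselian local fs log schemes, on which the given $sNis$-local equivalence is already an equivalence. The paper's proof is just a terser version of this argument (modeled on Morel--Voevodsky, Proposition 1.27), so no further comparison is needed.
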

\begin{proof}
We argue as in \cite[Proposition 1.27, p.\ 105]{MV}.
Assume $\tau=\sNis$ (resp.\ $\setale$, resp.\ $\ketale$).
Let $\cF\to \cG$ be a $\tau$-local equivalence in $\infPsh(\lSm/X,\cU)$.
For every henselization (resp.\ strict henselization, resp.\ log strict henselization) $U$ of an fs log scheme in $\lSm/S$, we need to show that the induced morphism
\[
f_*\cF(U)\to f_*\cG(U)
\]
is an isomorphism, i.e., the morphism $\cF(U\times_S X)\to \cG(U\times_S X)$ is an isomorphism.
This follows from the fact that $U\times_S X$ is a disjoint union of henselian (resp.\ strict, resp.\ log strict) local fs log schemes since $U\times_S X\to U$ is finite.
Here, for the log strictly local case,
use the strictness of $f$ to check the condition of log strictly local fs log schemes.
\end{proof}

Suppose that $f$ is as above.
Since \eqref{loc.1.2} preserves colimits, the functor
\begin{equation}
\label{loc.1.1}
f_*\colon \infShv_{\tau}(\lSm/X,\cU)\to \infShv_{\tau}(\lSm/S,\cU)
\end{equation}
preserves colimits by Lemma \ref{loc.1}.

For any morphism $p\colon Y\to X$ in $\lSm$, recall that we use the notation $M(Y)$ for the object $p_\sharp \unit$ in an $\lSm$-premotivic $\infty$-category.

\begin{lem}
\label{loc.2}
Suppose that $f$ is as above.
Then \eqref{loc.1.1} sends $\A^1$-local equivalences to $\A^1$-local equivalences.
\end{lem}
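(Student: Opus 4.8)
The plan is to reduce the statement to a concrete geometric fact about $\A^1$-contractions being compatible with the pushforward $f_*$ along a strict finite morphism. Since \eqref{loc.1.1} preserves colimits (as established just before the statement, using Lemma \ref{loc.1} and Construction \ref{shv.3}), and since the $\A^1$-local equivalences in $\infShv_{sNis}(\lSm/X,\cU)$ are generated under colimits by the projections $M(Y\times\A^1)\to M(Y)$ for $Y\in\lSm/X$ (more precisely, the maps obtained by tensoring and $p_\sharp$-pushing these projections), it suffices by Lemma \ref{shv.5}-type reasoning to check that $f_*$ sends each generating $\A^1$-weak equivalence to an $\A^1$-local equivalence. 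So the first step is to spell out this reduction: a colimit-preserving functor between presentable $\infty$-categories sends $\cS$-local equivalences to $\cS'$-local equivalences as soon as it sends a generating set of $\cS$ to $\cS'$-local equivalences, which is exactly the hypothesis of Construction \ref{shv.3}.

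Next I would identify what $f_*$ does to $M(Y\times\A^1)\to M(Y)$. The point is that $f_*$ is a right adjoint, not a left adjoint, so one cannot directly compute $f_*M(Y)$ as a representable. Instead I would use the projection/base-change formalism: for the strict finite morphism $f\colon X\to S$, and $Y\in\lSm/X$, one wants to relate $f_*$ applied to $\A^1$-homotopies on $X$ to $\A^1$-homotopies on $S$. The cleanest route is to observe that $\A^1$ pulls back to $\A^1$ under any morphism, so for any $Z\in\lSm/S$ and any henselization $U$ of $Z$, evaluating $f_*(M(Y\times\A^1)\to M(Y))$ at $U$ gives, after unwinding via the same computation as in Lemma \ref{loc.1}, a map of mapping spaces $\Map(M(U\times_S X),M(Y\times\A^1))\to\Map(M(U\times_S X),M(Y))$ computed in $\infShv_{sNis}(\lSm/X,\cU)$; but $U\times_S X$ is a disjoint union of henselian local fs log schemes, and the contraction $Y\times\A^1\to Y$ is an $\A^1$-local equivalence in $\infShv_{sNis}(\lSm/X,\cU)$, so this map is an equivalence because the target and source are $\A^1$-local. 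Actually the more robust formulation: $f_*$ of an $\A^1$-local equivalence is tested on $\A^1$-local objects $\cF$ of $\infShv_{sNis}(\lSm/S,\cU)$, and $f^*\cF$ is then $\A^1$-local on $X$ (since $f^*$ preserves the projections $\A^1$), so $\Map(f_*(-),\cF)=\Map(-,f^*\cF)$ turns the $\A^1$-local equivalence into an equivalence by adjunction $(f^*,f_*)$.

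This adjunction argument is in fact the slick way to do the whole lemma, so I would likely present it directly: let $\cG\to\cG'$ be an $\A^1$-local equivalence in $\infShv_{sNis}(\lSm/X,\cU)$, and let $\cF\in\infShv_{sNis}(\lSm/S,\cU)$ be $\A^1$-local; then since $f^*$ (the pullback along the strict morphism $f$) sends the projections $Z\times\A^1\to Z$ to projections $(Z\times_S X)\times\A^1\to Z\times_S X$, the object $f^*\cF$ is $\A^1$-local, hence $\Map(\cG',f^*\cF)\to\Map(\cG,f^*\cF)$ is an equivalence, and by the $(f^*,f_*)$-adjunction this is exactly $\Map(f_*\cG',\cF)\to\Map(f_*\cG,\cF)$; since $\cF$ was an arbitrary $\A^1$-local object, $f_*\cG\to f_*\cG'$ is an $\A^1$-local equivalence. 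The only thing needing care, and the main (minor) obstacle, is justifying that $f_*$ as in \eqref{loc.1.1} is genuinely the right adjoint of $f^*$ at the level of $sNis$-sheaves — but this is exactly what the paragraph before the statement records: \eqref{loc.1.1} preserves colimits, hence has a right adjoint, and one checks via Lemma \ref{loc.1} and Construction \ref{shv.3} that the composite $\infShv_{sNis}(\lSm/S,\cU)\xrightarrow{f^*}\infShv_{sNis}(\lSm/X,\cU)$ is left adjoint to it, identifying it with the honest $f_*$. With that in hand the proof is a two-line adjunction argument; I would not expect any genuine difficulty beyond bookkeeping the $sNis$-sheafification.
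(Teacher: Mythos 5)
The central step of your argument does not work: the adjunction runs the wrong way. In this paper $f^*$ is the \emph{left} adjoint and $f_*$ its right adjoint (see the notation table and Definition \ref{shv.7}); concretely, $f_*$ is precomposition with $U\mapsto U\times_S X$, whose left adjoint is the left Kan extension $f^*$ and whose right adjoint is a right Kan extension, not $f^*$. Hence the only identity available is $\Map(f^*\cF,\cG)\simeq\Map(\cF,f_*\cG)$; the identification $\Map(f_*\cG,\cF)\simeq\Map(\cG,f^*\cF)$ that your ``slick'' argument rests on is false. What that argument actually proves is the different (and easy) statement that $f_*$ preserves $\A^1$-\emph{local objects}, equivalently that $f^*$ preserves $\A^1$-local equivalences. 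Your more concrete first attempt is also off target: $M(Y)$ and $M(Y\times\A^1)$ are not $\A^1$-local, and verifying that $f_*(M(Y\times\A^1))\to f_*(M(Y))$ is an equivalence on henselizations would show it is an equivalence of strict Nisnevich sheaves, which is false and far stronger than the assertion of the lemma.

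The genuine content of the lemma, which your proposal does not supply, is a substitute for the missing projection formula: $f_*(\cF\otimes M(\A_X^1))$ is not $f_*(\cF)\otimes M(\A_S^1)$, so it is not automatic that $f_*$ of the projection $p\colon\cF\otimes M(\A_X^1)\to\cF$ is an $\A^1$-local equivalence. After reducing to such $p$ (your reduction step is fine), the paper constructs an explicit $\A^1_S$-homotopy: using the unit $M(\A_S^1)\to f_*f^*M(\A_S^1)\simeq f_*M(\A_X^1)$, the lax monoidality of $f_*$, and the multiplication $\A^1\times\A^1\to\A^1$, one produces a map $f_*(\cF\otimes M(\A_X^1))\otimes M(\A_S^1)\to f_*(\cF\otimes M(\A_X^1))$ whose restrictions along the $0$- and $1$-sections give an $\A^1$-local equivalence $f_*(i)f_*(p)\simeq\id$, where $i$ is induced by the zero section; combined with $f_*(p)f_*(i)\simeq\id$ this yields the claim. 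That homotopy is the missing ingredient.
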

\begin{proof}
Since \eqref{loc.1.1} preserves colimits,
we can argue as in \cite[Th\'eorem 4.5.34]{Ayo072}.
\end{proof}

Suppose that $f$ is as above.
Since \eqref{loc.1.1} preserves colimits, the functor
\begin{equation}
\label{loc.2.1}
f_*\colon (\A^1)^{-1}\infShv_{\tau}(\lSm/X,\cU)\to (\A^1)^{-1}\infShv_{\tau}(\lSm/S,\cU)
\end{equation}
preserves colimits by Lemma \ref{loc.2}.

\begin{lem}
\label{loc.3}
Let $f\colon X\to S$ be a $\tau$-covering in $\lSch$.
Then
\[
f^*\colon \infPsh(\lSm/S,\cU)
\to
\infPsh(\lSm/X,\cU)
\]
detects $(\tau\cup \A^1)$-local equivalences.
\end{lem}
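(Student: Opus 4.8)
The plan is to reduce the detection statement for the combined $(sNis\cup\A^1)$-localization to a statement that can be checked on stalks. First I would recall the general principle: a colimit-preserving functor $F\colon \cC\to\cD$ between presentable $\infty$-categories detects $\cS$-local equivalences provided $F$ is conservative after $\cS$-localization, or — more usefully here — provided $F$ sends the generators of $\cS$-local equivalences to $\cS'$-local equivalences for a corresponding class $\cS'$ and $F$ itself is conservative. Concretely, for $f\colon X\to S$ a strict Nisnevich cover, I would first observe that $f^*\colon\infPsh(\lSm/S,\cU)\to\infPsh(\lSm/X,\cU)$ has a left adjoint $f_\sharp$ (since $f\in\lSm$), and that $f^*$ commutes with the relevant localizations by Construction \ref{shv.3}, because $f^*$ carries $\A^1$-projections to $\A^1$-projections and strict Nisnevich distinguished squares to strict Nisnevich distinguished squares (being strict and flat, pullback along $f$ preserves these).

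The core step is conservativity. I would argue that the family of functors $\{$restriction to $\lSm/X\}$, as $X$ ranges over a strict Nisnevich cover, is jointly conservative on $sNis$-sheaves, hence also after further $\A^1$-localization. The cleanest route is via stalks: an object of $\infShv_{sNis}(\lSm/S,\cU)$ is trivial iff all its stalks at henselian local fs log schemes vanish, and every such stalk over $S$ is computed by a stalk over $X$ since $f$ is a strict Nisnevich cover (every henselization of an object of $\lSm/S$ lifts, after strict Nisnevich refinement, through $f$ — this is exactly the defining property of a Nisnevich cover applied to the underlying schemes, using strictness to keep log structures matched). Thus if $f^*(\alpha)$ is a $(sNis\cup\A^1)$-local equivalence, then $\alpha$ becomes an equivalence after $(sNis\cup\A^1)$-localization, which is what "detects" means. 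To package this I would use Lemma \ref{shv.5} together with the fact that the representables $M(Y)$ for $Y\in\lSm/X$ generate, pulling back the comparison along $f_\sharp$.

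More precisely, the steps in order: (1) note $f^*$ preserves colimits and descends to a colimit-preserving functor $f^*\colon(\A^1)^{-1}\infShv_{sNis}(\lSm/S,\cU)\to(\A^1)^{-1}\infShv_{sNis}(\lSm/X,\cU)$ via Construction \ref{shv.3}; (2) it therefore suffices, by the definition of $\cS$-local equivalence, to show this descended $f^*$ is conservative; (3) show conservativity on stalks, using that $f$ being a strict Nisnevich cover means the henselian local fs log schemes over $X$ surject onto those over $S$ in the appropriate sense, so a map that is a stalkwise equivalence after pulling back to $X$ is already a stalkwise equivalence over $S$; (4) conclude. The main obstacle I anticipate is step (3): making precise that stalks over $S$ are detected by stalks over $X$ requires care about what "henselization of an fs log scheme" means (Definition following Lemma \ref{loc.1}) and that a strict Nisnevich cover, after passing to a henselization $U$ of some $Y\in\lSm/S$, admits a section $U\to X\times_S U$ (this is the key lifting property of Nisnevich covers on henselian local schemes, and strictness guarantees the log structures agree). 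Once that section exists, any $\lSm/X$-computation of the stalk restricts to the desired $\lSm/S$-computation, and conservativity follows. I would also remark that since $sNis$-sheafification and $\A^1$-localization are both local for the strict Nisnevich topology, detecting local equivalences is equivalent to detecting them on the sheaf-level, so there is no obstruction coming from the interaction of the two localizations.
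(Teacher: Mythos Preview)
Your approach is viable but takes a different route from the paper's. The paper argues directly with the \v{C}ech nerve $\sX_\bullet$ of $f$: each projection $p_n\colon\sX_n\to S$ factors through $f$, so $p_n^*(\alpha)$ is an $(sNis\cup\A^1)$-local equivalence whenever $f^*(\alpha)$ is; applying $p_{n\sharp}$ and the projection formula gives that $\cF\otimes M(\sX_n)\to\cG\otimes M(\sX_n)$ is a local equivalence for each $n$; since $\colim_{n\in\Delta}M(\sX_n)\to M(S)$ is an $sNis$-local equivalence, taking the colimit yields that $\alpha$ itself is a local equivalence. This stays entirely at the level of presheaves and local equivalences and never mentions points of the site.

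Your route---descend $f^*$ to the localized category via Construction \ref{shv.3} and then prove conservativity there---also works, but one step you gloss over is not formal: the passage ``jointly conservative on $sNis$-sheaves, hence also after further $\A^1$-localization'' requires knowing that the sheaf-level $f^*$ preserves $\A^1$-local objects (so that the descended $f^*$ agrees with the restriction of the conservative sheaf-level $f^*$ to the full subcategory of $\A^1$-local sheaves). This is true and easy, since $f^*$ is computed by restriction along $\lSm/X\hookrightarrow\lSm/S$, but it should be stated. Your stalk argument also implicitly relies on $\infShv_{sNis}$ being defined as the \emph{hypercomplete} sheaf category (Definition \ref{shv.4}), which is what guarantees stalkwise detection of equivalences; the paper's \v{C}ech argument is agnostic to this point.
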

\begin{proof}
Let $\alpha\colon \cF\to \cG$ be a morphism in $\infPsh(\lSm/S,\cU)$ such that $f^*(\alpha)$ is an $(\tau\cup \A^1)$-local equivalence,
and let $\sX$ be the \v{C}ech nerve of $f$.
For every integer $n\geq 0$, the projection $p_n\colon \sX_n\to S$ factors through $f$.
It follows that $p_n^*(\alpha)$ is an $(\tau\cup \A^1)$-local equivalence.
Apply $p_{n\sharp}$ to obtain an $(\tau\cup \A^1)$-local equivalence
\[
\cF\otimes M(\sX_n)
\simeq
\cG\otimes M(\sX_n).
\]
Hence we obtain $(\tau\cup \A^1)$-local equivalences
\[
\cF
\simeq
\cF\otimes \colimit_{n\in \Delta}M(\sX_n)
\simeq
\cG\otimes \colimit_{n\in \Delta}M(\sX_n)
\simeq
\cG,
\]
where $\Delta$ denotes the simplex category.
\end{proof}

\begin{lem}
\label{loc.12}
Let
\[
\begin{tikzcd}
&Y\ar[d,"f"]
\\
Z\ar[ru,"s"]\ar[r,"i"]&
X
\end{tikzcd}
\]
be a commutative triangle of fs log schemes such that $i$ is a strict closed immersion, $f$ is log \'etale, and $\ul{X}$ is henselian.
Then there exists a unique section $X\to Y$ of $f$ extending $s$.
\end{lem}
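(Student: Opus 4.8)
The statement is the standard ``henselian lifting criterion'' for log \'etale morphisms, and the natural strategy is to reduce to the classical (non-logarithmic) case of \'etale morphisms of henselian local schemes. First I would unwind the structure of a log \'etale morphism $f\colon Y\to X$. Since the assertion is about the existence and uniqueness of a section, and $\ul{X}$ is henselian local, I would work Zariski locally near the closed point of $X$ (note everything has Zariski log structure by the conventions of the paper). By \cite[Theorem IV.3.3.1]{Ogu} (or the chart criterion for log smoothness), a log \'etale morphism admits, locally on $Y$ and $X$, a chart $P\to Q$ with $P^{\gp}\to Q^{\gp}$ injective with finite cokernel of order invertible on the base, such that $Y\to X\times_{\A_P}\A_Q$ is \'etale in the usual sense. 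The section $s\colon Z\to Y$ over the strict closed subscheme picks out, via the chart, a specific component; because $Z$ and $X$ have the same underlying topological space (a strict closed immersion into a henselian local scheme is a homeomorphism-free statement — more precisely $i$ induces an isomorphism on underlying spaces since $\ul X$ is local and henselian and $\ul Z$ is closed containing the closed point), the relevant connected component of the Kummer étale local model is already determined.

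\textbf{Key steps.} (1) Reduce to the affine local situation with a chart $\theta\colon P\to Q$ as above and a factorization $Y\xrightarrow{g} X\times_{\A_P}\A_Q =: X' \to X$ with $g$ \'etale. (2) Observe $X'\to X$ is log \'etale and, because $P^{\gp}\to Q^{\gp}$ is Kummer (injective with invertible-order cokernel), the map $\ul{X'}\to \ul{X}$ is finite; moreover $\ul{X'}$ is a disjoint union of henselian local schemes since $\ul X$ is henselian. The composite section $s$ lands in one of these local pieces, say $X'_0$, and on underlying schemes $\ul{X'_0}\to \ul X$ is \'etale and induces an isomorphism on residue fields at the closed point (this is where strictness of $i$ and $f$ being log \'etale, hence $\ul f$ being quasi-finite unramified, enters); hence $\ul{X'_0}\to \ul X$ is an isomorphism. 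Pulling back the log structure, $X'_0\to X$ is an isomorphism of fs log schemes, so $s$ factors canonically through a section $X\to X'$. (3) Now lift through the \'etale map $\ul g\colon \ul Y\to \ul{X'}\cong \ul X$: by the henselian property of $\ul X$ applied to the \'etale morphism $\ul Y\to \ul X$ and the point $s(Z)$, there is a unique section $\ul X\to \ul Y$ through that point; since $\ul i\colon \ul Z\to \ul X$ is surjective, this section agrees with $s$ on $Z$ automatically, and it is strict (being a section of the strict-after-chart morphism $g$), hence upgrades to a morphism of log schemes $X\to Y$ extending $s$. (4) Uniqueness: two such sections agree on the underlying schemes by the uniqueness part of the henselian lifting for $\ul g$ (they agree at the closed point since both extend $s$), and they agree on log structures because both are sections of $f$, which is strict étale after the chart reduction; alternatively, uniqueness is a formal consequence of $f$ being a monomorphism after base change, or one invokes that log \'etale morphisms are formally log \'etale and $Z\hookrightarrow X$ is a nilpotent-thickening-like situation only on the closed fiber — cleanest is to argue on underlying schemes plus strictness.

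\textbf{Main obstacle.} The delicate point is step (2): controlling the Kummer-\'etale local model and showing that the component selected by $s$ becomes an isomorphism over $X$. The subtlety is that $Z$ need not be all of $X$ (it is a strict \emph{closed} subscheme), so ``$s$ determines a component of $Y$ over $X$'' requires knowing that $\ul Z$ contains the closed point of $\ul X$ and that $\ul Y\to \ul X$ is unramified so components don't merge — this is exactly where log \'etaleness (forcing $\ul f$ to be quasi-finite and unramified, cf.\ \cite[Proposition IV.3.4.1]{Ogu}) is used, together with Proposition \ref{vert.8} only peripherally. Once the underlying-scheme picture is an honest \'etale morphism of henselian local schemes with a point over the closed point, classical henselianity finishes it, and the log-structure bookkeeping is routine because all the maps in sight are strict after the chart. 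I would present this by first disposing of uniqueness formally, then constructing the section via the chart-plus-henselian-lift argument.
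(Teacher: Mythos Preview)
Your approach has a genuine gap in step (2). You assert that a log \'etale chart $\theta\colon P\to Q$ may be taken with $\theta^{\gp}$ injective and finite cokernel, and conclude from this that $\ul{X'}\to\ul{X}$ is finite. But finiteness of $\coker\theta^{\gp}$ does not force $\A_Q\to\A_P$ to be finite: take $P=Q=\N^2$ with $\theta(e_1)=e_1$, $\theta(e_2)=e_1+e_2$. Then $\theta^{\gp}$ is an isomorphism, yet $\A_Q\to\A_P$ is the blow-up chart $(u,v)\mapsto(u,uv)$, which is certainly not finite. Log \'etale morphisms include such log modifications, and your argument collapses exactly in these cases. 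You are conflating ``$\theta^{\gp}$ injective with finite cokernel'' with ``$\theta$ Kummer'' (the latter requires every $q\in Q$ to have a multiple in $\theta(P)$, which fails here). Your further claim that $\ul f$ is unramified is also incorrect: the Kummer map $t\mapsto t^2$ on $\A^1$ over a base with $2$ invertible is log \'etale but ramified at the origin on underlying schemes, so \cite[Proposition IV.3.4.1]{Ogu} does not give what you want.

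The paper's proof bypasses charts entirely via the \emph{strict locus}. The key observation is that any section of a log \'etale morphism is an open immersion (\cite[Lemma A.11.2]{logDM}), hence strict; and one shows similarly that $s$ itself is a strict immersion (the graph $\Gamma_s\colon Z\to Z\times_X Y$ is a section of the log \'etale projection $Z\times_X Y\to Z$, hence an open immersion, while $Z\times_X Y\to Y$ is a strict closed immersion since $i$ is). Therefore both $s$ and any putative section $X\to Y$ factor through the maximal open $U\subset Y$ that is strict over $X$, which exists by \cite[Lemma A.4.4]{logDM}. Replacing $Y$ by $U$ makes $f$ strict \'etale, and the classical henselian lifting \cite[Th\'eor\`eme IV.18.5.11(b)]{EGA} gives both existence and uniqueness at once. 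This strict-locus reduction is the idea your chart-based approach is missing.
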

\begin{proof}
The graph morphism $\Gamma_s\colon Z\to Z\times_X Y$ is a section of the projection $Z\times_X Y\to Z$.
Hence $\Gamma_s$ is an open immersion by \cite[Lemma A.11.2]{logDM}.
Since $i$ is a strict closed immersion, so is the projection $Z\times_X Y\to Y$.
It follows that $s$ is a strict immersion.
There exists a maximal open subscheme $U$ of $Y$ that is strict over $X$ by \cite[Lemma A.4.4]{logDM}.
Since $s$ is a strict immersion, $U$ contains the image of $s$.

If $X\to Y$ is a section of $f$, then it is an open immersion by \cite[Lemma A.11.2]{logDM} again.
Hence we need to show the claim for the commutative triangle
\[
\begin{tikzcd}
&U\ar[d]
\\
Z\ar[ru]\ar[r,"i"]&
X.
\end{tikzcd}
\]
In other words, we may assume that $f$ is strict.
In this case, $f$ is strict \'etale.
Use \cite[Th\'eor\`eme IV.18.5.11(b)]{EGA} to conclude.
\end{proof}

\begin{lem}
\label{loc.21}
Let $f\colon X\to S$ be a morphism of fs log schemes.
Assume that $S$ admits a chart $P$.
Then Zariski locally on $X$, $f$ admits a chart $\eta\colon P\to R$ such that $\eta$ is injective and the cokernel of $\eta^\gp$ is torsion free.
\end{lem}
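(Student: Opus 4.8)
The statement is a local structure result for charts of log morphisms: given a chart $\theta\colon P\to Q$ of $f\colon X\to S$, one wants to replace $Q$ by a monoid $R$ so that $P\to R$ is injective with torsion-free cokernel, at the cost of shrinking $X$ Zariski-locally. The natural approach is to correct the two defects of $\theta$ — failure of injectivity and torsion in $\mathrm{coker}(\theta^\gp)$ — one at a time, using that a chart can always be enlarged by adjoining units coming from $\mathcal{O}_X^*$.

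First I would reduce to the case where $\theta^\gp$ is injective. The kernel $K$ of $\theta^\gp\colon P^\gp\to Q^\gp$ is a finitely generated abelian group; since $\theta$ is a chart, $P^\gp\to \mathcal{M}_{X,x}^\gp$ factors through $Q^\gp$, so $K$ maps to $\mathcal{O}_{X,x}^*$. Working Zariski-locally on $X$ we may assume this lift is defined on all of $X$, and then I would replace $Q$ by $Q':=Q\oplus_{\text{along }K} (\text{something})$ — more precisely, form the pushout that kills the image of $K$ — to obtain a new chart $\eta'\colon P\to Q'$ with $\eta'^\gp$ injective. Here one must check $Q'$ is still fs and that $\eta'$ is still a chart for the same log structure; this is the standard manipulation of charts (cf. \cite[II.2.3]{Ogu}).

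Second, assuming now $\theta^\gp$ injective, I would deal with the torsion subgroup $T$ of $N:=\mathrm{coker}(\theta^\gp)$. Choose a splitting of the torsion-free quotient so $N^\gp/T$ lifts, and let $Q^\gp\to T$ be the resulting surjection composed with the projection; as before, $T$ is finite and its generators lift to $\mathcal{O}_X^*$ Zariski-locally. Then set $R$ to be the fiber product (or rather the submonoid of $Q^\gp$ whose image in $T$ is trivial, saturated appropriately): the point is to pass to the covering of monoids that trivializes $T$, obtaining $\eta\colon P\to R$ with $R$ fs, $\eta$ injective, and $\mathrm{coker}(\eta^\gp)$ torsion free, while $R\to Q$ induces an isomorphism on the associated log structures after the Zariski localization. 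Injectivity of $\eta$ (not just $\eta^\gp$) follows since $\eta^\gp$ is injective and $R$ is saturated inside $P^\gp\otimes(\text{torsion-free part})$.

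**Main obstacle.** The delicate point is not the group-theoretic bookkeeping but verifying that each modification of the chart is still a chart for the \emph{same} log structure $\mathcal{M}_X$ after shrinking $X$ — i.e., that adjoining or quotienting by the lifted units $K,T\to\mathcal{O}_X^*$ produces a monoid scheme $\mathbf{A}_R\to \mathbf{A}_Q$ through which $X\to\mathbf{A}_Q$ lifts compatibly. This requires the lifts of the finitely many generators of $K$ and $T$ to $\Gamma(X,\mathcal{O}_X^*)$ to exist globally, which is exactly why the conclusion is only Zariski-local on $X$ (one inverts a suitable function to make each lifted unit invertible, and takes the common localization). I expect the cleanest route is to invoke \cite[II.2.3.2]{Ogu} or the existence-of-charts machinery to package "any finitely generated correction by units is realizable locally," rather than re-deriving it by hand.
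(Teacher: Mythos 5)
Both halves of your two-step plan break down, and for the same underlying reason: you try to repair the chart by modifying $Q$ inside (or below) its group envelope, whereas both defects can only be repaired by \emph{enlarging} the chart monoid. For injectivity: the kernel $K=\ker(\theta^\gp)$ has image $0$ in $Q^\gp$, so ``the pushout that kills the image of $K$'' is the identity operation, and in any case no quotient of $Q$ can make the map from $P$ injective. If instead you enlarge to $Q\oplus K$, you need a retraction $P^\gp\to K$ to define the new map on $P$, and such a retraction need not exist (already $K=2\Z\subset \Z=P^\gp$, which occurs e.g.\ for $\theta\colon \N\to \Z/2$, has none, since $P^\gp/K$ is not free). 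For the torsion: an element of $Q$ whose class in $\mathrm{coker}(\theta^\gp)$ is torsion need \emph{not} map to a unit of $\cM_X$, so your ``submonoid of $Q^\gp$ whose image in $T$ is trivial'' does not induce the same log structure. Concretely, take $\theta=(\times 2)\colon \N\to\N$ and $f\colon X=\A_{\N}\to S=\A_{\N}$ the induced map: then $T=\Z/2$ is generated by the class of $1\in Q$, which maps to the non-unit $t\in\cM_X$ at the origin, and the submonoid $2\N\subset Q$ is a chart for the log structure generated by $t^2$, not for $\cM_X$ (while saturating $2\N$ inside $Q^\gp$ just gives back $\N$). So the second step either changes the log scheme or makes no progress.

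The paper's proof makes a single move that fixes both problems at once and involves no case analysis on $\ker(\theta^\gp)$ or on $T$. Zariski locally near a point $x$ one chooses a chart $\beta'\colon Q'\to\Gamma(X,\cM_X)$ neat at $x$ \cite[Proposition II.2.3.7]{Ogu}; then $Q'$ is sharp and saturated, so $Q'^\gp$ is torsion free, and by \cite[Proposition II.2.3.9]{Ogu} the given chart satisfies $\beta=\beta'\circ\kappa+\gamma$ for some $\kappa\colon Q\to Q'$ and some $\gamma$ valued in $\Gamma(X,\cM_X^*)$. One then sets $R:=P^\gp\oplus Q'$ with $\eta(p):=(p,\kappa(\theta(p)))$ and chart $(p,q)\mapsto\gamma^\gp(\theta^\gp(p))+\beta'(q)$: injectivity of $\eta$ is automatic from the first coordinate, the extra summand $P^\gp$ maps into units so the log structure is unchanged, and $\mathrm{coker}(\eta^\gp)\cong Q'^\gp$ is torsion free whatever $\kappa\circ\theta$ is. If you want to salvage your outline, the first step should be $R_1:=P^\gp\oplus Q$ with $p\mapsto(p,\theta(p))$ and the new factor sent to $1$ (an enlargement, not a pushout); but the second step still requires replacing $Q$ by a sharp chart, not by a submonoid.
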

\begin{proof}
We may assume that $f$ admits a chart $P\to Q$.
Let $\beta\colon Q\to \Gamma(X,\cM_X)$ be the chart for $X$, and let $x$ be a point of $X$.
By \cite[Proposition II.2.3.7]{Ogu}, we may assume that there exists a chart $\beta'\colon Q'\to \Gamma(X,\cM_X)$ neat at $x$.
Note that $Q'$ is sharp.
Owing to \cite[Proposition II.2.3.9]{Ogu}, there exists maps $\kappa \colon Q\to Q'$ and $\gamma \colon Q \to \Gamma(X,\cM_X^*)$ such that $\beta=\beta'\circ \kappa+\gamma$.

Now let $\eta\colon P\to P^\gp \oplus Q'$ be the map given by $p\mapsto (p,\kappa(\theta(p)))$, and let $\delta\colon P^\gp \oplus Q'\to \Gamma(X,\cM_X)$ be the chart given by $(p,q)\mapsto \gamma^\gp( \theta^\gp(p)) + \beta'(q)$.
Then we have
\[
\delta(\eta(p))
=
\gamma(\theta(p)) + \beta'(\kappa(\theta(p)))
=
\beta(\theta(p))
\]
for $p\in P$, which implies that $\delta$ is a chart.
Since $Q'$ is sharp and saturated, $Q'^\gp$ is torsion free.
It follows that the cokernel of $\eta^\gp$ is torsion free.
\end{proof}

\begin{lem}
\label{loc.14}
Let $i\colon Z\to S$ be a strict closed immersion and $V\in \lSm/Z$.
Then Zariski locally on $V$,
there exists $X\in \lSm/S$ such that $X\times_S Z\simeq V$.
\end{lem}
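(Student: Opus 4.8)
The plan is to reduce to a statement about affine log smooth morphisms that can be lifted componentwise, following the standard pattern for deformation-to-the-normal-cone type arguments. First I would work Zariski locally, so I may assume $V$ is affine and that $f\colon V\to Z$ admits a chart $\theta\colon P\to Q$. Using Lemma \ref{loc.21}, after further Zariski localization on $V$ I may assume the chart is given by an injective homomorphism $\eta\colon P\to R$ whose cokernel on groups is torsion free; in particular $\A_\eta\colon \A_R\to \A_P$ is log smooth with the extra feature that $\ul{\A_R}\to \ul{\A_P}$ is smooth (the torsion-freeness of $\operatorname{coker}\eta^\gp$ guarantees the underlying scheme map is smooth, not merely log smooth). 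Then log smoothness of $V\to Z$ together with the chart structure gives, Zariski locally, a strict étale morphism
\[
V\to Z\times_{\A_P}\A_R,
\]
where $Z\to \A_P$ is the structure map coming from the chart $P\to \Gamma(Z,\cM_Z)$.

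Now I lift each piece separately. Since $i\colon Z\to S$ is a strict closed immersion, the log structure of $Z$ is pulled back from $S$, so I have a chart $P\to \Gamma(S,\cM_S)$ extending $P\to \Gamma(Z,\cM_Z)$ at least after shrinking $S$ Zariski-locally around the image — indeed strictness means $\ol{\cM}_{S,i(z)}\to\ol{\cM}_{Z,z}$ is an isomorphism, so any chart on $Z$ extends to one on $S$ Zariski-locally. This produces $S\to \A_P$ extending $Z\to \A_P$, hence a log smooth $S$-scheme $S\times_{\A_P}\A_R$ whose pullback to $Z$ is $Z\times_{\A_P}\A_R$. It remains to lift the strict étale cover $V\to Z\times_{\A_P}\A_R$ to a strict étale morphism $X\to S\times_{\A_P}\A_R$ with $X\times_S Z\simeq V$: this is classical deformation theory of étale morphisms along a closed immersion (the underlying map $\ul{Z}\to\ul{S}$ is a closed immersion and étale morphisms are unobstructed along closed immersions in the Zariski-local setting, cf.\ \cite[Th\'eor\`eme IV.18.1.2]{EGA}), and giving $X$ the strict log structure pulled back from $S\times_{\A_P}\A_R$ makes $X\in\lSm/S$ with $X\times_S Z\simeq V$ since strictness and the fiber product are preserved.

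The main obstacle I expect is the bookkeeping at the chart level: ensuring that the chart $P\to\Gamma(Z,\cM_Z)$ genuinely extends to $P\to\Gamma(S,\cM_S)$ Zariski-locally (using strictness of $i$ and \cite[Proposition II.2.3.7]{Ogu}-style neatening), and that after applying Lemma \ref{loc.21} the factorization $V\to Z\times_{\A_P}\A_R$ is strict étale rather than merely log smooth — this uses that $\operatorname{coker}\eta^\gp$ is torsion free so the underlying morphism $\ul{\A_R}\to\ul{\A_P}$ is smooth, whence log smoothness of $V\to Z$ with this chart forces the residual map to be strict étale by the standard structure theorem for log smooth morphisms \cite[Theorem IV.3.3.3]{Ogu}. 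Once the factorization is in this form, the lifting step is formal. I would also remark that "Zariski locally on $V$" is essential and cannot be improved to a global statement, exactly because the strict étale cover need not lift globally.
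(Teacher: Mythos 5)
There is a genuine gap in your first step. You assert that after applying Lemma \ref{loc.21} the chart $\eta\colon P\to R$ makes $\ul{\A_R}\to\ul{\A_P}$ smooth because the cokernel of $\eta^{\gp}$ is torsion free, and that consequently the residual map $V\to Z\times_{\A_P}\A_R$ is strict \'etale. Both claims fail. Torsion-freeness of the cokernel of $\eta^{\gp}$ gives \emph{log} smoothness of $\A_\eta$ by Kato's criterion, not smoothness of the underlying scheme morphism: for $P=0$ and $R$ the submonoid of $\N^2$ generated by $(2,0),(1,1),(0,2)$, the underlying scheme of $\A_R$ is the quadric cone $\Spec(\Z[x,y,z]/(xz-y^2))$, which is not smooth over $\Spec(\Z)=\ul{\A_P}$. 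More importantly, Lemma \ref{loc.21} only normalizes a given chart; it does not arrange that the residual map $V\to Z\times_{\A_P}\A_R$ is \'etale --- the relative dimension of $Z\times_{\A_P}\A_R$ over $Z$ has no reason to equal that of $V$, so the map is at best smooth of positive relative dimension. To get a strict \'etale factorization Zariski locally you would need the structure theorem with a neat chart plus an extra factor $\A^n$ and the factorization of smooth morphisms through \'etale ones, as is done elsewhere in the paper. Granting such a factorization, your remaining steps (extending the chart of $Z$ to $S$ via strictness of $i$, and lifting a standard \'etale presentation along the surjection of coordinate rings) do work Zariski locally.

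For comparison, the paper's proof sidesteps the \'etale factorization entirely: since $V$ is affine, it admits a strict \emph{closed immersion} $V\to W:=Z\times_{\A_P}\A_Q\times\A^n$ into a standard log smooth $Z$-scheme; the logarithmic Jacobian criterion \cite[Theorem IV.3.2.2]{Ogu} shows $d\colon I/I^2\to\Omega^1_{W/Z}$ is split injective at the chosen point, one lifts generators of $I$ to the corresponding scheme $S\times_{\A_P}\A_Q\times\A^n$ over $S$, and the same criterion shows the lifted vanishing locus is log smooth over $S$ in a neighborhood. Repairing your argument is easiest by switching to this embed-and-lift-equations pattern.
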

\begin{proof}
The question is Zariski local on $S$ and $V$, so we may assume that $V\to S$ admits a chart $\theta \colon P\to Q$ such that $\theta$ is injective and the cokernel of $\theta^\gp$ is torsion free by Lemma \ref{loc.21}.
We may also assume that $\Spec(R):=Z$, $\Spec(R'):=S$, and $V$ are affine.

Let $v$ be a point of $V$.
There exists a strict closed immersion $V\to W:=Z\times_{\A_P} \A_Q \times \A^n$ over $Z$ for some integer $n\geq 0$.
Observe that $W$ is log smooth over $Z$ by \cite[Theorem IV.3.1.8]{Ogu}.
We set $\Spec(A):=W$, and let $I$ be the ideal of $A$ such that $V\simeq \Spec(A/I)$.
By \cite[Theorem IV.3.2.2]{Ogu}, the map
\begin{equation}
\label{loc.14.1}
d\colon I/I^2 \otimes k(v)\to \Omega_{A/R}^1\otimes k(v)
\end{equation}
is injective and admits a retraction.
Use Nakayama's lemma to choose generators $a_1,\ldots,a_n$ of $I$ such that $da_1,\ldots,da_n$ are linearly independent in $\Omega_{A/R}^1\otimes k(v)$.

We set $\Spec(A'):=Y:=S\times_{\A_P}\A_Q \times \A^n$.
Choose any lifts $a_1',\ldots,a_n'$ of $a_1,\ldots,a_n$ to $A'$, and let $I'$ be the ideal of $A'$ generated by $a_1',\ldots,a_n'$.
We also set $X:=\Spec(A'/I')$.
The map
\[
d\colon I'/I'^2\otimes k(v)
\to
\Omega_{A'/R'}^1\otimes k(v)
\]
is isomorphic to \eqref{loc.14.1}, which is injective and admits a retraction.
Hence the map $d\colon I'/ I'^2\to \Omega_{A'/R'}^1$ is injective and admits a retraction on an open neighborhood $U$ of $v$ in $Y$.
By \cite[Theorem IV.3.1.8]{Ogu}, $Y$ is log smooth over $S$.
Together with \cite[Theorem IV.3.2.2]{Ogu}, we deduce that $U\times_Y X$ is log smooth over $S$.
\end{proof}

\begin{lem}
\label{loc.7}
Let $f\colon \cG\to \cF$ be a morphism in $\Psh(\lSm/S)$.
Assume that for every morphism $u\colon X\to \cF$ in $\Psh(\lSm/S)$ with $X\in \lSm/S$, the induced morphism
\begin{equation}
\label{loc.7.1}
p^*\cG\times_{p^*\cF}X
\to
X
\end{equation}
in $\Psh(\lSm/X)$ is an $(\tau\cup\A^1)$-local equivalence in $\infPsh(\lSm/S,\Spc)$.
Here, the morphism $p\colon X\to S$ is the structure morphism, and the morphism $X\to p^*\cF$ used in the fiber product is a right adjoint of $p_\sharp X\simeq X\xrightarrow{u}\cF$.
Then $f$ is an $(\tau\cup \A^1)$-local equivalence in $\infPsh(\lSm/S,\Spc)$.
\end{lem}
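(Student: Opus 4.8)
The plan is to imitate the corresponding step in the Morel-Voevodsky proof of the localization theorem: resolve $\cF$ by a simplicial presheaf assembled from representables, reduce to the case where $\cF$ is itself representable, and in that case deduce the conclusion from the hypothesis after applying $p_\sharp$.

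First I would invoke Lemma \ref{loc.15} to choose a projective trivial fibration $\sX\to \cF$ with each $\sX_n$ a coproduct of representable presheaves in $\lSm/S$; then $\cF\simeq \colimit_{[n]\in \Delta^{op}}\sX_n$ in $\infPsh(\lSm/S,\Spc)$. Put $\cG_n:=\cG\times_\cF \sX_n$, with the fiber product taken in $\Psh(\lSm/S)$; since $\cG$, $\cF$, and $\sX_n$ are $0$-truncated this also computes the fiber product in $\infPsh(\lSm/S,\Spc)$, and since colimits are universal in that $\infty$-topos we obtain $\colimit_{[n]\in \Delta^{op}}\cG_n\simeq \cG\times_\cF \cF\simeq \cG$, under which $f$ corresponds to the colimit over $\Delta^{op}$ of the maps $\cG_n\to \sX_n$. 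As $L_{sNis\cup \A^1}$ preserves colimits, it suffices to show that each $\cG_n\to \sX_n$ is an $(sNis\cup \A^1)$-local equivalence. Writing $\sX_n=\coprod_\alpha X_\alpha$ with $X_\alpha\in \lSm/S$ and structure maps $u_\alpha\colon X_\alpha\to \cF$, we get $\cG_n\simeq \coprod_\alpha (\cG\times_\cF X_\alpha)$, and a coproduct of $(sNis\cup \A^1)$-local equivalences is again one, so the claim reduces to showing that $\cG\times_\cF X\to X$ is an $(sNis\cup \A^1)$-local equivalence for every $u\colon X\to \cF$ with $X\in \lSm/S$.

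Now fix such a $u$, and let $p\colon X\to S$ be the structure morphism. By the hypothesis applied to $u$, the morphism $p^*\cG\times_{p^*\cF}X\to X$ is an $(sNis\cup \A^1)$-local equivalence in $\infPsh(\lSm/X,\Spc)$. Composition with $p$ defines a functor $\lSm/X\to \lSm/S$ carrying strict Nisnevich distinguished squares to strict Nisnevich distinguished squares and $\A^1$-projections to $\A^1$-projections, so by Construction \ref{shv.3} the colimit preserving functor $p_\sharp\colon \infPsh(\lSm/X,\Spc)\to \infPsh(\lSm/S,\Spc)$ sends $(sNis\cup \A^1)$-local equivalences to $(sNis\cup \A^1)$-local equivalences. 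Applying Lemma \ref{loc.13} to the projection $\cG\times_\cF X\to X$ in $\Psh(\lSm/S)$, together with the identifications $p^*(\cG\times_\cF X)\times_{p^*X}X\simeq p^*\cG\times_{p^*\cF}X$ and $p_\sharp X\simeq X$, identifies $p_\sharp$ of the morphism $p^*\cG\times_{p^*\cF}X\to X$ with $\cG\times_\cF X\to X$. Hence the latter is an $(sNis\cup \A^1)$-local equivalence, which completes the reduction and therefore the proof.

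The step I expect to be most delicate is the last identification: one has to check that the canonical equivalence of Lemma \ref{loc.13} is natural in the object over $X$ and compatible with the projection to $X$, and that the pasting law for pullbacks really produces $p^*(\cG\times_\cF X)\times_{p^*X}X\simeq p^*\cG\times_{p^*\cF}X$ with the structure map to $p^*\cF$ matching the one in the statement. Everything else---universality of colimits in $\infPsh(\lSm/S,\Spc)$, the stability of $(sNis\cup \A^1)$-local equivalences under coproducts and under $\Delta^{op}$-indexed colimits, and the preservation of the relevant morphisms by $p_\sharp$---is formal.
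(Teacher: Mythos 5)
Your proposal is correct and follows essentially the same route as the paper's proof: reduce to the representable case via the simplicial resolution of Lemma \ref{loc.15}, then use that $p_\sharp$ preserves $(sNis\cup\A^1)$-local equivalences together with the identification $p_\sharp(p^*\cG\times_{p^*\cF}X)\simeq p_\sharp(p^*(\cG\times_\cF X)\times_{p^*X}X)\simeq \cG\times_\cF X$ from Lemma \ref{loc.13}. The paper records exactly this chain of equivalences; your version just spells out the reduction step in more detail.
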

\begin{proof}
Argue as in \cite[Corollaire 4.5.40]{Ayo072}.
\end{proof}

\subsection{Study of \texorpdfstring{$\nu_{X,s}$}{PsiXs}}
\label{study}
Throughout this subsection, we fix a commutative triangle
\[
\begin{tikzcd}
&X\ar[d,"f"]
\\
Z\ar[ru,"s"]\ar[r,"i"]&
S
\end{tikzcd}
\]
in $\lSch$ such that $i$ is a strict closed immersion and $f$ is log smooth.
We imitate the proof of \cite[Proposition 4.5.42]{Ayo072} in this subsection.
Let $\nu_{X,s}$ be the presheaf on $\lSm/S$ given by
\[
\nu_{X,s}(Y)
:=
\left\{
\begin{array}{ll}
\Hom_S(Y,X)\times_{\Hom_Z(Y\times_S Z,Y\times_S X)}*&\text{if }Y\times_S Z\neq \emptyset
\\
* &\text{if }Y\times_S Z= \emptyset,
\end{array}
\right.
\]
for $Y\in \lSm/S$, where the morphism $*\to \Hom_Z(Y\times_S Z,Y\times_S X)$ appeared in the formulation is given by the composite
\[
Y\times_S Z \to Z \xrightarrow{\Gamma_s} X\times_S Z,
\]
and the first (resp.\ second) morphism is the projection (resp.\ graph morphism).

\begin{lem}
\label{loc.5}
Let
\[
\begin{tikzcd}
&X'\arrow[d,"g"]\\
Z\arrow[ru,"s'"]\arrow[r,"s"]&X
\end{tikzcd}
\]
be a commutative triangle such that $g$ is log \'etale.
Then the evident morphism
\[
\nu_{X',s'}\rightarrow \nu_{X,s}
\]
becomes an isomorphism after strict Nisnevich sheafifications. 
\end{lem}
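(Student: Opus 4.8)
The plan is to reduce the claim to a statement about henselian local fs log schemes and then solve a lifting problem there. First I would recall, by Lemma \ref{loc.15}, that to check a morphism of presheaves becomes an isomorphism after strict Nisnevich sheafification, it suffices to check that it is an isomorphism on stalks, i.e. that for every henselian local fs log scheme $U$ in $\lSm/S$ the induced map $\Psi_{X',s'}(U)\to\Psi_{X,s}(U)$ is a bijection of sets (both sides being, by construction, either a point or an explicit fiber product of $\Hom$-sets). The case $U\times_S Z=\emptyset$ is trivial since both values are $*$, so I may assume $U\times_S Z\neq\emptyset$; since $U$ is henselian local, $U\times_S Z$ is a nonempty henselian local strict closed subscheme of $U$.

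The core is then the following: given a section $\sigma\colon U\to X$ over $S$ whose restriction to $U\times_S Z$ is the tautological one coming from $s$ (this is an element of $\Psi_{X,s}(U)$), I must show it lifts uniquely along $g$ to a section $U\to X'$ whose restriction to $U\times_S Z$ is the one coming from $s'$. Pulling the triangle back along $\sigma$, I get a commutative triangle with $U\times_S Z\to U$ a strict closed immersion, $U\times_X X'\to U$ log étale (base change of $g$), $\ul U$ henselian, and a section $U\times_S Z\to U\times_X X'$ determined by $s'$; now Lemma \ref{loc.12} applies verbatim and produces a unique section $U\to U\times_X X'$ extending it, which is exactly the required unique lift $U\to X'$. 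This establishes that $\Psi_{X',s'}(U)\to\Psi_{X,s}(U)$ is a bijection for all henselian local $U$, hence the map of strict Nisnevich sheaves is an isomorphism.

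The one point requiring a little care — and the main obstacle — is matching the fiber-product description of $\Psi$ with the lifting problem on nose: I must check that a $U$-point of $\Psi_{X,s}$ is genuinely the same data as an $S$-morphism $U\to X$ together with the \emph{identification} of its restriction over $U\times_S Z$ with $\Gamma_s$ composed with the projection, and likewise for $X'$; and that under this dictionary the map $\Psi_{X',s'}\to\Psi_{X,s}$ is precisely post-composition with $g$. Once this bookkeeping is done, the uniqueness in Lemma \ref{loc.12} gives injectivity and its existence gives surjectivity, so no further argument is needed. I would also remark that strictness of the closed immersion $U\times_S Z\to U$ is automatic because $i$ is strict and strictness is stable under base change, which is what lets us invoke Lemma \ref{loc.12} directly.
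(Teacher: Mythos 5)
Your proposal is correct and follows essentially the same route as the paper: reduce to henselian local points of the strict Nisnevich topology, pull the log \'etale morphism $g$ back along a given section $U\to X$, and invoke Lemma \ref{loc.12} to produce the unique lift. One small correction: Lemma \ref{loc.15} concerns simplicial resolutions by representables and does not justify the reduction to stalks; that reduction is the standard fact that strict Nisnevich sheafification is detected on henselizations of objects of $\lSm/S$, which is exactly how the paper argues (compare the proof of Lemma \ref{loc.1}).
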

\begin{proof}
It suffices to construct the inverse of
\begin{equation}
\label{loc.5.1}
\nu_{X',s'}(Y)\rightarrow \nu_{X,s}(Y)
\end{equation}
for any henselization $Y$ of an fs log scheme in $\lSm/S$.
Argue as in \cite[\'{E}tape 2 in Proposition 4.5.42]{Ayo072},
but use Lemma \ref{loc.12} also.
\end{proof}

\begin{lem}
\label{loc.6}
The morphism
\[
\nu_{X,s}\to S
\]
is an $(\sNis\cup \A^1)$-local equivalence in $\infPsh(\lSm/S,\Spc)$.
\end{lem}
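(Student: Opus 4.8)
The plan is to reduce the statement, via a sequence of local computations, to the case where the closed immersion $i$ admits an especially simple shape and then exhibit $\Psi_{X,s}$ as an iterated affine-bundle-like construction over $S$. First I would invoke Lemma~\ref{loc.7} together with Lemma~\ref{loc.3}: since the property of being an $(sNis\cup\A^1)$-local equivalence can be detected strict-Nisnevich-locally on $S$ and can be checked after base change along representables $X\to \cF$, it suffices to prove the claim Zariski-locally on $S$. Next I would use Lemma~\ref{loc.14} to reduce to the situation where $f\colon X\to S$ is, Zariski-locally, of the standard log smooth form $X=S\times_{\A_P}\A_Q\times\A^n$ with $\theta\colon P\to Q$ injective and $\operatorname{coker}(\theta^\gp)$ torsion free, and where the section $s$ is cut out inside this model by finitely many coordinate functions whose differentials are linearly independent. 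At this point Lemma~\ref{loc.5} is the crucial flexibility tool: since $\Psi_{X,s}$ depends on $(X,s)$ only up to strict Nisnevich sheafification along log étale modifications of $X$, I can replace $X$ by a convenient log étale neighborhood of $s(Z)$ without changing the sheaf $\Psi_{X,s}$.

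The heart of the argument is then a direct analysis of $\Psi_{X,s}$ in this standard model. Writing $X$ as (étale-locally) an affine space bundle over the log part $S\times_{\A_P}\A_Q$ — using that the torsion-freeness of $\operatorname{coker}(\theta^\gp)$ forces the chart homomorphism to split after a localization — one sees that $\Psi_{X,s}$ fibers over the corresponding presheaf built from the log part, with fibers given by the "sections agreeing to first order along $Z$" construction for an ordinary affine space. For the affine-space factor this is a classical computation: the presheaf of sections of $\A^n_S\to S$ that restrict along the closed immersion $Z\hookrightarrow S$ to a prescribed section is representable by an affine space over $S$, hence $\A^1$-contractible, so the structure map to $S$ is an $\A^1$-local equivalence. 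For the log factor $S\times_{\A_P}\A_Q$, the key point is that after the reductions above the relevant $\Psi$-presheaf is itself built from the monoid homomorphism $\theta$, and one checks using the splitting of $\theta^\gp$ and the verticality/face combinatorics of Subsection~\ref{vertical_boundaries} that it again becomes, strict-Nisnevich-locally, an iterated $\A^1$-bundle (or $\G_m$-bundle absorbed by the log structure) over $S$. Combining the two factors and propagating through the fibration gives that $\Psi_{X,s}\to S$ is an $(sNis\cup\A^1)$-local equivalence.

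The main obstacle I anticipate is the bookkeeping in the log factor: unlike the purely scheme-theoretic affine-space case, where $\A^1$-contractibility is immediate, here one must verify that the "first-order sections" presheaf attached to the chart $\theta\colon P\to Q$ is genuinely an affine-bundle-type object and not merely a twisted form of one, and that the strict Nisnevich sheafification in Lemma~\ref{loc.5} is enough to trivialize it. This is where the hypotheses extracted from Lemma~\ref{loc.21} (injectivity of $\theta$ and torsion-freeness of $\operatorname{coker}\theta^\gp$) and Lemma~\ref{loc.12} (unique infinitesimal lifting for log étale maps over henselian bases) must be used in tandem: the former guarantees a local splitting making the model an honest product, and the latter guarantees that the section $s$ extends uniquely across the relevant étale neighborhoods so that the fibration structure on $\Psi_{X,s}$ is well behaved. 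Once these are in place, the remaining steps are the routine colimit and base-change manipulations already packaged in Lemmas~\ref{loc.7} and~\ref{loc.13}.
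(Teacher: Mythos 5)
Your proposal has the right outer shell (localize on $S$ and $X$, put $f$ in chart form, use Lemma \ref{loc.5} to move along log \'etale modifications, finish with the classical scaling homotopy on an affine-space factor), but it is missing the one idea that makes the log case work, and you have correctly identified the missing piece as your ``main obstacle'' without resolving it. The paper does \emph{not} analyze a residual ``log factor'' $S\times_{\A_P}\A_Q$ at all. Instead, it exactifies the chart of the strict closed immersion $Z\to X$: writing the chart of $Z\to X\to S$ as $P\xrightarrow{\theta}Q\xrightarrow{\eta}P'$, it factors $\eta$ as $Q\xrightarrow{\eta'}Q'\xrightarrow{\eta''}P'$ with $\eta'^{\gp}$ an isomorphism and $\eta''$ exact (Ogus, Proposition I.4.2.17), passes to the log \'etale cover $X'=X\times_{\A_Q}\A_{Q'}$, and observes that $\ol{\eta''}$ is then an isomorphism, so $s$ lifts to a strict closed immersion into $X'$ landing inside the maximal open subscheme $U\subset X'$ that is \emph{strict} over $S$. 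Lemma \ref{loc.5} then replaces $X$ by $U$, so the log structure relative to $S$ disappears entirely, and only the purely scheme-theoretic Morel--Voevodsky situation $(X,s)=(\A_S^n,s_0)$ with its explicit $\A^1$-contraction $(x,t)\mapsto tx$ remains. Your alternative route --- showing that the $\Psi$-presheaf attached to $S\times_{\A_P}\A_Q$ is ``strict-Nisnevich-locally an iterated $\A^1$-bundle'' --- is not substantiated and I do not believe it can be: a morphism $Y\to\A_Q$ over $\A_P$ is a datum in the monoid $\cM_Y$, which carries no linear structure to scale along, so there is no evident contraction of that factor; the whole point of step (IV) of the paper's proof is to make this factor vanish near $s(Z)$ rather than to contract it.

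Two smaller misattributions: Lemma \ref{loc.7} is not used inside the proof of Lemma \ref{loc.6} (it is the device by which Theorem \ref{loc.8} \emph{reduces to} Lemma \ref{loc.6}; here the target $S$ is already representable, and locality on $S$ is handled by Lemma \ref{loc.3} alone), and Lemma \ref{loc.14} is about extending log smooth schemes from $Z$ to $S$, not about putting a given $X\to S$ into chart form --- the latter comes from the chart criterion for log smoothness plus \cite[Corollaire IV.17.12.2(d)]{EGA} for the strict smooth case. These would be easy to fix, but the exactification step is a genuine gap.
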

\begin{proof}
(I) \emph{Locality on $S$}.
Let $\{p_i\colon S_i\to S\}_{i\in I}$ be a strict Nisnevich cover.
By Lemma \ref{loc.3}, it suffices to show that $p_i^*\nu_{X,s}\to S_i$ is an $(\sNis\cup \A^1)$-local equivalence.
There is a canonical isomorphism $p_i^*\nu_{X,s}\simeq \nu_{X_i,s_i}$ if $X_i:=X\times_S S_i$ and $s_i$ is the pullback of $s$.
Hence the question is strict Nisnevich local.
In particular, we may assume that $S$ has a chart.

(II) \emph{Locality on $X$}.
Let $\{X_i\to X\}_{i\in I}$ be a Zariski cover.
We set $Z_i:=Z\times_X X_i$ and $S_i:=S-(Z-Z_i)$.
Then $Z_i\simeq Z\times_S S_i$.
We have the induced commutative diagram
\begin{equation}
\label{loc.6.2}
\begin{tikzcd}
Z_i\ar[d,"t_i''"']\ar[rd,"t_i'"']\ar[rrd,"t_i"]
\\
X_i\times_S S_i\ar[r]&
X\times_S S_i\ar[r]&
S_i.
\end{tikzcd}
\end{equation}
We reduce to the case
\[
(S,X,Z,s)=(S_i,X\times_S S_i,Z_i,t_i')
\]
by (I).
Furthermore, we reduce to the case
\begin{equation}
\label{loc.6.1}
(S,X,Z,s)=(S_i,X_i\times_S S_i,Z_i,t_i'')
\end{equation}
by Lemma \ref{loc.5}.

(III) Choose a Zariski cover $\{X_i\to X\}_{i\in I}$ such that each composition $X_i\to S$ has a chart.
The same is true for the projection $X_i\times_S S_i\to S_i$ in \eqref{loc.6.2}.
By the reduction \eqref{loc.6.1}, we may assume that $Z\to X\to S$ has a chart $P\xrightarrow{\theta} Q\xrightarrow{\eta} P'$ such that the induced morphism $X\to S\times_{\A_P} \A_Q$ is strict \'etale, $\theta$ is injective, and the cokernel of $\theta^{\gp}$ is finite and invertible in $\cO_X$.

(IV)
By \cite[Proposition I.4.2.17]{Ogu}, $\eta$ admits a factorization
\[
Q\xrightarrow{\eta'} Q'\xrightarrow{\eta''} P'
\]
such that $\eta'^\gp$ is an isomorphism and $\eta''$ is exact.
We set $X':=X\times_{\A_Q}\A_{Q'}$.
The projection $g\colon X'\to X$ is log \'etale.
Since $\overline{\eta''}$ is surjective, \cite[Proposition I.4.2.1(3),(5)]{Ogu} implies that $\overline{\eta''}$ is an isomorphism.
It follows that we have an induced commutative triangle
\[
\begin{tikzcd}
&X'\arrow[d,"g"]\\
Z\arrow[r,"s"]\arrow[ru,"s'"]&X
\end{tikzcd}
\]
in $\lSch$ such that $s'$ is strict closed immersion.
Together with \cite[Lemma A.11.2]{logDM}, we can choose a maximal open subscheme $U$ of $X'$ that is strict over $S$.
Since $i$ is strict, $U$ contains $Z$.
Hence there is a commutative triangle
\[
\begin{tikzcd}
&U\arrow[d]\\
Z\arrow[r,"s"]\arrow[ru]&X
\end{tikzcd}
\]
such that the vertical morphism is strict \'etale.
By Lemma \ref{loc.5}, we can replace $X$ by $U$.
Hence we reduce to the case when $X$ is strict over $S$.
In this case, $X$ is strict smooth over $S$.

(V) By \cite[Corollaire IV.17.12.2(d)]{EGA}, we can choose a Zariski cover $\{X_i\to X\}_{i\in I}$ such that each $X_i\to S$ has a factorization
\[
X_i\xrightarrow{u_i} \A_S^n \to S
\]
such that $u_i$ is strict \'etale, the second morphism is the projection,
and the induced morphism $Z\times_X X_i\to \A_S^n$ factors through the zero section $S\to \A_S^n$.
The projection $X_i\times_S S_i \to S_i$ in \eqref{loc.6.2} has a factorization
\[
X_i\times_S S_i \to \A_{S_i}^n \to S_i.
\]
By the reduction \eqref{loc.6.1}, we reduce to the case when $f$ has a factorization
\[
X\xrightarrow{u}\A_S^n\to S
\]
such that $u$ is strict \'etale, the second morphism is the projection, and $us\colon Z\to \A_S^n$ is the zero section.

(VI)
By Lemma \ref{loc.5} again, we may assume $(X,s)=(\mathbb{A}_S^n,s_0)$, where $s_0\colon Z\rightarrow \mathbb{A}_S^n$ is the zero section.
We have a morphism
\begin{equation}
\label{loc.6.3}
\nu_{\mathbb{A}_S^n,0}\times\mathbb{A}_S^1
\to
\nu_{\mathbb{A}_S^n,0}
\end{equation}
of presheaves that maps $(f,t)\in \nu_{\mathbb{A}_S^n,0}(Y)\times\mathbb{A}_S^1(Y)$ for $Y\in \lSm/S$ to the composite
\[
\begin{tikzcd}
Y\arrow{r}&\mathbb{A}_S^n
\times_S\mathbb{A}_S^1\arrow{rrrr}{(x_1,\ldots,x_n,t)\mapsto
(tx_1,\ldots,tx_n)}&&&&\mathbb{A}_S^n
\end{tikzcd}
\]
Use this $\A^1$-homotopy to complete the proof.
\end{proof}

\subsection{Proof of the localization property}
\label{proof}

In this subsection, we fix a strict closed immersion $i\colon Z\to S$ in $\lSch$, and let $j\colon U \to S$ be its open complement.
Recall that $\tau$ is one of $\sNis$, $\setale$, and $\ketale$,
and see \S \ref{topology} for $\ul{\tau}$ and $\dtau$.

\begin{thm}
\label{loc.8}
For every $\cF\in (\A^1)^{-1}\infShv_{\tau}(\lSm/S,\Spc)$, the commutative square
\begin{equation}
\label{loc.8.1}
\begin{tikzcd}
j_\sharp j^* \cF\ar[d]\ar[r]&
\cF\ar[d]
\\
U\ar[r]&
i_*i^*\cF
\end{tikzcd}
\end{equation}
is cocartesian in $(\A^1)^{-1}\infShv_{\tau}(\lSm/S,\Spc)$, where the lower horizontal morphism is given by the unique object of $\Map(U,i_*i^*\cF)\simeq \Map(\emptyset,i^*\cF)$.
\end{thm}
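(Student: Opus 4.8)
The plan is to transpose the proof of the Morel--Voevodsky localization property \cite[Theorem 2.21]{MV}, in the form organized by Ayoub \cite{Ayo07} and Cisinski--D\'eglise \cite{CD19}, into the present setting, using the results of Subsections \ref{preliminary} and \ref{study} as the technical input. First I would observe that \eqref{loc.8.1} is cocartesian if and only if the canonical comparison map $\cF\sqcup_{j_\sharp j^*\cF}M(U)\to i_*i^*\cF$ is an equivalence. Since $i$ is a strict closed immersion, hence strict finite, Lemmas \ref{loc.1} and \ref{loc.2} show that the presheaf-level functor $i_*$ preserves $(sNis\cup\A^1)$-local equivalences; the functors $j_\sharp$, $j^*$, and $i^*$ preserve such equivalences as well, being colimit-preserving and carrying strict Nisnevich distinguished squares and $\A^1$-projections to ones of the same kind. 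Hence it suffices to prove that the comparison map, formed with the presheaf-level functors on $\infPsh(\lSm/S,\Spc)$, is an $(sNis\cup\A^1)$-local equivalence.

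Next I would reduce to the case that $\cF$ is representable. By Lemma \ref{loc.15} one may write $\cF\simeq\colimit_{[n]\in\Delta^{op}}\sX_n$ with each $\sX_n$ a coproduct of representables. Since $\Delta^{op}$ has a terminal object, the constant diagram with value $M(U)$ has colimit $M(U)$; as $j_\sharp j^*$ and $i_*i^*$ commute with all colimits of presheaves, the comparison map for $\cF$ is the colimit over $\Delta^{op}$ of the comparison maps for the $\sX_n$, and a routine bookkeeping over coproducts (using that any strict Nisnevich sheaf takes the value $*$ on the empty log scheme, so that the lower horizontal map of \eqref{loc.8.1} is canonically determined) reduces us to $\cF=M(Y)$ for a single $Y\in\lSm/S$. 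In this case the comparison map reads $M(Y)\sqcup_{M(Y\times_S U)}M(U)\to i_*M(Y\times_S Z)$.

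For this last map I would invoke Lemma \ref{loc.7}: it suffices to check, for every $X\in\lSm/S$ with structure morphism $p\colon X\to S$ and every morphism $u\colon X\to i_*M(Y\times_S Z)$, that the map of presheaves on $\lSm/X$ obtained by base change along $u$ is an $(sNis\cup\A^1)$-local equivalence. By adjunction $u$ amounts to a morphism $\sigma$ from $X\times_S Z$ to $Y\times_S Z$ over $Z$; viewing $(Y\times_S X)\times_X(X\times_S Z)$ inside $Y\times_S X$, this produces a section $\widehat{\sigma}$ over $X\times_S Z$ of the log smooth morphism $Y\times_S X\to X$, that is, a commutative triangle exactly as in Subsection \ref{study} with base $X$, closed part $X\times_S Z$, and total space $Y\times_S X$. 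Using that $p^*$ is a restriction functor, together with the ensuing smooth base change isomorphisms for $i_*$ and $j_\sharp$, one computes the base-changed comparison map levelwise and identifies it with the morphism $\Psi_{Y\times_S X,\widehat{\sigma}}\to X$; the point to verify carefully is that on objects $W$ with $W\times_S Z=\emptyset$ the base-changed pushout collapses to $*$, which is precisely the clause built into the definition of $\Psi_{X,s}$. Lemma \ref{loc.6} then gives that $\Psi_{Y\times_S X,\widehat{\sigma}}\to X$ is an $(sNis\cup\A^1)$-local equivalence, completing the argument.

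The main obstacle, I expect, is the identification in the third step of the base-changed pushout $M(Y)\sqcup_{M(Y\times_S U)}M(U)$ with the presheaf $\Psi_{Y\times_S X,\widehat{\sigma}}$: this is where the geometry of the strict closed immersion, the way the definition of $\Psi_{X,s}$ already accounts for the locus over $U$, and the base change compatibilities of $i_*$ and $j_\sharp$ all have to be matched up. The first two steps, while requiring some care with sheafification and with the constant term $M(U)$ of the pushout, are formal once Lemmas \ref{loc.1}, \ref{loc.2}, \ref{loc.7}, and \ref{loc.15} are available.
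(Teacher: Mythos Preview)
Your proposal is correct and follows essentially the same route as the paper: reduce to representable $\cF$, invoke Lemma~\ref{loc.7}, identify the base-changed comparison map with the presheaf $\Psi$ of Subsection~\ref{study}, and conclude by Lemma~\ref{loc.6}. Two small remarks: first, $\Delta^{op}$ does not have a terminal object (rather $[0]$ is initial there), but your conclusion that the colimit of the constant diagram at $M(U)$ is $M(U)$ is still correct because $\Delta^{op}$ is weakly contractible; second, the paper carries out the reduction to representables more directly via Lemma~\ref{shv.5} (checking a natural transformation of colimit-preserving functors on generators), bypassing the simplicial resolution argument you give.
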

\begin{proof}
Since $i_*$ preserves colimits,
we reduce to the case when $\cF=X$ for some $X\in \lSm/S$.

In this case, we need to show that the morphism
\[
X\amalg_{X\times_S U} U \to i_*(X\times_S Z)
\]
is an $(\tau\cup \A^1)$-local equivalence.
Owing to Lemma \ref{loc.7}, it suffices to show that for any morphism $p\colon V\to S$ in $\lSm/S$ with a morphism $s\colon V\to i_*(X\times_S Z)$, the induced morphism
\[
\nu_{X,V,s}:=p^*(X\amalg_{X\times_S U} U)\times_{p^*i_*(X\times_S Z)}V \to V
\]
is an $(\tau\cup \A^1)$-local equivalence.
Argue as in \cite[Paragraphs in front of Proposition 4.5.42]{Ayo072},
but use Lemma \ref{loc.6} also.
\end{proof}

\begin{lem}
\label{loc.20}
For $X\in \lSm/S$, there is a cofiber sequence
\begin{equation}
\label{loc.20.1}
U_+
\to
(i_*(X\times_S Z))_+
\to
i_*((X\times_S Z)_+)
\end{equation}
in $(\A^1)^{-1}\infShv_{\tau}(\lSm/S,\Spc_*)$.
We have similar results for $\Spt$ and $\DLambda$.
\end{lem}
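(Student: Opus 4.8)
\emph{Plan.} The statement should follow from Theorem \ref{loc.8} by applying the basepoint‑addition functor and manipulating cofibers. First I would apply Theorem \ref{loc.8} to $\cF=X\in\lSm/S$. Since $j^*X\simeq X\times_S U$, $j_\sharp j^*X\simeq X\times_S U$ viewed over $S$, and $i_*i^*X\simeq i_*(X\times_S Z)$, we obtain a cocartesian square
\[
\begin{tikzcd}
X\times_S U\ar[d]\ar[r]&
X\ar[d]
\\
U\ar[r]&
i_*(X\times_S Z)
\end{tikzcd}
\]
in $(\A^1)^{-1}\infShv_{sNis}(\lSm/S,\Spc)$. The functor $(-)_+$ from $(\A^1)^{-1}\infShv_{sNis}(\lSm/S,\Spc)$ to $(\A^1)^{-1}\infShv_{sNis}(\lSm/S,\Spc_*)$ is left adjoint to the functor forgetting the basepoint, hence preserves colimits; moreover it commutes with $j^*$ and $j_\sharp$, both being compatible with the left adjoint $(-)_+$. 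Applying it gives a cocartesian square with corners $(X\times_S U)_+$, $X_+$, $U_+$, $(i_*(X\times_S Z))_+$ in $(\A^1)^{-1}\infShv_{sNis}(\lSm/S,\Spc_*)$. Since a cocartesian square in a pointed $\infty$-category identifies the cofibers of each pair of its parallel arrows, we get
\[
\mathrm{cofib}\bigl(U_+\to (i_*(X\times_S Z))_+\bigr)
\simeq
\mathrm{cofib}\bigl((X\times_S U)_+\to X_+\bigr).
\]

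Next I would construct the third term of the desired cofiber sequence. Because $i^*$ is symmetric monoidal and commutes with $(-)_+$, we have $i^*(X_+)\simeq (i^*X)_+\simeq (X\times_S Z)_+$, so the unit gives $X_+\xrightarrow{ad}i_*i^*X_+\simeq i_*((X\times_S Z)_+)$. Its restriction along $(X\times_S U)_+\to X_+$ factors through $i_*i^*\bigl((X\times_S U)_+\bigr)\simeq i_*\bigl((X\times_S U\times_S Z)_+\bigr)=i_*(*)=*$, hence is nullhomotopic in $(\A^1)^{-1}\infShv_{sNis}(\lSm/S,\Spc_*)$; this induces a canonical map
\[
\varphi\colon \mathrm{cofib}\bigl((X\times_S U)_+\to X_+\bigr)\longrightarrow i_*\bigl((X\times_S Z)_+\bigr),
\]
and the lemma amounts to the assertion that $\varphi$ is an equivalence (equivalently, that $\mathrm{cofib}(j_\sharp j^*\cG\to\cG)\simeq i_*i^*\cG$ for $\cG=X_+$).

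The hard part is exactly this: it is the localization property for the pointed category evaluated on a representable object, and since $X$ is already representable there is no reduction via Lemma \ref{shv.5} available, so $\varphi$ being an equivalence cannot be obtained from Theorem \ref{loc.8} by a purely formal argument. I would prove it by the same method that proves Theorem \ref{loc.8}, run in the pointed setting: since $(-)_+$ is colimit preserving it carries $(sNis\cup\A^1)$-local equivalences of $\Spc$-valued presheaves to $(sNis\cup\A^1)$-local equivalences of $\Spc_*$-valued presheaves, so Lemmas \ref{loc.5}, \ref{loc.6} and \ref{loc.7} pass to pointed presheaves; feeding the pointed form of the key computation of $\Psi_{X,s}$ in Lemma \ref{loc.6} into the proof of Theorem \ref{loc.8} with $\cF$ replaced by $X_+$, and carrying the basepoint along, shows that the square with corners $(X\times_S U)_+$, $X_+$, $*$, $i_*((X\times_S Z)_+)$ is cocartesian, i.e. that $\varphi$ is an equivalence. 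Finally, the variants for $\Spt$ and $\DLambda$ follow in the same way, since $\Sigma_{S^1}^\infty$ and $\Lambda^{(-)}$ are colimit preserving and symmetric monoidal and Lemmas \ref{loc.1}--\ref{loc.7} were already established for all of $\Spc$, $\Spc_*$, $\Spt$ and $\DLambda$.
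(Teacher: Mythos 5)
Your reduction correctly identifies where the content lies: after applying the colimit-preserving functor $(-)_+$ to the cocartesian square of Theorem \ref{loc.8}, the lemma is exactly the assertion that $\cofib\bigl(U_+\to (i_*(X\times_S Z))_+\bigr)\simeq i_*((X\times_S Z)_+)$, i.e.\ a comparison of $(i_*\cG)_+$ with $i_*(\cG_+)$. But that comparison is precisely what you do not prove. Your plan is to re-run Lemmas \ref{loc.5}--\ref{loc.7} and the proof of Theorem \ref{loc.8} ``in the pointed setting,'' justified only by the remark that $(-)_+$ preserves local equivalences. That remark does not transport the machinery: Lemma \ref{loc.7} and the $\Psi_{X,s}$ analysis are formulated for presheaves of \emph{sets}, resolved by representables via Lemma \ref{loc.15}, and rely on colimits commuting with pullbacks in the topos of sheaves (\cite[Proposition II.4.3(1)]{SGA4}) --- none of which applies to the pointed quotient $X_+/(X\times_S U)_+$, which is not of the form $\cG_+$ for a presheaf of sets $\cG$. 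So the ``hard part'' you flag is left as an unsubstantiated claim, and in the paper's architecture it is also backwards: the pointed cofiber sequence (Theorem \ref{loc.17}) is \emph{deduced from} this lemma, not the other way around.

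What the paper actually does is far more elementary and entirely sectionwise. It introduces the minimal topology $t_\emptyset$ for which the only nontrivial sheaf condition is at $\emptyset$, and computes all three presheaves on an arbitrary $Y\in\lSm/S$: the point is that $(i_*(X\times_S Z))_+(Y)$ and $i_*((X\times_S Z)_+)(Y)$ agree (both being $*\amalg\Hom_Z(Y\times_S Z,X\times_S Z)$) except when $Y\times_S Z=\emptyset$, where the former is $*\amalg *$ and the latter collapses to $*$; the discrepancy is exactly $U_+(Y)$. This gives \eqref{loc.20.1} already in $\infShv_{t_\emptyset}(\lSm/S,\Spc_*)$, and one then applies $L_{\A^1}L_{sNis}$, using the analogues of Lemmas \ref{loc.1} and \ref{loc.2} to know that $i_*$ and hence the cofiber sequence survive the localization. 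No input from Theorem \ref{loc.8} or the $\Psi_{X,s}$ machinery is needed. To repair your argument you would either have to carry out this direct computation or genuinely establish the pointed analogue of Lemma \ref{loc.7}; as written, the proposal has a gap at its central step.
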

\begin{proof}
We focus on the case of $\Spc_*$ since the proofs are similar.

Let $t_\emptyset$ be the smallest topology on $\lSm/S$ containing the empty covering of $\emptyset$.
A presheaf $\cF$ of sets is a $t_\emptyset$-sheaf if and only if $\cF(\emptyset)=*$.
Using an argument similar to the argument establishing the cocartesian square \cite[(152)]{Ayo072},
we obtain the cofiber sequence \eqref{loc.20.1} in $\infShv_{t_\emptyset}(\lSm/S,\Spc_*)$.
Argue as in the proof of Lemma \ref{loc.1} to see that the functor
\[
i_*\colon \infShv_{t_\emptyset}(\lSm/Z,\Spc_*)\to \infShv_{t_\emptyset}(\lSm/X,\Spc_*)
\]
preserves colimits.
Apply $L_{\A^1}L_{\tau}$ to this, and use Lemma \ref{loc.2} to obtain the cofiber sequence \eqref{loc.20.1} in $(\A^1)^{-1}\infShv_{\tau}(\lSm/S,\Spc_*)$.
\end{proof}

\begin{thm}
\label{loc.17}
For every $\cF\in (\A^1)^{-1}\infShv_{\tau}(\lSm/S,\Spc_*)$, the sequence
\begin{equation}
\label{loc.17.1}
\begin{tikzcd}
j_\sharp j^* \cF
\xrightarrow{ad'}
\cF
\xrightarrow{ad}
i_*i^*\cF
\end{tikzcd}
\end{equation}
is a cofiber sequence in $(\A^1)^{-1}\infShv_{\tau}(\lSm/S,\Spc_*)$.
Similar results hold for $\Spt$ and $\DLambda$ too.
\end{thm}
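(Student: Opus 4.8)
The plan is to deduce Theorem~\ref{loc.17} from its unpointed counterpart Theorem~\ref{loc.8} together with the comparison supplied by Lemma~\ref{loc.20}, via a reduction to representable generators. I will spell out the case of $\Spc_*$; the cases of $\Spt$ and $\DLambda$ are handled by the identical argument using the corresponding versions of Theorem~\ref{loc.8} and Lemma~\ref{loc.20}, and in those stable cases a cofiber sequence is automatically a fiber sequence.

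First I would check that the composite $j_\sharp j^*\to \id\xrightarrow{ad}i_*i^*$ is canonically nullhomotopic, so that it gives rise to a natural transformation
\[
\alpha_\cF\colon \mathrm{cofib}\bigl(j_\sharp j^*\cF\xrightarrow{ad'}\cF\bigr)\to i_*i^*\cF,
\]
and the theorem becomes the assertion that $\alpha_\cF$ is an equivalence. For the nullhomotopy: if $Y\in\lSm/U$ then $Y\times_S Z\simeq\emptyset$ because $U\times_S Z=\emptyset$, so $i^*j_\sharp$ vanishes on each $Y_+$; since $i^*j_\sharp$ preserves colimits and the objects $Y_+$ generate $(\A^1)^{-1}\infShv_{sNis}(\lSm/U,\Spc_*)$, it follows that $i^*j_\sharp\simeq 0$, hence $\Map(j_\sharp j^*\cF,i_*i^*\cF)\simeq\Map(i^*j_\sharp j^*\cF,i^*\cF)$ is contractible. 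Both functors $\cF\mapsto\mathrm{cofib}(j_\sharp j^*\cF\to\cF)$ and $\cF\mapsto i_*i^*\cF$ preserve colimits: the former because $j_\sharp j^*$ does and a pointwise finite colimit is computed pointwise, the latter because $i^*$ does and $i_*$ preserves colimits on $(\A^1)^{-1}\infShv_{sNis}(\lSm/-,\Spc_*)$ by Lemma~\ref{loc.2} and~\eqref{loc.2.1}, a strict closed immersion being strict finite. Since the objects $X_+$ for $X\in\lSm/S$ generate $(\A^1)^{-1}\infShv_{sNis}(\lSm/S,\Spc_*)$, Lemma~\ref{shv.5} reduces the problem to the case $\cF=X_+$.

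For $\cF=X_+$ one computes $j_\sharp j^*X_+\simeq(X\times_S U)_+$ and $i^*X_+\simeq(X\times_S Z)_+$. Applying the colimit-preserving functor $(-)_+$ to the cocartesian square of Theorem~\ref{loc.8} for the representable $X$ produces a cocartesian square
\[
\begin{tikzcd}
(X\times_S U)_+\ar[r]\ar[d]&X_+\ar[d]\\
U_+\ar[r]&(i_*(X\times_S Z))_+
\end{tikzcd}
\]
in $(\A^1)^{-1}\infShv_{sNis}(\lSm/S,\Spc_*)$. Hence the cofiber of its upper horizontal map is equivalent to the cofiber of its lower horizontal map $U_+\to(i_*(X\times_S Z))_+$, which by Lemma~\ref{loc.20} is $i_*((X\times_S Z)_+)=i_*i^*X_+$. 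This yields an equivalence $\mathrm{cofib}(j_\sharp j^*X_+\to X_+)\simeq i_*i^*X_+$, and it remains to identify it with $\alpha_{X_+}$.

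The one point that needs genuine care is precisely this last identification: one must verify that the equivalence just obtained from ``cofiber of the upper row $=$ cofiber of the lower row, then Lemma~\ref{loc.20}'' is the transformation $\alpha$, i.e.\ that under it the cofiber inclusion $X_+\to\mathrm{cofib}(j_\sharp j^*X_+\to X_+)$ corresponds to $ad\colon X_+\to i_*i^*X_+$, rather than being merely some abstract equivalence of the two objects. This is a diagram chase comparing the cofiber maps of the two rows of the square against the map of Lemma~\ref{loc.20} and the unit of $(i^*,i_*)$, using only the naturality of Theorem~\ref{loc.8} in $X$ and of the cofiber sequence of Lemma~\ref{loc.20}. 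Apart from this bookkeeping the argument is formal, the substantive content having already been absorbed into Theorem~\ref{loc.8} and Lemma~\ref{loc.20}.
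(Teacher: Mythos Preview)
Your proposal is correct and follows essentially the same route as the paper: reduce via Lemma~\ref{shv.5} to $\cF=X_+$, apply $(-)_+$ to the cocartesian square of Theorem~\ref{loc.8}, and invoke Lemma~\ref{loc.20}. You supply more detail than the paper does---the explicit nullhomotopy of $i^*j_\sharp$, the justification that $i_*$ preserves colimits via Lemma~\ref{loc.2}, and the bookkeeping that the resulting equivalence agrees with $\alpha_{X_+}$---but none of this departs from the paper's argument.
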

\begin{proof}
We focus on the case of $(\A^1)^{-1}\infShv_{\tau}(\lSm/S,\Spc_*)$ since the proofs are similar.
Since $i_*$ preserves colimits,
we reduce to the case when $\cF=X_+$ for some $X\in \lSm/S$.
Then apply the colimit preserving functor $(-)_+$ to \eqref{loc.8.1} and use Lemma \ref{loc.20} to conclude.
\end{proof}

\begin{lem}
\label{loc.9}
The functor
\[
i_*
\colon
(\A^1)^{-1}\infShv_{\tau}(Z,\Spc)
\to
(\A^1)^{-1}\infShv_{\tau}(S,\Spc)
\]
sends $(\dtau\cup\ver)$-local equivalences to $(\dtau\cup\ver)$-local equivalences.
Similar results hold for $\Spc_*$, $\Spt$, and $\DLambda$ too.
\end{lem}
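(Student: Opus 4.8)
The plan is to verify that $L_{dNis\cup\ver}\circ i_*$ inverts a convenient generating set of $(dNis\cup\ver)$-local equivalences, so that $i_*$ descends to the $(dNis\cup\ver)$-localizations. First I would record that, by Lemmas \ref{loc.1} and \ref{loc.2} together with Construction \ref{shv.3}, the functor $i_*\colon(\A^1)^{-1}\infShv_{sNis}(\lSm/Z,\cU)\to(\A^1)^{-1}\infShv_{sNis}(\lSm/S,\cU)$ preserves colimits for each $\cU\in\{\Spc,\Spc_*,\Spt,\DLambda\}$; hence $L_{dNis\cup\ver}\circ i_*$ preserves colimits and the class of morphisms it inverts is strongly saturated. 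Since the dividing cd-structure is regular and quasi-bounded \cite[Proposition 3.3.30]{logDM}, and since the subclass $\ver'=\{\,U-\partial_S U\to U\,\}\subset\ver$ defines the same localization, the $(dNis\cup\ver)$-local equivalences over $Z$ form the strongly saturated class generated by the maps $M(Y')\to M(Y)$ for dividing covers $Y'\to Y$ in $\lSm/Z$ and the maps $M(Y-\partial_Z Y)\to M(Y)$ for $Y\in\lSm/Z$. It therefore suffices to show that $L_{dNis\cup\ver}\circ i_*$ inverts these two families.

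Next I would reduce both families to the case $Y=X\times_S Z$ with $X\in\lSm/S$. By Lemma \ref{loc.14} choose a Zariski cover $\{Y_a\to Y\}$ with $Y_a\simeq X_a\times_S Z$, $X_a\in\lSm/S$; for a dividing-cover generator, refine it further so that in addition each $Y'\times_Y Y_a\to Y_a$ is a pull-back $Y_a\times_{\A_{P_a}}\T_{\Sigma_a}\to Y_a$ of a toric subdivision along a chart $Y_a\to\A_{P_a}$ which extends to a chart $X_a\to\A_{P_a}$ along the strict closed immersion $Y_a\hookrightarrow X_a$, and put $X'_a:=X_a\times_{\A_{P_a}}\T_{\Sigma_a}$, a dividing cover of $X_a$ with $X'_a\times_S Z\simeq Y'\times_Y Y_a$. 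Passing to the \v{C}ech nerve of $\{Y_a\to Y\}$, and using that its terms are again of the form $X_n\times_S Z$ with $X_n\in\lSm/S$ (disjoint unions of open subschemes of the $X_a$), that in the dividing-cover case the corresponding terms of the \v{C}ech nerve of $\{Y'\times_Y Y_a\to Y'\}$ are of the form $X'_n\times_S Z$ with $X'_n\to X_n$ a dividing cover, and that $L_{dNis\cup\ver}\circ i_*$ preserves colimits, I reduce to the claim: for $X\in\lSm/S$ the morphism $i_*i^*M(X^*)\to i_*i^*M(X)$ is a $(dNis\cup\ver)$-local equivalence, where $X^*\to X$ is a dividing cover, resp.\ $X^*=X-\partial_S X$. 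Here I use the base-change equivalence $i^*M(X)\simeq M(X\times_S Z)$ and, in the second case, $Y-\partial_Z Y\simeq(X-\partial_S X)\times_S Z$ from Proposition \ref{vert.5}.

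For this last claim I would invoke the localization property already proved. By Theorem \ref{loc.8} the object $i_*i^*M(X)$ equals $M(X)\amalg_{M(X\times_S U)}M(U)$ when $\cU=\Spc$, and by Theorem \ref{loc.17} it equals $\cofib(M(X\times_S U)\to M(X))$ when $\cU\in\{\Spc_*,\Spt,\DLambda\}$; in either case this is a finite colimit depending on $X$ only through $M(X)$ and $M(X\times_S U)$, up to a fixed constant. Applying the colimit-preserving functor $L_{dNis\cup\ver}$, the morphism $L_{dNis\cup\ver}i_*i^*M(X^*)\to L_{dNis\cup\ver}i_*i^*M(X)$ is induced by $L_{dNis\cup\ver}M(X^*)\to L_{dNis\cup\ver}M(X)$ and by $L_{dNis\cup\ver}M(X^*\times_S U)\to L_{dNis\cup\ver}M(X\times_S U)$. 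In the dividing-cover case $X^*\to X$ and its base change $X^*\times_S U\to X\times_S U$ are dividing covers, hence inverted by $L_{dNis\cup\ver}$; in the other case $X-\partial_S X\to X$ and $(X-\partial_S X)\times_S U\simeq(X\times_S U)-\partial_S(X\times_S U)\to X\times_S U$ (the identification by Proposition \ref{vert.5}, using that $U\to S$ is strict) both lie in $\ver_S$ and are again inverted by $L_{dNis\cup\ver}$. In all cases the induced map of colimits is an equivalence, which proves the lemma.

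The step I expect to be the main obstacle is the refinement in the second paragraph, namely arranging a Zariski cover of $Y=X\times_S Z$ over which a given dividing cover of $Y$ extends to a dividing cover of $X$ restricting back to it. This rests on the local description of dividing covers (log modifications) as pull-backs of toric subdivisions and on the extendability of fs charts of $X\times_S Z$ to $X$ along the strict closed immersion $X\times_S Z\hookrightarrow X$, which is possible since $\overline{\cM}$ is unchanged along this immersion.
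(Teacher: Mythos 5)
Your proposal follows essentially the same route as the paper: reduce to the generating morphisms (dividing covers and $Y-\partial_Z Y\to Y$), lift $Y$ to some $X\in\lSm/S$ via Lemma \ref{loc.14} after Zariski/strict Nisnevich localization, express $i_*i^*M(X)$ through Theorem \ref{loc.8} (resp.\ Theorem \ref{loc.17}) as a pushout on the corners $M(X)$ and $M(X\times_S U)$, and compare the two pushouts after applying the colimit-preserving localization. The $\ver$-case and the final comparison are exactly as in the paper.

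The one step that is not justified as stated is your refinement claim for the dividing-cover generators: you assert that, after shrinking Zariski-locally, $Y'\times_Y Y_a\to Y_a$ \emph{is} the pullback of a toric subdivision along a chart. The available input (\cite[Proposition A.11.5]{logDM}) only says that a dividing cover is \emph{dominated} by such a pullback, i.e.\ there is a subdivision $\Sigma$ with $Y'\times_{\A_P}\T_\Sigma\simeq Y\times_{\A_P}\T_\Sigma$; it does not identify $Y'$ itself with a pullback. The paper repairs exactly this point by a two-out-of-three argument: it introduces $Y'':=Y'\times_{\A_P}\T_\Sigma\simeq Y\times_{\A_P}\T_\Sigma$ and checks that both $i_*Y''\to i_*Y$ and $i_*Y''\to i_*Y'$ are local equivalences, thereby reducing to the case where the dividing cover genuinely is a pullback of a subdivision, which then lifts to $X':=X\times_{\A_P}\T_\Sigma$ over $S$. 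With that modification (and noting that the $X$ produced by Lemma \ref{loc.14} already carries a chart compatible with the chosen chart of $Y$, so the chart-extension issue you flag as the main obstacle is handled by the construction itself), your argument goes through.
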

\begin{proof}
We first treat the case of $\Spc$.
To show that $i_*$ preserves $\ver$-local equivalences, we need to show that
\begin{equation}
\label{loc.9.1}
i_*(Y-\partial_Z Y)\to i_*Y
\end{equation}
is a $\ver$-local equivalence for $Y\in \lSm/Z$.
This question is strict Nisnevich local on $Y$, so we may assume that there exists $X\in \lSm/S$ such that $X\times_S Z\simeq Y$ by Lemma \ref{loc.14}.

In this case, we have $i^*X\simeq Y$ and $i^*(X-\partial_S X)\simeq Y-\partial_Z Y$ by Proposition \ref{vert.5}.
Theorem \ref{loc.8} gives cocartesian squares
\[
\begin{tikzcd}
X\times_S U\ar[d]\ar[r]&
X\ar[d]
\\
U\ar[r]&
i_*i^*X,
\end{tikzcd}
\quad
\begin{tikzcd}
(X-\partial_S X)\times_S U\ar[d]\ar[r]&
X-\partial_S X\ar[d]
\\
U\ar[r]&
i_*i^*(X-\partial_S X).
\end{tikzcd}
\]
Compare these two to deduce that \eqref{loc.9.1} is a $\ver$-local equivalence.

Let $Y'\to Y$ be a dividing cover in $\lSm/Z$.
We need to show that
\begin{equation}
\label{loc.9.2}
i_*Y'\to i_*Y
\end{equation}
is a $\dtau$-local equivalence.
This question is $\tau$-local on $Y$ too, so we may assume that $Y$ admits a chart $P$.
By \cite[Proposition A.11.5]{logDM}, there exists a subdivision $\Sigma$ of $\Spec(P)$ such that 
\[
Y'':=Y'\times_{\A_P}\T_\Sigma
\simeq
Y\times_{\A_P}\T_\Sigma.
\]
We only need to show that the two induced morphisms $i_*Y''\to i_*Y$ and $i_*Y''\to i_*Y'$ are isomorphisms.
Replace $(Y,Y')$ by $(Y,Y'')$ and $(Y',Y'')$ to reduce to the case when $Y$ admits a morphism $Y\to \A_P$ with an fs monoid $P$ and $Y'=Y\times_{\A_P}\A_\Sigma$ for some subdivision $\Sigma$ of $\Spec(P)$.

We may further assume that there exists $X\in \lSm/S$ with $X\times_S Z\simeq Y$ by Lemma \ref{loc.14}.
We set $X':=X\times_{\A_P}\T_\Sigma$, then we have $Y'\simeq X'\times_S Z$.
Theorem \ref{loc.8} gives cocartesian squares
\[
\begin{tikzcd}
X\times_S U\ar[d]\ar[r]&
X\ar[d]
\\
U\ar[r]&
i_*i^*X,
\end{tikzcd}
\quad
\begin{tikzcd}
X'\times_S U\ar[d]\ar[r]&
X'\ar[d]
\\
U\ar[r]&
i_*i^*X'.
\end{tikzcd}
\]
Compare these two to deduce that \eqref{loc.9.2} is a $\dtau$-local equivalence.

For the cases of $\Spc_*$ and $\DLambda$, use Theorem \ref{loc.17} instead of Theorem \ref{loc.8}.
\end{proof}

Since \eqref{loc.2.1} preserves colimits, we have a colimit preserving functor
\begin{equation}
\label{loc.9.3}
i_*\colon \rH_{\tau}(Z)\to \rH_{\tau}(S)
\end{equation}
by Lemma \ref{loc.9}.

\begin{thm}
\label{loc.10}
For every  $\cF\in \rH_{\tau}(S)$, the square
\[
\begin{tikzcd}
j_\sharp j^*\cF\ar[d]\ar[r]&
\cF\ar[d]
\\
U\ar[r]&
i_*i^*\cF
\end{tikzcd}
\]
is cocartesian in $\rH_{\tau}(S)$.
\end{thm}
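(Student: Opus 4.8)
The plan is to deduce the square of Theorem~\ref{loc.10} by applying the localization functor
\[
L:=L_{dNis\cup\ver}\colon \cD_S\longrightarrow\cH(S),\qquad \cD_S:=(\A^1)^{-1}\infShv_{sNis}(\lSm/S,\Spc)
\]
(and analogously $\cD_Z$, $\cD_U$) to the cocartesian square of Theorem~\ref{loc.8}. Recall that $\cH$ is obtained from $\cD$ by inverting the dividing covers together with $\ver$, so $L$ is a colimit-preserving Bousfield localization with fully faithful right adjoint $\iota\colon\cH(S)\hookrightarrow\cD_S$ and $L\iota\simeq\id$, and that by Construction~\ref{shv.8} and the remark after Definition~\ref{comp.3} the functors $L$ form a functor of $\lSm$-premotivic $\infty$-categories over $\lSch$.

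First I would fix $\cF\in\cH(S)$ and feed $\iota\cF\in\cD_S$ into Theorem~\ref{loc.8}, obtaining a cocartesian square in $\cD_S$ with vertices $j_\sharp j^*\iota\cF$, $\iota\cF$, $U$, and $i_*i^*\iota\cF$ (all functors computed in $\cD$, and $U=j_\sharp\unit$ the representable). Since $L$ preserves colimits, its image is a cocartesian square in $\cH(S)$, so it remains to identify the four vertices. One has $L\iota\cF\simeq\cF$ directly, and $L(U)=L(j_\sharp\unit)\simeq j_\sharp\unit=U$ because $L$ commutes with $j_\sharp$ and with the unit. For $j_\sharp j^*\iota\cF$: naturality of $L$ along $j$ gives $Lj^*\simeq j^*L$, and the premotivic-functor condition gives the Beck--Chevalley equivalence $j_\sharp L\simeq L j_\sharp$, hence $L(j_\sharp j^*\iota\cF)\simeq j_\sharp j^*\cF$. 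For $i_*i^*\iota\cF$: naturality of $L$ along $i\colon Z\to S$ gives $Li^*\simeq i^*L$, while applying Construction~\ref{shv.3} to the colimit-preserving functor $i_*\colon\cD_Z\to\cD_S$ together with Lemma~\ref{loc.9} yields a commutative square $L\circ i_*\simeq i_*\circ L$, with $i_*\colon\cH(Z)\to\cH(S)$ the functor~\eqref{loc.9.3}, hence $L(i_*i^*\iota\cF)\simeq i_*i^*\cF$. The maps in the square are those of Theorem~\ref{loc.8} — the counit $j_\sharp j^*\to\id$, the unit $\id\to i_*i^*$, and the two maps into and out of $U$ (the latter being the unique map $U\to i_*i^*\cF$, which exists since $i^*U\simeq\emptyset$) — all of which are natural, so they transport correctly along $L$.

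The one point requiring a small separate verification is that the functor~\eqref{loc.9.3} is the right adjoint of $i^*\colon\cH(S)\to\cH(Z)$, so that the lower-right vertex really is ``$i_*i^*\cF$'' in the sense of the premotivic category $\cH$. This is a routine mate computation: from the equivalence $i^*L\simeq Li^*$, the adjunctions $L\dashv\iota$ and $i^*\dashv i_*$ in $\cD$, and the commutative square of Construction~\ref{shv.3}, one recovers $i_*^{\cH}\simeq L\circ i_*^{\cD}\circ\iota\simeq(i^{*,\cH})^R$. I expect this bookkeeping, rather than any new geometric input, to be the main obstacle here, since the substantive content has already been packaged into Theorem~\ref{loc.8} (the analysis of $\Psi_{X,s}$) and Lemma~\ref{loc.9}.
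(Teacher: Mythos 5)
Your proposal is correct and is essentially the paper's own proof: the paper likewise obtains Theorem \ref{loc.10} by applying the localization functor to the cocartesian square \eqref{loc.8.1} of Theorem \ref{loc.8} and invoking Construction \ref{shv.3} together with Lemma \ref{loc.9} to identify $L\circ i_*$ with $i_*\circ L$ (the paper leaves the vertex identifications and the mate computation for $i_*^{\cH}$ implicit, which you spell out). The only discrepancy is notational: the paper writes $L_{dNis\cup\boxx}$ where $L_{dNis\cup\ver}$ is meant, and your version uses the correct functor.
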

\begin{proof}
Apply $L_{\dtau\cup\boxx}$ to \eqref{loc.8.1}, and use Lemma \ref{loc.9}.
\end{proof}

\begin{thm}
\label{loc.11}
For every $\cF\in \rH_{\tau,*}(S)$, the sequence
\[
\begin{tikzcd}
j_\sharp j^* \cF
\xrightarrow{ad'}
\cF
\xrightarrow{ad}
i_*i^*\cF
\end{tikzcd}
\]
is a cofiber sequence in $\rH_{\tau,*}(S)$.
We have similar results for $\SH_{\tau}^\eff$ and $\DA_{\tau}^\eff(-,\Lambda)$ too.
\end{thm}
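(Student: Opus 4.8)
The plan is to deduce this exactly as Theorem \ref{loc.10} was deduced from Theorem \ref{loc.8}: apply the relevant localization functor to the cofiber sequence of Theorem \ref{loc.17}. Write $L_S\colon (\A^1)^{-1}\infShv_{sNis}(\lSm/S,\Spc_*)\to \Hpt(S)$ for the functor obtained by further inverting the dividing covers and the class $\ver$, and similarly $L_Z$ and $L_U$. Being a localization functor, $L_S$ is a left adjoint and hence preserves cofiber sequences.

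First I would record that each of the functors $j_\sharp$, $j^*$, $i^*$, $i_*$ occurring in \eqref{loc.17.1} is compatible with these localizations. For $j^*$ and $i^*$ this is routine: both preserve colimits, carry dividing Nisnevich distinguished squares to dividing Nisnevich distinguished squares, and carry $\ver$ into $\ver$ by the first assertion of Proposition \ref{comp.4}; hence by Construction \ref{shv.3} they descend to colimit-preserving functors on the localized categories sitting inside commutative squares with the localization functors. The functor $j_\sharp$, being induced by an open immersion in $\lSm$, preserves the same classes, so the same applies. For $i_*$ the corresponding statement is precisely Lemma \ref{loc.9}: $i_*$ sends $(dNis\cup\ver)$-local equivalences to $(dNis\cup\ver)$-local equivalences, so it descends to a colimit-preserving functor $i_*\colon \Hpt(Z)\to \Hpt(S)$ with $L_S\circ i_*\simeq i_*\circ L_Z$, just as in \eqref{loc.9.3}.

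Next I would fix $\cF\in \Hpt(S)$ and regard it as a $(dNis\cup\ver)$-local object of $(\A^1)^{-1}\infShv_{sNis}(\lSm/S,\Spc_*)$, so that $L_S\cF\simeq \cF$. Applying $L_S$ to the cofiber sequence \eqref{loc.17.1} for this $\cF$ and using the commutative squares from the previous step, the term $L_S(j_\sharp j^*\cF)$ is identified with $j_\sharp j^*\cF$ formed in $\Hpt$ and $L_S(i_*i^*\cF)$ with $i_*i^*\cF$ formed in $\Hpt$, while the two maps become $ad'$ and $ad$ by the compatibility of the unit and counit of the adjunctions $(j_\sharp,j^*)$ and $(i^*,i_*)$ with the premotivic localization functors. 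Since $L_S$ preserves cofiber sequences, this yields the desired cofiber sequence in $\Hpt(S)$. For $\SH_{S^1}$ and $\DAeff(-,\Lambda)$ the argument is verbatim the same, with $\Spt$ and $\DLambda$ in place of $\Spc_*$ and using the corresponding parts of Theorem \ref{loc.17} and Lemma \ref{loc.9}.

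Rather than an obstacle, the main point here is that everything substantive has already happened: the statement is a formal consequence of Theorem \ref{loc.17} together with Lemma \ref{loc.9}. The only thing deserving care is checking that $i_*$ commutes with the $(dNis\cup\ver)$-localization and that the unit and counit survive it; the former is exactly Lemma \ref{loc.9}, whose proof --- via the presheaves $\Psi_{X,s}$ and Theorem \ref{loc.8} --- is where the real work was done.
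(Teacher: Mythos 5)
Your proposal is correct and is essentially identical to the paper's argument: the paper's proof also consists of applying the $(dNis\cup\ver)$-localization functor to the cofiber sequence of Theorem \ref{loc.17}, invoking Construction \ref{shv.3} for the compatibility of the left adjoints with localization and Lemma \ref{loc.9} for $i_*$. You have simply spelled out the details that the paper leaves implicit.
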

\begin{proof}
Apply $L_{\dtau\cup\boxx}$ to \eqref{loc.17.1}, and use Lemma \ref{loc.9}.
\end{proof}

\begin{lem}
\label{loc.18}
For every $\cF\in \SH_{\tau}^\eff(Z)$, the canonical morphism
\[
\Sigma_{\Gmm}i_*\cF
\to
i_*\Sigma_{\Gmm}\cF
\]
is an isomorphism.
A similar result holds for $\DA^\eff_{\tau}(-,\Lambda)$ too.
\end{lem}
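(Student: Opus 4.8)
The plan is to reduce, by a standard colimit argument, to the case where $\cF$ is the $i$-pullback of a representable motive, and then to deduce the statement from the localization property (Theorem \ref{loc.11}) together with the observation that $\Sigma_{\Gmm}$ is tensoring with an object pulled back from $\Spec(\Z)$, and hence commutes with $j^*$ and $i^*$ (both symmetric monoidal) and with $j_\sharp$ (projection formula, Definition \ref{shv.7}(iii)).

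First I would note that $\Sigma_{\Gmm}$ is colimit preserving and exact, being tensoring in a presentable symmetric monoidal stable $\infty$-category, and that $i_*\colon\SH_{S^1}(Z)\to\SH_{S^1}(S)$ preserves colimits by Lemma \ref{loc.9} (see \eqref{loc.9.3}). Hence $\Sigma_{\Gmm}i_*$ and $i_*\Sigma_{\Gmm}$ are colimit preserving functors $\SH_{S^1}(Z)\to\SH_{S^1}(S)$ and the canonical comparison is a natural transformation between them; since $\SH_{S^1}(Z)$ is generated under colimits by $\{M(Y):Y\in\lSm/Z\}$, it is enough to treat $\cF=M(Y)$. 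The collection of $Y$ that work is stable under strict Nisnevich covers and under passing to open subschemes, because $M(Y)$ is the colimit of the associated \v{C}ech nerve and both functors preserve that colimit; so Lemma \ref{loc.14} lets me assume $Y=X\times_S Z$, that is $\cF\simeq i^*M(X)$ for some $X\in\lSm/S$.

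For such $\cF$, Theorem \ref{loc.11} presents $i_*i^*M(X)$ as the cofiber of $j_\sharp j^*M(X)\to M(X)$, and likewise $i_*i^*\Sigma_{\Gmm}M(X)$ as the cofiber of $j_\sharp j^*\Sigma_{\Gmm}M(X)\to\Sigma_{\Gmm}M(X)$. Since $\Sigma_{\Gmm}$ commutes with $j^*$, $j_\sharp$ and $i^*$ as above and is exact, applying it to the first cofiber sequence identifies $\Sigma_{\Gmm}i_*i^*M(X)$ with the cofiber of $j_\sharp j^*\Sigma_{\Gmm}M(X)\to\Sigma_{\Gmm}M(X)$, hence with $i_*i^*\Sigma_{\Gmm}M(X)$; on the other hand $i_*\Sigma_{\Gmm}\cF\simeq i_*\Sigma_{\Gmm}i^*M(X)\simeq i_*i^*\Sigma_{\Gmm}M(X)$. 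The remaining point is to check that this chain of identifications is the canonical comparison map of the lemma; this coherence check is the only delicate step, everything else being formal. The case of $\DAeff(-,\Lambda)$ is proved in the same way, replacing $\Spt$ by $\DLambda$ and invoking the corresponding versions of Lemmas \ref{loc.9}, \ref{loc.14} and Theorem \ref{loc.11}.

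Alternatively, using that the localization property forces $i_*$ to be fully faithful (so that $i_*\cF\simeq i_*i^*i_*\cF$), one can skip Lemma \ref{loc.14} altogether: apply $\Sigma_{\Gmm}$ to the cofiber sequence $j_\sharp j^*i_*\cF\to i_*\cF\to i_*i^*i_*\cF$, use the same commutations and Theorem \ref{loc.11} once more, and obtain $\Sigma_{\Gmm}i_*\cF\simeq i_*i^*\Sigma_{\Gmm}i_*\cF\simeq i_*\Sigma_{\Gmm}\cF$ directly. In either route the geometric content (localization plus the projection formula) is immediate, and the work lies entirely in the bookkeeping identifying the natural transformation produced with the canonical one.
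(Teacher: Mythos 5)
Your proposal is correct and follows essentially the same route as the paper: reduce by colimit-preservation of $i_*$ and generation by representables to $\cF\simeq i^*M(X)$ via Lemma \ref{loc.14}, then compare the two cofiber sequences of Theorem \ref{loc.11} using that $\Sigma_{\Gmm}$ commutes with $j_\sharp$, $j^*$, and $i^*$. The alternative route via full faithfulness of $i_*$ is also fine (and not circular, since full faithfulness for $\SH_{S^1}$ rests only on Theorem \ref{loc.17} and Lemma \ref{loc.14}), but it merely relocates rather than avoids the same ingredients.
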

\begin{proof}
We focus on the case of $\SH_{\tau}^\eff$ since the proofs are similar.
Since $i_*$ preserves colimits,
we reduce to the case when $\cF=\Sigma_{S^1}^\infty X_+$ for some $X\in \lSm/S$.

The question is Zariski local on $X$.
Hence by Lemma \ref{loc.14}, we reduce to the case when $X\simeq V\times_X Z$ for some $V\in \lSm/S$.
We set $\cG:=\Sigma_{S^1}^\infty V_+$ for simplicity of notation.

Consider the induced commutative diagram
\[
\begin{tikzcd}
\Sigma_{\Gmm}j_\sharp j^* \cG\ar[d]\ar[r]&
\Sigma_{\Gmm}\cG\ar[d,"\id"]\ar[r]&
\Sigma_{\Gmm}i_*i^*\cG\ar[d]
\\
j_\sharp j^* \Sigma_{\Gmm}\cG\ar[r]&
\Sigma_{\Gmm}\cG\ar[r]&
i_*i^*\Sigma_{\Gmm}\cG
\end{tikzcd}
\]
whose rows are cofiber sequences by Theorem \ref{loc.11}.
To conclude, observe that $j_\sharp$, $j^*$, and $i^*$ commute with $\Sigma_{\Gmm}$.
\end{proof}

\begin{lem}
\label{loc.19}
In $\PrL$, there are commutative squares
\[
\begin{tikzcd}
\SH_{\tau}^\eff(Z)\ar[d,"i_*"']\ar[r,"\Sigma_{\Gmm}^\infty"]&
\SH_{\tau}(Z)\ar[d,"i_*"]
\\
\SH_{\tau}^\eff(S)\ar[r,"\Sigma_{\Gmm}^\infty"]&
\SH_{\tau}(S),
\end{tikzcd}
\quad
\begin{tikzcd}
\DA_{\tau}^\eff(Z,\Lambda)\ar[d,"i_*"']\ar[r,"\Sigma_{\Gmm}^\infty"]&
\DA_{\tau}(Z,\Lambda)\ar[d,"i_*"]
\\
\DA_{\tau}^\eff(S,\Lambda)\ar[r,"\Sigma_{\Gmm}^\infty"]&
\DA_{\tau}(S,\Lambda).
\end{tikzcd}
\]
\end{lem}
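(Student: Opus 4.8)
\noindent The plan is to obtain $i_*$ on $\SH$ by stabilizing $i_*$ on $\SH_{S^1}$, and then to recognize the result as the right adjoint of $i^*$. First I would recall that $i_*\colon \SH_{S^1}(Z)\to \SH_{S^1}(S)$ preserves colimits by Lemma \ref{loc.9} and Construction \ref{shv.3}, and that by Lemma \ref{loc.18} it commutes with the endofunctor $\Sigma_{\Gmm}=(-)\otimes \Gmm/1$ on either side.

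Next I would use that, for every $T\in \lSch$, the underlying $\infty$-category of $\SH(T)$ is the colimit in $\PrL$ of the sequence
\[
\SH_{S^1}(T)\xrightarrow{\Sigma_{\Gmm}}\SH_{S^1}(T)\xrightarrow{\Sigma_{\Gmm}}\cdots,
\]
with $\Sigma_{\Gmm}^\infty$ the canonical functor out of the first term, and that the premotivic functor $i^*$ on $\SH$ is obtained from $i^*$ on $\SH_{S^1}$ by the same procedure; this is legitimate because $i^*$, being symmetric monoidal with $i^*(\Gmm/1)\simeq \Gmm/1$ (as $\Gmm/1$ is pulled back from $\Spec(\Z)$), commutes with $\Sigma_{\Gmm}$. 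Since $i_*$ preserves colimits and commutes with the transition functors, functoriality of sequential colimits in $\PrL$ produces a colimit preserving functor $i_*^{\mathrm{st}}\colon \SH(Z)\to \SH(S)$ together with a natural equivalence $i_*^{\mathrm{st}}\,\Sigma_{\Gmm}^\infty\simeq \Sigma_{\Gmm}^\infty\,i_*$. I would then observe that the monoidal adjunction $(i^*,i_*)$ on $\SH_{S^1}$ is compatible with $\Sigma_{\Gmm}$ on both sides---its unit and counit are maps of the corresponding $\Sigma_{\Gmm}$-towers, by Lemma \ref{loc.18} together with coherence of monoidal adjunctions---so that it passes to the colimit and yields an adjunction $(i^*,i_*^{\mathrm{st}})$ on $\SH$. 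Essential uniqueness of adjoints then gives $i_*^{\mathrm{st}}\simeq i_*$, and the first square follows. The second square is proven by the same argument with $\DLambda$ in place of $\Spt$, using the $\DAeff(-,\Lambda)$-variants of Lemmas \ref{loc.9} and \ref{loc.18}.

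The main obstacle I anticipate is the coherence hidden in ``passes to the colimit'': one must upgrade the pointwise equivalence of Lemma \ref{loc.18} to a morphism of sequential diagrams in $\PrL$ that is compatible with units and counits. I would treat this as a formal feature of $\otimes$-stabilization, along the lines of \cite[Section 9.1]{Robalothesis} and \cite[Section 2.1]{zbMATH06374152}, rather than verify it by hand. Should that route be cumbersome, an alternative is to build the mate transformation $\Sigma_{\Gmm}^\infty i_*\to i_*\Sigma_{\Gmm}^\infty$ directly from the units and counits of $(i^*,i_*)$ on $\SH$ and on $\SH_{S^1}$ and the equivalence $i^*\Sigma_{\Gmm}^\infty\simeq \Sigma_{\Gmm}^\infty i^*$, to note that both its source and target preserve colimits, and to check it is an equivalence on the generating family $\{\Sigma_{S^1}^\infty Y_+\mid Y\in \lSm/Z\}$ of $\SH_{S^1}(Z)$ by means of Lemma \ref{shv.5} and Lemma \ref{loc.18}.
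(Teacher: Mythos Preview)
Your proposal is correct and takes essentially the same approach as the paper: form the ladder diagram of $\Sigma_{\Gmm}$-towers using Lemma \ref{loc.18} and take the colimit in $\PrL$ along the rows. You are more explicit than the paper about two points it leaves implicit---that the pointwise equivalences of Lemma \ref{loc.18} must be upgraded to a morphism of sequential diagrams, and that the resulting stabilized functor must be identified with the right adjoint $i_*$ of $i^*$ on $\SH$---and your suggested fallback via the mate transformation and Lemma \ref{shv.5} is a valid way to close that gap if the coherence bookkeeping proves awkward.
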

\begin{proof}
We focus on the left square since the proofs are similar.
By Lemma \ref{loc.18}, we have a commutative diagram
\[
\begin{tikzcd}
\SH_{\tau}^\eff(Z)\ar[d,"i_*"']\ar[r,"\Sigma_{\Gmm}"]&
\SH_{\tau}^\eff(Z)\ar[d,"i_*"']\ar[r,"\Sigma_{\Gmm}"]&
\cdots
\\
\SH_{\tau}^\eff(S)\ar[r,"\Sigma_{\Gmm}"]&
\SH_{\tau}^\eff(Z)\ar[r,"\Sigma_{\Gmm}"]&
\cdots.
\end{tikzcd}
\]
Take colimits in $\PrL$ along the rows to obtain the desired diagram.
\end{proof}

\begin{thm}
\label{loc.16}
For every $\cF\in \SH_{\tau}(S)$, the sequence
\begin{equation}
\label{loc.16.1}
\begin{tikzcd}
j_\sharp j^* \cF
\xrightarrow{ad'}
\cF
\xrightarrow{ad}
i_*i^*\cF
\end{tikzcd}
\end{equation}
is a cofiber sequence in $\SH_{\tau}(S)$.
A similar result holds for $\DA_{\tau}(-,\Lambda)$ too.
\end{thm}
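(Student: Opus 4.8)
The plan is to reduce the assertion, by $\Gmm$-stabilization, to its $S^1$-stable counterpart Theorem \ref{loc.11}; since the arguments for $\SH(-)$ and $\DA(-,\Lambda)$ are formally the same, I would only discuss $\SH(-)$.

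First I would record that $i_*\colon \SH(Z)\to \SH(S)$ preserves colimits. By (the $\Spt$-case of) Lemma \ref{loc.9} together with Construction \ref{shv.3}, the functor $i_*\colon \SH_{S^1}(Z)\to \SH_{S^1}(S)$ preserves colimits, exactly as in \eqref{loc.9.3}; and by Lemma \ref{loc.18} it commutes with the functors $\Sigma_{\Gmm}$ defining the stabilization, so taking the colimit in $\PrL$ along the $\Sigma_{\Gmm}$-towers (as in the proof of Lemma \ref{loc.19}) exhibits $i_*\colon \SH(Z)\to \SH(S)$ as a colimit of colimit-preserving functors, hence colimit-preserving. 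Consequently the endofunctor $\cF\mapsto i_*i^*\cF$ of $\SH(S)$ preserves colimits, and so does $\cF\mapsto\cofib(j_\sharp j^*\cF\xrightarrow{ad'}\cF)$ because $j_\sharp$ and $j^*$ do. Therefore, by Lemma \ref{shv.5} applied to the natural transformation
\[
\cofib\bigl(j_\sharp j^*\cF\xrightarrow{ad'}\cF\bigr)\longrightarrow i_*i^*\cF,
\]
it suffices to check this map is an equivalence on a class of objects generating $\SH(S)$.

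For the generating class I would take $\{\Sigma_{\Gmm}^\infty\cG\otimes\Gmm^{\otimes-n}:\cG\in\SH_{S^1}(S),\ n\ge 0\}$, which generates $\SH(S)$ by the construction of $\Stab_{\Gmm}$. Since $\Gmm$ is defined over $\Spec(\Z)$, the autoequivalence $\otimes\Gmm$ of $\SH(S)$ commutes with $\cF\mapsto\cofib(j_\sharp j^*\cF\xrightarrow{ad'}\cF)$ (via $j^*\Gmm\simeq\Gmm$ and the projection formula, i.e.\ premotivic axiom (iii)) and with $\cF\mapsto i_*i^*\cF$ by Lemma \ref{loc.18}; hence it is enough to treat $\cF=\Sigma_{\Gmm}^\infty\cG$. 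For such $\cF$ the functor $\Sigma_{\Gmm}^\infty$, being a colimit-preserving functor of $\lSm$-premotivic $\infty$-categories, commutes with $j^*$, with $j_\sharp$ (as $j$ is an open immersion), and with $i^*$, and it commutes with $i_*$ by Lemma \ref{loc.19}. Thus the sequence \eqref{loc.16.1} for $\cF=\Sigma_{\Gmm}^\infty\cG$ is identified with the image under $\Sigma_{\Gmm}^\infty$ of
\[
j_\sharp j^*\cG\xrightarrow{ad'}\cG\xrightarrow{ad}i_*i^*\cG,
\]
which is a cofiber sequence in $\SH_{S^1}(S)$ by Theorem \ref{loc.11}; since $\Sigma_{\Gmm}^\infty$ is a colimit-preserving functor between stable $\infty$-categories it preserves cofiber sequences, and we conclude.

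The only genuinely delicate input is the colimit-preservation of $i_*$ on the $\Gmm$-stable level, and this is precisely why Lemmas \ref{loc.18} and \ref{loc.19} (the compatibilities $\Sigma_{\Gmm}i_*\simeq i_*\Sigma_{\Gmm}$ and $\Sigma_{\Gmm}^\infty i_*\simeq i_*\Sigma_{\Gmm}^\infty$) are proved in advance; granted those, the remaining argument is a routine "stabilize the $S^1$-stable localization theorem" and I do not anticipate a real obstacle.
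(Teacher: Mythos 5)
Your proof is correct and follows essentially the same route as the paper's: reduce via Lemma \ref{shv.5} to generators, use that $\Sigma_{\Gmm}^\infty$ commutes with $j_\sharp$, $j^*$, $i^*$ (premotivic structure) and with $i_*$ (Lemma \ref{loc.19}), and invoke Theorem \ref{loc.11}. The only difference is cosmetic: the paper reduces directly to $\cF=\Sigma_{\P^1}^\infty X_+$ with $X\in\lSm/S$, whereas you take the generators $\Sigma_{\Gmm}^\infty\cG\otimes\Gmm^{\otimes-n}$ and explicitly dispose of the negative twists, which is if anything slightly more careful.
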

\begin{proof}
We focus on $\SH_{\tau}(S)$ since the proofs are similar.
As noted in Lemma \ref{loc.19}, $i_*$ preserves colimits.
Hence we reduce to the case when $\cF=\Sigma_{\P^1}^\infty X_+$ for some $X\in \lSm/S$.

Since $\Sigma_{\P^1}^\infty$ is a functor of premotivic $\infty$-categories, $\Sigma_{\P^1}^\infty$ commutes with $j_\sharp$, $j^*$, and $i^*$.
Together with Lemma \ref{loc.19}, we reduce to showing that the sequence
\[
\Sigma_{\P^1}^\infty j_\sharp j^* X_+
\to
\Sigma_{\P^1}^\infty  X_+
\to
\Sigma_{\P^1}^\infty i_*i^* X_+
\]
is a cofiber sequence.
This follows from Theorem \ref{loc.11}.
\end{proof}

\begin{prop}
For every $\cF\in \rH_{\tau,*}(Z)$, the morphism
\[
i^*i_*\cF \xrightarrow{ad'} \cF
\]
is an isomorphism.
Similar results hold for $\SH_{\tau}^\eff$, $\SH_{\tau}$, $\DA_{\tau}^\eff(-,\Lambda)$, and $\DA_{\tau}(-,\Lambda)$ too.
\end{prop}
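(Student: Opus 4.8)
The plan is to deduce the statement formally from the localization cofiber sequences already established (Theorems \ref{loc.11} and \ref{loc.16}), the only geometric ingredient being Lemma \ref{loc.14}. I would first record two preliminary facts. Since $W\times_S Z\subseteq U\times_S Z=\emptyset$ for every $W\in\lSm/U$, base change gives $i^*j_\sharp M(W)\simeq M(W\times_S Z)=M(\emptyset)=0$; as $i^*$ and $j_\sharp$ preserve colimits and the $M(W)$ generate $\Hpt(U)$, Lemma \ref{shv.5} upgrades this to $i^*j_\sharp\simeq 0$, and likewise for the other four categories. Secondly, $i_*$ preserves colimits in each of $\Hpt$, $\SH_{S^1}$, $\DAeff$ (Lemma \ref{loc.9} together with Construction \ref{shv.3}) and in $\SH$, $\DA$ (Lemma \ref{loc.19}); hence $i^*i_*$ preserves colimits in all five cases.

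Next I would treat objects lying in the essential image of $i^*$. Fix $\cG$ in $\Hpt(S)$ (resp.\ in $\SH_{S^1}(S)$, $\SH(S)$, $\DAeff(S,\Lambda)$, $\DA(S,\Lambda)$) and apply the colimit-preserving functor $i^*$ to the localization cofiber sequence $j_\sharp j^*\cG\xrightarrow{ad'}\cG\xrightarrow{ad}i_*i^*\cG$ of Theorem \ref{loc.11} (resp.\ Theorem \ref{loc.16}). Since $i^*j_\sharp\simeq 0$, the resulting map $i^*\cG\to i^*i_*i^*\cG$ induced by the unit $ad$ is an equivalence. The triangle identity says that the composite $i^*\cG\to i^*i_*i^*\cG\xrightarrow{ad'}i^*\cG$ is the identity, so the counit $ad'\colon i^*i_*i^*\cG\to i^*\cG$ is an equivalence as well. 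Thus $ad'$ is an equivalence on every object in the essential image of $i^*$; in particular on $i^*M(X)$ for all $X\in\lSm/S$, and, since $i^*$ is a functor of premotivic $\infty$-categories, on the suspensions and Tate twists of these in the stable cases.

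Finally I would pass from the image of $i^*$ to all objects. By Lemma \ref{loc.14}, every $V\in\lSm/Z$ admits, Zariski-locally, a lift $X\in\lSm/S$ with $X\times_S Z\simeq V$; moreover any open subscheme of such a liftable $V$ is again liftable, since its closed complement is closed in the closed subscheme $X\times_S Z$ of $X$, hence closed in $X$, and may simply be removed from $X$. Applying this to a Zariski cover by liftable opens and passing to the \v{C}ech nerve shows that the class of $M(V)$ with $V$ liftable, enlarged by suspensions and Tate twists in the stable cases, generates $\Hpt(Z)$ (resp.\ the other four categories) in the sense of Definition \ref{shv.6}. Each such $M(V)\simeq i^*M(X)$ lies in the image of $i^*$, so $ad'$ is an equivalence on this generating class; as $i^*i_*$ and $\id$ both preserve colimits, Lemma \ref{shv.5} gives that $ad'\colon i^*i_*\to\id$ is an equivalence, which is what we want. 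I expect this last reduction to be the delicate point: one cannot argue directly with all objects because $i^*$ need not be essentially surjective, so the colimit-preservation of $i_*$ (the substantive content of Lemmas \ref{loc.9} and \ref{loc.19}) together with the generation statement is genuinely needed; the real work, however, is already packaged into Theorems \ref{loc.11} and \ref{loc.16}.
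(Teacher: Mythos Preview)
Your proposal is correct and follows essentially the same approach as the paper's proof. Both arguments use that $i_*$ (hence $i^*i_*$) preserves colimits to reduce via Lemma~\ref{shv.5} to generators, invoke Lemma~\ref{loc.14} to lift Zariski-locally to $\lSm/S$, and then apply $i^*$ to the localization cofiber sequence together with the triangle identity to conclude; the paper organizes these steps in the order ``reduce to $X_+$, Zariski-localize, lift, cofiber sequence'' while you first treat the essential image of $i^*$ and then argue that it generates, but the content is the same.
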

\begin{proof}
We focus on the case of $\rH_{\tau,*}(Z)$ since the proofs are similar.
Since $i_*$ preserves colimits,
we reduce to the case when $\cF=X_+$ for some $X\in \lSm/Z$.
Then the question is Zariski local on $X$.
Hence by Lemma \ref{loc.14}, we reduce to the case when $X\simeq V\times_S Z$ for some $V\in \lSm/S$.
We need to show that the morphism
\[
i^*i_*i^*V_+\xrightarrow{ad'} i^*V_+
\]
is an isomorphism.
We can alternatively show that the morphism
\[
i^*V_+\xrightarrow{ad} i^*i_*i^*V_+
\]
is an isomorphism.
This is a consequence of Theorem \ref{loc.17}.
\end{proof}

\subsection{Consequences of the localization property}
\label{consequences}

Throughout this subsection, we fix a base $B\in \Sch$ and an $\sSm$-premotivic $\infty$-category $\sT$ over $\lFt/B$, where $\sSm$ denotes the class of strict smooth morphisms (i.e., the class of strict morphisms $f\colon X\to S$ such that $\ul{f}$ is smooth), and $\lFt$ denotes the class of morphisms of finite type in $\lSch$.
We assume the following conditions for $\sT$:
\begin{itemize}
\item (Localization property) For every strict closed immersion $i$ in $\lFt/B$ with its open complement $j$, the sequence
\[
j_\sharp j^*\xrightarrow{ad'} \id \xrightarrow{ad} i_*i^*
\]
is a cofiber sequence, and $i_*$ is fully faithful.
\item ($\A^1$-invariance) $p^*$ is fully faithful, where $p\colon X\times \A^1\to X$ is the projection with $X\in \lFt/B$.
Then 
\item ($\P^1$-stability) For $X\in \lFt/B$,
the functor
\[
(-)\otimes M(\P^1/1) \colon \sT(X)\to \sT(X)
\]
is an equivalence of $\infty$-categories, where $M(\P^1/1):=\cofib(M(\{1\})\to M(\P^1))$.
\end{itemize}
For example, $\SH_{\tau}$ and $\DA_{\tau}(-,\Lambda)$ satisfy these conditions.

\begin{df}
For $X\in \lFt/B$, recall from \cite[Definition 7.1.2]{logDM} that a \emph{vector bundle over $X$} is a strict morphism $\xi\colon \cE\to X$ such that $\ul{\xi}\colon \ul{\cE}\to \ul{X}$ is a vector bundle over $\ul{X}$.
In this case, we set
\[
MTh(\cE)
:=
\cofib(M(\cE-Z) \to M(\cE)),
\]
where $Z$ is the zero section of $\cE$.
\end{df}

\begin{thm}
\label{consq.1}
The premotivic $\infty$-category $\sT$ enjoys the following properties.
\begin{enumerate}
\item[\textup{(1)}]
There exists a covariant functor $f\mapsto f_!$ on the subcategory of $\lFt/B$ whose objects are the same as $\lFt/B$ and whose morphisms are the strict morphisms.
If $f$ is strict proper, then $f_!\simeq f_*$.
If $f$ is an open immersion, then $f_!\simeq f_\sharp$.
\item[\textup{(2)}]
Let $f$ be a strict smooth morphism in $\lFt/B$.
There exists a natural isomorphism
\[
f_\sharp
\xrightarrow{\simeq}
f_! (MTh(T_f)\otimes -),
\]
where $T_f$ is the tangent bundle of $f$.
Furthermore, $MTh(-T_f)$ is the $\otimes$-inverse of $MTh(T_f)$.
\item[\textup{(3)}]
Let
\[
\begin{tikzcd}
Y'\ar[d,"f'"']\ar[r,"g'"]&
Y\ar[d,"f"]
\\
X'\ar[r,"g"]&
X
\end{tikzcd}
\]
be a cartesian square in $\lFt/B$.
If $f$ and $g$ are strict, then there exists a natural isomorphism
\[
Ex\colon g^*f_! \xrightarrow{\simeq} f_!'g'^*.
\]
\item[\textup{(4)}]
Let $f$ be a strict morphism in $\lFt/B$.
Then there exists a natural isomorphism
\[
Ex\colon f_!(-) \otimes (-) \xrightarrow{\simeq} f_!((-)\otimes f^*(-)).
\]
\end{enumerate}
\end{thm}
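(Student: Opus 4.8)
The plan is to deduce this from the abstract Grothendieck six-functor formalism of Ayoub \cite{Ayo07} and Cisinski--D\'eglise \cite{CD19}. The three standing hypotheses on $\sT$ are exactly the localization, homotopy, and stability axioms in \emph{loc.\ cit.}, and $\Ho(\sT)$ is a premotivic triangulated category over $\lFt/B$ in their sense; so the only thing to check is that the base category $\lFt/B$, equipped with strict smooth morphisms, strict closed immersions, and strict proper morphisms, supports the geometric inputs (Nagata compactification, Chow's lemma, deformation to the normal cone, finiteness) needed to run their arguments. Each of these reduces to the scheme-theoretic statement applied to underlying schemes, because in every construction below all morphisms are strict, so the log structures in play are pulled back from a common base and may simply be transported along the scheme-theoretic construction. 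Note that only the $\sSm$-premotivic structure of $\sT$ is used.

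To prove (1), I would first factor a strict morphism $f\colon X\to Y$ as $p\circ j$ with $j$ a strict open immersion and $p$ strict proper: apply Nagata compactification to $\ul f$ to get $\ul X\hookrightarrow\ul{\overline X}\to\ul Y$, and put on $\overline X$ the log structure pulled back from $Y$; then $\overline X\in\lFt/B$, $j$ is strict, and $p$ is strict proper. Define $f_!:=p_*j_\sharp$. The localization property provides the support triangles that make the usual verification in \cite[\S 2.4]{CD19} go through: $f_!$ is independent of the factorization, $f\mapsto f_!$ is a covariant $2$-functor on strict morphisms, and by construction $f_!\simeq f_\sharp$ for open immersions and $f_!\simeq f_*$ for strict proper $f$.

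For (2), I would run the deformation-to-the-normal-cone argument of Morel--Voevodsky, Ayoub, and Cisinski--D\'eglise. For $f$ strict smooth, the diagonal $\Delta_f\colon X\to X\times_Y X$ is a strict closed immersion whose underlying immersion is regular with normal bundle $T_f$, and the first projection $X\times_Y X\to X$ is strict smooth; the deformation space, built on underlying schemes and decorated with the pulled-back log structure, stays in $\lFt/B$ since strictness is stable under base change. Homotopy invariance then identifies $f_\sharp$ with $f_!\bigl(MTh(T_f)\otimes(-)\bigr)$ as in \cite[\S 2.4]{CD19}. That $MTh$ of a vector bundle is $\otimes$-invertible follows from $\P^1$-stability, which first yields $\G_m$-stability and then invertibility of all Thom spaces by the standard argument \cite{MV}; in particular $MTh(-T_f)$ exists and inverts $MTh(T_f)$.

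Finally, (3) and (4) follow formally from (1) and (2) together with proper base change for strict proper morphisms, exactly as in \cite[\S 2.4]{CD19}; proper base change itself is reduced by Chow's lemma (again lifted from schemes) to the projective case, then to the projection $\P^n_X\to X$, which is settled by homotopy invariance, the localization property applied to the standard affine cover of $\P^n$, and the purity isomorphism from (2). The main obstacle---more a matter of bookkeeping than of real difficulty---will be checking that each scheme-theoretic input (Nagata, Chow, the deformation space, and finiteness of the auxiliary schemes) genuinely lifts to strict morphisms of fs log schemes; this holds because strictness propagates through all the relevant constructions, so no specifically logarithmic phenomenon obstructs the classical arguments.
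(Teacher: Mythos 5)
Your proposal is correct in substance and lands on the same machinery as the paper ([CD19]), but the execution is genuinely different, and the paper's route is much shorter. You propose to re-run the proofs of \cite[Section 2.4]{CD19} inside $\lFt/B$, checking morphism by morphism that Nagata compactification, Chow's lemma, the deformation to the normal cone, and so on lift to strict morphisms of fs log schemes by decorating the scheme-theoretic constructions with pulled-back log structures. The paper instead packages your recurring observation that ``strictness propagates through all the relevant constructions'' into a single statement: for each $X\in\lFt/B$ the category $\sFt/X$ of strict finite-type fs log schemes over $X$ is \emph{equivalent} to $\Ft/\ul{X}$, since a strict morphism is determined by its underlying scheme morphism together with the pulled-back log structure. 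Consequently the restriction of $\Ho(\sT)$ to $\sFt/X$ is literally a motivic triangulated category over a category of schemes in the sense of \cite[Definition 2.4.45]{CD19}, and one invokes \cite[Theorem 2.4.50]{CD19} as a black box for varying $X$ rather than reproving it; no geometric input needs to be ``lifted'' because there is nothing to lift. Your approach works, but it amounts to reproving the cited theorem. One small point your write-up glosses over and the paper addresses explicitly: besides the localization, homotopy, and stability axioms, \cite[Theorem 2.4.50]{CD19} requires the additional hypothesis recorded in \cite[Remark 2.4.47(1)]{CD19}, which the paper verifies by citing an external reference; you should at least acknowledge this condition rather than treating the three standing hypotheses as exactly matching the axioms of \emph{loc.\ cit.}
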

\begin{proof}
For every $X\in \lFt/B$, the restriction of $\Ho(\sT)$ to $\sFt/X\simeq \Ft/\ul{X}$ is a motivic triangulated category in the sense of \cite[Definition 2.4.45]{CD19}, where $\Ft$ (resp.\ $\sFt$) denotes the class of morphisms (resp.\ strict morphisms) of finite type in $\Sch$ (resp.\ $\lSch$).
Note that the condition \cite[Remark 2.4.47(1)]{CD19} is automatic in our setting due to \cite{zbMATH02242854}.
Use \cite[Theorem 2.4.50]{CD19} for varying $X$ to show the desired claim for $\Ho(\sT)$.
We can promoted this to $\sT$ using the technique of Liu-Zheng \cite[\S 6.2]{LZ}, see e.g.\ Mann's thesis \cite{Mann} for the rigid analytic setting.
\end{proof}

Let us explain Theorem \ref{consq.1}(1) in more detail.
Let
\[
\begin{tikzcd}
U'\ar[r,"j'"]\ar[d,"p'"']&
X'\ar[d,"p"]
\\
U\ar[r,"j"]&
X
\end{tikzcd}
\]
be a cartesian square in $\lFt/B$ such that $j$ is an open immersion and $p$ is proper.
Then we have the natural transformation
\begin{equation}
\label{consq.1.1}
Ex\colon j_\sharp p_*' \to p_*j_\sharp'
\end{equation}
given by the composition
\[
Ex\colon j_\sharp p_*'
\xrightarrow{ad}
p_*p^*j_\sharp p_*'
\xrightarrow{Ex^{-1}}
p_*j_\sharp' p'^* p_*'
\xrightarrow{ad'}
p_*j_\sharp'.
\]
If $p$ is strict proper, then the composition
\[
j_\sharp p_*' \xrightarrow{\simeq} j_!p_!'\xrightarrow{\simeq} p_!j_!' \xrightarrow{\simeq} p_*j_\sharp'
\]
is given by \eqref{consq.1.1} according to \cite[\S 2.2.a]{CD19}.

\begin{rmk}
As a consequence of the localization property for schemes,
Ayoub proved an equivalence between motives and formal motives,
see \cite[Corollaire 1.4.29]{MR3381140} for the details.
Binda, Kato, and Vezzani \cite[Theorem 2.2]{2207.00369} proved its logarithmic analogue with a setting slightly different from ours.
\end{rmk}

\section{Cohomology of boundaries}

One of the ultimate goals of this series of papers is to establish the motivic six-functor formalism for fs log schemes.
Such a formalism would immediately compute the cohomology of the standard log point $\pt_{\N,B}$ in terms of the cohomology of schemes,
where $B\in \Sch$.
This suggests that computing the cohomology of $\pt_{\N,B}$ can be an important step toward the formalism.

The main result of this section is Theorem \ref{local.5}, which expresses
\begin{equation}
\label{boundary.0.1}
\map_{\SH(\A_{\N,B})}
(M(X)(d),M(\G_{m,B}))
\end{equation}
in terms of a hom spectrum in $\SH(\ul{X-\partial_{\A_{\N,B}}X})$ for $X\in \lSm/\A_{\N,B}$ and an integer $d$.
If $X=\A_{\N,B}$, then we obtain the computation of the cohomology of $\pt_{\N,B}$ using the localization property.
Allowing general $X$ is obviously more helpful than just having the computation for $X=\A_{\N,B}$.

The outline of the proof is as follows.
We will introduce the \emph{dividing vertical localization} $L_{\dver}$ in \S \ref{localization}.
We will have an explicit description of
\[
L_{\dver}L_{\divi}L_{\A^1}L_{\sNis}\Sigma_{\P^1}^\infty (\G_{m,B})_+.
\]
Using the result of \S \ref{key}, we will show that this is $\ver$-local.
This explicit description will allow us to conclude the proof of Theorem \ref{local.5}.

The reason for taking this strategy is that we do not know an explicit description of $L_{ver}$ unlike $L_{\dver}$.
The following illustrates the scheme of the proof of Theorem \ref{local.5}.
\[
\begin{tikzpicture}
\node[draw,align=center] at (0,0)
{Proposition \ref{divlocal.9}};
\node[draw,align=center] at (4,0)
{Proposition \ref{lemma.7}};
\node[draw,align=center] at (8,0)
{Lemmas \ref{lemma.17}--\ref{lemma.12}};
\node[draw,align=center] at (0,-1)
{Lemma \ref{local.2}};
\node[draw,align=center] at (4,-1)
{Lemma \ref{local.4}};
\node[draw,align=center] at (8,-1)
{Constructions \ref{divlocal.2}, \ref{divlocal.11}};
\node[draw,align=center] at (0,-2)
{Construction \ref{divlocal.7}};
\node[draw,align=center] at (4,-2)
{Lemma \ref{local.3}};
\node[draw,align=center] at (8,-2)
{Theorem \ref{local.5}};
\draw [-stealth](0,-0.35) -- (0,-0.72);
\draw [-stealth](4,-0.35) -- (4,-0.72);
\draw [-stealth](4,-1.72) -- (4,-1.3);
\draw [-stealth](6.25,0) -- (5.5,0);
\draw [-stealth](5.88,-1) -- (5.15,-1);
\draw [-stealth](1.16,-1) -- (2.83,-1);
\draw [-stealth](1.6,-2) -- (2.83,-2);
\draw [-stealth](1.13,-1.3) -- (2.83,-1.77);
\draw [-stealth](5.13,-1.3) -- (6.72,-1.77);
\end{tikzpicture}
\]

In \S \ref{cohomology}, we explain how we extend cohomology theories of schemes to fs log schemes.

\subsection{Key comparison}
\label{key}
Throughout this subsection, we fix $B\in \Sch$ and an open subscheme $C$ of $\G_{m,B}$.
We also fix a motivic $\infty$-category $\sT$, i.e., an $\Sm$-premotivic $\infty$-category over $\Sch/B$ whose homotopy category is a motivic triangulated category in the sense of \cite[Definition 2.4.45]{CD19}.
The goal of this subsection is to prove Proposition \ref{lemma.7}.
For a stable $\infty$-category $\cC$, let $\map_{\cC}(-,-)$ denote the hom spectrum.

For an fs monoid $P$,
\cite[Lemma I.6.7]{zbMATH07027475} yields a non-canonical decomposition $P\simeq P^*\oplus \ol{P}$.
Note that $\ol{P}^\gp$ is torsion free by \cite[Proposition I.1.3.5(2)]{Ogu}.
Let $P^\tor$ be the torsion subgroup of $P^*$.
We set $P^\tf:=P/P^\tor$.
Then we have a non-canonical decomposition
\begin{equation}
P
\simeq
P^\tf \oplus P^\tor.
\end{equation}

\begin{const}
\label{lemma.11}
Recall that $C$ is an open subscheme of $\G_{m,B}$.
For $Y\in \lSch/\A_{\N,B}$ and integer $d$, we set
\[
\Phi_d(Y)
:=
\map_{\sT(\ul{Y})}
(M(\ul{Y})(d),M(\ul{Y\times_{\A_{\N,B}}C}))
\in
\Ho(\Spt),
\]
which defines a functor $\Phi_d \colon (\lSch/\A_{\N,B})^{\op}\to \Ho(\Spt)$.
We also set
\begin{align*}
\Theta_d(Y)
:=
&\Phi_d(Y-\partial_{\A_{\N,B}}Y)
\\
=
&\map_{\sT(\ul{Y-\partial_{\A_{\N,B}}Y})}
(M(\ul{Y-\partial_{\A_{\N,B}}Y})(d),M((Y-\partial_{\A_{\N,B}}Y)\times_{\A_{\N,B}}C)).
\end{align*}
Here, we have removed the underline in $\ul{(Y-\partial_{\A_{\N,B}}Y)\times_{\A_{\N,B}}C}$ since $(Y-\partial_{\A_{\N,B}}Y)\times_{\A_{\N,B}}C$ has trivial log structure.
If $Y$ is vertical over $\A_{\N,B}$, then $\Theta_d(Y)\simeq \Phi_d(Y)$.
\end{const}

\begin{lem}
\label{lemma.10}
Let $f\colon Y'\to Y$ be a dividing cover in $\lSm/X$,
where $X\in \lSch$.
Let $V\to U\to Y$ be open immersions, and assume that for every point $v$ of $V$,
$\ol{\cM}_{V,v}^\gp$ has rank $\leq 1$.
Then for every integer $d$,
there is a canonical isomorphism
\[
\map_{\sT(\ul{U})}
(M(\ul{U})(d),M(\ul{V}))
\simeq
\map_{\sT(\ul{U\times_Y Y'})}
(M(\ul{U\times_Y Y'})(d),M(\ul{V\times_Y Y'})).
\]
\end{lem}
\begin{proof}
The projection $V\times_Y Y'\to V$ is a dividing cover,
and there exists no nontrivial subdivision of a $1$-dimensional fan.
Hence the assumption on $V$ implies $V\times_Y Y'\simeq V$.

Let $g\colon \ul{U\times_Y Y'}\to \ul{U}$ be the projection.
Consider the induced cartesian square
\[
\begin{tikzcd}
\ul{V\times_Y Y'}\ar[d,"\simeq"']\ar[r,"j'"]&
\ul{U\times_Y Y'}\ar[d,"g"]
\\
\ul{V}\ar[r,"j"]&
\ul{U}.
\end{tikzcd}
\]
We have isomorphisms
\[
g_*g^*M(\ul{V})
\simeq
g_*j_\sharp' \unit
\simeq
j_\sharp \unit
=
M(\ul{V}),
\]
where we use the \cite[Theorem 2.4.50(2)]{CD19} for the second isomorphism.
By adjunction, we obtain the desired isomorphism.
\end{proof}

\begin{const}
Let $f\colon Y'\to Y$ be a dividing cover in $\lSm/\A_{\N,B}$.
By Propositions \ref{vert.9}(1) and \ref{vert.8}, $(Y-\partial_{\A_{\N,B}}Y)\times_Y Y'$ is vertical over $\A_{\N,B}$.
In other words, we have the open immersion
\begin{equation}
\label{lemma.11.1}
(Y-\partial_{\A_{\N,B}}Y)\times_Y Y'
\hookrightarrow
Y'-\partial_{\A_{\N,B}} Y'.
\end{equation}
Since $(Y-\partial_{\A_{\N,B}} Y)\times_{\A_{\N,B}} C$ has trivial log structure,
Lemma \ref{lemma.10} yields
\[
\Phi_d(Y-\partial_{\A_{\N,B}}Y)
\simeq
\Phi_d((Y-\partial_{\A_{\N,B}}Y)\times_Y Y').
\]
Hence we obtain a morphism
\begin{equation}
\label{lemma.11.2}
\Theta_d(f)\colon \Theta_d(Y')\to \Theta_d(Y)
\end{equation}
given by the composition
\[
\Phi_d(Y'-\partial_{\A_{\N,B}}Y')
\to 
\Phi_d((Y-\partial_{\A_{\N,B}}Y)\times_Y Y')
\simeq 
\Phi_d(Y-\partial_{\A_{\N,B}}Y).
\]
If $g\colon Y''\to Y'$ is another dividing cover, then use the commutative diagram
\[
\begin{tikzcd}[column sep=small, row sep=small]
(Y-\partial_{\A_{\N,B}}Y)\times_Y Y''\ar[r,hookrightarrow]\ar[d]&
(Y'-\partial_{\A_{\N,B}} Y')\times_{Y'}Y''\ar[r,hookrightarrow]\ar[d]&
Y''
\\
(Y-\partial_{\A_{\N,B}}Y)\times_Y Y'\ar[r,hookrightarrow]\ar[d]&
Y'-\partial_{\A_{\N,B}} Y'
\\
Y-\partial_{\A_{\N,B}}Y
\end{tikzcd}
\]
to show $\Theta_d(g)\Theta_d(f)\simeq \Theta_d(gf)$.
We also have $\Theta_d(\id)=\id$.
Hence $\Theta_d$ is covariant for dividing covers.
On the other hand, $\Theta_d$ is contravariant for open immersions since if $U\to Y$ is an open immersion, then we have the induced open immersion $U-\partial_{\A_{\N,B}}U\to Y-\partial_{\A_{\N,B}}Y$.

It seems that $\Theta_d$ can be turned into functors with values in $\Spt$ instead of $\Ho(\Spt)$, but we do not need this.
\end{const}

\begin{prop}
\label{lemma.7}
Let $p\colon X'\to X$ be a dividing cover in $\lSm/\A_{\N,B}$, and let $d$ be an integer.
Then $\Theta_d(p)$ is an isomorphism in the following two cases:
\begin{enumerate}
\item[\textup{(i)}]
The structure morphism $X\to \A_{\N,B}$ admits a factorization $X\to \A_{Q,B}\xrightarrow{\A_{\theta,B}}\A_{\N,B}$ such that $X\to \A_{Q,B}$ is strict \'etale and $\theta\colon \N\to Q$ is a map of fs monoids.
\item[\textup{(ii)}]
$C=\G_{m,B}$.
\end{enumerate}
\end{prop}

Lemmas \ref{lemma.17}--\ref{lemma.12} are the steps of the proof.
We only write the case of $d=0$ for notational ease, and we set $\Phi:=\Phi_0$ and $\Theta:=\Theta_0$.
For general $d$, add $(d)$ to the appropriate places in the proof.
We finish the proof at the end of this section.

Let us recall some notation and terminology in log geometry.
For an fs monoid $P$ and its ideal $I$, let $\A_{(P,I)}$ be the strict closed subscheme of $\A_P$ whose underlying scheme is given by $\Spec(\Z[P]/(I))$.
We set
\[
\A_{(P,I),B}:=\A_{(P,I)}\times B,
\;
\pt_{\N}:=\A_{(\N,\N^+)},
\;
\pt_{\N,B}:=\pt_{\N}\times B.
\]

It is well-known that for a face $F$ of $P$,
we have $\ol{P_F}\simeq P/F$,
which can be shown using $F^\gp\oplus_F P\simeq P_F$.

A \emph{fan chart} of an fs log scheme $X$ is a strict morphism $X\to \T_{\Sigma}$, where $\Sigma$ is a fan.

For $X\in \lSch$, let $X_{\divi}$ be the full subcategory of $\lSch$ consisting of $Y$ such that $Y\to X$ is a dividing cover.

For a fan $\Sigma$, let $\Sigma_{\divi}$ be the category of subdivisions of $\Sigma$.

\begin{lem}
\label{lemma.17}
Let $\theta\colon \N\to P$ be a vertical map of fs monoids.
Assume that $\ol{P}\simeq \N^n$ for some  integer $n\geq 1$.
If $F$ is a face of $P$ with $F\neq P$, then we have an isomorphism
\[
\Phi(\A_{P_F,B})
\simeq
\Phi(\A_{P,B}).
\]
\end{lem}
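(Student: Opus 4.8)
The plan is as follows. Write $X:=\ul{\A_{P,B}}=\A_P\times B$ and, for a face $G$ of $P$, $X_G:=\ul{\A_{P_G,B}}=\A_{P_G}\times B$, so $j_G\colon X_G\to X$ is the open immersion localizing at $G$. Unwinding Construction \ref{lemma.11}, $\Phi(\A_{P_G,B})=\map_{\sT(X_G)}(\unit,M_{X_G}(W_G))$, where $W_G$ is the preimage of $C$ under $X_G\to\ul{\A_{\N,B}}=\A^1\times B$. Since $C\subset\G_{m,B}$ and $\theta$ is vertical, $W_G$ is contained in the open torus $\Spec(\Z[P^\gp])\times B$, which sits inside every $X_G$; hence $W:=W_G$ does not depend on $G$, and the map of the statement is the restriction $j_G^*\colon\map_{\sT(X)}(\unit,M_X(W))\to\map_{\sT(X_G)}(\unit,M_{X_G}(W))$. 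First I would reduce to the case that $P^\gp$ is torsion free: splitting off $P^\tor$ replaces $X$ and every $X_G$ by the product with the common finite factor $\Spec(\Z[P^\tor])$ and hence does not affect the comparison; this is a routine normalization which moreover makes $X$ and each $X_G$ smooth over $B$. Fixing a splitting $P\simeq P^*\oplus\N^n$ (Lemma \ref{mono.1}), the faces of $P$ are the $P^*\oplus F_S$ with $F_S=\{v\in\N^n:v_i=0\text{ for }i\notin S\}$, $S\subseteq\{1,\dots,n\}$; the condition $F\ne P$ reads $S\ne\{1,\dots,n\}$, and $j_F$ is the open immersion of $X_F$ into $X$ obtained by inverting the coordinates $x_i$ ($i\in S$) on the factor $\A^n$.

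Next I would run an induction on $|S|$. For $|S|=0$ we have $X_F=X$ and nothing to prove. For $|S|\ge 2$, choose $i_0\in S$ and let $F'\subsetneq F$ be the face with $S'=S\setminus\{i_0\}$; then $F'$ is again a proper face, $X_F\subseteq X_{F'}\subseteq X$, and the comparison factors as $\Phi(\A_{P,B})\xrightarrow{j_{F'}^*}\Phi(\A_{P_{F'},B})\to\Phi(\A_{P_F,B})$. The first arrow is an equivalence by the inductive hypothesis applied to $(P,F')$. The second arrow is precisely the assertion of the lemma for the pair $(P_{F'},F)$: indeed $P_{F'}$ is fs with $\overline{P_{F'}}\simeq\N^{\,n-|S'|}$, the localized homomorphism $\N\to P_{F'}$ is vertical by Proposition \ref{vert.3}(2) (note $\theta^{-1}(F')=0$ since $F'\ne P$), and $F$ is a proper face of $P_{F'}$ with exactly $|S|-|S'|=1$ inverted coordinate relative to $P_{F'}$; as $|S|\ge 2$ this is covered by the inductive hypothesis. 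Composing, we reduce to the remaining case $|S|=1$.

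Finally, for $S=\{i_0\}$ the reduced closed complement $Z:=X\setminus X_F$ is the hyperplane $\{x_{i_0}=0\}$, a smooth divisor in the smooth $B$-scheme $X$ with trivial normal bundle; write $i\colon Z\hookrightarrow X$ and $\nu\colon W\hookrightarrow X$. Because $W$ lies in the torus, $W\cap Z=\emptyset$, so $i^*M_X(W)=i^*\nu_\sharp\unit_W=0$ by the base change isomorphism of Definition \ref{shv.7}(ii) applied to the open immersion $\nu$. By relative purity for $i$ (a codimension-one closed immersion of smooth $B$-schemes, available in $\Ho(\sT)$ by \cite{CD19}) one gets $i^!M_X(W)\simeq i^*M_X(W)(-1)[-2]=0$. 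Since $W\subseteq X_F$ we have $j_F^*M_X(W)\simeq M_{X_F}(W)$, so after adjunction the comparison map becomes $\map_{\sT(X)}(\unit,M_X(W))\to\map_{\sT(X)}(\unit,j_{F*}j_F^*M_X(W))$ induced by the unit $M_X(W)\to j_{F*}j_F^*M_X(W)$, whose cofiber is $\Sigma\,i_*i^!M_X(W)=0$ by the localization triangle $i_*i^!\to\id\to j_{F*}j_F^*$. Hence it is an equivalence, which closes the induction. The one genuine obstacle is exactly the reduction to a single inverted coordinate: for a general proper face $F$ the complement $X\setminus X_F$ is a singular union of coordinate hyperplanes, so relative purity does not apply to it directly, and the dévissage along the face lattice above is what trades this singular locus for a chain of smooth divisors; everything else, in particular passing to $P^\gp$ torsion free, is routine.
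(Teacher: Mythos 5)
Your preliminary reductions are sound and run parallel to the paper's: the identification of $W:=\A_{P,B}\times_{\A_{\N,B}}C$ as an open subscheme of the torus (the first place verticality of $\theta$ enters), the elimination of $P^{\tor}$, and the d\'evissage along the face lattice down to a face with a single inverted coordinate are all correct (the paper reaches the same corank-one case by inducting on $n$). The gap is in the last step, and it is fatal as written. The isomorphism $i^{!}\cF\simeq i^{*}\cF(-1)[-2]$ for a codimension-one closed immersion of smooth $B$-schemes, applied to an arbitrary object $\cF$, is \emph{absolute} purity, not relative purity: what \cite{CD19} provides formally is $f^{!}\simeq MTh(T_f)\otimes f^{*}$ for \emph{smooth} $f$, which yields $i^{!}p^{*}K\simeq i^{*}p^{*}K(-1)[-2]$ only for objects pulled back along $p\colon X\to B$; and $M(W)=\nu_{\sharp}\unit_W$ is not of that form. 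Worse, the statement you invoke is false at this level of generality. Your argument uses nothing about $W$ beyond its being open with $W\cap Z=\emptyset$, but for $W=X_F=X-Z$ one has $i^{*}j_{F\sharp}\unit=0$ while the localization sequences give $i^{!}j_{F\sharp}\unit\simeq i^{*}j_{F*}\unit[-1]\simeq(\unit_Z\oplus\unit_Z(-1)[-1])[-1]\neq 0$ (this is the computation $i^{*}j_{*}\unit_{\G_m}\simeq\unit\oplus\Sigma^{-1,-1}\unit$ recorded in the paper's introduction); correspondingly $\map_{\sT(X)}(\unit,M(X_F))=0$ whereas $\map_{\sT(X_F)}(\unit,\unit)\neq 0$, so the comparison map is not an equivalence for that choice of open subset. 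In general $i^{!}M(W)\simeq i^{*}j_{F*}M(W)[-1]$ is a motivic nearby-cycles type object, and the vanishing of $\map(\unit_Z,i^{!}M(W))$ for the particular $W$ at hand \emph{is} the content of the corank-one case, not a formal consequence of purity.

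The paper closes this case by a toric argument in which verticality does its real work: writing $\theta(1)=(a_1,\dots,a_n)$ with all $a_i>0$, it constructs auxiliary cones $Q$ and $Q'$ sharing a face $G$ with $\Spec(P)$, contracts $\ul{\A_Q}$ and $\ul{\A_{Q'}}$ onto $\ul{\A_G}$ by explicit $\A^1$-homotopies compatible with the projection to $\ul{\A_{\N}}$, and observes that $\Spec(P)$ and $\Spec(Q)$ glue along $\Spec(P_F)$ into a subdivision of $\Spec(Q')$; Zariski descent for this fan then produces $\Phi(\A_{P_F,B})\simeq\Phi(\A_{P,B})$. Some geometric input of this kind is unavoidable: your reduction correctly isolates the hard case but does not prove it.
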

\begin{proof}
Replace $(P,B)$ by $(\ol{P},\A_{P^*,B})$ to reduce to the case when $P\simeq \N^n$.
We proceed by induction on $n$.
The claim is trivial for $n=1$.
Assume $n>1$.
By induction, we only need to consider the case when the rank of $F^\gp$ is $1$.
Let $\{e_1,\ldots,e_n\}$ be the standard coordinate in $\Z^n$.
Without loss of generality, we may assume that $F$ is generated by $e_1$.

We express $\theta(1)$ as $(a_1,\ldots,a_n)$ with $a_1,\ldots,a_n >0$.
Let $L$ be the lattice for the fan $\Spec(P)$.
Then its dual $L^\vee$ is the lattice for $P$,
i.e., $L^\vee\simeq P^\gp$.
We set
\begin{gather*}
Q:=((a_1e_1+a_2e_2)\Q_{\geq 0}\oplus (-e_1)\Q_{\geq 0} \oplus e_3\Q_{\geq 0}\cdots \oplus e_n \Q_{\geq 0})\cap L^\vee,
\\
Q':=((a_1e_1+a_2e_2)\Q_{\geq 0}\oplus e_2\Q_{\geq 0}\oplus e_3\Q_{\geq 0}\cdots \oplus e_n \Q_{\geq 0})\cap L^\vee.
\end{gather*}
Said differently,
$Q$ (resp.\ $Q'$) is the intersection of the lattice $L^\vee$ for $P$ and the $\Q$-cone \cite[Definition I.2.3.1]{Ogu} generated by $a_1e_1+a_2e_2$, $-e_1$ (resp.\ $e_2$), $e_3$, $\ldots$, $e_n$.
We have the maps $\N\to Q,Q'$ induced by $\theta$.
The dual cones are
\begin{gather*}
\Spec(Q)=((-a_2e_1+a_1e_2)\Q_{\geq 0} \oplus e_2 \Q_{\geq 0} \oplus \cdots \oplus e_n \Q_{\geq 0})\cap L,
\\
\Spec(Q')=((-a_2e_1+a_1e_2)\Q_{\geq 0} \oplus e_1 \Q_{\geq 0}\oplus e_3\Q_{\geq 0}\oplus \cdots \oplus e_n \Q_{\geq 0})\cap L.
\end{gather*}
We have the face
\[
G:=((a_1e_1+a_2e_2)\Q_{\geq 0}\oplus 0\oplus e_3\Q_{\geq 0}\cdots \oplus e_n \Q_{\geq 0})\cap L^\vee\]
of $Q$ and $Q'$.

By \cite[Proposition I.2.2.1]{Ogu}, there exists a map $\eta\colon Q\to \N$ such that $\eta^{-1}(0)=G$.
Consider the map $\mu\colon Q\to Q\oplus \N$ sending $\sigma\in Q$ to $(\sigma,\eta(\sigma))$, which induces a map
\[
h\colon \Z[Q]\to \Z[Q\oplus \N].
\]
The zero and one sections $\Spec(\Z)\rightrightarrows \A^1$ induce maps
\[
a_0,a_1\colon \Z[Q\oplus \N]
\to
\Z[Q].
\]
Consider the log structure map $\gamma\colon Q\oplus \N \to \Z[Q\oplus \N]$.
We set $x^\sigma:=\gamma(\sigma,0)$ for $\sigma\in Q$ and $t:=\gamma(0,1)$.
Then $a_1h=\id$ and
\[
a_0 h(x^\sigma)=a_0(x^\sigma t^{\eta(\sigma)})
=
\left\{
\begin{array}{ll}
0 & \text{if }\sigma\in Q-G,
\\
x^\sigma & \text{if }\sigma\in G.
\end{array}
\right.
\]
This shows that the induced morphism
\[
m\colon \ul{\A_Q}\times \A^1\to \ul{\A_Q}.
\]
is an elementary $\A^1$-homotopy between $ip$ and $\id$ over $\ul{\A_\N}$, where $p\colon \ul{\A_Q}\to \ul{\A_{G}}$ is the morphism induced by the inclusion $G\to Q$, and $i\colon \ul{\A_G}\simeq \ul{\A_{(Q,Q-G)}}\to \ul{\A_Q}$ is the obvious closed immersion.

Use $m$ to deduce that the composite
\begin{align*}
\hom_{\sT(\ul{\A_{Q,B}})}
(M(\ul{\A_{Q,B}}),
M(\ul{\A_{Q,B}\times_{\A_{\N,B}}C}))
\xrightarrow{i^*} &
\hom_{\sT(\ul{\A_{G,B}})}
(M(\ul{\A_{G,B}}),
M(\ul{\A_{G,B}\times_{\A_{\N,B}}C}))
\\
\xrightarrow{p^*} &
\hom_{\sT(\ul{\A_{Q,B}})}
(M(\ul{\A_{Q,B}}),
M(\ul{\A_{Q,B}\times_{\A_{\N,B}}C}))
\end{align*}
is homotopic to $\id^*\simeq \id$.
Since $pi\simeq \id$,
we obtain
\begin{equation}
\label{lemma.17.1}
\Phi(\A_{Q,B})
\simeq
\Phi(\A_{G,B}).
\end{equation}
We can similarly show
\begin{equation}
\label{lemma.17.2}
\Phi(\A_{Q',B})
\simeq
\Phi(\A_{G,B}).
\end{equation}
Let $\Delta$ be the fan in $L$ whose maximal cones are $\Spec(P)$ and $\Spec(Q)$,
i.e., $\Delta$ consists of those cones that are faces of $\Spec(P)$ and $\Spec(Q)$.
Then $\Delta$ is the star subdivision of $\Spec(Q')$ at $e_2$,
see \cite[\S 11.1]{CLStoric} for star subdivisions.
The image of $1\in \N$ in $Q'$ can be expressed as $(1,0,a_3,\ldots,a_n)$ under the $\Q$-basis $\{a_1e_1+a_2e_2,e_2,e_3,\ldots,e_n\}$ of $Q'\otimes \Q$.
It follows that $\ol{Q'\oplus_{\N} \Z}$ is generated by the image of $e_2$.
Hence $\ol{Q'\oplus_{\N}\Z}^\gp$ has rank $1$,
so $V:=\A_{Q',B}\times_{\A_{\N,B}}C$ of $\A_{Q',B}$ satisfies the condition that for every point $v$ of $V$, $\ol{\cM}_{V,v}^\gp$ has rank $\leq 1$,
Together with Lemma \ref{lemma.10},
we have
\begin{equation}
\label{lemma.17.3}
\Phi(\T_{\Delta,B})\simeq \Phi(\A_{Q',B}).
\end{equation}
The intersection of the cones $\Spec(P)$ and $\Spec(Q)$ is $\Spec(P_F)$ since
\[
\Spec(P)
\simeq e_1\N \oplus \cdots \oplus e_n \N,
\;
\Spec(P_F)
\simeq e_2 \N\oplus \cdots \oplus e_n \N.
\]
By Zariski descent, the induced square
\[
\begin{tikzcd}
\Phi(\T_{\Delta,B})\ar[d]\ar[r]&
\Phi(\A_{Q,B})\ar[d]
\\
\Phi(\A_{P,B})\ar[r]&
\Phi(\A_{P_F,B})
\end{tikzcd}
\]
is cartesian.
The lower horizontal morphism is an isomorphism since the upper horizontal morphism is an isomorphism by \eqref{lemma.17.1}, \eqref{lemma.17.2}, and \eqref{lemma.17.3}.
\end{proof}

For every fan $\Sigma$ in a lattice $L$, let $|\Sigma|_{\R}$ denote the closure of the support $|\Sigma|$ in $\R\otimes_{\Q} L$.
The origin of $|\Sigma|_{\R}$ is denoted by $0$.

\begin{lem}
\label{lemma.16}
Let $\theta\colon \N\to P$ be a nontrivial map of fs monoids.
If $P^\gp$ is torsion free and $P\neq P^*$, then the topological space
\[
|\Spec(P)-\partial_{\Spec(\N)}\Spec(P)|_{\R}-\{0\}
\]
is acyclic.
\end{lem}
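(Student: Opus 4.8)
The plan is to reduce to the case where $P$ is sharp, recognize the space up to homotopy as ``a polytope with a face removed'', and show the latter is contractible (hence acyclic).

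\emph{Reductions and set-up.} Put $u:=\theta(1)\in P$; since $\theta$ is nontrivial, $u\neq 0$. Choosing a splitting $P\simeq P^*\oplus\ol P$ as in Lemma \ref{mono.1} gives $P^\gp\simeq P^*\oplus\ol P^\gp$, hence $\Hom(P^\gp,\Z)\simeq\Hom(P^*,\Z)\oplus\Hom(\ol P^\gp,\Z)$, under which $|\Spec(P)|_\R=\{0\}\times|\Spec(\ol P)|_\R$ and the fan morphism induced by $\theta$ becomes the one induced by $\ol\theta\colon\N\to\ol P$; so I may assume $P$ sharp. If the image of $u$ in $\ol P$ vanishes, i.e.\ $u\in P^*$, then $\langle-,u\rangle$ vanishes on $|\Spec(P)|_\R$, the subfan $\Spec(P)-\partial_{\Spec(\N)}\Spec(P)$ is $\{0\}$, and the space is empty, so I also assume $u\neq 0$ in the now sharp $P$. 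Set $N:=\Hom(P^\gp,\Z)$ (a lattice, as $P^\gp$ is torsion free) and $C:=|\Spec(P)|_\R=\{v\in N_\R:\langle v,p\rangle\geq 0\text{ for all }p\in P\}$. Then $C$ is a pointed rational polyhedral cone (as $P$ is fs and sharp), full-dimensional (as $P$ generates $P^\gp$), and $\dim C\geq 1$ (as $P^+\neq 0$); and $g:=\langle-,u\rangle$ is $\geq 0$ on $C$ but not identically $0$.

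\emph{Identifying the space.} On supports $\Spec(\theta)$ is $v\mapsto g(v)$, and a face $\sigma$ of $C$ maps to $\{0\}$ exactly when $\sigma\subseteq u^\perp$. Unwinding Definition \ref{vert.11}, $\Spec(P)-\partial_{\Spec(\N)}\Spec(P)$ is the subfan of faces $\tau$ of $C$ with $\tau\cap u^\perp=\{0\}$; writing $\rho:=C\cap u^\perp$ (a proper face, possibly $\{0\}$, since $g\not\equiv 0$), this is the set of faces $\tau$ with $\tau\cap\rho=\{0\}$. Let $D$ be the union of these faces, so the space to analyse is $D\setminus\{0\}$; note $g>0$ on $D\setminus\{0\}$ (if $v$ lies in such a $\tau$ and $g(v)=0$ then $v\in\tau\cap u^\perp=\{0\}$). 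Fixing a linear form $f$ strictly positive on $C\setminus\{0\}$, radial scaling identifies $D\setminus\{0\}$ with $\hat D:=D\cap\{f=1\}$, which is the union of the faces of the polytope $Q:=C\cap\{f=1\}$ disjoint from the face $G:=\rho\cap\{f=1\}$ (with $G=\emptyset$ when $\rho=\{0\}$).

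\emph{The main point.} I would show $\hat D$ is contractible. First, $Q\setminus G$ is contractible: for an interior point $x_0$ of $Q$ and any $y\in Q\setminus G$ the segment $[x_0,y]$ avoids $G\subseteq\partial Q$, so $Q\setminus G$ is star-shaped at $x_0$. Then I would show $\hat D$ is a deformation retract of $Q\setminus G$, obtained by pushing radially away from $G$: pushing away from a vertex $v_0$ of $G$ deformation retracts $Q\setminus\{v_0\}$ onto the union of faces of $Q$ not containing $v_0$ (via the map sending $x$ to the point where the ray from $v_0$ through $x$ leaves $Q$, which lies on a face not containing $v_0$ and which is fixed once $x$ already lies on such a face), and iterating over the vertices of $G$ — so that the exit faces eventually become disjoint from all of $G$ — retracts $Q\setminus G$ onto $\hat D$. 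When $\rho=\{0\}$, equivalently when $\theta$ is vertical, one simply has $D=C$ and $D\setminus\{0\}$ is a pointed cone with its apex removed; likewise whenever $D$ is a single face (e.g.\ when $C$ is simplicial), so these cases are immediate.

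\emph{Conclusion and the hard part.} This gives $D\setminus\{0\}\simeq\hat D$ contractible, hence acyclic. The step needing real care is the last one: proving rigorously that the iterated radial push lands in $\hat D$ and fixes it — equivalently, that for a polytope $Q$ and a face $G$ the union of the faces of $Q$ disjoint from $G$ is a deformation retract of $Q\setminus G$. (A tempting shortcut — collapsing $\operatorname{span}(\rho)$ and claiming $D$ maps homeomorphically onto the pointed cone $q(C)$ — fails in general, because $q$ need not be injective on a single face of $C$ disjoint from $\rho$ once $\dim\rho$ is large; it is the honest combinatorial retraction above that I expect to do the job.)
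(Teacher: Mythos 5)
Your proof is correct, but it takes a genuinely different route from the paper's. The paper passes to the cross-section $W$ of the subfan inside the sphere $X=\partial(h^{-1}(1))\cong S^{n-2}$, applies Alexander duality to the open complement $U=X-W$, and shows $U$ is homology-equivalent to the convex set $V$ by a closed-cover Mayer--Vietoris comparison; this yields acyclicity only. You instead identify the space directly as the punctured cone on $\hat D$, the union of the faces of the polytope $Q$ disjoint from the face $G=\rho\cap\{f=1\}$, and exhibit $\hat D$ as a deformation retract of the star-shaped set $Q\setminus G$. This is more elementary (no duality, no nerve comparison) and proves the stronger statement that the space is contractible rather than merely acyclic. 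The step you flag as delicate does go through: for a vertex $v$ of $G$ and $x\in Q\setminus\{v\}$, the exit point $r_v(x)$ of the ray from $v$ through $x$ lies in a face contained in the minimal face $F$ of $Q$ containing $x$ (if $v\in F$ the ray stays in $\mathrm{aff}(F)$ and $Q\cap\mathrm{aff}(F)=F$, so the exit point is computed inside $F$ and lands in a proper face of $F$ not containing $v$; if $v\notin F$ then $r_v(x)=x$). Hence each radial push preserves the antistars of all previously processed vertices as well as $Q\setminus G$ (the latter because $v\in G$ and $G$ is convex), and after running through all vertices of $G$ one lands in the union of faces containing no vertex of $G$, which equals $\hat D$ since a face meets $G$ exactly when it contains a vertex of $G$. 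One caveat, shared with the paper's own proof: when $\theta(1)$ is a nonzero unit of $P$ (not excluded by the hypotheses), the space in question is empty, which is not acyclic under the usual convention; you assume this case away, and the paper's Alexander-duality argument also silently breaks there, but the case does not arise where the lemma is applied.
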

\begin{proof}
If $\theta$ is vertical, then
$|\Spec(P)-\partial_{\Spec(\N)}\Spec(P)|_{\R}-\{0\}=|\Spec(P)|_{\R}-\{0\}$ is contractible since $|\Spec(P)|_{\R}-\{0\}$ is convex.

Assume $\theta$ is not vertical.
Let $f\colon |\Spec(P)|_{\R}\to |\Spec(\N)|_{\R}\simeq \R_{\geq 0}$ be the morphism induced by $\theta$.
Choose any map $\eta\colon \N\to P$ such that $\eta(1)$ is not contained in any proper face of $P$.
This induces a map $h\colon |\Spec(P)|_{\R}\to \R_{\geq 0}$ such that $h^{-1}(0)=0$.
We set
\[
V:=f^{-1}(0)\cap h^{-1}(1)
\text{ and }
W:=|\Spec(P)-\partial_{\Spec(\N)}\Spec(P)|_{\R}\cap h^{-1}(1).
\]
For a cone $\sigma\in \Spec(P)$, we have $\sigma\in \Spec(P)-\partial_{\Spec(\N)}\Spec(P)$ if and only if $\sigma$ does not contain a ray $r$ with $f(r)=0$. 
Hence $f(x)>0$ for all $x\in |\Spec(P)-\partial_{\Spec(\N)}\Spec(P)|_{\R}-\{0\}$, so we have $V\cap W=\emptyset$.
There is a homeomorphism
\[
|\Spec(P)-\partial_{\Spec(\N)}\Spec(P)|_{\R}-\{0\}
\simeq
W\times \R_{>0}.
\]
Hence it suffices to show that $W$ is acyclic.

Let $X$ be the boundary of $h^{-1}(1)$.
Since $h^{-1}(1)$ is a convex polytope, $X$ is homeomorphic to $S^{n-2}$, where $n$ is the rank of $\ol{P}^\gp$.
We assumed that $\theta$ is not vertical, so we have $W\subset X$.
By the Alexander duality
\[
\tilde{H}_q(X-W)
\simeq
\tilde{H}^{n-q-3}(W)
\]
that holds for every integer $q\geq 0$, we only need to show that $U:=X-W$ is acyclic.

Let $\Gamma$ be the set of cones $\sigma$ of $\Spec(P)$ such that there exists two rays $r_1$ and $r_2$ of $\sigma$ with $f(r_1)=0$ and $f(r_2)\neq 0$.
Equivalently, $|\sigma|_{\R}\cap V,|\sigma|_{\R}\cap W\neq \emptyset$.
We set $u(\sigma):=|\sigma|_{\R}\cap U$.
Let $\{\sigma_1,\ldots,\sigma_m\}$ Let $I:=\{i_1,\ldots,i_r\}$ be a nonempty subset of $\{1,\ldots,m\}$, and we set $\sigma_I:=\sigma_{i_1}\cap \cdots \cap \sigma_{i_r}$.
If $u(\sigma_I)=\emptyset$, then $u(\sigma_I)\cap V=\emptyset$.
On the other hand, if $u(\sigma_I)\neq \emptyset$, then $\sigma_I$ contains a ray $r$ with $f(r)=0$.
Hence we have $u(\sigma_I)\cap V\neq \emptyset$.
Since $u(\sigma_I)$ and $u(\sigma_I)\cap V$ are convex, they are contractible.
Hence the inclusion
\[
u(\sigma_I)\cap V
\to
u(\sigma_I)
\]
is a homotopy equivalence.

Since 
$\{u(\sigma_1),\ldots,u(\sigma_m),V\}$ (resp.\ $\{u(\sigma_1)\cap V,\ldots,u(\sigma_m)\cap V,V\}$) is a closed cover of $U$ (resp.\ $V$), the homomorphism of singular cohomology groups $H_*(V)\to H_*(U)$ induced by the inclusion $V\to U$ is an isomorphism by 
Mayer-Vietoris.
The assumption that $\theta$ is nontrivial implies that $V$ is not empty.
Furthermore, $V$ is convex, so $V$ is contractible.
It follows that $U$ is acyclic.
\end{proof}

\begin{exm}
Let us give a specific example for Lemma \ref{lemma.16} to help the readers understand its proof.
Suppose that $P$ is the submonoid of $\Z^3$ generated by $(1,0,0)$, $(0,1,0)$, $(1,0,1)$, $(0,1,1)$.
Then $|\Spec(P)|_{\R}$ is generated by $(1,0,0)$, $(0,1,0)$, $(0,0,1)$, and $(1,1,-1)$.
Suppose that $\theta\colon \N\to P$ (resp.\ $\eta\colon \N\to P$) is the map sending $1$ to $(1,1,0)$ (resp.\ $(1,1,1)$).
Then $f$ (resp.\ $h$) sends $(x,y,z)$ to $x+y$ (resp.\ $x+y+z$).
This implies that we have $h^{-1}(1)$ is the convex set generated by $(1,0,0)$, $(0,1,0)$, $(0,0,1)$, and $(1,1,-1)$,
and we have $V=(0,0,1)$.
The following figure illustrates several topological spaces appearing in the proof of Lemma \ref{lemma.16}.
\[
\begin{tikzpicture}
\fill[black!20] (0.8,0)--(0,0.8)--(-0.8,0)--(0,-0.8);
\draw[ultra thick] (0.8,0)--(0,0.8)--(-0.8,0)--(0,-0.8)--(0.8,0);
\filldraw (0.8,0) circle (2pt);
\filldraw (0,0.8) circle (2pt);
\filldraw (-0.8,0) circle (2pt);
\filldraw (0,-0.8) circle (2pt);
\node at (0,-1.3) {$h^{-1}(1)$};
\draw[ultra thick] (3.2,0)--(2.4,0.8)--(1.6,0)--(2.4,-0.8)--(3.2,0);
\filldraw (3.2,0) circle (2pt);
\filldraw (2.4,0.8) circle (2pt);
\filldraw (1.6,0) circle (2pt);
\filldraw (2.4,-0.8) circle (2pt);
\filldraw (3.2,0) circle (2pt);
\node at (2.4,-1.3) {$X$};
\filldraw (4.8,-0.8) circle (2pt);
\node at (4.8,-1.3) {$V$};
\draw[ultra thick] (8,0)--(7.2,0.8)--(6.4,0);
\filldraw (8,0) circle (2pt);
\filldraw (7.2,0.8) circle (2pt);
\filldraw (6.4,0) circle (2pt);
\node at (7.2,-1.3) {$W$};
\draw[ultra thick] (10.4,0)--(9.6,-0.8)--(8.8,0);
\draw (10.4,0) circle (2pt);
\filldraw (9.6,-0.8) circle (2pt);
\draw (8.8,0) circle (2pt);
\node at (9.6,-1.3) {$U$};
\end{tikzpicture}
\]
As claimed in Lemma \ref{lemma.16},
$X$ is homeomorphic to $S^1$,
$V$ is contractible,
and $W$ and $U$ are acyclic.
\end{exm}

\begin{lem}
\label{lemma.15}
Let $f\colon \Delta\to \Spec(\N)$ be a vertical morphism of fans, and let $\sigma$ be any nontrivial cone of $\Delta$.
If the topological space $|\Delta|_{\R}-\{0\}$ is acyclic, then there is an isomorphism $\Phi(\T_{\sigma,B})\simeq \Phi(\T_{\Delta,B})$.
\end{lem}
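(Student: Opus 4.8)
The plan is to use that $\Phi$ satisfies Zariski (indeed Nisnevich) descent, so that $\Phi(\T_{\Delta,B})$ is a homotopy limit over the poset of cones of $\Delta$, and then to show this limit degenerates by feeding in Lemma \ref{lemma.17} and the acyclicity hypothesis.

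First I would reduce to the case where every cone of $\Delta$, and $\sigma$, is smooth. Choose a smooth subdivision $\Delta'\to\Delta$ by toric resolution of singularities; then $\T_{\Delta'}\to\T_\Delta$ is a dividing cover, so Lemma \ref{lemma.10}, used as in Construction \ref{lemma.11}, gives $\Phi(\T_{\Delta',B})\simeq\Phi(\T_{\Delta,B})$, and likewise $\Phi(\T_{\Sigma_\sigma,B})\simeq\Phi(\T_{\sigma,B})$ for the subfan $\Sigma_\sigma\subseteq\Delta'$ supported on $\sigma$ (a subdivision of $\sigma$). One checks that $f$ stays vertical on $\Delta'$ and on $\Sigma_\sigma$ (a cone of $\Delta'$ mapping to $0$ lies in a face of some cone of $\Delta$, hence in $\Delta$, hence is trivial), that $|\Delta'|_\R-\{0\}=|\Delta|_\R-\{0\}$ is acyclic, and that $|\Sigma_\sigma|_\R-\{0\}=|\sigma|_\R-\{0\}$ is acyclic since it is convex. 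Picking a nonzero cone $\sigma'$ of $\Sigma_\sigma$ and applying the smooth case of the lemma to $(\Delta',\sigma')$ and to $(\Sigma_\sigma,\sigma')$ yields $\Phi(\T_{\Delta,B})\simeq\Phi(\T_{\sigma',B})\simeq\Phi(\T_{\sigma,B})$, compatibly with the restriction map by inspecting the resulting diagram of restrictions and dividing covers.

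Now assume $\Delta$ and $\sigma$ smooth. For nonzero cones $\tau'\subseteq\tau$ of $\Delta$, the open immersion $\T_{\tau',B}\hookrightarrow\T_{\tau,B}$ gives a restriction map $\Phi(\T_{\tau,B})\to\Phi(\T_{\tau',B})$, and since $\tau$ is smooth and nonzero and $f$ restricted to the faces of $\tau$ is vertical, Lemma \ref{lemma.17}, applied to the monoid $P$ with $\A_P\simeq\T_\tau$ and the face $F$ of $P$ with $\A_{P_F}\simeq\T_{\tau'}$, shows it is an equivalence. The toric opens $\{\T_\tau\}$, for $\tau$ running over all cones of $\Delta$, cover $\T_\Delta$ and are closed under intersection ($\T_\tau\cap\T_{\tau'}=\T_{\tau\cap\tau'}$), so Nisnevich descent for $\sT$ gives $\Phi(\T_{\Delta,B})\simeq\lim_{\tau}\Phi(\T_{\tau,B})$, the homotopy limit over the opposite poset. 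Separating off the zero cone, which is the minimum of the poset of cones, the indexing category is the join of the opposite poset $\mathcal P$ of nonzero cones with a final object; by the standard formula, $\Phi(\T_{\Delta,B})$ is the fiber product of $\Phi(\T_{0,B})$ and $\lim_{\tau\in\mathcal P}\Phi(\T_{\tau,B})$ over the cotensor $\Phi(\T_{0,B})^{N}$, where $N$ is the nerve of the poset of nonzero cones. Since $N$ is homotopy equivalent to $|\Delta|_\R-\{0\}$, which is acyclic, $N$ is homology-trivial, so $\Phi(\T_{0,B})\to\Phi(\T_{0,B})^{N}$ is an equivalence of spectra and the fiber product collapses to $\lim_{\tau\in\mathcal P}\Phi(\T_{\tau,B})$. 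Finally, $\tau\mapsto\Phi(\T_{\tau,B})$ on $\mathcal P$ sends all morphisms to equivalences — a local system on $N$ — and because these equivalences are the restriction maps coming from the $\A^1$-deformation retractions of Lemma \ref{lemma.17}, which are compatible over $\A_\N$, this local system is constant; by acyclicity of $N$ its homotopy limit is again $\Phi(\T_{\sigma,B})$, and the resulting equivalence $\Phi(\T_{\Delta,B})\to\Phi(\T_{\sigma,B})$ is the restriction map.

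The delicate point is this last step: evaluating the homotopy limit over $\mathcal P$, i.e. killing the zero-cone contribution and checking that the local system $\tau\mapsto\Phi(\T_{\tau,B})$ has trivial monodromy, so that the limit over the acyclic but a priori non-contractible nerve $N$ recovers the value at a single cone. This is exactly where the acyclicity of $|\Delta|_\R-\{0\}$ and the spectrum-valued nature of $\Phi$ are needed, only the homology of $N$ intervening. (Alternatively one can induct on the number of maximal cones of the smooth fan via the Mayer--Vietoris squares $\Phi(\T_{\Delta_1\cup\Delta_2,B})\simeq\Phi(\T_{\Delta_1,B})\times_{\Phi(\T_{\Delta_1\cap\Delta_2,B})}\Phi(\T_{\Delta_2,B})$, but the acyclicity input reappears in the same shape.)
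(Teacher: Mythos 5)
Your overall strategy --- reduce to a smooth fan by a dividing cover, use Lemma \ref{lemma.17} to see that all restriction maps between nonzero cones are equivalences, express $\Phi(\T_{\Delta,B})$ by descent as a limit over the face poset, and use acyclicity of $|\Delta|_{\R}-\{0\}$ to collapse that limit --- is the same as the paper's, and several of your steps are fine: the reduction to the smooth case, the fiber-product decomposition splitting off the zero cone, and the observation that for an acyclic nonempty space $N$ the map $E\to E^{N}$ is an equivalence (because $\Sigma^\infty_+ N\to \mathbb{S}$ is a homology isomorphism of connective spectra).

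The gap is in the last step, the one you yourself flag as delicate: you assert that the diagram $\tau\mapsto\Phi(\T_{\tau,B})$ on the poset of nonzero cones is a \emph{constant} local system ``because the equivalences come from compatible $\A^1$-deformation retractions.'' That is exactly the coherence which has to be proved, and it does not follow from what you have. A functor sending every morphism to an equivalence is a local system on the nerve, and an acyclic space can carry nonconstant local systems (acyclicity controls homology, not monodromy, nor the higher coherences needed to null-homotope the classifying map into $B\mathrm{Aut}$ of the fiber); so ``all transition maps are equivalences and the nerve is acyclic'' does not yield $\lim\simeq\Phi(\T_{\sigma,B})$. Lemma \ref{lemma.17} produces one equivalence for each inclusion of nonzero cones, but no compatible system of identifications with the single object $\Phi(\T_{\sigma,B})$. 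The paper sidesteps the monodromy question entirely: it writes down the single functorial formula \eqref{lemma.15.3}, interpolating between the two values $\Phi(\sigma)$ and $\Phi(0)$ by means of the Spanier--Whitehead dual $\epsilon(|\delta|_{\R}-\{0\})$ of the punctured support of each cone; the limit of the right-hand side over the \v{C}ech cube of maximal cones is then computed by Mayer--Vietoris applied to $\epsilon$ of the closed cover $\{|\delta_i|_{\R}-\{0\}\}$ of $|\Delta|_{\R}-\{0\}$, so that acyclicity is consumed by the purely homological equivalence $\epsilon(|\Delta|_{\R}-\{0\})\simeq\unit$ rather than by a statement about a local system of $\Phi$-values. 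To repair your argument you would either have to construct an actual coherent trivialization of your local system, or replace the final step by something like the paper's interpolation formula.
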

\begin{proof}
Since $\Phi$ is invariant for dividing covers, we may assume that every cone of $\Delta$ is smooth by \cite[Theorem 11.1.9]{CLStoric}.
For a cone $\delta$ of $\Delta$, we set
\[
\Phi(\delta)
:=
\Phi(\T_{\delta,B}).
\]
Observe that $\Phi$ is contravariant for the inclusions of cones.

There exists a colimit preserving functor
\[
\alpha\colon \Spt \to \sT(S)
\]
sending the sphere spectrum $\mathbb{S}$ to $\unit$ by \cite[Corollary 1.4.4.6]{HA}.
For a space $V$, we set
\[
\epsilon(V)
:=
\alpha(\map_{\Spt}(\Sigma^\infty V_+,\mathbb{S})).
\]
If $V$ is contractible, then $\epsilon(V)\simeq \unit$.
If $V$ is empty, then $\epsilon(V)\simeq 0$.

If $\delta'\subset \delta$ are nontrivial cones of $\Delta$, then Lemma \ref{lemma.17} shows $\Phi(\delta')\simeq \Phi(\delta)$.
Together with the connectivity of $|\delta|_{\R}-\{0\}$, we have
\[
\Phi(\delta)\simeq \Phi(\sigma).
\]
Observe also that $|\delta|_{\R}-\{0\}$ is contractible since it is convex.
On the other hand, $\Phi(\delta)=\Phi(0)$ if $\delta=0$.
These two formulas for $\Phi$ can be combined into a single functorial isomorphism
\begin{equation}
\label{lemma.15.3}
\Phi(\delta)
\simeq
\Phi(\sigma)\otimes \epsilon(|\delta|_{\R}-\{0\})
\oplus
\Phi(0)\otimes \fib(\unit \to \epsilon(|\delta|_{\R}-\{0\}))
\end{equation}
for every cone $\delta$ of $\Delta$.

Let $\delta_1,\ldots,\delta_m$ be all the maximal cones of $\Delta$.
If $v$ is a $0$-simplex of the simplicial set $I:=(\Delta^1)^m-\{(1,\ldots,1)\}$, we set
\[
\delta_v:=\delta_{i_1}\cap \cdots \cap \delta_{i_r},
\]
where the $i$th coordinate of $v$ is $0$ if and only if $i\in \{i_1,\ldots,i_r\}$.
If $v\to w$ is a morphism in $I$, then we have the morphism $\delta_v\to \delta_w$ given by the inclusion of the cones.
Hence we can take limits on both sides of \eqref{lemma.15.3} to obtain an isomorphism
\begin{equation}
\label{lemma.15.2}
\limit_{v\in I} \Phi(\delta)
\simeq
\limit_{v\in I}
\big(
\Phi(\sigma)\otimes \epsilon(|\delta|_{\R}-\{0\})
\oplus
\Phi(0)\otimes \fib(\unit \to \epsilon(|\delta|_{\R}-\{0\}))
\big).
\end{equation}
The left-hand side of \eqref{lemma.15.2} is isomorphic to $\Phi(\T_{\Delta,B})$ by Zariski descent.
Since $\{|\delta_1|_{\R}-\{0\},\ldots,|\delta_m|_{\R}-\{0\}\}$ is a closed cover of $|\Delta|_{\R}-\{0\}$, the right-hand side of \eqref{lemma.15.2} is isomorphic to
\[
\Phi(\sigma)\otimes \epsilon(|\Delta|_{\R}-\{0\})
\oplus
\Phi(0)\otimes \fib(\unit \to \epsilon(|\Delta|_{\R}-\{0\}))
\]
by Mayer-Vietoris, which is isomorphic to $\Phi(\sigma)$ since $|\Delta|_{\R}-\{0\}$ is acyclic.
\end{proof}

We refer to \cite[\S 11.1]{CLStoric} for the definition of a star subdivision of a fan.

\begin{lem}
\label{lemma.1}
Let $\theta\colon \N\to Q$ be a map of fs monoids, and let $\Sigma$ be a star subdivision of $\Spec(P)$ with $P:=Q^\tf$.
Then $\Theta(p)$ is an isomorphism, where $p\colon \A_{Q,B}\times_{\A_P}\T_{\Sigma}\to \A_{Q,B}$ is the projection.
\end{lem}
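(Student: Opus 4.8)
Using the computations of Subsection~\ref{key}, the plan is to identify $\Theta$ of the toric schemes in question with $\Phi$ of the toric scheme of a single cone, and then to see that $\Theta(p)$ becomes the tautological comparison between two such identifications.

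First, some reductions. By Lemma~\ref{mono.1} we have $Q\simeq P\oplus Q^\tor$ with $Q^\tor\subset Q^*$ finite, so $\Z[Q]\simeq\Z[P]\otimes_\Z\Z[Q^\tor]$, the log structure of $\A_Q$ is pulled back from $\A_P$, and every morphism to $\A_\N$ factors through the $P$-factor; hence $\A_{Q,B}\simeq\A_{P,B'}$ and $\A_{Q,B}\times_{\A_P}\T_\Sigma\simeq\T_{\Sigma,B'}$ for $B':=\Spec(\Z[Q^\tor])\times B\in\Sch$, and neither the vertical boundary nor the relevant cohomology is disturbed if we replace $B$ by $B'$ and $C$ by $C\times_B B'$. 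So we may assume $Q=P$, and $p\colon\T_{\Sigma,B}\to\A_{P,B}$. If $P$ is a group then $\Spec(P)=\{0\}=\Sigma$ and $p$ is an isomorphism; assume $P^+\neq 0$. If $\ol{\theta}\colon\N\to P$ is vertical, then $\A_{P,B}$ is vertical over $\A_{\N,B}$, and so is $\T_{\Sigma,B}$ (the dividing cover $\T_\Sigma\to\A_P$ is log \'etale, hence vertical by Proposition~\ref{vert.8}, and Proposition~\ref{vert.9}(1) applies); thus $\Theta=\Phi$ on both and $\Theta(p)=\Phi(p)$ is an isomorphism, since $\Phi$ inverts dividing covers (Lemma~\ref{lemma.10}, Construction~\ref{lemma.11}). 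If $\ol{\theta}$ is trivial, then both $\A_{P,B}-\partial_{\A_{\N,B}}\A_{P,B}$ and $\T_{\Sigma,B}-\partial_{\A_{\N,B}}\T_{\Sigma,B}$ are the big torus $\Spec(\Z[P^\gp])\times B$, the open immersion of \eqref{lemma.11.1} is an identity, and again $\Theta(p)$ is an isomorphism. So assume $\ol{\theta}$ is nontrivial and not vertical.

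Set $\sigma_P:=|\Spec(P)|$, let $F_0$ be the largest face of $\sigma_P$ whose image in $|\Spec(\N)|$ is the origin, and put $\Delta:=\Spec(P)-\partial_{\Spec(\N)}\Spec(P)$ and $\Sigma^\circ:=\Sigma-\partial_{\Spec(\N)}\Sigma$. By Definition~\ref{vert.11} and the subsequent proposition, $\Theta(\A_{P,B})=\Phi(\T_{\Delta,B})$ and $\Theta(\T_{\Sigma,B})=\Phi(\T_{\Sigma^\circ,B})$, and $\Sigma^\circ$ and $\Delta$ are vertical over $\Spec(\N)$. Since every cone of the subdivision $\Sigma$ lies in the smallest face of $\sigma_P$ containing it, the fibre product in \eqref{lemma.11.1} is $\T_{\Sigma'',B}$, where $\Sigma'':=\{\sigma\in\Sigma:\sigma\subset|\Delta|\}$ is a subfan of $\Sigma^\circ$ with $|\Sigma''|=|\Delta|$; thus $\T_{\Sigma'',B}\to\T_{\Delta,B}$ is a dividing cover and $\Theta(p)$ is the composite $\Phi(\T_{\Sigma^\circ,B})\to\Phi(\T_{\Sigma'',B})\xrightarrow{\sim}\Phi(\T_{\Delta,B})$. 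Hence it suffices to prove the restriction $\Phi(\T_{\Sigma^\circ,B})\to\Phi(\T_{\Sigma'',B})$ is an isomorphism. Now $|\Sigma''|_\R-\{0\}=|\Delta|_\R-\{0\}$ is acyclic by Lemma~\ref{lemma.16} (using $\ol{\theta}$ nontrivial and $P^+\neq 0$), hence nonempty; choose a nontrivial cone $\sigma^*\in\Sigma''\subset\Sigma^\circ$, and note $\Sigma''$, being a subfan of $\Sigma^\circ$, is also vertical over $\Spec(\N)$. Assuming $|\Sigma^\circ|_\R-\{0\}$ is also acyclic, Lemma~\ref{lemma.15} (whose proof realizes the stated isomorphism by the restriction map) shows that $\Phi(\T_{\Sigma^\circ,B})\to\Phi(\T_{\sigma^*,B})$ and $\Phi(\T_{\Sigma'',B})\to\Phi(\T_{\sigma^*,B})$ are isomorphisms, whence so is $\Phi(\T_{\Sigma^\circ,B})\to\Phi(\T_{\Sigma'',B})$ by the two-out-of-three property.

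It remains to show $|\Sigma^\circ|_\R-\{0\}$ is acyclic, and this is where the hypothesis that $\Sigma$ is a \emph{star} subdivision is essential. Let $v_0$ be the new ray of $\Sigma$ and $\tau\preceq\sigma_P$ its minimal face (when starring a cone $\tau$, take $v_0$ spanning a ray in the relative interior of $\tau$). If $\tau\subset F_0$, i.e.\ $v_0$ maps to the origin of $|\Spec(\N)|$, then no cone of $\Sigma$ containing $v_0$ lies in $\Sigma^\circ$, one checks $\Sigma^\circ=\Sigma''=\Delta$, and $|\Sigma^\circ|_\R-\{0\}$ is acyclic by Lemma~\ref{lemma.16} (indeed $\Theta(p)$ is then visibly an isomorphism). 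If $\tau\not\subset F_0$, then the ray $v_0$ lies in $\Sigma^\circ$ and every maximal cone of $\Sigma^\circ$ contains $v_0$, so $|\Sigma^\circ|$ is star-shaped about $v_0$ and $|\Sigma^\circ|_\R-\{0\}$ is contractible. This star-shapedness of $\Sigma^\circ$ about the new ray is the crux of the argument and the only place the star property (as opposed to an arbitrary subdivision) is used; a secondary technical point is the identity $|\Sigma''|=|\Delta|$ together with the description of the fibre product in \eqref{lemma.11.1}, and one should take some care that the first reduction does not disturb $\Theta$.
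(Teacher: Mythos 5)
Your proposal is correct and follows essentially the same route as the paper: reduce to $Q=P$ torsion free, dispose of the degenerate cases, and in the main case use the star-shapedness of $|\Sigma-\partial_{\Spec(\N)}\Sigma|_{\R}$ about the new ray together with Lemmas \ref{lemma.16} and \ref{lemma.15} applied to a common nontrivial cone. Your extra bookkeeping (factoring $\Theta(p)$ through the subfan $\Sigma''$ over $|\Delta|$ and checking that the Lemma \ref{lemma.15} isomorphisms are compatible with restriction) only makes explicit what the paper's proof leaves implicit.
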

\begin{proof}
Replace $(Q,B)$ by $(P,\A_{Q^\tor,B})$ to reduce to the case when $Q^\gp$ is torsion free, i.e., $P=Q$.

If $\theta$ is trivial, then
\[
\A_{P,B}-\partial_{\A_{\N,B}}\A_{P,B}
\simeq
\A_{P,B}-\partial\A_{P,B}
\simeq 
\T_{\Sigma,B}-\partial_{\A_{\N,B}}\T_{\Sigma,B}
\simeq
\T_{\Sigma,B}-\partial\T_{\Sigma,B}.
\]
Hence $\Theta(p)$ is an isomorphism.

Assume that $\theta$ is nontrivial.
Let $N$ be the dual lattice of $P^\gp$, and let $v$ be the point of $|\Spec(P)|\cap N$ such that $\Sigma$ is the star subdivision of $\Spec(P)$ at $v$.
If $f(v)=0$, then $\Sigma-\partial_{\Spec(\N)}\Sigma\simeq \Spec(P)-\partial_{\Spec(\N)}\Spec(P)$.
Hence we are done.

Assume $f(v)\neq 0$.
For every $x\in |\Sigma-\partial_{\Spec(\N)}\Sigma|_{\R}$, the segment whose two ends are $x$ and $v$ is contained in $|\Sigma-\partial_{\Spec(\N)}\Sigma|_{\R}$.
It follows that $|\Sigma-\partial_{\Spec(\N)}\Sigma|_{\R}-\{0\}$ is contractible.
Together with Lemmas \ref{lemma.16} and \ref{lemma.15}, we obtain $\Theta(\A_P)\simeq \Theta(\T_\Sigma)$.
\end{proof}

\begin{lem}
\label{lemma.8}
Let $\theta\colon \N\to Q$ be a map of fs monoids, let $X\to \A_{Q,B}$ be a strict \'etale morphism in $\lSch$, and let $\Sigma$ be a star subdivision of $\Spec(P)$ with $P:=Q^\tf$.
Then $\Theta(p)$ is an isomorphism, where $p\colon X':=X\times_{\A_P}\T_{\Sigma}\to X$ is the projection.
\end{lem}
\begin{proof}
We proceed by induction on $m:=\rank(\ol{P}^\gp)$.
If $m=0,1$, then there is no nontrivial star subdivision of $\Spec(P)$.
Hence the claim is trivial.

Assume $m\geq 2$.
Let $i\colon \A_{(P,P^+),B}\to \A_{P,B}$ be the obvious closed immersion, let $j$ be its open complement, and let $q\colon \T_\Sigma\to \A_P$ be the projection.
We have the induced commutative diagram with cartesian squares
\begin{equation}
\label{lemma.8.1}
\begin{tikzcd}
\A_{(P,P^+),B}\times_{\A_P}\T_{\Sigma,B}\ar[d]\ar[r,"i'"]&
\T_{\Sigma,B}\ar[d,"q"]\ar[r,leftarrow,"j'"]&
(\A_{P,B}-\A_{(P,P^+),B})\times_{\A_P}\T_\Sigma\ar[d]
\\
\A_{(P,P^+),B}\ar[r,"i"]&
\A_{P,B}\ar[r,leftarrow,"j"]&
\A_{P,B}-\A_{(P,P^+),B}.
\end{tikzcd}
\end{equation}

Consider the morphism $\A_{P,B}\to \ul{ \A_{(P,P^+),B}}\simeq \A_{P^*,B}$ induced by the inclusion $P^*\to P$.
We set
\[
Y:=\ul{X\times_{\A_{P,B}} \A_{(P,P^+),B}}\times_{\ul{\A_{(P,P^+),B}}}\A_{P,B},
\;
Y':=\ul{X\times_{\A_{P,B}} \A_{(P,P^+),B}}\times_{\ul{\A_{(P,P^+),B}}}\T_{\Sigma,B}.
\]
Since $X$ is strict smooth over $\A_{P,B}$, $Y$ (resp.\ $Y'$) is strict smooth over $\A_{P,B}$ (resp.\ $\T_{\Sigma,B}$).
There is a commutative diagram with cartesian squares
\begin{equation}
\label{lemma.8.2}
\begin{tikzcd}[column sep=small, row sep=small]
X'\ar[r,leftarrow]\ar[d]&
X'\times_{\A_{P,B}}\A_{(P,P^+),B}\ar[r]\ar[d]&
Y'\ar[d]
\\
X\ar[r,leftarrow]\ar[d]&
X\times_{\A_{P,B}}\A_{(P,P^+),B}\ar[d]\ar[r]&
Y\ar[d]
\\
\A_{P,B}\ar[r,leftarrow,"i"]&
\A_{(P,P^+),B}\ar[r,"i"]&
\A_{P,B}.
\end{tikzcd}
\end{equation}

For simplicity of notation, we set $U:=C\times_{\A_{\N}}\A_P$ and $U':=U\times_{\A_P}\T_{\Sigma}\simeq U$.
By Theorem \ref{consq.1}, we have an isomorphism $\ul{q}_*M(U')\simeq M(U)$.
Since $q$ is log smooth, there is a commutative diagram
\begin{equation}
\label{lemma.2.3}
\begin{tikzcd}
\ul{q}^*\ul{j}_\sharp \ul{j}^*\ar[r,"ad'"]\ar[d,"\simeq"']&
\ul{q}^*\ar[r,"ad"]\ar[d,"\simeq"]&
\ul{q}^*\ul{i}_*\ul{i}^*\ar[d,"\simeq"]
\\
\ul{j}_\sharp'\ul{j}'^*\ul{q}^*\ar[r,"ad'"]&
\ul{q}'^*\ar[r,"ad"]&
\ul{i}_*'\ul{i}'^*\ul{q}^*
\end{tikzcd}
\end{equation}
whose rows are cofiber sequences and vertical morphisms are isomorphisms.
We set $Y'':=(Y-\partial_{\A_{\N,B}}Y)\times_Y Y'$.
From \eqref{lemma.2.3}, we obtain a commutative diagram
\[
\begin{tikzcd}[row sep=small, column sep=tiny]
\map_{\sT(\ul{\T_{\Sigma,B}})}(\ul{i}_*'\ul{i}'^*M(\ul{Y''}),M(U'))\ar[d]\ar[r,"\simeq"]&
\map_{\sT(\ul{\A_{P,B}})}(\ul{i}_*\ul{i}^*M(\ul{Y-\partial_{\A_{\N,B}}Y}),M(U))\ar[d]
\\
\map_{\sT(\ul{\T_{\Sigma,B}})}(M(\ul{Y''}),M(U'))\ar[d]\ar[r,"\simeq"]&
\map_{\sT(\ul{\A_{P,B}})}(M(\ul{Y-\partial_{\A_{\N,B}}Y}),M(U))\ar[d]
\\
\map_{\sT(\ul{\T_{\Sigma,B}})}(\ul{j}_\sharp' \ul{j}'^*M(\ul{Y''}),M(U'))\ar[r,"\simeq"]&
\map_{\sT(\ul{\A_{P,B}})}(\ul{j}_\sharp\ul{j}^*M(\ul{Y-\partial_{\A_{\N,B}}Y}),M(U))
\end{tikzcd}
\]
whose columns are fiber sequences.
Together with the open immersion $Y''\hookrightarrow Y'-\partial_{\A_{\N,B}}Y'$, we obtain a commutative diagram
\begin{equation}
\label{lemma.2.1}
\begin{tikzpicture}[baseline= (a).base]
\node[scale=.9] (a) at (0,0)
{
\begin{tikzcd}[row sep=small, column sep=tiny]
\map_{\sT(\ul{\T_{\Sigma,B}})}(\ul{i}_*'\ul{i}'^*M(\ul{Y'-\partial_{\A_{\N,B}}Y'}),M(U'))\ar[d]\ar[r]&
\map_{\sT(\ul{\A_{P,B}})}(\ul{i}_*\ul{i}^*M(\ul{Y-\partial_{\A_{\N,B}}Y}),M(U))\ar[d]
\\
\map_{\sT(\ul{\T_{\Sigma,B}})}(M(\ul{Y'-\partial_{\A_{\N,B}}Y'}),M(U'))\ar[d]\ar[r]&
\map_{\sT(\ul{\A_{P,B}})}(M(\ul{Y-\partial_{\A_{\N,B}}Y}),M(U))\ar[d]
\\
\map_{\sT(\ul{\T_{\Sigma,B}})}(\ul{j}_\sharp' \ul{j}'^*M(\ul{Y'-\partial_{\A_{\N,B}}Y'}),M(U'))\ar[r]&
\map_{\sT(\ul{\A_{P,B}})}(\ul{j}_\sharp\ul{j}^*M(\ul{Y-\partial_{\A_{\N,B}}Y}),M(U))
\end{tikzcd}
};
\end{tikzpicture}
\end{equation}
whose columns are fiber sequences.

The fs log scheme $\A_{P,B}-\A_{(P,P^+),B}$ admits a Zariski cover
\[
\{\A_{P_F,B}:
F\text{ is a nonzero face of }P\}.
\]
By induction, the morphism $\Theta(Y'\times_{\A_P}\A_{P_F})\to \Theta(Y\times_{\A_P}\A_{P_F})$ is an isomorphism.
Hence by Zariski descent and adjunction, the third row of \eqref{lemma.2.1} is an isomorphism.

By \cite[Lemma I.6.7]{zbMATH07027475},
we have isomorphisms
\[
Y:=\ul{X\times_{\A_{P,B}} \A_{(P,P^+),B}}\times \A_{\ol{P},B},
\;
Y':=\ul{X\times_{\A_{P,B}} \A_{(P,P^+),B}}\times \T_{\Sigma',B},
\]
where $\Sigma':=\Sigma\times_{\Spec(P)}\Spec(\ol{P})$ is a star subdivision of $\Spec(\ol{P})$.
Replace $B$ by $\ul{X\times_{\A_{P,B}} \A_{(P,P^+),B}}$ and use Lemma \ref{lemma.1} to deduce that the second row of \eqref{lemma.2.1} is an isomorphism.
It follows that the first row of \eqref{lemma.2.1} is an isomorphism too.

We also have a commutative diagram
\begin{equation}
\label{lemma.2.2}
\begin{tikzpicture}[baseline= (a).base]
\node[scale=.9] (a) at (0,0)
{
\begin{tikzcd}[row sep=small, column sep=tiny]
\map_{\sT(\ul{\T_{\Sigma,B}})}(\ul{i}_*'\ul{i}'^*M(\ul{X'-\partial_{\A_{\N,B}}X'}),M(U'))\ar[d]\ar[r]&
\map_{\sT(\ul{\A_{P,B}})}(\ul{i}_*\ul{i}^*M(\ul{X-\partial_{\A_{\N,B}}X}),M(U))\ar[d]
\\
\map_{\sT(\ul{\T_{\Sigma,B}})}(M(\ul{X'-\partial_{\A_{\N,B}}X'}),M(U'))\ar[d]\ar[r]&
\map_{\sT(\ul{\A_{P,B}})}(M(\ul{X-\partial_{\A_{\N,B}}X}),M(U))\ar[d]
\\
\map_{\sT(\ul{\T_{\Sigma,B}})}(\ul{j}_\sharp' \ul{j}'^*M(\ul{X'-\partial_{\A_{\N,B}}X'}),M(U'))\ar[r]&
\map_{\sT(\ul{\A_{P,B}})}(\ul{j}_\sharp\ul{j}^*M(\ul{X-\partial_{\A_{\N,B}}X}),M(U))
\end{tikzcd}
};
\end{tikzpicture}
\end{equation}
whose rows are fiber sequences.
From \eqref{lemma.8.2}, we see that the first rows of \eqref{lemma.2.1} and \eqref{lemma.2.2} are isomorphic.
By induction, the morphism $\Theta(X'\times_{\A_P}\A_{P_F})\to \Theta(X\times_{\A_P}\A_{P_F})$ is an isomorphism for every nonzero face $F$ of $P$.
Hence by Zariski descent and adjunction, the third row of \eqref{lemma.2.2} is an isomorphism.
It follows that the second row of \eqref{lemma.2.2} is an isomorphism too.
By adjunction, we obtain the desired isomorphism.
\end{proof}

\begin{lem}
\label{lemma.14}
Let $\theta\colon \N\to Q$ be a map of fs monoids, let $X\to \A_{Q,B}$ be a strict \'etale morphism, let $\Sigma$ be a subdivision of $\Spec(P)$ with $P:=Q^\tf$, and let $\Delta$ be a subdivision of $\Sigma$ obtained by finite successions of star subdivisions.
Then $\Theta(q)$ is an isomorphism, where $q\colon X\times_{\A_{P}}\T_{\Delta}\to X\times_{\A_{P}}\T_{\Sigma}$ be the induced morphism.
\end{lem}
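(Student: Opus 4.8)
The plan is to bootstrap everything from Lemma \ref{lemma.8}, which already settles the case where the base fan is $\Spec(P')$ itself and a single star subdivision is performed. As a preliminary step, exactly as in the opening line of the proof of Lemma \ref{lemma.1} I would replace $(Q,B)$ by $(P,\A_{Q^\tor,B})$; using the splitting $Q\simeq P\oplus Q^\tor$ of Lemma \ref{mono.1} this reduces us to the case $Q=P$, so from now on $X\to\A_{P,B}$ is strict \'etale and $P^\gp$ is torsion free. Write $N$ for the dual lattice of $P^\gp$.

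Next I would induct on the number $n$ of star subdivisions exhibited in a chain $\Sigma=\Sigma_0,\Sigma_1,\dots,\Sigma_n=\Delta$, each $\Sigma_{\ell+1}$ being a star subdivision of $\Sigma_\ell$. The case $n=0$ is trivial. For $n\geq 1$, set $\Sigma':=\Sigma_{n-1}$; it is a subdivision of $\Spec(P)$ obtained from $\Sigma$ by $n-1$ star subdivisions, so by the inductive hypothesis $\Theta$ sends the projection $X\times_{\A_P}\T_{\Sigma'}\to X\times_{\A_P}\T_\Sigma$ to an isomorphism. Since $\Theta$ is covariant for dividing covers (Construction \ref{lemma.11}) and $q$ factors through this projection, it suffices to treat the case $n=1$: $\Delta$ is a single star subdivision, at a point $v\in N$, of a subdivision $\Sigma'$ of $\Spec(P)$.

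For the case $n=1$ I would argue Zariski-locally on $\T_{\Sigma'}$. For a cone $\sigma\in\Sigma'$ let $S_\sigma$ be the fs monoid dual to $\sigma$, so that the affine toric open $U_\sigma\subset\T_{\Sigma'}$ is $\A_{S_\sigma}$; since $\sigma$ is pointed, $S_\sigma^\gp=P^\gp$ is torsion free, and the inclusion $\sigma\subset|\Spec(P)|$ yields an inclusion $P\hookrightarrow S_\sigma$ inducing a morphism $\A_{S_\sigma}\to\A_P$ that is the restriction of $\T_{\Sigma'}\to\A_P$ to $U_\sigma$. The opens $X\times_{\A_P}U_\sigma$ cover $X\times_{\A_P}\T_{\Sigma'}$, their pairwise intersections have the same shape, and $q^{-1}(X\times_{\A_P}U_\sigma)=X\times_{\A_P}\T_{\Delta_\sigma}$, where $\Delta_\sigma$ equals $\Spec(S_\sigma)$ when $v\notin\sigma$ and the star subdivision of $\Spec(S_\sigma)$ at $v$ when $v\in\sigma$ — this is the standard fact that star subdivisions restrict chart by chart, see \cite[Section 11.1]{CLStoric}. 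When $v\notin\sigma$ the restriction of $q$ over $X\times_{\A_P}U_\sigma$ is the identity, so $\Theta$ is trivially an isomorphism there. When $v\in\sigma$, observe that $X\times_{\A_P}U_\sigma\simeq X\times_{\A_{P,B}}\A_{S_\sigma,B}\to\A_{S_\sigma,B}$ is strict \'etale, that $X\times_{\A_P}\T_{\Delta_\sigma}\simeq(X\times_{\A_P}U_\sigma)\times_{\A_{S_\sigma}}\T_{\Delta_\sigma}$, and that the composite $\N\xrightarrow{\theta}P\hookrightarrow S_\sigma$ realizes all of this over $\A_{\N,B}$ compatibly with $\Theta$; hence Lemma \ref{lemma.8}, applied with $S_\sigma$ (so that the ``$P$'' there is $S_\sigma^\tf=S_\sigma$) and $\Delta_\sigma$ in the roles of $Q$ and $\Sigma$, shows that $\Theta$ of $X\times_{\A_P}\T_{\Delta_\sigma}\to X\times_{\A_P}U_\sigma$ is an isomorphism. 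Since $\Phi$, and therefore $\Theta$, satisfies Zariski descent, as already used in the proofs of Lemmas \ref{lemma.17} and \ref{lemma.15}, patching these local isomorphisms along $\{X\times_{\A_P}U_\sigma\}$ gives that $\Theta(q)$ is an isomorphism.

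The one point requiring genuine care is the Zariski-local bookkeeping in the third paragraph: checking that $U_\sigma\to\A_P$ is the morphism induced by $P\hookrightarrow S_\sigma$, that $q$ pulls back over $U_\sigma$ to the projection from $\T_{\Delta_\sigma}$ with $\Delta_\sigma$ the corresponding star subdivision of $\Spec(S_\sigma)$, and that $\Theta$ genuinely descends along this finite affine cover. All three are either elementary toric geometry or were already invoked above, so I do not expect a serious obstacle; the conceptual content is entirely carried by Lemma \ref{lemma.8}, and the present lemma is essentially a localization-plus-induction packaging of it.
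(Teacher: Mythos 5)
Your proposal is correct and follows essentially the same route as the paper: reduce to a single star subdivision, localize on $\T_{\Sigma'}$ via its affine toric cover $\{U_\sigma=\A_{S_\sigma}\}$ using Zariski descent for $\Theta$, and apply Lemma \ref{lemma.8} on each chart. The only cosmetic difference is that you strip the torsion $Q^\tor$ at the outset, whereas the paper absorbs it at the end by noting $X\times_{\A_P}\A_R\simeq X\times_{\A_Q}\A_{R\oplus Q^\tor}$ is strict \'etale over $\A_{R\oplus Q^\tor,B}$; both are fine.
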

\begin{proof}
We only need to show the claim when $\Delta$ is a star subdivision of $\Sigma$.
By Zariski descent, we reduce to the case when $\Sigma= \Spec(R)$ for an fs monoid $R$.
Then $X\times_{\A_P}\A_R\simeq X\times_{\A_Q}\A_{R\oplus Q^\tor}$ is strict \'etale over $\A_{R\oplus Q^\tor,B}$.
We can use Lemma \ref{lemma.8} in this situation.
\end{proof}

\begin{lem}
\label{lemma.13}
Let $\theta\colon \N\to Q$ be a map of fs monoids, let $X\to \A_{Q,B}$ be a strict \'etale morphism in $\lSch$, and let $\Sigma$ be a subdivision of $\Spec(P)$ with $P:=Q^\tf$.
Then $\Theta(p)$ is an isomorphism, where $p\colon X\times_{\A_{P}}\T_{\Sigma}\to X$ is the projection.
\end{lem}
\begin{proof}
Let $\cB$ be the category of subdivisions of $\Spec(P)$, and let $\cA$ be the class of morphisms in $\cA$ that are obtained by finite successions of star subdivisions relative to two dimensional cones.
By \cite[Lemma A.3.15]{logDM}, $\cA$ admits a calculus of right fractions in $\cB$.

Consider the functor
\[
F\colon \cB\to \Ho(\Spt).
\]
sending $\Sigma\in \cB$ to $\Theta(X\times_{\A_{P}}\T_{\Sigma})$.
By Lemma \ref{lemma.14}, $\Theta$ sends every morphism in $\cA$ to an isomorphism.
Together with \cite[Lemmas A.3.15, C.2.1]{logDM}, we deduce that $\Theta$ sends every morphism in $\cB$ to an isomorphism.
\end{proof}

\begin{lem}
\label{lemma.12}
Let $\N\to Q$ be a map of fs monoids, let $X\to \A_{Q,B}$ be a strict \'etale morphism in $\lSch$, let $X'\to X$ be a log \'etale monomorphism, and let $\Sigma$ be a subdivision of $\Spec(P)$ with $P:=Q^\tf$.
Then $\Theta(p)$ is an isomorphism, where $p\colon X'\times_{\A_{P}}\T_{\Sigma}\to X'$ is the projection.
\end{lem}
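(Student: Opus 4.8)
The plan is to reduce the statement to Lemma \ref{lemma.13} by a Zariski-local argument together with the structure theory of log \'etale monomorphisms. First I would reduce to the case that $Q^\gp$ is torsion free: replacing $(Q,B)$ by $(Q^\tf,\A_{Q^\tor,B})$ as in the proofs of Lemmas \ref{lemma.1} and \ref{lemma.8}, we may assume $Q=P$, so that $X$ is strict \'etale over $\A_{P,B}$ and in particular admits $P$ as a chart Zariski locally. Since $\Phi$, and hence $\Theta$, satisfies strict Nisnevich descent, and since the formation of $Y\mapsto Y-\partial_{\A_{\N,B}}Y$ commutes with open immersions, the assertion that $\Theta(p)$ is an isomorphism is Zariski local on $X'$: for a strict open cover $X'_i\to X'$ the pullbacks $X'_i\times_{\A_P}\T_\Sigma\to X'\times_{\A_P}\T_\Sigma$ form a strict open cover, and every open subscheme of a log \'etale monomorphism over $X$ is again a log \'etale monomorphism over $X$, so the same hypotheses hold for each $X'_i$ and each intersection $X'_i\cap X'_j$.

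Next I would apply the structure theory of log \'etale monomorphisms (cf.\ \cite[Appendix A]{logDM}): Zariski locally on $X'$, and after shrinking $X$ to an open subscheme of $\A_{P,B}$ over which it is strict \'etale, the morphism $X'\to X$ factors as an open immersion $X'\hookrightarrow X\times_{\A_P}\T_\Delta$ followed by the projection to $X$, where $\Delta$ is a subdivision of $\Spec(P)$. Covering $\T_\Delta$ by its affine toric charts $\A_S$, where $S$ is the fs monoid dual to a cone of $\Delta$ with its induced homomorphism $P\to S$, and invoking Zariski descent once more, I may assume that $X'=V$ is an open subscheme of $X\times_{\A_P}\A_S$. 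Because $\A_{P,B}\times_{\A_P}\A_S\simeq\A_{S,B}$, the scheme $X\times_{\A_P}\A_S=X\times_{\A_{P,B}}\A_{S,B}$ is strict \'etale over $\A_{S,B}$, hence so is its open subscheme $V$; and the composite $\N\to P\to S$ makes $V$ an object of $\lSm/\A_{\N,B}$ of the type occurring in Lemma \ref{lemma.13}.

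It then remains to identify $X'\times_{\A_P}\T_\Sigma$ in these coordinates. The dividing cover $\T_\Sigma\to\A_P$ is stable under base change, so its pullback along $\A_S\to\A_P$ is a dividing cover of $\A_S$; since $\A_S\simeq\A_{S^\tf}\times\A_{S^\tor}$ and this pullback is pulled back from $\A_{S^\tf}$, we obtain $\T_\Sigma\times_{\A_P}\A_S\simeq\T_{\Sigma'}\times_{\A_{S^\tf}}\A_S$ for a suitable subdivision $\Sigma'$ of $\Spec(S^\tf)$. Hence $X'\times_{\A_P}\T_\Sigma=V\times_{\A_P}\T_\Sigma\simeq V\times_{\A_{S^\tf}}\T_{\Sigma'}$, and under this identification $p$ becomes the projection $V\times_{\A_{S^\tf}}\T_{\Sigma'}\to V$. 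Applying Lemma \ref{lemma.13} with $(Q,X,\Sigma)$ there replaced by $(S,V,\Sigma')$ shows that $\Theta(p)$ is an isomorphism, and assembling back along the Zariski covers finishes the proof.

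The step I expect to be the main obstacle is obtaining the structure statement for log \'etale monomorphisms in exactly the form used here — an open immersion into a log modification of the base, Zariski locally on the source — and carrying out the accompanying bookkeeping with charts, fibre products of fans, and monoid torsion; once this is in place the argument is a routine descent reducing to Lemma \ref{lemma.13}.
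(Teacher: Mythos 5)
Your proposal is correct and follows essentially the same route as the paper: reduce Zariski locally, use the structure theory of log \'etale monomorphisms to realize $X'$ as an open subscheme of $X\times_{\A_P}\A_{P'}$ for some $P\to P'$ inducing an isomorphism on groupifications (your affine charts $\A_S$ of $\T_\Delta$ are exactly these), rewrite $X'\times_{\A_P}\T_\Sigma$ as $X'\times_{\A_{P'}}\T_{\Sigma'}$ with $\Sigma'=\Spec(P')\times_{\Spec(P)}\Sigma$, and apply Lemma \ref{lemma.13}. The paper cites \cite[Lemma A.11.3]{logDM} for precisely the factorization you flag as the main obstacle.
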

\begin{proof}
The question is Zariski local on $X$ and $X'$.
Hence by \cite[Lemma A.11.3]{logDM}, we may assume that there exists a map $\theta\colon P\to P'$ such that $\theta^\gp$ is an isomorphism and the induced morphism $X'\to X\times_{\A_P}\A_{P'}$ is an open immersion.
This means that the induced morphism $X'\to \A_{P'\oplus Q^{\tor},B}$ is strict \'etale.

We set $\Sigma':=\Spec(P')\times_{\Spec(P)}\Sigma$, which is a subdivision of $\Spec(P')$.
Then we have $X'\times_{\A_{P}}\T_{\Sigma}\simeq X'\times_{\A_{P'}}\T_{\Sigma'}$, and use Lemma \ref{lemma.13} to conclude.
\end{proof}

\begin{proof}[Proof of Proposition \ref{lemma.7}]
We first treat the case (i).
By applying \cite[Proposition A.11.5]{logDM} to the dividing cover $X'\to X$ and the chart $P:=Q^\tf$ of $X$,
we obtain a subdivision $\Sigma$ of $\Spec(P)$ such that the pullback
\[
r\colon X'\times_{\A_P}\T_{\Sigma}\to X\times_{\A_P}\T_{\Sigma}
\]
is an isomorphism.
Let $p\colon X\times_{\A_P}\T_\Sigma\to X$ and $p'\colon X'\times_{\A_P}\T_\Sigma\to X'$ be the projections.
By Lemma \ref{lemma.12}, $\Theta(p)$ and $\Theta(p')$ are isomorphisms.
Since $\Theta(r)\Theta(p)=\Theta(p')\Theta(f)$, $\Theta(f)$ is an isomorphism.

Next, we treat the case (ii).
The question is Zariski local on $X$.
Hence we may assume that the structure morphism $q\colon X\to \A_{\N,B}$ admits a chart $\eta\colon \N\to P$ by \cite[Proposition II.2.4.2]{Ogu}.
Since $X\in \lSm/B$, we may assume that there exists a neat chart $P'$ of $X$ at a point $x\in X$ such that the induced morphism $X\to \A_{P',B}$ is strict smooth by \cite[Lemma A.5.9]{logDM}.
The structure morphism $q\colon X\to \A_{\N,B}$ induces a map $\eta'\colon \N \to \ol{\cM}_{X,x}\simeq P'$.
We have the morphism $q'\colon X\to \A_{\N,B}$ induced by $\eta'$,
which can differ from the original structure morphism $q\colon X\to \A_{\N,B}$.

We may also assume that the morphism $X\to \A_{P',B}$ factors through a strict \'etale morphism $X\to \A_{P'\oplus \Z^n,B}$ for some integer $n\geq 0$ by \cite[Corollaire IV.17.11.4]{SGA4}.
We set $Q':=P'\oplus \Z^n$.
and let $\theta'\colon \N\to Q'$ be the map given by $a\mapsto (\eta'(a),0)$.

Let $\alpha \colon P\to \Gamma(X,\cM_X)$ and $\alpha'\colon P'\to \Gamma(X,\cM_X)$ be the structure maps.
By \cite[Proposition II,2.3.9]{Ogu},
we may assume that there exist maps $\kappa\colon P\to P'$ and $\beta\colon P\to \Gamma(X,\cO_X^*)$ such that $\alpha=\alpha'\kappa+\beta$.
Consider the diagram
\[
\begin{tikzcd}
\N\ar[r,"\eta"]\ar[rd,"\eta'"']&
P\ar[r,"\alpha_x"]\ar[d,"\kappa"]&
\ol{\cM}_{X,x}
\\
&
P',\ar[ru,"\alpha_x'"']
\end{tikzcd}
\]
where $\alpha_x$ and $\alpha_x'$ are induced by $\alpha$ and $\alpha'$.
The relation $\alpha=\alpha'\kappa+\beta$ implies that the right triangle commutes. Since $P'$ is neat at $x$, $\alpha_x'$ is an isomorphism.
By the construction of $\eta'$,
the outer diagram commutes.
Hence the left triangle commutes.

Let $y$ be a point of $X$.
Consider the diagram
\[
\begin{tikzcd}
\N\ar[r,"\eta"]\ar[rd,"\eta'"']&
P\ar[r,"\alpha_y"]\ar[d,"\kappa"]&
\ol{\cM}_{X,y}
\\
&
P',\ar[ru,"\alpha_y'"']
\end{tikzcd}
\]
where $\alpha_y$ and $\alpha_y'$ are induced by $\alpha$ and $\alpha'$.
We already know that the left triangle commutes,
and the relation $\alpha=\alpha'\kappa+\beta$ implies that the right triangle commutes.
Hence we have an induced commutative diagram
\[
\begin{tikzcd}
\N\ar[r]\ar[rd]&
P/\alpha_y^{-1}(0)\ar[r,"\simeq"]\ar[d]&
\ol{\cM}_{X,y}
\\
&
P'/\alpha_y'^{-1}(0),\ar[ru,"\simeq"']
\end{tikzcd}
\]
The commutativity of the outer diagram implies that the verticality of $y$ over $\A_{\N,B}$ is unchanged if we replace $q$ by $q'$.
Hence
$X-\partial_{\A_{\N,B}} X$ is unchanged if we replace $q$ by $q'$.
The assumption $C=\G_{m,B}$
yields
\[
\Theta(X)
\simeq
\hom_{\sT(\ul{X-\partial_{\A_{\N,B}}X})}(M(\ul{X-\partial_{\A_{\N,B}}X}),M(X-\partial X)).
\]
This is also unchanged if we replace $q$ by $q'$.
We have a similar result for $\Theta(X')$.
Hence we may assume that $q$ and $q'$ agree.

Then we are in the situation of the case (i).
\end{proof}

\subsection{Localization functors}
\label{localization}

For a category $\cC$ and $X\in \cC$,
we set
\[
h(X)
:=
\left\{
\begin{array}{ll}
\Sigma_{S^1}^\infty X_+ & \text{if }\cV=\Spt,
\\
\Lambda^X[0] & \text{if }\cV=\DLambda,
\end{array}
\right.
\]
which is an object of $\infPsh(\cC,\cV)$.
We set $h_t(X):=L_t h(x)$ for a topology $t$ on $\cC$.
This defines a functor
\[
h_t\colon \cC \to \infShv_t(\cC,\cV).
\]

Throughout this subsection, $S\in \lSch$.
Furthermore, $\cV$ is either $\Spt$ or $\DLambda$.
Recall that $\tau$ is one of $\sNis$, $\setale$, and $\ketale$,
and see \S \ref{topology} for $\ul{\tau}$ and $\tau$.

A \emph{fan isochart} of an fs log scheme $X$ is a Kummer morphism $X\to \T_\Sigma$, where $\Sigma$ is a fan.
Here, ``iso'' in isochart stands for isogeny.
Let $\lSmIFan/S$ denote the full subcategory of $\lSm/S$ consisting of those disjoint unions $\amalg_{i\in I} X_i$ such that each $X_i$ admits a fan isochart.

We have the induced $\tau$-topology and $\dtau$-topology on $\lSmIFan/S$.
If $\tau$ is one of $\sNis$ and $\setale$, then fan charts are enough for the below discussions, but we need fan isocharts for the case of $\tau=\ketale$.

Every $X\in \lSm/S$ admits a Zariski cover $Y\to X$ with $Y\in \lSmIFan/S$ according to \cite[Proposition II.2.3.7]{Ogu}.
Hence we have equivalences of topoi
\[
\Shv_{\tau}(\lSm/S)
\simeq
\Shv_{\tau}(\lSmIFan/S),
\text{ }
\Shv_{\dtau}(\lSm/S)
\simeq
\Shv_{\dtau}(\lSmIFan/S)
\]
by the implication (i)$\Rightarrow$(ii) in \cite[Th\'eor\`eme III.4.1]{SGA4}.
Also, we have an equivalence of $\infty$-categories
\begin{equation}
\Shv_{\tau}(\lSm/S,\cU)
\simeq
\Shv_{\tau}(\lSmIFan/S,\cU),
\text{ }
\Shv_{\dtau}(\lSm/S,\cU)
\simeq
\Shv_{\dtau}(\lSmIFan/S,\cU)
\end{equation}
for $\cU=\Spc,\Spc_*,\Spt,\DLambda$ by \cite[Proposition A.3.11]{Mann}.

\begin{prop}
\label{divlocal.12}
Let $f\colon Y\to X$ be a quasi-compact morphism in $\lSch$.
If $X$ admits a fan isochart $\Sigma$, then there exists a subdivision $\Sigma'$ of $\Sigma$ such that the pullback
\[
f'\colon Y\times_{\T_\Sigma}\T_{\Sigma'}\to X\times_{\T_\Sigma}\T_{\Sigma'}
\]
is exact.
\end{prop}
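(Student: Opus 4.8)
The plan is to reduce, by a sequence of localizations, to a combinatorial statement about fs monoids, which is then settled by combining the factorization \cite[Proposition I.4.2.17]{Ogu} with a toric subdivision. First I would record two elementary facts: exactness of a morphism of fs log schemes may be checked Zariski locally on the source and target, and if $\Sigma''$ refines $\Sigma'$ then $f_{\Sigma''}$ is the base change of $f_{\Sigma'}$ along $\T_{\Sigma''}\to\T_{\Sigma'}$, so that (exactness being stable under base change) refining $\Sigma'$ preserves exactness of $f_{\Sigma'}$. Since $X$ and $Y$ are noetherian and any two subdivisions of $\Sigma$ admit a common refinement, these let me work Zariski locally on $X$ and on $Y$ and assemble the resulting subdivisions. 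After shrinking I may assume $\Sigma$ is a single cone and $X\to\T_\Sigma=\A_P$ is strict for the associated fs monoid $P$; note $P^\gp$ is then torsion free, being a subgroup of a character lattice. Shrinking $Y$ further, $f$ admits a chart $\theta\colon P\to Q$ extending the chart $P$ of $X$ (see \cite{Ogu}). Finally, since $P^\gp$ is torsion free, $\theta$ factors through $Q^\tf$, and $\A_{Q^\tor}\to\Spec(\Z)$ is strict, so I may replace $Q$ by $Q^\tf$ and assume $Q^\gp$ is torsion free as well; then $\Spec(P)$, $\Spec(Q)$, and $\Spec(R)$ below are defined and $\A_Q=\T_{\Spec(Q)}$.

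Next I would use the chart to pass to a toric problem. Since $X\to\A_P$ is strict, $f$ factors as $Y\xrightarrow{g}X\times_{\A_P}\A_Q\xrightarrow{h}X$ with $g$ strict and $h$ the base change of the morphism $\A_\theta\colon\A_Q\to\A_P$ induced by $\theta$. Base changing this factorization along $\T_{\Sigma'}\to\A_P$, we see that $f_\Sigma$ is the composite of a strict morphism and the base change of $\A_Q\times_{\A_P}\T_{\Sigma'}\to\T_{\Sigma'}$. As strict morphisms are exact and exactness is stable under base change and composition, it suffices to choose $\Sigma'$ so that $\A_Q\times_{\A_P}\T_{\Sigma'}\to\T_{\Sigma'}$ is exact.

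For the combinatorial core I would apply \cite[Proposition I.4.2.17]{Ogu} to factor $\theta$ as $P\xrightarrow{\alpha}R\xrightarrow{\beta}Q$ with $\alpha^\gp$ an isomorphism and $\beta$ exact. Then $\alpha$ is injective and identifies $P$ with a submonoid of $R$ with $R^\gp=P^\gp$, so $\Spec(R)$ is a subcone of $\Spec(P)$. I would choose a subdivision $\Sigma'$ of $\Spec(P)$ in which $\Spec(R)$ is a union of cones (possible by a standard toric argument: extend the rays and walls of $\Spec(R)$ to a fan subdividing $\Spec(P)$), and let $\Sigma'_R\subset\Sigma'$ be the induced subdivision of $\Spec(R)$. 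Since $\theta$ factors through $R$, the morphism $\A_Q\to\A_P$ factors through $\A_R$; moreover $\A_R\to\A_P$ is a log \'etale monomorphism (as $R\oplus_P R\simeq R$), and the base change of the subdivision $\T_{\Sigma'}\to\A_P$ along it is $\T_{\Sigma'_R}\to\A_R$, an open subscheme of $\T_{\Sigma'}$ strict over it. Hence $\A_Q\times_{\A_P}\T_{\Sigma'}\simeq\A_Q\times_{\A_R}\T_{\Sigma'_R}$, which is the base change of the exact morphism $\A_\beta\colon\A_Q\to\A_R$ along $\T_{\Sigma'_R}\to\A_R$ followed by the open immersion $\T_{\Sigma'_R}\hookrightarrow\T_{\Sigma'}$. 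Both are exact, so $\A_Q\times_{\A_P}\T_{\Sigma'}\to\T_{\Sigma'}$ is exact, which completes the proof.

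The main obstacle is the toric step: producing a subdivision $\Sigma'$ of $\Spec(P)$ in which the subcone $\Spec(R)$ appears as a subfan, and verifying the identification $\A_R\times_{\A_P}\T_{\Sigma'}\simeq\T_{\Sigma'_R}$ of fs fiber products together with the monomorphism property of $\A_R\to\A_P$. The localization bookkeeping in the first two paragraphs — choosing compatible charts, the reduction to torsion free $Q^\gp$, and the ``strict $\circ$ exact'' manipulation with fs fiber products — is routine but should be carried out with care.
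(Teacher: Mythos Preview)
Your proposal is correct and follows essentially the same strategy as the paper: localize on $Y$, reduce to a combinatorial statement about a single chart, solve it torically, and then pass to a common refinement of the finitely many subdivisions obtained. The difference is only in how the combinatorial core is handled. The paper simply cites \cite[Theorem III.2.6.7 (proof), Theorem III.2.2.7]{Ogu} to obtain, for each local chart $\Delta_i\to\Sigma$, a subdivision $\Sigma_i$ of the whole fan $\Sigma$ making the pulled-back morphism exact, and then intersects these. You instead unpack this step by hand: factor the chart homomorphism via \cite[Proposition I.4.2.17]{Ogu} as $P\to R\to Q$ with the first map an isomorphism on group completions and the second exact, and then choose a subdivision of $\Spec(P)$ in which $\Spec(R)$ appears as a subfan. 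This is a perfectly valid and more self-contained route; it is in fact close to what lies behind the cited results in Ogus.

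One small point of bookkeeping you should make explicit: you reduce not only Zariski locally on $Y$ but also on $X$, replacing $\Sigma$ by a single cone $\Spec(P)$. The subdivision you produce is then only a subdivision of $\Spec(P)$, not of the original $\Sigma$. When you reassemble, you need the standard (but not entirely trivial) toric fact that a subdivision of a cone of a fan can be extended to a subdivision of the whole fan, so that you can then take a common refinement. The paper sidesteps this by never shrinking $X$: it keeps the global fan $\Sigma$ throughout and obtains each local subdivision already as a subdivision of $\Sigma$.
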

\begin{proof}
We first treat the case when $\Sigma$ is a fan chart of $X$.
By \cite[Proposition II.2.4.2]{Ogu},
we can choose a Zariski cover $Y_1\amalg \cdots \amalg Y_n$ of $Y$ such that each $Y_i\to X$ admits a fan chart $\Delta_i\to \Sigma$.
According to the proof of \cite[Theorem III.2.6.7]{Ogu} and the implication (1)$\Rightarrow$(2) in \cite[Theorem III.2.2.7]{Ogu}, there exists a subdivision $\Sigma_i$ of $\Sigma$ such that the pullback
\[
Y_i\times_{\T_{\Sigma}}\T_{\Sigma_i}
\to
X\times_{\T_{\Sigma}}\T_{\Sigma_i}
\]
is exact.
We set
\[
\Sigma'
:=
\{\sigma_1\cap \cdots \cap \sigma_n
:
\sigma_1\in \Sigma_1,\ldots,\sigma_n\in \Sigma_n\},
\]
which is a common subdivision of $\Sigma_1,\ldots,\Sigma_n$.
Then the pullback $Y_i\times_{\T_{\Sigma}}\T_{\Sigma'}\to X\times_{\T_{\Sigma}}\T_{\Sigma'}$ is exact for all $1\leq i\leq n$.
This implies that $f'$ is exact.

Now we treat the general fan isochart case.
By \cite[Theorem I.4.9.1]{Ogu},
there exists an integer $n\geq 1$ such that the projection $p\colon X':=X\times_{\T_\Sigma}\T_{\Delta}\to \T_{\Delta}$ is saturated,
where $\Delta$ is $\Sigma$, but the morphism $\Delta\to \Sigma$ is obtained by the multiplication of the lattice by $n$.
Since $p$ is Kummer,
\cite[Proposition I.4.8.12]{Ogu} implies that $p$ is strict.
Hence $\Delta$ is a fan chart of $X'$.
By the above special case,
there exists a subdivision $\Delta'$ of $\Delta$ such that the induced morphism
\[
g'\colon
Y'\times_{\T_{\Delta}}\T_{\Delta'}
\to
X'\times_{\T_{\Delta}}\T_{\Delta'}
\]
is exact,
where $Y':=Y\times_X X'$.
Let $\Sigma'$ be the subdivision of $\Sigma$ whose cones are precisely corresponding to the cones of $\Delta'$.
Then we have $\T_{\Sigma'}\times_{\T_{\Sigma}}\T_{\Delta}\simeq \T_{\Delta'}$.
Hence we have an induced cartesian square
\[
\begin{tikzcd}
Y'\times_{\T_\Sigma}\T_{\Sigma'}\ar[d,"h'"']\ar[r,"g'"]&
X'\times_{\T_\Sigma}\T_{\Sigma'}\ar[d,"h"]
\\
Y\times_{\T_\Sigma}\T_{\Sigma'}\ar[r,"f'"]&
X\times_{\T_\Sigma}\T_{\Sigma'}.
\end{tikzcd}
\]
The induced morphism $\T_{\Sigma'}\to \T_{\Sigma}$ is surjective and Kummer,
so $h$ and $h'$ are surjective by \cite[Corollary III.2.2.5]{Ogu} and Kummer.
Since $g'$ is an exact,
\cite[Proposition III.2.2.1(1)]{Ogu} implies that $f'$ is exact.
\end{proof}

\begin{lem}
\label{divlocal.10}
For $X\in \lSch$,
the following hold.
\begin{enumerate}
\item[\textup{(1)}]
$X_{\divi}$ is filtered.
\item[\textup{(2)}]
If $X$ admits a fan isochart $\Sigma$, then the functor
\[
\Sigma_{\divi}\to X_{\divi}
\]
sending $\Sigma'\in \Sigma_{\divi}$ to $X\times_{\T_\Sigma}\T_{\Sigma'}$ is final.
\item[\textup{(3)}]
For every dividing cover $f\colon Y\to X$ in $\lSch$, the functor $f^*\colon X_{\divi}\to Y_{\divi}$ is final.
\end{enumerate}
\end{lem}
\begin{proof}
By \cite[Proposition A.11.14]{logDM}, we have (1).

For (2),
let $f\colon Y\to X$ be a dividing cover in $\lSch$.
By Proposition \ref{divlocal.12},
there exists a subdivision $\Sigma'$ of $\Sigma$ such that the pullback $f'\colon Y\times_{\T_\Sigma}\T_{\Sigma'}\to X\times_{\T_\Sigma}\T_{\Sigma'}$ is exact.
Since $f'$ is a proper surjective log \'etale monomorphism,
\cite[Proposition A.11.13]{logDM} implies that $f'$ is an isomorphism.
Hence we have (2).

For (3), assume $Y'\in Y_{\divi}$.
Then $Y'\times_X Y\simeq Y'$ since $Y\to X$ is a monomorphism.
This shows that $f^*$ is final.
\end{proof}

\begin{const}
\label{divlocal.7}
Consider the functor
\[
G_{\divi}\colon \infPsh(\lSmIFan/S,\cV)
\to
\infPsh(\lSmIFan/S,\cV)
\]
given by
\begin{equation}
\label{divlocal.7.1}
G_{\divi}\cF(X)
:=
\colimit_{X'}\cF(X'),
\end{equation}
where $X'$ runs over the full subcategory of $X_{\divi}$ consisting of those objects $X'$ such that $X'\in \lSmIFan/S$.
If $X$ admits a fan isochart $\Sigma$, then
\begin{equation}
\label{divlocal.7.3}
G_{\divi}\cF(X)
\simeq
\colimit_{\Sigma'\in \Sigma_{\divi}}\cF(X_{\Sigma'})
\end{equation}
by Lemma \ref{divlocal.10}(2), where $X_{\Sigma'}:=X\times_{\T_\Sigma}\T_{\Sigma'}$.

Assume that $X\in \lSmIFan/S$ has a fan isochart $\Sigma$ and
\begin{equation}
\label{divlocal.7.2}
Q
:=
\begin{tikzcd}
W\ar[d]\ar[r]&
Z\ar[d]
\\
Y\ar[r]&
X
\end{tikzcd}
\end{equation}
is a strict Nisnevich distinguished square in $\lSmIFan/S$.
For a subdivision $\Sigma'$ of $\Sigma$, we set
\[
X_{\Sigma'}:=X\times_{\T_{\Sigma}}\T_{\Sigma'},
\;
Y_{\Sigma'}:=Y\times_{\T_{\Sigma}}\T_{\Sigma'},
\;
Z_{\Sigma'}:=Z\times_{\T_{\Sigma}}\T_{\Sigma'},
\;
W_{\Sigma'}:=W\times_{\T_{\Sigma}}\T_{\Sigma'}.
\]
The induced square
\[
\begin{tikzcd}
W_{\Sigma'}\ar[d]\ar[r]&
Z_{\Sigma'}\ar[d]
\\
Y_{\Sigma'}\ar[r]&
X_{\Sigma'}
\end{tikzcd}
\]
is a strict Nisnevich distinguished square.
Apply $\colim_{\Sigma'\in \Sigma_{\divi}}\cF$ to this square and use \eqref{divlocal.7.3} to deduce that the square $G_{\divi}\cF(Q)$ is cartesian.
Hence the above $G_{\divi}$ induces 
\[
G_{\divi}
\colon
\infShv_{\tau}(\lSm/S,\cV)
\to
\infShv_{\tau}(\lSm/S,\cV)
\]
if $\tau=\sNis$.
We similarly obtain $G_{\divi}$ for the cases of $\tau=\detale,\ketale$ too.
By Proposition \ref{divlocal.8} and Lemma \ref{divlocal.10}(3), $\infShv_{\dtau}(\lSm/S,\cV)$ is equivalent to the essential image of $G_{\divi}$.

There is a natural transformation $\beta_{\divi}\colon \id \to G_{\divi}$ such that the morphism
\[
\beta_{\divi}\cF(X) \colon \cF(X)\to \colimit_{X'\in X_{\divi}}\cF(X')
\]
for $\cF\in \infShv_{\tau}(\lSm/S,\cV)$ is induced by the morphisms $\cF(X)\to \cF(X')$ for all $X'\in X_{\divi}$.
Together with the description
\[
G_{\divi}G_{\divi}\cF(X)
\simeq
\colimit_{X'\in X_{\divi},X''\in X_{\divi}'}\cF(X''),
\]
we deduce that the two morphisms
\[
G_{\divi}\beta_{\divi}\cF,\beta_{\divi}G_{\divi}\cF
\colon
G_{\divi}\cF\to G_{\divi}G_{\divi}\cF
\]
are isomorphisms.
Hence by \cite[Proposition 5.2.7.4]{HTT}, we have the following result:
\end{const}

\begin{lem}
\label{divlocal.13}
The functor $G_{\divi}\colon \infShv_{\tau}(\lSm/S,\cV)
\to
\infShv_{\tau}(\lSm/S,\cV)$ is isomorphic to $\iota_{\divi} L_{\divi}$, where  
\[
\iota_{\divi}
\colon
\infShv_{\dtau}(\lSm/S,\cV)
\to
\infShv_{\tau}(\lSm/S,\cV).
\]
is the inclusion functor,
and $L_{\divi}$ is its left adjoint.
\end{lem}

\begin{const}
\label{divlocal.2}
Let $f\colon X'\to X$ be a dividing cover in $\lSm/S$.
Consider the induced morphisms
\[
X-\partial_S X
\leftarrow
(X-\partial_S X)\times_X X'
\hookrightarrow
X'-\partial_S X'
\]
The left morphism becomes an isomorphism in the category of $\dtau$-sheaves.
Hence we obtain a morphism
\[
h_{\dtau}(X-\partial_S X)
\to
h_{\dtau}(X'-\partial_S X').
\]
Now, assume $X\in \lSmIFan/S$.
The \emph{dividing verticalization of $X$ over $S$} is defined to be
\begin{equation}
\label{divlocal.2.8}
h_{\dver}(X)
:=
\colimit_{X'\in X_{\divi}^{\op}}
h_{\dtau}(X'-\partial_S X'),
\end{equation}
where the colimit is taken in the category of $\dtau$-sheaves.
If $X$ admits a fan isochart $\Sigma$, then we have an isomorphism
\begin{equation}
h_{\dver}(X)
\simeq
\colimit_{\Sigma'\in \Sigma_{\divi}^{\op}}
h_{\dtau}(X_{\Sigma'}-\partial_S X_{\Sigma'})
\end{equation}
by Lemma \ref{divlocal.10}(2), where $X_{\Sigma'}:=X\times_{\T_\Sigma}\T_{\Sigma'}$.

If $f\colon Y\to X$ is a morphism in $\lSmIFan/S$, then there exists $X'\in X_{\divi}$ such that the projection $Y':=Y\times_X X'\to X'$ is exact by Proposition \ref{divlocal.12}.
For every $X''\in X_{\divi}'$ with $Y'':=Y\times_X X''$, we have the induced morphism
\[
Y''-\partial_S Y''\to X''-\partial_S X''
\]
by Proposition \ref{vert.9}(6).
Hence we obtain a morphism $h_{\dver}(Y')\to h_{\dver}(X')$.
Since $h_{\dver}(X)\simeq h_{\dver}(X')$ and $h_{\dver}(Y)\simeq h_{\dver}(Y')$ by Lemma \ref{divlocal.10}(3), we obtain a morphism $h_{\dver}(Y)\to h_{\dver}(X)$.
As a consequence, we obtain a functor
\[
h_{\dver}
\colon
\lSmIFan/S
\to
\infShv_{\dtau}(\lSm/S,\cV).
\]
Take colimits to the morphisms $h_{\dtau}(X'-\partial_S X')\to h_{\dtau}(X')\simeq h_{\dtau}(X)$ induced by the open immersion $X'-\partial_S X'\to X'$ for $X'\in X_{\divi}$ to obtain a natural transformation
\begin{equation}
\label{divlocal.2.5}
\alpha_{\dver}\colon h_{\dver}\to h_{\dtau}.
\end{equation}
By \cite[Theorem 5.1.5.6]{HTT}, $h_{\dver}$ naturally extends to a colimit preserving functor
\begin{equation}
\label{divlocal.2.1}
\infPsh(\lSmIFan/S,\cV)
\to
\infShv_{\dtau}(\lSm/S,\cV).
\end{equation}
\end{const}

\begin{const}
\label{divlocal.11}
Assume that $X\in \lSmIFan/S$ has a fan isochart $\Sigma$ with a strict Nisnevich distinguished square $Q$ of the form \eqref{divlocal.7.2}.
For a subdivision $\Sigma'$ of $\Sigma$, consider $X_{\Sigma'}$, $Y_{\Sigma'}$, $Z_{\Sigma'}$, and $W_{\Sigma'}$ in Construction \ref{divlocal.7}.

The induced square
\[
\begin{tikzcd}
W_{\Sigma'}-\partial_S W_{\Sigma'}\ar[d]\ar[r]&
Z_{\Sigma'}-\partial_S Z_{\Sigma'}\ar[d]
\\
Y_{\Sigma'}-\partial_S Y_{\Sigma'}\ar[r]&
X_{\Sigma'}-\partial_S X_{\Sigma'}
\end{tikzcd}
\]
is a strict Nisnevich distinguished square.
It follows that the square $h_{\dver}(Q)$ is cocartesian in $\infShv_{\dtau}(\lSm/S,\cV)$ if $\tau=\sNis$.
The same holds if $\tau=\setale,\ketale$ by a similar argument.

Let $X'\to X$ be a dividing cover in $\lSmIFan/S$.
Since $X_{\divi}\to X_{\divi}'$ is final by Lemma \ref{divlocal.10}(3), the induced morphism
\[
h_{\dver}(X')
\to
h_{\dver}(X)
\]
is an isomorphism.

Together with \cite[Proposition 5.5.4.20]{HTT}, we deduce that \eqref{divlocal.2.1} factors through a colimit preserving functor
\begin{equation}
\label{divlocal.2.2}
F_{\dver}
\colon
\infShv_{\dtau}(\lSm/S,\cV)
\to
\infShv_{\dtau}(\lSm/S,\cV).
\end{equation}
This sends $h_{\dtau}(X)$ for $X\in \lSmIFan/S$ to $h_{\dver}(X)$.
The natural transformation $\alpha_{\dver}$ gives a natural transformation
\begin{equation}
\beta_{\dver}
\colon
F_{\dver} \to \id.
\end{equation}

For $X\in \lSmIFan/S$ and $X'\in X_{\divi}$, the morphism
\[
F_{\dver}(h_{\dtau}(X'-\partial_S X'))
\to
h_{\dtau}(X'-\partial_S X')
\]
induced by $\beta_{\dver}$ is an isomorphism since $X'-\partial_S X'$ is already vertical over $S$.
Take colimits to see that 
\[
\beta_{\dver}h_{\dver}(X)
\colon
F_{\dver}h_{\dver}(X)\to h_{\dver}(X)
\]
is an isomorphism.
Together with \cite[Theorem 5.1.5.6, Proposition 5.5.4.20]{HTT}, we deduce that
\begin{equation}
\label{divlocal.2.6}
\beta_{\dver}F_{\dver}(X)
\colon
F_{\dver}F_{\dver}(X)\to F_{\dver}(X)
\end{equation}
is an isomorphism.
There is a commutative triangle
\[
\begin{tikzcd}
F_{\dver}h_{\dver}(X)\ar[rd,"\beta_{\dver}h_{\dver}(X)"']\ar[rr,"F_{\dver}\alpha_{\dver}(X)"]&
&
F_{\dver}h_{\dtau}(X)\ar[ld,"\simeq",leftarrow]
\\
&
h_{\dver}(X).
\end{tikzcd}
\]
Together with \cite[Theorem 5.1.5.6, Proposition 5.5.4.20]{HTT}, we deduce that
\begin{equation}
\label{divlocal.2.7}
F_{\dver}\beta_{\dver}(X)
\colon
F_{\dver}F_{\dver}(X)\to F_{\dver}(X)
\end{equation}
is an isomorphism.

Let $G_{\dver}$ be a right adjoint of $F_{\dver}$, and let $\dver$ denote the class of morphisms $\alpha_{\dver}(X)$ for all $X\in \lSm/S$.
For $\cF\in \Shv_{\dtau}(\lSm/S,\cV)$, $G_{\dver}\cF$ is $\dver$-local since \eqref{divlocal.2.7} is an isomorphism.
On the other hand, $G_{\dver}\cF\simeq \cF$ if $\cF$ is $\dver$-local.
It follows that the essential image of $G_{\dver}$ is equivalent to $\dver^{-1}\infShv_{\dtau}(\lSm/S,\cV)$.
Furthermore, \cite[Proposition 5.2.7.4]{HTT} shows that $G_{\dver}$ is isomorphic to $\iota_{\dver}L_{\dver}$, where
\[
L_{\dver}\colon \infShv_{\dtau}(\lSm/S,\cV) \to \dver^{-1}\infShv_{\dtau}(\lSm/S,\cV)
\]
is the localization functor and $\iota_{\dver}$ is the inclusion functor.

For $\cF\in \infShv_{\dtau}(\lSm/S,\cV)$ and $X\in \lSmIFan/S$, \eqref{divlocal.2.8} gives
\begin{equation}
\label{divlocal.2.4}
L_{\dver}(\cF)(X)
\simeq
\limit_{X'\in X_{\divi}^{\op}}\cF(X'-\partial_S X').
\end{equation}
\end{const}

\begin{lem}
\label{divlocal.5}
If $X\in \lSmIFan/S$ and $Y\to S$ is a strict morphism in $\Sm/S$, then there is a canonical isomorphism
\[
h_{\dver}(X\times_S Y)
\simeq
h_{\dver}(X)\otimes h_{\dtau}(Y).
\]
\end{lem}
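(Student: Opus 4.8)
The plan is to compute both sides as filtered colimits over the category of subdivisions of a fan chart, match them term by term, and then pull the tensor factor $h_{dNis}(Y)$ out of the colimit.

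First I would reduce to the case where $X$ carries a single fan chart $\varphi\colon X\to \T_\Sigma$; this is harmless because $\lFan$ consists of finite disjoint unions and $h_{dver}$, $h_{dNis}$, and $(-)\otimes h_{dNis}(Y)$ all commute with finite coproducts. In this situation the composite $X\times_S Y\to X\xrightarrow{\varphi}\T_\Sigma$ is again a fan chart, since the projection $X\times_S Y\to X$ is a base change of the strict morphism $Y\to S$ and hence strict. Writing $X_{\Sigma'}:=X\times_{\T_\Sigma}\T_{\Sigma'}$ and using $(X\times_S Y)\times_{\T_\Sigma}\T_{\Sigma'}\simeq X_{\Sigma'}\times_S Y$, Construction \ref{divlocal.2} then gives
\[
h_{dver}(X)\simeq \colimit_{\Sigma'\in \Sigma_{div}^{op}} h_{dNis}(X_{\Sigma'}-\partial_S X_{\Sigma'}),
\]
\[
h_{dver}(X\times_S Y)\simeq \colimit_{\Sigma'\in \Sigma_{div}^{op}} h_{dNis}\big((X_{\Sigma'}\times_S Y)-\partial_S(X_{\Sigma'}\times_S Y)\big).
\]

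The crux is then to identify $\partial_S(X_{\Sigma'}\times_S Y)=\partial_S X_{\Sigma'}\times_S Y$, equivalently $(X_{\Sigma'}-\partial_S X_{\Sigma'})\times_S Y\xrightarrow{\simeq}(X_{\Sigma'}\times_S Y)-\partial_S(X_{\Sigma'}\times_S Y)$. For a point $z$ of $X_{\Sigma'}\times_S Y$ lying over $x$ in $X_{\Sigma'}$, strictness of the projection $g\colon X_{\Sigma'}\times_S Y\to X_{\Sigma'}$ makes the induced map $\cM_{X_{\Sigma'},x}\to\cM_{X_{\Sigma'}\times_S Y,z}$ an isomorphism; since the structure morphism to $S$ factors through $g$ (say $f\colon X_{\Sigma'}\to S$), the homomorphism $\cM_{S,f(x)}\to\cM_{X_{\Sigma'}\times_S Y,z}$ is this isomorphism precomposed with $\cM_{S,f(x)}\to\cM_{X_{\Sigma'},x}$, so by Proposition \ref{vert.3}(3),(5) the former is vertical exactly when the latter is, whence $z\in\partial_S(X_{\Sigma'}\times_S Y)$ if and only if $x\in\partial_S X_{\Sigma'}$. (One inclusion is also immediate from Proposition \ref{vert.9}(6).) I expect this log-structure bookkeeping to be the only genuinely geometric input and hence the main technical point, though it is routine. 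I would also remark that applying Proposition \ref{vert.5} to the cartesian square on $X_{\Sigma'}\times_S Y$, $X_{\Sigma'}$, $Y$, $S$ only produces $\partial_Y$ in place of $\partial_S$, so the direct pointwise argument is the efficient route here.

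Finally I would invoke that $h_{dNis}$ is symmetric monoidal — concretely $h_{dNis}(A\times_S B)\simeq h_{dNis}(A)\otimes h_{dNis}(B)$ for $A,B\in\lSm/S$, which follows from the base change and projection formula isomorphisms of Definition \ref{shv.7}(ii),(iii) applied to $M(A)=h_{dNis}(A)$ — to rewrite the $\Sigma'$-th term of the second colimit, via the isomorphism above, as $h_{dNis}(X_{\Sigma'}-\partial_S X_{\Sigma'})\otimes h_{dNis}(Y)$. Since $(-)\otimes h_{dNis}(Y)$ preserves colimits, extracting it from the colimit over $\Sigma_{div}^{op}$ yields $h_{dver}(X)\otimes h_{dNis}(Y)$; all the identifications used are natural, so the resulting equivalence is canonical.
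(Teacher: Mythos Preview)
Your proof is correct and follows essentially the same route as the paper's own proof: reduce to a single fan chart, expand both sides as colimits over $\Sigma_{div}^{op}$, identify $(X_{\Sigma'}\times_S Y)-\partial_S(X_{\Sigma'}\times_S Y)\simeq (X_{\Sigma'}-\partial_S X_{\Sigma'})\times_S Y$, and pull the constant tensor factor out of the colimit. The paper is terser---it simply invokes Zariski descent for the reduction and states the chain of equivalences without justifying the boundary identification---whereas you spell out the strictness argument (one small imprecision: strictness gives an isomorphism on $\overline{\cM}$ rather than $\cM$, but by Proposition~\ref{vert.3}(1) that is exactly what is needed for verticality).
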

\begin{proof}
By Zariski descent, we may assume that $X$ admits a fan isochart $\Sigma$.
Then we have isomorphisms
\begin{align*}
h_{\dver}(X\times_S Y)
&\simeq
\colimit_{\Sigma'\in \Sigma_{\divi}^{\op}}
h_{\dtau}(X_{\Sigma'}\times_S Y-\partial_S (X_{\Sigma'}\times_S Y))
\\
&\simeq
\colimit_{\Sigma'\in \Sigma_{\divi}^{\op}}
h_{\dtau}(X_{\Sigma'}-\partial_S X_{\Sigma'}) \otimes h_{\dtau}(Y)
\\
& \simeq
h_{\dver}(X)\otimes h_{\dtau}(Y),
\end{align*}
where $X_{\Sigma'}:=X\times_{\T_\Sigma}\T_{\Sigma'}$.
\end{proof}

As a consequence of Lemma \ref{divlocal.5} for $Y=\A_S^1$, we see that $L_{\dver}\cF$ is $\A^1$-local whenever $\cF\in \infShv_{\dtau}(\lSm/S,\cV)$ is $\A^1$-local.
Hence the localization functor
\[
L_{\dver}
\colon
(\A^1)^{-1}\infShv_{\dtau}(\lSm/S,\cV)
\to
(\A^1\cup \dver)^{-1}\infShv_{\dtau}(\lSm/S,\cV)
\]
satisfies \eqref{divlocal.2.4}.
Lemma \ref{divlocal.5} for $Y=\G_{m,S}$ allows us to stabilize this to obtain a functor
\begin{align*}
L_{\dver}
\colon
 & \Stab_{\G_m}((\A^1)^{-1}\infShv_{\dtau}(\lSm/S,\cV))
\\
\to &
\Stab_{\G_m}((\A^1\cup \dver)^{-1}\infShv_{\dtau}(\lSm/S,\cV)).
\end{align*}

Recall that an object $\cF$ of an $\infty$-category is called \emph{compact} if $\Map(\cF,-)$ commutes with filtered colimits.

\begin{const}
Let $\cC$ be a category with a final object $\pt$, a topology $t$, and an interval $I$ in the sense of \cite[\S 2.3]{MV}.
Then $I$ is equipped with morphisms $m\colon I\times I\to I$ and $i_0,i_1\colon I\to \pt$.
Let $p\colon I\to \pt$ be the canonical morphism.
The following two conditions are required:
\begin{enumerate}
\item[(i)] $m(i_0\times \id)=m(\id\times i_0)=i_0p$ and $m(i_1\times \id)=m(\id \times i_1)=\id$.
\item[(ii)] The morphism $i_0\amalg i_1\colon \pt \amalg \pt \to I$ is a monomorphism.
\end{enumerate}

Let us recall Morel's construction of the $I$-localization functor in the proof of \cite[Theorem 4.2.1]{zbMATH05052895}, where he only considered the case when $I=\A^1$.
See also \cite[\S 5.2.c]{CD19}.
We set $U_I:=\cofib(h_t(\pt)\xrightarrow{h_t(i_0)} h_t(I))$.
For $\cF\in \infShv_t(\cC,\cV)$, consider the composition
\[
ev_1\colon \uMap(U_I,\cF) \xrightarrow{\simeq} h_t(\pt) \otimes \uMap(U_I,\cF) \to h_t(U_I) \otimes \uMap(U_I,\cF) \xrightarrow{ev} \cF.
\]
We set
\[
G_{I}^{(1)}(\cF)
:=
\cofib(\uMap(U_I,\cF)\xrightarrow{ev_1} \cF).
\]
This defines a functor $G_{I}^{(1)}\colon \infShv_t(\cC,\cV)\to \infShv_t(\cC,\cV)$.
We have a natural transformation $r_1\colon \id \to G_{I}^{(1)}$.
We inductively define
\[
G_{I}^{(n+1)}:=G_{I}^{(1)}(G_{I}^{(n)})
\]
for integer $n\geq 1$.
We set $r_n:=r_1 G_I^{(n-1)}$ and
\[
G_{I}:=\colimit(G_{I}^{(1)}\xrightarrow{r_2} G_{I}^{(2)} \xrightarrow{r_3} \cdots ).
\]
The natural transformation $r_1$ induces a natural transformation $r\colon \id \to G_{I}$.

Now, we assume that $h_t(X)$ is compact in $\infShv_t(\cC,\cV)$.
Adapt the proofs of \cite[Lemma 5.2.27, Proposition 5.2.28]{CD19} to our setting to see that $\cF\to G_I(\cF)$ is an $I$-equivalence and $G_I(\cF)\in I^{-1}\infShv_t(\cC,\cV)$.
This shows $G_{I}\simeq \iota_I L_{I}$, where 
\[
L_I
\colon
\infShv(\cC,\cV) \to I^{-1}\infShv(\cC,\cV)
\]
is the localization functor, and $\iota_I\colon I^{-1}\infShv(\cC,\cV)\to \infShv(\cC,\cV)$ is the inclusion functor.
\end{const}

Let $\varphi\colon \cC\to \cC'$ be a morphism of sites, and let $t$ (resp.\ $t'$) be the topology on $\cC$ (resp.\ $\cC'$).
Then there is an adjunction
\begin{equation}
\varphi_\sharp : \infShv_t(\cC,\cV)\rightleftarrows \infShv_{t'}(\cC',\cV) : \varphi^*,
\end{equation}
where $\varphi_\sharp$ sends $h_t(X)$ for $X\in \cC$ to $h_{t'}(\varphi(X))$.

Assume that $I$ is an interval object of $\cC$, $\cC$ and $\cC'$ have final objects, and $\varphi$ preserves a final object.
We set $I':=\varphi(I)$, which is an interval object of $\cC'$.
There is an induced adjunction
\begin{equation}
\varphi_\sharp^I : I^{-1}\infShv_t(\cC,\cV)\rightleftarrows I'^{-1}\infShv_{t'}(\cC',\cV) : \varphi_I^*
\end{equation}
such that the square
\begin{equation}
\begin{tikzcd}
\infShv_t(\cC,\cV)\ar[d,"L_I"']\ar[r,"\varphi_\sharp"]&
\infShv_{t'}(\cC',\cV)\ar[d,"L_{I'}"]
\\
I^{-1}\infShv_t(\cC,\cV)\ar[r,"\varphi_\sharp^I"]&
I'^{-1}\infShv_t(\cC',\cV)
\end{tikzcd}
\end{equation}
commutes.

\begin{prop}
\label{divlocal.9}
With the above notation, 
there is an isomorphism
\[
G_I\varphi^*
\simeq
\varphi^* G_{I'}.
\]
\end{prop}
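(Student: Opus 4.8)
The plan is to prove the equivalence by showing that $\varphi^{*}$ commutes with every operation occurring in the inductive definition of $G_I$ and $G_{I'}$. First I would record three properties of $\varphi^{*}$. Being a right adjoint it preserves all limits; since $\infShv_t(\cC,\cV)$ and $\infShv_{t'}(\cC',\cV)$ are stable (being categories of $t$-sheaves valued in the stable $\infty$-category $\cV$), preservation of finite limits makes $\varphi^{*}$ exact, so it preserves cofibers. Next, the objects $h_{t'}(X')$ form a set of compact generators of $\infShv_{t'}(\cC',\cV)$, and $\varphi_\sharp$ carries the compact generators $h_t(X)$ to the objects $h_{t'}(\varphi(X))$, which are compact; hence the colimit preserving functor $\varphi_\sharp$ preserves compact objects, and therefore its right adjoint $\varphi^{*}$ preserves filtered colimits. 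Finally, $\varphi_\sharp$ is symmetric monoidal with $\varphi_\sharp h_t(X)\simeq h_{t'}(\varphi(X))$, so
\[
\varphi_\sharp(U_I)
\simeq
\cofib\big(h_{t'}(\varphi(\pt))\to h_{t'}(\varphi(I))\big)
\simeq
U_{I'},
\]
using that $\varphi$ carries the final object $\pt$ of $\cC$ to a final object of $\cC'$ and that $\varphi(I)=I'$.

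Using these, I would first treat $G_I^{(1)}$. The internal-hom projection formula for the symmetric monoidal adjunction $(\varphi_\sharp,\varphi^{*})$ (which follows by the Yoneda lemma from the monoidality of $\varphi_\sharp$) gives, for $\cG\in \infShv_{t'}(\cC',\cV)$, a natural equivalence
\[
\varphi^{*}\uMap(U_{I'},\cG)
\simeq
\uMap(\varphi_\sharp U_I,\varphi^{*}\cG)
\simeq
\uMap(U_I,\varphi^{*}\cG),
\]
and unwinding the definition of $ev_1$ on both sides---it only involves the monoidal units $h_t(\pt)$ and $h_{t'}(\varphi(\pt))$, the maps $h_t(\pt)\to U_I$ and $h_{t'}(\varphi(\pt))\to U_{I'}$, and the evaluation counits, all matched by the identifications above---shows that this equivalence commutes with $ev_1$. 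Applying the exact functor $\varphi^{*}$ to the cofiber sequence $\uMap(U_{I'},\cG)\xrightarrow{ev_1}\cG\to G_{I'}^{(1)}(\cG)$ and comparing with the one defining $G_I^{(1)}(\varphi^{*}\cG)$ yields a natural equivalence $\varphi^{*}G_{I'}^{(1)}\simeq G_I^{(1)}\varphi^{*}$ compatible with the natural transformations $r_1$.

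I would then promote this to all finite stages by induction on $n$: from $G^{(n+1)}=G^{(1)}\circ G^{(n)}$ and $r_{n+1}=r_1 G^{(n)}$ the inductive step
\[
\varphi^{*}G_{I'}^{(n+1)}
=
\varphi^{*}G_{I'}^{(1)}G_{I'}^{(n)}
\simeq
G_I^{(1)}\varphi^{*}G_{I'}^{(n)}
\simeq
G_I^{(1)}G_I^{(n)}\varphi^{*}
=
G_I^{(n+1)}\varphi^{*}
\]
is purely formal and compatible with the transition maps. Finally, since $G_{I'}=\colimit_n G_{I'}^{(n)}$ is a sequential colimit and $\varphi^{*}$ preserves filtered colimits,
\[
\varphi^{*}G_{I'}\simeq \colimit_n \varphi^{*}G_{I'}^{(n)}\simeq \colimit_n G_I^{(n)}\varphi^{*}=G_I\varphi^{*},
\]
compatibly with the natural transformations $r$, which is the desired equivalence.

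The main obstacle is the middle step: one must verify not merely the abstract equivalence $\varphi^{*}\uMap(U_{I'},-)\simeq \uMap(U_I,\varphi^{*}(-))$ but that it intertwines the two evaluation maps $ev_1$, so that the cofibers can be identified. This is exactly where the symmetric monoidality of $\varphi_\sharp$---equivalently, the identity $\varphi_\sharp(h_t(X)\otimes U_I)\simeq h_{t'}(\varphi(X))\otimes U_{I'}$---and the equivalence $\varphi_\sharp U_I\simeq U_{I'}$ are used, and hence where the hypotheses that $\varphi$ preserves a final object and that $I'=\varphi(I)$ enter. One should also keep track throughout of naturality in $\cG$, so that the end result is an equivalence of functors rather than an objectwise equivalence.
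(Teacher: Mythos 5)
Your proposal is correct and follows essentially the same route as the paper: both establish $G_I^{(1)}\varphi^*\simeq \varphi^*G_{I'}^{(1)}$ from the monoidality of $\varphi_\sharp$, the identification $\varphi_\sharp U_I\simeq U_{I'}$, and the adjunction (the paper checks this on mapping spectra out of the compact generators $h_t(X)$, whereas you phrase it via the internal-hom formula $\varphi^*\uMap(U_{I'},-)\simeq\uMap(U_I,\varphi^*(-))$ plus the $ev_1$-compatibility, which amounts to the same verification), and then both conclude by induction on $n$ and passage to the sequential colimit using that $\varphi^*$ commutes with filtered colimits by compactness of the generators. The only blemish is the ill-typed intermediate expression $\uMap(\varphi_\sharp U_I,\varphi^*\cG)$, whose two arguments live in different categories, but the two endpoints of that chain are correct.
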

\begin{proof}
For $\cF\in \infShv_t(\cC,\cV)$ and $X\in \cC$, compare
\begin{align*}
& \map_{\infShv_t(\cC,\cV)}(h_t(X),G_I^{(1)}\varphi^*\cF)
\\
\simeq &
\cofib(
\map_{\infShv_t(\cC,\cV)}(h_t(X)\otimes U_I,\varphi^*\cF)
\to
\map_{\infShv_t(\cC,\cV)}(h_t(X),\varphi^*\cF))
\end{align*}
and
\begin{align*}
& \map_{\infShv_t(\cC,\cV)}(h_t(X),\varphi^*G_{I'}^{(1)}\cF)
\simeq 
\map_{\infShv_{t'}(\cC',\cV)}(h_{t'}(\varphi(X)),G_{I'}^{(1)}\cF)
\\
\simeq &
\cofib(
\map_{\infShv_{t'}(\cC',\cV)}(h_{t'}(\varphi(X))\otimes U_{I'},\cF)
\to
\map_{\infShv_{t'}(\cC',\cV)}(h_{t'}(\varphi(X)),\cF))
\end{align*}
to obtain a canonical isomorphism
\[
G_I^{(1)}\varphi^*
\simeq
\varphi^* G_{I'}^{(1)}.
\]
This induces a canonical isomorphism $G_I^{(n)}\varphi^* \simeq \varphi^* G_{I'}^{(n)}$ for all integer $n\geq 1$, and take colimits to conclude.
\end{proof}

As a consequence of Proposition \ref{divlocal.9}, we see that the square
\begin{equation}
\label{divlocal.9.1}
\begin{tikzcd}
\infShv_{t'}(\cC',\cV)\ar[d,"L_I"']\ar[r,"\varphi^*"]&
\infShv_{t}(\cC,\cV)\ar[d,"L_{I'}"]
\\
I^{-1}\infShv_{t'}(\cC',\cV)\ar[r,"\varphi_I^*"]&
I'^{-1}\infShv_t(\cC,\cV)
\end{tikzcd}
\end{equation}
commutes.

\subsection{The main computation}

Throughout this subsection, $\cV$ is either $\Spt$ or $\DLambda$.
Recall that $\tau$ is one of $\sNis$, $\setale$, and $\ketale$,
and see \S \ref{topology} for $\ul{\tau}$ and $\dtau$.
We fix $B\in \Sch$ and an open subscheme $C$ of $\G_{m,B}$.

For $X\in \Sch$, we set
\[
\SH_{\ul{\tau}}(\Sch/X,\cV):=\Stab_{\G_m}((\A^1)^{-1}\infShv_{\ul{\tau}}(\Sch/X,\cV)).
\]
If $V\in \Sch/X$, let $M(X)$ denote the images of $h_{\ul{\tau}}(V)$ in $\SH_{\ul{\tau}}(\Sch/X,\cV)$.
For $Y\in \lSch$, we set
\begin{gather*}
\sC_{\tau}(Y):=\Stab_{\G_m}((\A^1)^{-1}\infShv_{\tau}(\lSm/Y,\cV)),
\\
\sC_{\dtau}(Y):=\Stab_{\G_m}((\A^1)^{-1}\infShv_{\dtau}(\lSm/Y,\cV)).
\end{gather*}
If $W\in \lSm/Y$, let $M(W)$ denote the images of $h_{\tau}(W)$ and $h_{\dtau}(W)$ in $\sC_\tau(Y)$ and $\sC_{\dtau}(Y)$.

According to \eqref{divlocal.2.8}, $\dver$ is already inverted in $\ver^{-1}\sC_{\dtau}(Y)\simeq \SH_{\ul{\tau}}(Y,\cV)$.
Hence we obtain a commutative triangle
\begin{equation}
\begin{tikzcd}
\sC_{\dtau }(Y)\ar[r,"L_{\dver}"]\ar[rd,"L_{ver}"']
&
\dver^{-1}\sC_{\dtau}(Y)\ar[d,"L_{ver}"]
\\
&
\SH_{\ul{\tau}}(Y,\cV).
\end{tikzcd}
\end{equation}

The inclusion functor $\eta\colon \Sm/X\to \Sch/X$ sends $\ul{\tau}$-coverings to $\ul{\tau}$-coverings, $\A^1$ to $\A^1$, and $\G_m$ to $\G_m$.
Hence $\eta$ induces an adjunction
\begin{equation}
\eta_\sharp :
\SH_{\ul{\tau}}(X,\cV) \rightleftarrows \SH_{\ul{\tau}}(\Sch/X,\cV)
: \eta^*,
\end{equation}
where $\eta_\sharp M(V):=M(V)$ for $V\in \Sm/X$.
Consider the functor $\rho\colon \lSm/Y\to \Sch/\ul{Y}$ sending $V\in \lSm/Y$ to $\ul{V}$.
Then $\rho$ sends strict $\tau$-coverings to $\tau$-coverings, $\A^1$ to $\A^1$, and $\G_m$ to $\G_m$.
Hence $\rho$ induces an adjunction
\begin{equation}
\rho_\sharp :
\sC_{\tau}(Y)\rightleftarrows \SH_{\ul{\tau}}(\Sch/\ul{Y},\cV)
: \rho^*,
\end{equation}
where $\rho_\sharp M(V):=M(\ul{V})$.

\begin{rmk}
\label{local.11}
It is well-known that $h_{\Nis}(V)$ is compact in $\infShv_{\Nis}(\Sm/X,\cV)$ for $X\in \Sch$ and $V\in \Sm/X$, see \cite[Proposition 4.5.62]{Ayo072} and its following paragraph.
One can similarly show that $h_{\Nis}(V)$ is compact in $\infShv_{\Nis}(\Sch/X,\cV)$ for $V\in \Sch/X$ and $h_{\sNis}(W)$ is compact in $\infShv_{\sNis}(\lSm/Y,\cV)$ for $Y\in \lSch$ and $W\in \lSm/Y$.

On the other hand,
argue as in \cite[Proposition 2.4.20(1)]{AGV} (see also \cite[Remark 2.4.23]{AGV}) to show that $\infShv_{\et}(\Sm/X,\DLambda)$, $\infShv_{\et}(\Sch/X,\DLambda)$, $\infShv_{\setale}(\lSm/Y,\DLambda)$, and $\infShv_{\ketale}(\lSm/Y,\DLambda)$ (but not the case of $\cV=\Spt$ since the sphere spectrum is not a bounded above spectrum) are compactly generated.
For the case of $\tau=\ketale$,
we also need a result on the Kummer \'etale cohomological dimension \cite[Proposition 5.3]{Nak}.
\end{rmk}

\begin{lem}
\label{local.2}
Assume one of the following conditions:
\begin{enumerate}
\item[\textup{(i)}]
$\tau=\sNis$,
\item[\textup{(ii)}]
$\tau=\setale,\ketale$ and $\cV=\DLambda$.
\end{enumerate}
For $Y\in \lSch$, $V\in \lSm/Y$, $W\in \Sm/(Y-\partial Y)$, and integer $d$,
there is a canonical isomorphism
\[
\map_{\sC_{\tau}(Y)}
(M(V)(d),M(W))
\simeq
\map_{\SH_{\ul{\tau}}(\ul{V},\cV)}
(M(\ul{V})(d),M(\ul{V\times_{Y}W})).
\]
\end{lem}
\begin{proof}
Consider the functors $\eta\colon \Sm/\ul{V}\to \Sch/\ul{V}$ and $\rho\colon \lSm/Y\to \Sch/\ul{Y}$.
We have $\eta^*h(\ul{V\times_Y W})\simeq h(\ul{V\times_Y W})$ and $\rho^*h(W)\simeq h(W)$.
By the compactness result in Remark \ref{local.11}, we can use the commutativity of \eqref{divlocal.9.1} to obtain isomorphisms
\[
\eta^*M(\ul{V\times_Y W})\simeq M(\ul{V\times_Y W}),
\text{ }
\rho^*M(W)\simeq M(W).
\]
By adjunction, we have isomorphisms
\begin{gather*}
\map_{\sC_{\tau}(Y)}
(M(V)(d),M(W))
\simeq
\map_{\SH_{\ul{\tau}}(\Sch/\ul{Y},\cV)}
(M(\ul{V})(d), M(W)),
\\
\map_{\SH_{\ul{\tau}}(\ul{V},\cV)}(M(\ul{V})(d),M(\ul{V\times_Y W}))
\simeq
\map_{\SH_{\ul{\tau}}(\Sch/\ul{V},\cV)}(M(\ul{V})(d),M(\ul{V\times_Y W})).
\end{gather*}
We also have an isomorphism
\[
\map_{\SH_{\ul{\tau}}(\Sch/\ul{Y},\cV)}(M(\ul{V})(d), M(W))
\simeq
\map_{\SH_{\ul{\tau}}(\Sch/\ul{V},\cV)}
(M(\ul{V})(d),M(\ul{V\times_Y W})).
\]
Combine these to obtain the desired isomorphism.
\end{proof}

\begin{lem}
\label{local.3}
Assume one of the following conditions:
\begin{enumerate}
\item[\textup{(i)}]
$\tau=\sNis$,
\item[\textup{(ii)}]
$\tau=\setale,\ketale$ and $\cV=\DLambda$.
\end{enumerate}
For $X\in \lSm/{\A_{\N,B}}$, open subscheme $C$ of $\A_{\N,B}$, and integer $d$ such that $X$ is vertical over $\A_{\N,B}$,
there is a canonical isomorphism
\[
\map_{\sC_{\tau}(\A_{\N,B})}
(M(X)(d),M(C))
\simeq
\map_{\sC_{\dtau}(\A_{\N,B})}
(M(X)(d),M(C)).
\]
\end{lem}
\begin{proof}
The question is Zariski local on $X$.
Hence we may assume $X\in \lSmIFan/S$.
Let $X'\to X$ be a dividing cover in $\lSmIFan/S$.
Lemmas \ref{lemma.10} and \ref{local.2} give an isomorphism
\[
\map_{\sC_{\tau}(\A_{\N,B})}
(M(X)(d),M(C))
\simeq
\map_{\sC_{\tau}(\A_{\N,B})}
(M(X')(d),M(C)).
\]
Together with Lemma \ref{divlocal.13}, we obtain the desired isomorphism.
\end{proof}

For $X\in \lSm/\A_{\N,B}$ and open subscheme $C$ of $\A_{\N,B}$, recall from Construction \ref{lemma.11} that we have
\[
\Theta_d(X)
=
\map_{\SH_{\ul{\tau}}(\ul{X-\partial_{\A_{\N,B}}X},\cV)}
(M(\ul{X-\partial_{\A_{\N,B}}X})(d),M((X-\partial_{\A_{\N,B}}X)\times_{\A_{\N,B}}C)).
\]

\begin{lem}
\label{local.4}
Let $X\in \lSm/{\A_{\N,B}}$.
Assume one of the following conditions:
\begin{enumerate}
\item[\textup{(i)}]
$\tau=\sNis$ and $C=\G_{m,B}$,
\item[\textup{(ii)}]
$\tau=\setale,\ketale$, $\cV=\DLambda$, and $C$ is an open subscheme of $\G_{m,B}$.
\end{enumerate}
Then for every integer $d$,
there is a canonical isomorphism
\[
\Theta_d(X)
\simeq 
\map_{\dver^{-1}\sC_{\dtau}(\A_{\N,B})}
(M(X)(d),M(C)).
\]
\end{lem}
\begin{proof}
We have isomorphisms
\begin{align*}
& \map_{\dver^{-1}\sC_{\dtau}(\A_{\N,B})}
(M(X)(d),M(C))
\\
\simeq &
\limit_{X'\in X_{\divi}^{\op}} \map_{\sC_{\dtau}(\A_{\N,B})}
(M(X'-\partial_{\A_{N,B}}X')(d),M(C))
\\
\simeq &
\limit_{X'\in X_{\divi}^{\op}}
\Theta_d(X')
\simeq
\Theta_d(X),
\end{align*}
where we use \eqref{divlocal.2.4} for the first one, Lemmas \ref{local.2} and \ref{local.3} for the second one, and Proposition \ref{lemma.7} for the third one.
\end{proof}

\begin{thm}
\label{local.5}
Let $X\in \lSm/{\A_{\N,B}}$.
Assume one of the following conditions:
\begin{enumerate}
\item[\textup{(i)}]
$\tau=\sNis$ and $C=\G_{m,B}$,
\item[\textup{(ii)}]
$\tau=\setale,\ketale$, $\cV=\DLambda$, and $C$ is an open subscheme of $\G_{m,B}$.
\end{enumerate}
Then for every integer $d$,
there is a canonical isomorphism
\[
\Theta_d(X)
\simeq
\map_{\SH_{\tau}(X,\cV)}
(M(X)(d),M(X\times_{\A_{\N,B}}C)).
\]
\end{thm}
\begin{proof}
Lemma \ref{local.4} shows that $M(C)\in \dver^{-1}\sC_{\dtau}(\A_{\N,B})$ is $\ver$-local.
Hence there is a canonical isomorphism
\[
\Theta_d(X)
\simeq
\map_{\SH_{\tau}(\A_{\N,B},\cV)}
(M(X)(d),M(C)).
\]
By adjunction, we obtain the desired isomorphism.
\end{proof}

\begin{rmk}
\label{local.9}
For $X\in \lSm/\A_{\N,B}$ such that $X$ is vertical over $\A_{\N,B}$,
Theorem \ref{local.5} gives a canonical isomorphism
\begin{equation}
\label{local.5.1}
\begin{split}
&\map_{\SH_{\ul{\tau}}(\ul{X},\cV)}
(M(\ul{X})(d),M(X\times_{\A_{\N,B}}C))
\\
\xrightarrow{\simeq}&
\map_{\SH_{\tau}(X,\cV)}
(M(X)(d),M(X\times_{\A_{\N,B}}C))
\end{split}
\end{equation}
for every integer $d$.
This is isomorphic to the composition
\begin{align*}
&\map_{\SH_{\ul{\tau}}(\ul{X},\cV)}
(M(\ul{X})(d),M(X\times_{\A_{\N,B}}C))
\\
\xrightarrow{\simeq}&
\map_{\SH_{\ul{\tau}}(\Sch/\ul{X},\cV)}
(M(\ul{X})(d),M(X\times_{\A_{\N,B}}C))
\\
\xrightarrow{\simeq}&
\map_{\sC_{\tau}(X)}
(M(X)(d),M(X\times_{\A_{\N,B}}C))
\\
\xrightarrow{\simeq}&
\map_{\SH_{\tau}(X,\cV)}
(M(X)(d),M(X\times_{\A_{\N,B}}C)),
\end{align*}
where the inverse of the first isomorphism is induced by $\eta_\sharp$, the second (resp.\ third) isomorphism is induced by $\rho_\sharp$ (resp.\ $L_{\ver\cup \dtau}$).
The square
\[
\begin{tikzcd}
\Sm/\ul{X}\ar[r,"\lambda"]\ar[d,"\eta"']&
\lSm/\ul{X}\ar[d,"\mu"]
\\
\Sch/\ul{X}\ar[r,leftarrow,"\rho"]&
\lSm/X
\end{tikzcd}
\]
commutes, where $\mu$ sends $V\in \lSm/\ul{X}$ to $V\times_{\ul{X}} X$.
Recall that $\lambda$ sends $W\in \Sm/\ul{X}$ to $W$.
It follows that \eqref{local.5.1} is isomorphic to the induced morphism
\begin{align*}
&\map_{\SH_{\ul{\tau}}(\ul{X},\cV)}
(M(\ul{X})(d),M(X\times_{\A_{\N,B}}C))
\\
\to &
\map_{\SH_{\tau}(X,\cV)}
(f^*M(\ul{X})(d),f^*M(X\times_{\A_{\N,B}}C)),
\end{align*}
where $f\colon X\to \ul{X}$ is the morphism removing the log structure.
\end{rmk}

\begin{lem}
\label{local.13}
For $X\in \lSm/B$,
Zariski locally on $X$, there exists a vertical log smooth morphism $X\to \A_{\N,B}$.
\end{lem}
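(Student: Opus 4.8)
The plan is to reduce the statement to the structure theory of log smooth morphisms over a trivial-log base and then to build the morphism to $\A_{\N,B}$ out of a well-chosen homomorphism $\N\to P$ from a chart monoid $P$ of $X$. Concretely, I would first fix a point $x\in X$ and work Zariski-locally around it. Since $B$ carries the trivial log structure, by \cite[Proposition II.2.3.7]{Ogu} and \cite[Lemma A.5.9]{logDM} — exactly the input used in the proof of Proposition \ref{lemma.7} — after shrinking $X$ there is a neat chart $P:=\ol{\cM}_{X,x}$, a sharp fs monoid, such that the induced morphism $g\colon X\to \A_{P,B}$ is strict and smooth. If $\ol{\cM}_{X,x}=0$, then $X$ is strict smooth over $B$ near $x$ and the statement reduces to finding a smooth morphism of underlying schemes into $\G_{m,B}$ and composing it with the vertical, log \'etale open immersion $\G_{m,B}\hookrightarrow \A_{\N,B}$; so I may assume $P\neq 0$ in what follows.

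Next, I would regard $P$ as the set of lattice points of the cone $\sigma:=\R_{\geq 0}P\subset P^\gp\otimes_\Z\R$, which is full-dimensional since $P$ generates $P^\gp$. Choose an element $v\in P$ lying in the interior of $\sigma$; for instance the sum of a finite generating set of $P$ is a strictly positive combination of generators and hence lies in the relative interior, which coincides with $\operatorname{int}(\sigma)$. Writing $v=d\,p_0$ with $p_0\in P^\gp$ primitive and $d\geq 1$, we still have $p_0\in\operatorname{int}(\sigma)$ because $\sigma$ is a cone, and $p_0\in\sigma\cap P^\gp=P$ because $P$ is saturated. Let $\theta\colon\N\to P$ be the homomorphism with $\theta(1)=p_0$, and let $f\colon X\to\A_{\N,B}$ be the composite of $g$ with the base change along $B\to\Spec(\Z)$ of the morphism $\A_P\to\A_\N$ induced by $\theta$. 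Two verifications then remain. For log smoothness: $\theta^\gp\colon\Z\to P^\gp$ is injective (as $p_0\neq 0$) with torsion-free cokernel $P^\gp/\Z p_0$ (as $p_0$ is primitive), so $\A_P\to\A_\N$ is log smooth by \cite[Theorem IV.3.1.8]{Ogu}; this is preserved by base change to $B$, and composing with the strict smooth morphism $g$ shows $f$ is log smooth. For verticality: $\theta$ is a chart for $\A_{P,B}\to\A_{\N,B}$, and since $g$ is strict it is also a chart for $f$, so by Proposition \ref{vert.7} it suffices that $\theta$ be vertical; but the face of $P$ generated by $\theta(\N)=\N p_0$ is $P$ itself, because $p_0$ lies in no proper face of $P$. (Alternatively, $f$ is the composite of the strict — hence vertical — morphism $g$ with the vertical morphism $\A_{P,B}\to\A_{\N,B}$, and one invokes Proposition \ref{vert.9}(1).)

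The step I expect to be the real point is the insistence on \emph{primitivity} of $p_0$: over a base with positive residue characteristics the only invertible integers are $\pm1$, so Kato's criterion genuinely forces $\operatorname{coker}(\theta^\gp)$ to be torsion-free, and it is the saturatedness of $P$ that lets us divide $v$ down to a primitive lattice point without leaving $P$ — the small trick that makes the construction go through. A secondary subtlety is the trivial-log locus noted above, where no chart monoid is available and the argument has to be run through the underlying schemes instead.
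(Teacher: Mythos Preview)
Your argument for the main case $P\neq 0$ is essentially identical to the paper's: both pick an element of $P$ lying in no proper face (equivalently, in the interior of the cone) and primitive in $P^{\gp}$ (equivalently, in the paper's phrasing, ``not a multiple of any other element of $P$''), then invoke Kato's criterion \cite[Theorem IV.3.1.8]{Ogu}. Your construction of such an element---sum of generators, then divide down to a primitive lattice point using saturatedness---is more explicit than the paper's bare ``choose,'' and your torsion-freeness argument via primitivity is cleaner than the paper's somewhat roundabout contradiction.

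The one place your write-up does not go through is the trivial-log case $P=0$. Your reduction is correct---any morphism of log schemes from a trivial-log $X$ to $\A_{\N,B}$ must factor through $\G_{m,B}$, so one needs a smooth morphism $X\to\G_{m,B}$---but such a morphism need not exist: already for $X=B$ every section of $\G_{m,B}\to B$ is a regular closed immersion of codimension one, not smooth. So the lemma as literally stated fails at points with trivial log structure. The paper's own proof is no better here (it simply asserts ``the claim is trivial''); the real point is that the lemma's sole application is Theorem~\ref{local.6}, where at trivial-log points the map $f\colon X\to\ul{X}$ is the identity and the statement of that theorem is genuinely trivial. So the gap is cosmetic, but you should not present your $P=0$ paragraph as closing it.
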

\begin{proof}
By \cite[Lemma A.5.9]{logDM}, we may assume that there exists a strict smooth morphism $X\to \A_{P,B}$ for some sharp fs monoid $P$.
If $P=0$, then the claim is trivial.
Hence assume $P\neq 0$.

By \cite[Corollary I.2.3.17(3)]{Ogu},
we can choose an element $p\in P$ not contained in any proper face of $P$.
Since $P$ is saturated,
we have $\Q_{\geq 0}p\cap P\simeq \N$.
Replace $p$ by the least nonzero element of this monoid to assume that $p$ is not a multiple of any other element of $P$.
Let $\theta\colon \N\to P$ be the map sending $n\in \N$ to $np$.
Observe that $\theta$ is vertical.
If the cokernel of $\theta^\gp$ has torsion, then $rq=sp$ for some $q\in P^\gp-\theta(P)$ and integers $r,s>0$.
Since $P$ is saturated, we have $q\in P$.
There exist integers $m,n\geq 0$ with $mr+ns=\gcd(r,s)$.
Then we have $p=r(mp+nq)/\gcd(r,s)$, which contradicts to the assumption on $p$.
Hence the cokernel of $\theta^\gp$ is torsion free.
It follows that $\A_\theta$ is log smooth by \cite[Theorem IV.3.1.8]{Ogu}, and 
then we obtain a vertical log smooth morphism $X\to \A_{\N,B}$.
\end{proof}

In the following, we will use the natural transformation \eqref{consq.1.1}.

\begin{thm}
\label{local.6}
Assume one of the following conditions:
\begin{enumerate}
\item[\textup{(i)}]
$\tau=\sNis$,
\item[\textup{(ii)}]
$\tau=\setale,\ketale$ and $\cV=\DLambda$.
\end{enumerate}
Let $f\colon X\to \ul{X}$ be the morphism removing the log structure with $X\in \lSm/B$, let $j'\colon X-\partial X\to X$ be the obvious open immersion, and let $p\colon X-\partial X\to B$ be the structure morphism.
We set $j:=fj'$.
Then the natural transformation
\[
j_\sharp p^* \xrightarrow{Ex} f_*j_\sharp' p^*
\]
of functors $\SH_{\tau}(B,\cV)\to \SH_{\tau}(X,\cV)$ is an isomorphism.
\end{thm}
\begin{proof}
The question is Zariski local on $B$ and $X$.
By Lemma \ref{local.13}, we may assume that there is a vertical log smooth morphism $X\to \A_{\N,B}$.

Let $v\colon V\to B$ be any proper morphism in $\Sch$.
Consider the induced commutative diagram with cartesian squares
\[
\begin{tikzcd}
(X-\partial X)\times_B V\ar[d,"v'''"']\ar[r,"u'"]&
X\times_B V\ar[d,"v''"]\ar[r,"g"]&
\ul{X}\times_B V\ar[d,"v'"]\ar[r]&
V\ar[d,"v"]&
\\
X-\partial X\ar[r,"j'"]&
X\ar[r,"f"]&
\ul{X}\ar[r]&
B.
\end{tikzcd}
\]
We set $u:=gu'$.
By \cite[Lemma 2.2.23]{Ayo071} or \cite[Proposition 4.2.13]{CD19}, it suffices to show that the morphism
\[
j_\sharp  p^* v_*\unit \xrightarrow{Ex} f_*j_\sharp' p^* v_* \unit
\]
is an isomorphism.
Let $q\colon (X-\partial X)\times_B V\to V$ be the projection.
Consider the commutative diagram
\[
\begin{tikzcd}
j_\sharp p^* v_*\unit\ar[d,"Ex"']\ar[r,"Ex"]&
j_\sharp v_*''' q^*\unit\ar[d,"Ex"]\ar[rr,"Ex"]&
&
v_*'u_\sharp q^*\unit\ar[d,"Ex"]
\\
f_*j_\sharp'p^* v_*\unit\ar[r,"Ex"]&
f_*j_\sharp'v_*''' q^*\unit\ar[r,"Ex"]&
f_*v_*''u_\sharp'q^*\unit\ar[r,"\simeq"]&
v_*'g_* u_\sharp'q^*\unit.
\end{tikzcd}
\]
The left upper and left lower horizontal morphisms are isomorphisms by Theorem \ref{consq.1}(3).
The right upper and middle lower horizontal morphisms are isomorphisms by Theorem \ref{consq.1}(1).
Hence it suffices to show that the right vertical morphism is an isomorphism.
Replace the pair $(B,X)$ by $(V,X\times_B V)$ to reduce to showing that the morphism
\[
j_\sharp \unit \xrightarrow{Ex} f_*j_\sharp' \unit
\]
is an isomorphism.

It suffices to show that the induced morphism
\[
\map_{\SH_{\ul{\tau}}(\ul{X},\cV)}
(M(\ul{Y})(d),M(X-\partial X))
\to
\map_{\SH_{\tau}(X,\cV)}
(M(Y)(d),M(X-\partial X))
\]
is an isomorphism for all $\ul{Y}\in \Sm/\ul{X}$ and integer $d$, where $Y:=\ul{Y}\times_{\ul{X}}X$.
Equivalently, it suffices to show that the induced morphism
\[
\map_{\SH_{\ul{\tau}}(\ul{Y},\cV)}
(M(\ul{Y})(d),M(Y-\partial Y))
\to
\map_{\SH_{\tau}(Y,\cV)}
(M(Y)(d),M(Y-\partial Y))
\]
is an isomorphism.
We conclude together with Theorem \ref{local.5} and Remark \ref{local.9}.
\end{proof}

\begin{thm}
\label{local.8}
Assume one of the following conditions:
\begin{enumerate}
\item[\textup{(i)}]
$\tau=\sNis$,
\item[\textup{(ii)}]
$\tau=\setale,\ketale$ and $\cV=\DLambda$.
\end{enumerate}
For $X\in \lSm/B$,
consider the commutative diagram with cartesian squares
\[
\begin{tikzcd}
\partial X\ar[r,"i'"]\ar[d,"f'"']&
X\ar[d,"f"]\ar[r,leftarrow,"j'"]&
X-\partial X\ar[d,"\id"]
\\
\ul{\partial X}\ar[r,"i"]&
\ul{X}\ar[r,leftarrow,"j"]&
X-\partial X,
\end{tikzcd}
\]
where $i$ and $j$ are the obvious immersions, and $f$ is the morphism removing the log structure.
Let $p\colon \ul{X}\to B$ be the structure morphism.
Then the square
\[
\begin{tikzcd}
p^* \ar[r,"ad"]\ar[d,"ad"']&
f_*f^*p^*\ar[d,"ad"]
\\
i_*i^*p^*\ar[r,"ad"]&
g_*g^*p^*
\end{tikzcd}
\]
of functors $\SH_{\tau}(B,\cV)\to \SH_{\tau}(\ul{X},\cV)$ is cartesian in the sense that it becomes a cartesian square when applied to any object of $\SH_{\tau}(B,\cV)$, where $g:=if'=fi'$.
\end{thm}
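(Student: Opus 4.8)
The plan is to deduce the statement from the localization property (Theorem~\ref{loc.16}) together with Theorem~\ref{local.6}, by comparing the fibres of the two vertical maps of the square. Fix $\cF\in\SH(B,\cV)$; since $\SH(\ul X,\cV)$ is stable, the square obtained by evaluating at $\cF$ is cartesian if and only if the map induced on the fibres of its two vertical maps is an equivalence. The left vertical map is $p^*\cF\xrightarrow{ad}i_*i^*p^*\cF$, so Theorem~\ref{loc.16} on $\ul X$ (for the strict closed immersion $i\colon\ul{\partial X}\to\ul X$ with open complement $j\colon X-\partial X\to\ul X$) identifies its fibre with $j_\sharp j^*p^*\cF$. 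For the right vertical map $f_*f^*p^*\cF\to g_*g^*p^*\cF$: since $g=fi'=if'$ we may rewrite $g_*g^*\simeq f_*i'_*i'^*f^*$, so this map is $f_*$ applied to the unit $f^*p^*\cF\xrightarrow{ad}i'_*i'^*f^*p^*\cF$; applying Theorem~\ref{loc.16} on $X$ (for $i'\colon\partial X\to X$ with complement $j'\colon X-\partial X\to X$) and using that $f_*$ preserves fibres (being a right adjoint), the fibre of the right vertical map is $f_*j'_\sharp j'^*f^*p^*\cF$. Writing $q\colon X-\partial X\to B$ for the structure morphism (denoted $p$ in the statement of Theorem~\ref{local.6}) and using $fj'=j$, $pj=q$, one has $j'^*f^*p^*\cF\simeq j^*p^*\cF\simeq q^*\cF$; hence the comparison of fibres is a map $\psi\colon j_\sharp q^*\cF\to f_*j'_\sharp q^*\cF$, and it remains to show $\psi$ is an equivalence.

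To prove this I would test $\psi$ against the pair $(j^*,i^*)$, which is jointly conservative on $\SH(\ul X,\cV)$ because $j_\sharp j^*\to\id\to i_*i^*$ is a cofibre sequence. Applying $j^*$: since $j$ is a strict open immersion, the localization property gives $j^*j_\sharp\simeq\id$ and $j^*i_*\simeq 0$, and the premotivic axioms give the base change equivalence $j^*f_*\simeq j'^*$ attached to the cartesian square expressing $X-\partial X$ as $X\times_{\ul X}(X-\partial X)$. Tracing these through the two localization cofibre sequences shows that $j^*(\psi)$ is, up to the resulting identifications, the unit $j^*p^*\cF\xrightarrow{ad}j^*f_*f^*p^*\cF$ — i.e. the base change equivalence — hence an equivalence. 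Applying $i^*$: the localization property gives $i^*j_\sharp\simeq 0$, so the source $i^*j_\sharp q^*\cF$ vanishes; and Theorem~\ref{local.6} gives an equivalence $f_*j'_\sharp q^*\simeq j_\sharp q^*$, so the target $i^*f_*j'_\sharp q^*\cF\simeq i^*j_\sharp q^*\cF$ vanishes too. Thus $i^*(\psi)$ is the essentially unique map $0\to 0$, an equivalence. By conservativity $\psi$ is an equivalence, and the square is cartesian.

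I expect the main obstacle to be notational rather than conceptual: once the two fibres have been identified, the only non-formal ingredient is Theorem~\ref{local.6}, which is exactly what forces $f_*j'_\sharp q^*\cF$ to be ``supported on $X-\partial X$'' (i.e. to restrict to zero along $i$). The remaining work is bookkeeping: making precise the identification of $j^*(\psi)$ with the unit/base-change map via the standard compatibilities of Beck-Chevalley transformations with units and counits and the triangle identities; checking that the square $X-\partial X=X\times_{\ul X}(X-\partial X)$ is indeed cartesian (the log structure of $X$ restricts trivially to $X-\partial X$); noting that $f_*$, $i_*$, $g_*$ are exact as right adjoints between stable $\infty$-categories; and verifying that the rewritings $g_*g^*\simeq f_*i'_*i'^*f^*\simeq i_*f'_*f'^*i^*$ agree (both equal $g_*g^*$ by functoriality of $(-)_*$ and $(-)^*$, using $g=fi'=if'$).
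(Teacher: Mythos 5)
Your proof is correct and follows essentially the same route as the paper: identify the fibres of the two vertical maps via the localization property (Theorem \ref{loc.16}) as $j_\sharp j^*p^*$ and $f_*j'_\sharp j'^*f^*p^*$, and then invoke Theorem \ref{local.6} to see the comparison between them is an equivalence. The only difference is that the paper identifies the fibre comparison directly with the exchange map of Theorem \ref{local.6}, whereas you verify it is an equivalence by testing against the jointly conservative pair $(j^*,i^*)$ — a slightly longer but equally valid way of discharging the same bookkeeping.
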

\begin{proof}
Consider the commutative diagram
\[
\begin{tikzcd}
j_\sharp j^*p^*\ar[r,"Ex"]\ar[d,"ad'"']&
f_* j_\sharp' j^*p^*\ar[r,"\simeq"]&
f_*j_\sharp' j'^* f^*p^*\ar[d,"ad'"]
\\
p^*\ar[rr,"ad"]&
&
f_*f^*p^*.
\end{tikzcd}
\]
The upper left horizontal natural transformation is an isomorphism by Theorem \ref{local.6}.
The localization property yields cofiber sequences
\[
j_\sharp j^*p^*
\xrightarrow{ad'}
p^*
\xrightarrow{ad}
i_*i^*p^*,
\;
f_*j_\sharp'j'^*f^*p^*
\xrightarrow{ad'}
f_*f^*p^*
\xrightarrow{ad}
f_*i_*'i'^*f^*p^*.
\]
Combine what we have discussed above to conclude.
\end{proof}

\begin{thm}
\label{local.10}
Assume that $\tau=\setale,\ketale$ and $\cV=\DLambda$.
For $X\in \lSch$ with a chart $\N$,
consider the induced cartesian square
\[
\begin{tikzcd}
\partial X\ar[d,"f'"']\ar[r,"i'"]&
X\ar[d,"f"]
\\
\ul{\partial X}\ar[r,"i"]&
\ul{X}.
\end{tikzcd}
\]
Then the square
\[
\begin{tikzcd}
\id \ar[r,"ad"]\ar[d,"ad"']&
f_*f^*\ar[d,"ad"]
\\
i_*i^*\ar[r,"ad"]&
g_*g^*
\end{tikzcd}
\]
of functors $\DA_{\tau}(\ul{X},\Lambda)\to \DA_{\tau}(\ul{X},\Lambda)$ is cartesian, where $g:=if'=f'i$.
\end{thm}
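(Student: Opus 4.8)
The plan is to follow the proof of Theorem \ref{local.8} and then transport the log‑smooth case along the chart. Write $j'\colon X-\partial X\to X$ for the open immersion complementary to $i'$ and $j\colon X-\partial X\to\ul X$ for the open immersion complementary to $i$; since $X-\partial X$ carries the trivial log structure we have $j=fj'$, hence $j^*=j'^*f^*$. By the localization property (Theorem \ref{loc.16}) for $i$, the cofiber sequence $j_\sharp j^*\xrightarrow{ad'}\id\xrightarrow{ad}i_*i^*$ identifies $\fib(\cF\to i_*i^*\cF)$ with $j_\sharp j^*\cF$; applying the right adjoint $f_*$ (which preserves fiber sequences, hence cofiber sequences in the stable category $\SH(\ul X,\cV)$) to the localization cofiber sequence for $i'$ evaluated at $f^*\cF$ identifies $\fib(f_*f^*\cF\to f_*i'_*i'^*f^*\cF)=\fib(f_*f^*\cF\to g_*g^*\cF)$ with $f_*j'_\sharp j'^*f^*\cF=f_*j'_\sharp j^*\cF$. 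Therefore the square in the statement is cartesian for every $\cF$ if and only if the Beck--Chevalley transformation $j_\sharp j^*\xrightarrow{Ex}f_*j'_\sharp j^*$ is an equivalence of endofunctors of $\SH(\ul X,\cV)$; equivalently, since $j^*$ is essentially surjective, if and only if $j_\sharp\xrightarrow{Ex}f_*j'_\sharp$ is an equivalence of functors $\SH(X-\partial X,\cV)\to\SH(\ul X,\cV)$.

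Next I would use the chart. Let $\phi\colon\ul X\to\A^1=\ul{\A_{\N}}$ be the underlying morphism of the chart, so $X\simeq\ul X\times_{\A^1}\A_{\N}$, and set $\A^1_{\ul X}:=\A^1\times\ul X=\ul{\A_{\N,\ul X}}$. The graph $\gamma\colon\ul X\to\A^1_{\ul X}$ of $\phi$ is a strict closed immersion (a section of the separated projection $\A^1_{\ul X}\to\ul X$), and $X$ is the strict closed subscheme $\ul X\times_{\A^1_{\ul X}}\A_{\N,\ul X}$; denote by $\tilde\gamma\colon X\to\A_{\N,\ul X}$ the resulting strict closed immersion, by $\tilde f\colon\A_{\N,\ul X}\to\A^1_{\ul X}$ the morphism removing the log structure, by $\tilde j'\colon\G_{m,\ul X}\to\A_{\N,\ul X}$ and $\tilde j\colon\G_{m,\ul X}\to\A^1_{\ul X}$ the open immersions, and by $\tilde p\colon\G_{m,\ul X}\to\ul X$ the structure morphism. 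Since $\A_{\N,\ul X}\in\lSm/\ul X$, Theorem \ref{local.6} gives that $\tilde j_\sharp\tilde p^*\xrightarrow{Ex}\tilde f_*\tilde j'_\sharp\tilde p^*$ is an equivalence. Apply $\gamma^*$ to this equivalence. Using that the pullback of $\tilde j$ (resp.\ $\tilde j'$) along $\gamma$ (resp.\ $\tilde\gamma$) is the open immersion $j$ (resp.\ $j'$) and that the composite $X-\partial X\to\G_{m,\ul X}\xrightarrow{\tilde p}\ul X$ equals $j$, the Beck--Chevalley isomorphisms for the open immersions give $\gamma^*\tilde j_\sharp\tilde p^*\simeq j_\sharp j^*$ and $\tilde\gamma^*\tilde j'_\sharp\tilde p^*\simeq j'_\sharp j^*$; so, granting the base change $\gamma^*\tilde f_*\xrightarrow{\simeq}f_*\tilde\gamma^*$ on the objects $\tilde j'_\sharp\tilde p^*\cF$, one gets $\gamma^*\tilde f_*\tilde j'_\sharp\tilde p^*\simeq f_*j'_\sharp j^*$ and hence the desired equivalence $j_\sharp j^*\simeq f_*j'_\sharp j^*$, finishing the proof by the first paragraph.

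The main obstacle is exactly this last base change: $\gamma^*\tilde f_*\simeq f_*\tilde\gamma^*$ for the cartesian square with legs the strict closed immersion $\gamma$ and the \emph{non‑strict} morphism $\tilde f$ (which removes the log structure). The strict base change of Theorem \ref{consq.1}(3) does not cover $\tilde f$, so this has to be proved by hand on the relevant objects, and I would do it stalk‑wise on $\ul X$, i.e.\ after applying $j^*$ and $i^*$ (which jointly detect equivalences by the localization property). After $j^*$ both sides become $j^*\cF$, since $j$ is smooth — so smooth base change applies — and $\tilde f$ and $f$ restrict to isomorphisms away from the boundary. After $i^*$ the left side vanishes: pulling back along $\gamma$ turns $i^*$ into $\tilde i^*$, the pullback along the boundary closed immersion $\tilde i\colon\{0\}\times\ul X\to\A^1_{\ul X}$, and $\tilde i^*\tilde f_*\tilde j'_\sharp\tilde p^*\cF\simeq\tilde i^*\tilde j_\sharp\tilde p^*\cF\simeq0$ by Theorem \ref{local.6} and the localization property on $\A_{\N,\ul X}$; the right side $i^*f_*j'_\sharp j^*\cF$ must then be seen to vanish as well, and this is the delicate point — one uses the strict base change of Theorem \ref{consq.1}(3) for the pair $(\tilde i,\gamma)$ and $i'^*j'_\sharp\simeq0$ to reduce it to the vanishing just recorded. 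This is the step I expect to require the most care, since it is precisely where the logarithmic input of Theorems \ref{local.5}--\ref{local.6}, available only in the log‑smooth situation $\A_{\N,\ul X}$, is carried over to the general fs log scheme $X$ that merely admits a chart $\N$.
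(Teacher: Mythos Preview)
Your reduction in the first paragraph is correct: the square is cartesian if and only if $j_\sharp j^*\xrightarrow{Ex}f_*j'_\sharp j^*$ is an equivalence. But the argument you outline to establish this is circular. After checking the $j^*$-part (which is trivial), the entire content of the theorem is precisely the vanishing of $i^*f_*j'_\sharp j^*\cF$. Your proposal to deduce this vanishing from strict base change for the pair $(\tilde i,\gamma)$ together with $i'^*j'_\sharp\simeq 0$ does not work: the cartesian square with legs $\tilde i$ and $\gamma$ lives entirely on the side of underlying schemes and only yields $\tilde i^*\gamma_*\simeq i_*i^*$, which says nothing about $i^*f_*$. To pass from $i^*f_*$ to something on the $\A_{\N,\ul X}$ side you would need exactly the non-strict base change $\gamma^*\tilde f_*\simeq f_*\tilde\gamma^*$ that you are trying to prove. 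So the ``delicate point'' you flag is not a technicality to be checked at the end; it is the whole theorem restated.

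The paper avoids this circularity by working in the \emph{covariant} direction and by first cutting down the class of test objects. Using \cite[Lemma 2.2.23]{Ayo07} (or \cite[Proposition 4.2.13]{CD19}), one reduces to showing the square is cartesian at $v_*\unit$ for proper $v\colon V\to\ul X$; strict base change for $v$ (Theorem~\ref{consq.1}(3)) then lets one replace $X$ by $V\times_{\ul X}X$, so one may assume the object is $\unit$. Now one applies the fully faithful $\gamma_*$ (rather than your $\gamma^*$): since $\gamma_*f_*=\tilde f_*a_*$ is just composition, no non-strict base change is needed, and the localization cofiber sequence for the strict closed immersion $a\colon X\hookrightarrow\A_{\N,\ul X}$ expresses the pushed-forward square as a cofiber of the corresponding squares for $\A_{\N,\ul X}$ evaluated at $M(U)$ for open immersions $U\hookrightarrow\A_{\N,\ul X}$. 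For those, the square is cartesian by Theorem~\ref{local.5}. The two ingredients you are missing are thus the reduction to $\unit$ via proper generators, and the use of $\gamma_*$ instead of $\gamma^*$; together they let one transport the problem to $\A_{\N,\ul X}$ using only strict base change.
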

\begin{proof}
By \cite[Lemma 2.2.23]{Ayo071} or \cite[Proposition 4.2.13]{CD19}, it suffices to show that the square
\[
\begin{tikzcd}
v_*\unit \ar[r,"ad"]\ar[d,"ad"']&
f_*f^*v_*\unit\ar[d,"ad"]
\\
i_*i^*v_*\unit\ar[r,"ad"]&
g_*g^*v_*\unit
\end{tikzcd}
\]
is cartesian for all proper morphism $v\colon V\to \ul{X}$ in $\Sch$.
Use Theorem \ref{consq.1}(3) and replace $X$ by $V\times_{\ul{X}}X$ to reduce to showing that the square
\begin{equation}
\label{local.10.1}
\begin{tikzcd}
\unit \ar[r,"ad"]\ar[d,"ad"']&
f_*f^*\unit\ar[d,"ad"]
\\
i_*i^*\unit\ar[r,"ad"]&
g_*g^*\unit
\end{tikzcd}
\end{equation}
is cartesian.
We set $B:=\ul{X}$.
Let $a\colon X\to \A_{\N,\ul{X}}$ be the strict closed immersion induced by the chart $X\to \A_{\N}$.
Since $a_*$ is fully faithful by the localization property, it suffices to show that \eqref{local.10.1} is cartesian after applying $a_*$.
Use Theorem \ref{consq.1}(3) and replace $X$ by $\A_{\N,\ul{X}}$ to reduce to showing that the square
\[
\begin{tikzcd}
M(U) \ar[r,"ad"]\ar[d,"ad"']&
f_*f^*M(U)\ar[d,"ad"]
\\
i_*i^*M(U)\ar[r,"ad"]&
g_*g^*M(U)
\end{tikzcd}
\]
is cartesian whenever $X=\A_{\N,B}$ for some $B\in \Sch$ and $U\to \A_{\N,B}$ is an open immersion.
By the localization property, it suffices to show that the induced morphism
\[
\map_{\SH_{\ul{\tau}}(\A_B^1,\cV)}(M(Y)(d),M(C))
\to
\map_{\SH_{\tau}(\A_{\N,B},\cV)}(M(Y\times_{\A^1}\A_{\N})(d),M(C))
\]
is an isomorphism for all $Y\in \Sm/\A_B^1$, where $C:=U\times_{\A_{\N}}\G_m$.
This is a consequence of Theorem \ref{local.5}.
\end{proof}

\begin{rmk}
\label{local.14}
Let $P$ be the submonoid of $\N^2$ generated by $(2,0)$, $(1,1)$, and $(0,2)$, and let $\theta \colon \N\to P$ the map sending $1$ to $(2,0)$.
Then $\A_P\to \A_{\N}$ is log smooth.
However, it is unclear whether the open immersion
\[
\A_{P}-\partial_{\A_{\N}}\A_P\to \A_P
\]
is inverted in $(\A^1)^{-1}\logSH(\A_{\N})$ or not.
Hence unlike Theorem \ref{local.5}, we do \emph{not} know how to express
\[
\map_{(\A^1)^{-1}\logSH(\A_{\N})}(M(\A_P),M(\G_m))
\]
as a hom spectrum in $\SH(\A^1)$.
In particular, we do \emph{not} know whether there is an equivalence of $\infty$-categories
\[
(\A^1)^{-1}\logSH(\A_{\N})
\simeq
\SH(\A_{\N})
\]
or not.
This is the reason why we invert both $\A^1$ and $\ver$ instead of just $\A^1$ for our definition of $\SH(S)$ when $S$ has a nontrivial log structure.
\end{rmk}

\begin{rmk}
Assume that $\tau=\setale,\ketale$.
Using $(\divi)^{-1}\infShv_\tau^\wedge(\lSm/-,\cV)$ instead of  $\infShv_{\dtau}(\lSm/-,\cV)$ as noted in Remark \ref{comp.9},
we can prove Theorems \ref{local.5} and \ref{local.6}--\ref{local.10} for $\SH_\tau^\wedge$ and $\DA_\tau^\wedge(-,\Lambda)$
if for every $X\in \lSch/B$,
$\ul{X}$ is $(\Lambda,\ul{\tau})$-admissible in the sense of \cite[Definition 2.4.14(1)]{AGV},
which is a certain bounded cohomological dimension assumption.
Here, $\Lambda$ is the sphere spectrum in the case of $\SH_\tau^\wedge$.
The condition is needed in the proof of Lemma \ref{local.2} to ensure that $\infShv_\tau^\wedge(\Sm/\ul{X},\cV)$ and $\infShv_\tau^\wedge(\lSch/\ul{X},\cV)$ are compactly generated,
which can be shown by arguing as in \cite[Proposition 2.4.20(2)]{AGV} (see also \cite[Remark 2.4.23]{AGV}).
For the case of $\tau=\ketale$,
we also need a result on the Kummer \'etale cohomological dimension \cite[Proposition 5.3]{Nak}.
\end{rmk}

\subsection{Cohomology theories}
\label{cohomology}

Throughout this subsection, $B$ is a base scheme with an object $\E\in \SH(B)$.
For every morphism $p\colon X\to B$ in $\lSch$ and integers $p$ and $q$, the \emph{$\E$-cohomology of $X$} is defined to be
\begin{equation}
\E^{p,q}(X)
:=
\pi_0\Hom_{\SH(X)}(\Sigma_{\P^1}^\infty X_+,\Sigma^{p,q} p^*\E).
\end{equation}

\begin{exm}
\label{coh.1}
Consider the following three fundamental examples of $\E$:
\begin{enumerate}
\item[(i)] motivic Eilenberg-MacLane spectrum $\ML$,
\item[(ii)] homotopy $K$-theory spectrum $\KGL$,
\item[(iii)] algebraic cobordism spectrum $\MGL$.
\end{enumerate}
Voevodsky introduced these in \cite[\S 6]{zbMATH01194164}, but the definition of $\ML$ that we will use below is Spitzweck's version \cite[Definition 4.27]{zbMATH07015021}.
He defined $\rM\Z$ in $\SH(\Spec(\Z))$.
If $q\colon B\to \Spec(\Z)$ denotes the structure morphism, his definition of $\rM\Z$ in $\SH(B)$ is $q^*\rM\Z$.
The two different definitions of $\rM\Z$ are isomorphic if $B$ is smooth over a field by \cite[Theorem 8.22]{zbMATH07015021}.
Spitzweck's construction can be generalized to any ring $\Lambda$, see the statement of \cite[Corollary 10.4]{zbMATH07015021}.

If $p\colon X\to B$ is a morphism in $\Sch$, then $p^*\KGL\simeq \KGL$ and $p^*\MGL\simeq \MGL$ because $\KGL$ and $\MGL$ admit geometric models.
By definition, $p^* \ML \simeq \ML$.
\end{exm}

\begin{df}
\label{coh.2}
For $X\in \lSch/B$ and integers $p$ and $q$,
the \emph{motivic cohomology of $X$} is defined to be
\[
H_{\cM}^p(X,\Lambda(q))
:=
\ML^{p,q}(X).
\]
The \emph{homotopy $K$-theory spectrum of $X$} is defined to be
\[
\mathrm{KH}(X)
:=
\map_{\SH(X)}(\Sigma_{\P^1}^\infty X_+,\KGL).
\]
This is isomorphic to the original definition when $X$ has the trivial log structure due to \cite{zbMATH06156613}.
The \emph{algebraic cobordism of $X$} is defined to be $\MGL^{p,q}(X)$.
\end{df}

\begin{cor}
\label{coh.3}
For $X\in \lSm/B$ and integer $q$, there is a canonical long exact sequence
\[
\cdots
\to
\E^{p,q}(\ul{X})
\to
\E^{p,q}(X-\partial X)
\oplus
\E^{p,q}(\ul{\partial X})
\to
\E^{p,q}(\partial X)
\to
\E^{p+1,q}(\ul{X})
\to
\cdots
\]
\end{cor}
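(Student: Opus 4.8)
The plan is to obtain the stated sequence as the Mayer--Vietoris long exact sequence attached to the cartesian square of Theorem~\ref{local.8}. First I would take $\cV=\Spt$ there, so $\SH(-,\cV)=\SH(-)$, and evaluate that cartesian square of functors $\SH(B)\to\SH(\ul X)$ at the object $\E$. Writing $f\colon X\to\ul X$ for the morphism removing the log structure, $i\colon\ul{\partial X}\to\ul X$ and $j\colon X-\partial X\to\ul X$ for the evident immersions, $g:=if'=fi'\colon\partial X\to\ul X$, and $p\colon\ul X\to B$ for the structure morphism, this yields a cartesian square
\[
\begin{tikzcd}
p^*\E\ar[r,"ad"]\ar[d,"ad"']&f_*f^*p^*\E\ar[d,"ad"]\\i_*i^*p^*\E\ar[r,"ad"]&g_*g^*p^*\E
\end{tikzcd}
\]
in $\SH(\ul X)$. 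Since $\SH(\ul X)$ is stable, this square is also cocartesian, hence bicartesian, and therefore produces a canonical cofiber sequence
\[
p^*\E\longrightarrow f_*f^*p^*\E\oplus i_*i^*p^*\E\longrightarrow g_*g^*p^*\E
\]
in $\SH(\ul X)$.

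For a fixed $q$, the groups $\E^{p,q}(Y)$ (with $p$ varying) are the homotopy groups of the mapping spectrum $\map_{\SH(Y)}(\Sigma_{\P^1}^\infty Y_+(q),r_Y^*\E)$, where $r_Y\colon Y\to B$ is the structure morphism; so it is enough to convert the cofiber sequence above into a cofiber sequence of such spectra. Applying the exact functor $\map_{\SH(\ul X)}(\Sigma_{\P^1}^\infty(\ul X)_+(q),-)$ and using that $f^*,i^*,g^*$ are symmetric monoidal (so send $\unit$ to $\unit$) together with the functoriality of $\sT$, I would identify the first term with the $\E$-cohomology spectrum of $\ul X$ by definition, the $i_*i^*p^*\E$-term with $\map_{\SH(\ul{\partial X})}(\Sigma_{\P^1}^\infty(\ul{\partial X})_+(q),(pi)^*\E)$ --- i.e.\ the $\E$-cohomology spectrum of $\ul{\partial X}$ --- by the adjunction $(i^*,i_*)$, and the $g_*g^*p^*\E$-term with the $\E$-cohomology spectrum of $\partial X$ by the adjunction $(g^*,g_*)$. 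These three steps are purely formal.

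The remaining term is the crux. The adjunction $(f^*,f_*)$ identifies $\map_{\SH(\ul X)}(\Sigma_{\P^1}^\infty(\ul X)_+(q),f_*f^*p^*\E)$ with $\map_{\SH(X)}(\Sigma_{\P^1}^\infty X_+(q),r_X^*\E)$, and the essential point is that Theorem~\ref{local.6} (and its reformulation in Remark~\ref{local.9}) allows one to replace $X$ by its trivial-log-structure open subscheme $X-\partial X$ at this stage: in the square above the fiber of $f_*f^*p^*\E\to g_*g^*p^*\E$ is $f_*j'_\sharp j'^*f^*p^*\E$, which Theorem~\ref{local.6} identifies with $j_\sharp j^*p^*\E$, the fiber of $p^*\E\to i_*i^*p^*\E$; tracing this through, the contribution of the $f_*f^*$-corner to the Mayer--Vietoris sequence becomes the $\E$-cohomology of $X-\partial X$. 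Taking homotopy groups of the resulting cofiber sequence of spectra and reorganizing the triangle then gives the claimed long exact sequence. I expect the main obstacle to be precisely this last identification, together with the bookkeeping needed to place $\E^{p,q}(\ul X)$, $\E^{p,q}(\partial X)$, and $\E^{p,q}(X-\partial X)\oplus\E^{p,q}(\ul{\partial X})$ at the correct vertices of the Mayer--Vietoris triangle, so that the connecting homomorphism emerges in the form $\E^{p,q}(\ul X)\to\E^{p+1,q}(\partial X)$; all the rest is a formal consequence of stability and adjunction.
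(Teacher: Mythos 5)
Your proposal follows the paper's proof: the corollary is the Mayer--Vietoris long exact sequence of the cartesian square of Theorem \ref{local.8}, evaluated at $\E$ and mapped into from $\Sigma^{p,q}\Sigma_{\P^1}^\infty\ul{X}_+$, with three of the four corners identified by the adjunctions $(i^*,i_*)$, $(g^*,g_*)$, $(f^*,f_*)$. The one place you wobble is the final identification at the $f_*f^*$-corner. Adjunction gives you $\E^{p,q}(X)$ there, but the passage to $\E^{p,q}(X-\partial X)$ is not obtained by matching fibers via Theorem \ref{local.6}: the agreement of the two vertical fibers is just a restatement of the square being cartesian and does not by itself identify the mapping spectrum out of $\Sigma^{p,q}M(\ul{X})$ into the corner $f_*f^*p^*\E$ with the cohomology of $X-\partial X$ (that fiber is $j_\sharp j^*p^*\E$, whose mapping spectrum from $M(\ul{X})$ is a Borel--Moore-type object, not $\E$-cohomology of $X-\partial X$). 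The correct and much simpler reason --- and the paper's entire proof --- is that the open immersion $X-\partial X\to X$ lies in $\ver_B$, so $M(X-\partial X)\to M(X)$ is an equivalence in $\SH(B)$ and hence $\E^{p,q}(X)\simeq\E^{p,q}(X-\partial X)$ by adjunction over $B$. With that substitution your argument is complete and identical to the paper's.
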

\begin{proof}
By Theorem \ref{local.8},
the induced square
\[
\begin{tikzcd}
\hom_{\SH(\ul{X})}(\unit,\Sigma^{0,q}\E)\ar[d]\ar[r]&
\hom_{\SH(X)}(\unit,\Sigma^{0,q}\E)\ar[d]
\\
\hom_{\SH(\ul{\partial X})}(\unit,\Sigma^{0,q}\E)\ar[r]&
\hom_{\SH(\partial X)}(\unit,\Sigma^{0,q}\E)
\end{tikzcd}
\]
is cartesian.
We also have
\begin{align*}
&
\hom_{\SH(X)}
(\unit,\Sigma^{0,q}\E)
\simeq
\hom_{\SH(B)}
(\Sigma_{\P^1}^\infty X_+,\Sigma^{0,q}\E)
\\
\simeq &
\hom_{\SH(B)}
(\Sigma_{\P^1}^\infty (X-\partial X)_+,\Sigma^{0,q}\E)
\simeq
\hom_{\SH(X-\partial X)}
(\unit,\Sigma^{0,q}\E).
\end{align*}
Combine what we have discussed above to conclude.
\end{proof}

\begin{exm}
\label{coh.4}
Assume $X=\A_{\N,B}$.
Then Corollary \ref{coh.3} gives an isomorphism
\[
\E^{p,q}(\pt_{\N,B})
\simeq
\E^{p,q}(\G_{m,B})
\]
for all integers $p$ and $q$.
\end{exm}

\begin{rmk}
Let $k$ be a field,
and let $X\to \A_{\N^r,k}$ be a strict smooth morphism in $\lSch$ for some integer $r\geq 0$.
In general, our motivic cohomology $H_{\cM}^p(\partial X,\Z(q))$ is not isomorphic to the log-motivic cohomology $H_{\log-\cM}^p(\partial X,\Z(q))$ of Gregory-Langer in \cite{2108.02845}.
For example, if $X=\A_{\N,k}$, then
\[
H_{\cM}^1(\partial X,\Z(1))\simeq H_{\cM}^1(\G_{m,k},\Z(1))\simeq k^*\oplus \Z
\simeq
H^0(\partial X,\cM_{\partial X}^\gp)
\]
by Example \ref{coh.4}, while 
\[
H_{\log-\cM}^1(\partial X,\Z(1))
\simeq
H^0(\partial X,\cM_{\partial X}^\gp/\Z_{\partial X})
\simeq
k^*.
\]
Nevertheless, we expect that these two cohomology theories are closely related.
In the above case, the factor $\Z$ in $k^*\oplus \Z\simeq H_\cM^1(\partial X,\Z(1))$ is the only difference.
\end{rmk}

\bibliography{bib_EJM}
\bibliographystyle{siam}

\end{document}